\numberwithin{equation}{section}
\newtheorem{theorem}{Theorem}
\newtheorem{lemma}{Lemma}
\newtheorem{corollary}{Corollary}
\newtheorem{Proposition}{Proposition}
\newtheorem{RHP}{RHP}
\newtheorem{Assumption}{Assumption}
\DeclareMathOperator*{\res}{Res}
\begin{document}

\title{ On asymptotic approximation   of the  modified Camassa-Holm equation in different  space-time  solitonic regions  }
\author{Yiling YANG$^1$\thanks{\ Email address: 19110180006@fudan.edu.cn } \  and  \  Engui FAN$^{1}$\thanks{\ Corresponding author and email address: faneg@fudan.edu.cn } }
\footnotetext[1]{ \  School of Mathematical Sciences  and Key Laboratory   for Nonlinear Science, Fudan   University, Shanghai 200433, P.R. China.}

\date{ }

\maketitle
\begin{abstract}
	\baselineskip=17pt

In this paper, we   study the long time asymptotic behavior for   the initial value problem   of the modified Camassa-Holm (mCH) equation in the solitonic region
\begin{align}
	&m_{t}+\left(m\left(u^{2}-u_{x}^{2}\right)\right)_{x}+\kappa u_{x}=0, \quad m=u-u_{x x}, \nonumber \\
	&u(x, 0)=u_{0}(x),\nonumber
\end{align}	
where $\kappa$ is a positive constant.  Based on the spectral analysis of the  Lax pair associated with the   mCH  equation and scattering matrix,
  the solution of the  Cauchy problem  is  characterized   via the solution of a  Riemann-Hilbert (RH) problem.
   Further using  the $\overline\partial$ generalization of Deift-Zhou steepest descent method,
  we derive   different long time asymptotic expansion of the solution $u(x,t)$   in  different space-time solitonic region of $x/t$.
These  asymptotic   approximations can be   characterized with  an $N(\Lambda)$-soliton  whose parameters are modulated by
a sum of localized soliton-soliton interactions as one moves through the region with diverse  residual error order from  $\overline\partial$ equation:
 $\mathcal{O}(|t|^{-1+2\rho})$ for $\xi=\frac{y}{t}\in(-\infty,-0.25)\cup(2,+\infty)$ and  $\mathcal{O}(|t|^{-3/4})$ for $\xi=\frac{y}{t}\in(-0.25,2)$.
Our results also confirm  the soliton resolution conjecture  and  asymptotically stability of N-soliton solutions for the  mCH equation.  \\
\\
{\bf Keywords:}   Modified Camassa-Holm   equation,  Riemann-Hilbert problem,    $\overline\partial$   steepest descent method, long time asymptotics, asymptotic stability, soliton resolution.\\ \\
{\bf MSC:} 35Q51; 35Q15; 37K15; 35C20.

\end{abstract}

\baselineskip=18pt

\newpage

\tableofcontents

\section {Introduction}
\quad
The inverse  scattering  transform (IST) procedure, as one of the most powerful tool to investigate
solitons of nonlinear integrable  models, was first discovered by Gardner,
Green, Kruskal and Miura \cite{Gardner1967}.   The
modern version of IST is based on the dressing method proposed by Zakharov
and Shabat, first in terms of the factorization of integral operators
on a line into a product of two Volterra integral operators  \cite{Zakharov1974}  and then
using the Riemann-Hilbert (RH) problem  \cite{Zakharov1979}. In general,  the initial value  problems  of integrable systems  can be solved
  by suing IST or RH  method only  in the case of  refectioness potentials.
     So  a natural idea is  to study the asymptotic behavior of solutions
to  integrable systems.

 The  study  on the long-time behavior of nonlinear wave equations   was first  carried out with IST method  by Manakov  in 1974 \cite{Manakov1974}.
  Later,  by using this method, Zakharov and Manakov   gave   the first result on the  large-time asymptotic  of solutions for the  NLS equation  with  decaying initial value \cite{ZM1976}.
The IST method    also    worked  for long-time behavior of integrable systems    such as  KdV,  Landau-Lifshitz  and the reduced Maxwell-Bloch   system \cite{SPC,BRF,Foka}.
    In 1993,
    Deift and Zhou  developed a  nonlinear steepest descent method to rigorously obtain the long-time asymptotics behavior of the solution for the MKdV equation
by deforming contours to reduce the original Riemann-Hilbert (RH)  problem   to a  model one  whose solution is calculated in terms of parabolic cylinder functions \cite{RN6}.
Since then    this method
has been widely  applied  to  the focusing NLS equation, KdV equation, Camassa-Holm equation,Degasperis-Procesi,  Fokas-Lenells equation, Sasa-Satuma equation,  short-pulse equation   etc. \cite{RN9,RN10,Grunert2009,MonvelCH,Monvel1,Monvel2,xu2015,xusp,Geng3,XF2020}.

In recent years,   McLaughlin and   Miller further  presented a $\bar\partial$ steepest descent method which combine   steepest descent  with  $\bar{\partial}$-problem  rather than the asymptotic analysis
 of singular integrals on contours to analyze asymptotic of orthogonal polynomials with non-analytical weights  \cite{MandM2006,MandM2008}.
When  it  is applied  to integrable systems,   the $\bar\partial$ steepest descent method  also has  displayed some advantages,  such as   avoiding delicate estimates involving $L^p$ estimates  of Cauchy projection operators, and leading  the non-analyticity in the RH problem  reductions to a $\bar{\partial}$-problem in some sectors of the complex plane.
  Dieng and  McLaughin used it to study the defocusing NLS equation  under essentially minimal regularity assumptions on finite mass initial data \cite{DandMNLS}; This   method   was also successfully applied to prove asymptotic stability of N-soliton solutions to focusing NLS equation \cite{fNLS}; Jenkins et.al  studied  soliton resolution for the derivative nonlinear NLS equation  for generic initial data in a weighted Sobolev space \cite{Liu3}.  For finite density initial data, Cussagna and  Jenkins improved $\bar\partial$ steepest descent method to   study  the asymptotic stability for   defocusing NLS equation with non-zero boundary conditions \cite{SandRNLS}.  Recently   $\bar\partial$ steepest descent method has been successfully used  to
 study  the  short pulse, three-wave, modifed Camassa-Holm and  Fokas-Lenells equations \cite{YF1,YF2,YF3,YF4}.

In the present paper, we study the long time asymptotic behavior for the initial value problem  for the
modified Camassa-Holm (mCH) equation:
\begin{align}\label{mch}
	&m_{t}+\left(m\left(u^{2}-u_{x}^{2}\right)\right)_{x}+\kappa u_{x}=0, \quad m=u-u_{x x}, \\
	&u(x, 0)=u_{0}(x), \quad x \in \mathbb{R},\  t>0,
\end{align}	
where $\kappa$ is a positive constant, and   $u=u(x,t)$  is a real-valued function of   $x$ and
  $t$.   The mCH equation (\ref{mch}) as a new integrable system was derived independently by Fokas  \cite{Fokas}, Fuchssteiner \cite{BF1996}, Olver and
Rosenau \cite{PP1996}, and Qiao \cite{Qiao}, where the equation was derived from the two-dimensional
Euler system, and Lax pair, the M/W-shape solitons and peakon/cuspon solutions were presented. So the mCH equation (\ref{mch})
   is also   referred to as the Fokas-Olver-Rosenau-Qiao equation  \cite{THFan},
but is mostly known as the  mCH  equation.

In recent years,  the mCH equation (\ref{mch})  has attracted considerable interest  due to its rich mathematical structure and  remarkable properties such as algebro-geometric quasiperiodic solutions \cite{THFan}, Backlund transformation \cite{WLM},  conservative peakons \cite{CS1,CS2}, local well-posedness for classical solutions and global weak
solutions to (\ref{mch}) in Lagrangian coordinates \cite{Gao2018} and solitary wave solutions \cite{Gui2013}.
 Under a  simple  transformation
\begin{align}
		x=\tilde{x}, \ \
		t=\frac{2}{\kappa} \tilde{t},  \ \
		u(x, t)=\sqrt{\frac{\kappa}{2}} \tilde{u}(\tilde{x}, \tilde{t}),
\end{align}
the mCH equation (\ref{mch}) becomes
\begin{equation}
	\tilde{m}_{\tilde{t}}+\left(\tilde{m}\left(\tilde{u}^{2}-\tilde{u}_{\tilde{x}}^{2}\right)\right)_{\tilde{x}}+2 \tilde{u}_{\tilde{x}}=0, \quad \tilde{m}=\tilde{u}-\tilde{u}_{x x}.
\end{equation}
So without loss of generally, we fix $\kappa=2$. Applying the scaling transformation
\begin{align}
		x=\epsilon\hat{x}, \ \
		t= {\hat{t}}/{\epsilon},  \ \
		u(x, t)= \epsilon^2\hat{u}(\hat{x}, \hat{t}),
\end{align}
and let $\epsilon\to0$, then the  mCH  reduces  short pulse equation
\begin{align}
	&u_{xt}=u+\frac{1}{6}(u^3)_{xx}.
\end{align}

Recently, Boutet de Monvel,   Kostenko,  Shepelsky and Teschl developed a RH approach to the mCH equation (\ref{mch}) with nonzero boundary
conditions \cite{Mon}.  They further present the results of
the asymptotic analysis in the solitonless case for the two sectors $\frac{3}{4} <\frac{x}{t}<1, \ 1<\frac{x}{t}<3$ \cite{Mon2}.   Xu and Fan  applied Deift-Zhou
steepest decedent method to obtain  long-time asymptotic behavior of (\ref{mch})  with zero boundary conditions     \cite{Xurhp}.
\begin{align}
	u(x,t)=f(x,t,\xi)t^{-1/2}+\mathcal{O}(t^{-1} \log t ),
\end{align}
where $\xi=\frac{x}{t}$, and $f$ has different structure for $\xi$ in  different cases respectively.

In our results,  for the   weighted  Sobolev  initial data $u_0(x) \in H^{1,1}(\mathbb{R})$,  we   obtain  the leading  order  asymptotic approximation  for the  mCH equation (\ref{mch})
(see  Theorem \ref{last} in the section 9):  when $\xi\in(-\infty,-0.25)\cup(2,+\infty)$,
 	\begin{align}
	u(x,t)=&=u^r(x,t;\tilde{\mathcal{D}}) +\mathcal{O}(t^{-1+2\rho}),\label{ours}
	\end{align}
and when $\xi\in(-0.25,2)$,
\begin{align}
u(x,t)=u^r(x,t;\tilde{\mathcal{D}}) +f_{11}t^{-1/2}+\mathcal{O}(t^{-3/4}).
\end{align}
 Our results is different from it  in   \cite{Mon2,Xurhp}.

This  paper is arranged as follows.  In section \ref{sec2},   we recall  some main  results on
 the construction  process  of  RH  problem    \cite{Xurhp},  which will be used
 to analyze   long-time asymptotics  of the mCH equation in our paper.  In section \ref{secr}, we shown that  the reflection coefficient $r(z)$  belongs in $H^{1,1}(\mathbb{R})$.
  In section \ref{sec3},
a $T(z)$ function  is introduced to  define a new   RH problem  for  $M^{(1)}(z)$,  which  admits a regular discrete spectrum and  two  triangular  decompositions of the jump matrix
near 0.
In section \ref{sec4},  by introducing a matrix-valued  function  $R(z)$,  we obtain  a mixed $\bar{\partial}$-RH problem  for  $M^{(2)}(z)$  by continuous extension of  $M^{(1)}(z)$.
     In section \ref{sec5},  we decompose  $M^{(2)}(z)$    into a
 model RH   problem  for  $M^{R}(z)$ and a  pure $\bar{\partial}$ Problem for  $M^{(3)}(z)$.
 The  $M^{R}(z)$  can be obtained  via  an modified reflectionless RH problem $M^{(r)}(z)$   for the soliton components which  is solved   in Section \ref{sec6} and an inner model $M^{lo}(z)$  for the stationary phase point $\xi_k$   which are   approximated   by  parabolic cylinder model    obtained  in Section \ref{sec8} when $\xi=\frac{y}{t}\in(-0.25,2)$. But when $\xi=\frac{y}{t}\in(-\infty,-0.25)\cup(2,+\infty)$, $M^{R}(z)=M^{(r)}(z)$. This is a more simple case.
  In section \ref{sec7},   the error function   can be computed  with a  small-norm RH problem.
 In Section \ref{sec8},   we analyze  the $\bar{\partial}$-problem  for $M^{(3)}$.
   Finally, in Section \ref{sec9},   based on  the result obtained above,   a relation formula
   is found
\begin{align}
 M(z) = M^{(3)}(z)E(z)M^{(r)}(z)R^{(2)}(z)^{-1}T(z)^{-\sigma_3},\nonumber
\end{align}
from which   we then obtain the   long-time   asymptotic behavior  for the mch equation (\ref{mch}) via reconstruction formula.

\section {The spectral analysis and the   RH problem}\label{sec2}

\subsection{Some notations}
\quad
In this subsection, we fix some notations used this paper.
$\sigma_1$, $\sigma_2$ and $\sigma_3$  are  Pauli matrices
\begin{equation}
	\sigma_1=\left(\begin{array}{cc}
		0 & 1  \\
		1 & 0
	\end{array}\right),\hspace{0.5cm}\sigma_2=\left(\begin{array}{cc}
		0 & -i  \\
		i & 0
	\end{array}\right),\hspace{0.5cm}\sigma_3=\left(\begin{array}{cc}
		1 & 0   \\
		0 & -1
	\end{array}\right).\hspace{0.5cm}\nonumber
\end{equation}

If $I$ is an interval on the real line $\mathbb{R}$ and $X$ is a  Banach space, then $C^0(I,X)$ denotes the space of continuous functions on $I$ taking values in $X$. It is equipped with the norm
\begin{equation*}
	\|f\|_{C^{0}(I, X)}=\sup _{x \in I}\|f(x)\|_{X}.
\end{equation*}
Moreover, denote $C^0_B(X)$ as a  space of bounded continuous functions on $X$.

If the  entries  $f_1$ and $f_2$  are in space $X$,  then we call vector  $\vec{f}=(f_1,f_2)^T$  is in space $X$ with $\parallel \vec{f}\parallel_X\triangleq \parallel f_1\parallel_X+\parallel f_2\parallel_X$. Similarly, if every  entries of  matrix $A$ are in space $X$, then we call $A$ is also in space $X$.

We introduce  the normed spaces:
\begin{itemize}
\item A weighted $L^p(\mathbb{R})$ space is specified by
$$L^{p,s}(\mathbb{R})  =  \left\lbrace f(x)\in L^p(\mathbb{R}) | \hspace{0.1cm} |x|^sf(x)\in L^p(\mathbb{R}) \right\rbrace; $$
\item  A Sobolev space is defined by
$$W^{k,p}(\mathbb{R})  =  \left\lbrace f(x)\in L^p(\mathbb{R}) | \hspace{0.1cm} \partial^j f(x)\in L^p(\mathbb{R})  \text{ for }j=1,2,...,k \right\rbrace;$$

\item A weighted Sobolev space  is defined by
 $$H^{k,s}(\mathbb{R})   =  \left\lbrace f(x)\in L^2(\mathbb{R}) | \hspace{0.1cm} (1+|x|^s)\partial^jf(x)\in L^2(\mathbb{R}),  \text{ for }j=1,...,k \right\rbrace.$$

\end{itemize}
And the norm of $f(x)\in L^{p}(\mathbb{R})$ and $g(x)\in L^{p,s}(\mathbb{R})$ are  abbreviated to $\parallel f\parallel_{p}$,  $\parallel g\parallel_{p,s}$ respectively.
 In our paper, we only need the initial value $m(x,0)=u(x,0)-u_{xx}(x,0)$ in $H^{2,2}(\mathbb{R})$.

\subsection{Spectral analysis on the Lax pair}

\quad The mCH equation (\ref{mch}) is completely integrable and    admits the Lax pair \cite{Xurhp}
\begin{equation}
\Phi_x = X \Phi,\hspace{0.5cm}\Phi_t =T \Phi, \label{lax0}
\end{equation}
where
\begin{equation}
	X=-\frac{k}{2}\sigma_3+\frac{i\lambda m(x,t)}{2}\sigma_2,\nonumber
\end{equation}
\begin{equation}
	T=\frac{k}{\lambda^2}\sigma_3+\frac{k}{2} \left(u^{2}-u_{x}^{2}\right)\sigma_3-i\left(\frac{u-k u_{x}}{\lambda}+\frac{\lambda}{2} \left(u^{2}-u_{x}^{2}\right) m \right) \sigma_2,
 \nonumber
\end{equation}
with $k=\sqrt{1-\lambda^2}$ and $\lambda\in \mathbb{C}$ being  a spectral parameter.
The   $X$ and $T$ of above Lax pair are traceless matrices, so it
implies  that the determinant of a matrix solution to (\ref{lax0})   is independent of $x$ and $t$.
To avoid multi-valued case  of   eigenvalue  $\lambda$,   we  introduce    a uniformization variable
\begin{equation}
	z= k+\lambda,
\end{equation}
and  obtain two single-valued functions
\begin{equation}
	k(z)=\frac{i}{2}(z-\frac{1}{z}),\hspace{0.5cm}\lambda(z)=\frac{1}{2}(z+\frac{1}{z}).\label{uniformization55}
\end{equation}
Usually we only use the $x$-part of Lax pair to analyze the initial value problem. For example in section \ref{secr}, we consider the case of $t=0$ to obtain the relationship of reflection coefficient and initial data. And the $t$-part   is often used
to determine the time evolution of the scattering data   by inverse scattering transform method.
Here  different from   NLS and  derivative NLS equations \cite{DandMNLS,SandRNLS,fNLS},     the Lax pair   (\ref{lax0}) for the mCH equation  has
singularities at $\lambda=0, \infty$ and branch cut points  $z=\pm i$  in the extended complex $\lambda$-plane, so the asymptotic  behavior of their eigenfunctions should   be controlled.
But the asymptotic behaviors of Lax pair  (\ref{lax0}) as $z \to \infty$  can't be directly obtained.
 This difficulty is also appear in other WKI-type equation,
and it is solved by appropriate transformation  due to   Boutet de Monvel and Shepelsky
\cite{CH,CH1}. This idea  was also applied in \cite{RHPsp,Xurhp}.  By using  her consequence directly,  we need to use   different transformations respectively to analyze these  singularities $z=i$, $z=0$ and $z=\infty$, and give a new scale to construct RH problem.  Now we first consider   the case $z=\infty$, which   is corresponding to  $\lambda=\infty$.

\noindent \textbf{Case I: $z=\infty$}
\\
In order to control asymptotic behavior of the Lax pair (\ref{lax0}) as $z\to \infty$, we define
 \begin{align}
 	F(x, t)=\sqrt{\frac{q+1}{2 q}}\left(\begin{array}{cc}
 		1 & \frac{-i m}{q+1} \\
 		\frac{-i m}{q+1} & 1
 	\end{array}\right),\label{F}
 \end{align}
and
\begin{equation}
	p(x,t,z)=x-\int_{x}^{\infty} (q-1) dy-\frac{2t}{\lambda(z)^2}, \ \ \ q=\sqrt{m^2+1}.
\end{equation}
Making  a transformation
\begin{equation}
	\Phi_\pm=F\mu_\pm e^{-\frac{i}{4}(z-\frac{1}{z})p\sigma_3}\label{transmu},
\end{equation}
then  we obtain  a new Lax pair
\begin{align}
	&(\mu_\pm)_x = -\frac{i}{4}(z-\frac{1}{z})p_x[\sigma_3,\mu_\pm]+P\mu_\pm,\label{lax1.1}\\
	&(\mu_\pm)_t =-\frac{i}{4}(z-\frac{1}{z})p_t[\sigma_3,\mu_\pm]+L\mu_\pm, \label{lax1.2}
\end{align}
where
\begin{align}
	&P=\frac{i m_{x}}{2q^2}\sigma_1+  \frac{m}{2 z q}\left(\begin{array}{cc}
		-i m & 1 \\
		-1 & i m
	\end{array}\right),\\
 	&L=\frac{i m_{t}}{2q^2}\sigma_1-\frac{m\left(u^{2}-u_{x}^{2}\right)}{2 z q}\left(\begin{array}{cc}
		-i m & 1 \\
		-1 & i m
	\end{array}\right)+\frac{\left(z^{2}-1\right) u_{x}}{z^{2}+1}\sigma_1 \nonumber\\
	&-\frac{2 z u}{\left(z^{2}+1\right) q}\left(\begin{array}{cc}
		-i m & 1 \\
		-1 & i m
	\end{array}\right)+\frac{2 i z\left(z^{2}-1\right)}{\left(z^{2}+1\right)^{2}}\left(\begin{array}{cc}
		\frac{1}{q}-1 & \frac{-i m}{q} \\
		\frac{i m}{q} & 1-\frac{1}{q}
	\end{array}\right).
\end{align}
Moreover
\begin{align}
	\mu_\pm \sim I, \hspace{0.5cm} x \rightarrow \pm\infty.
\end{align}
The Lax pair (\ref{lax1.1})-(\ref{lax1.2}) can be written in to a  total differential form
\begin{equation}
	d\left(e^{\frac{i}{4}(z-\frac{1}{z})p\hat{\sigma}_3}\mu_\pm \right)=e^{\frac{i}{4}(z-\frac{1}{z})p\hat{\sigma}_3}\left(Pdx+Ldt \right)\mu_\pm,
\end{equation}
which leads to two  Volterra type integrals
\begin{equation}
	\mu_\pm=I+\int_{x}^{\pm \infty}e^{-\frac{i}{4}(z-\frac{1}{z})(p(x)-p(y))\hat{\sigma}_3}P(y)\mu_\pm(y)dy\label{intmu}.
\end{equation}
Denote
$$\mu_\pm=\left(\left[ \mu_\pm\right]_1, \left[ \mu_\pm\right]_2 \right), $$
where  $\left[ \mu_\pm\right] _1$ and $\left[ \mu_\pm\right] _2$ are
the first and second columns of $\mu_\pm$ respectively.
Then  from  (\ref{intmu}),   we can show that  $\left[ \mu_-\right] _1$ and $\left[ \mu_+\right] _2$ are analysis in $\mathbb{C}^+$;  $\left[ \mu_+\right] _1$
and $\left[ \mu_-\right] _2$ are analysis in $\mathbb{C}^-$.

\begin{Proposition}\label{sym}
	Jost functions $ \mu_\pm$ admit three reduction conditions on
	the $z$-plane:
	
	The first symmetry reduction:
\begin{equation}
\mu_\pm(z)=\sigma_2\overline{\mu_\pm(\bar{z})}\sigma_2=\sigma_1\overline{\mu_\pm(-z)}\sigma_1.\label{symPhi1}
\end{equation}

The second symmetry reduction:
\begin{equation}
	\mu_\pm(z)=F^{-2}\sigma_2\mu_\pm(-z^{-1})\sigma_2,\label{symPhi2}
\end{equation}

\end{Proposition}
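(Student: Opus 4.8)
The plan is to prove all three reductions by the uniqueness principle for the Jost functions: each identity asserts that a specific transform of $\mu_\pm$ again solves the $x$-part spectral problem (\ref{lax1.1}) --- equivalently the Volterra equation (\ref{intmu}) --- with the same normalization $\mu_\pm\sim I$ as $x\to\pm\infty$. Since that Volterra problem has a unique solution (its Neumann series converges), it suffices to check that the transformed function satisfies the same equation and normalization; it must then coincide with $\mu_\pm(z)$. Throughout I will use that $u$, and hence $m=u-u_{xx}$, $q=\sqrt{m^2+1}$ and $m_x$, are real, and that the phase difference $p(x)-p(y)$ appearing in the kernel of (\ref{intmu}) is real and independent of $z$ (the $t$-term $-2t/\lambda(z)^2$ cancels), so only the scalar factor $-\tfrac{i}{4}(z-\tfrac1z)$ and the Pauli structure need to be tracked.

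For the first reduction (\ref{symPhi1}) I would work directly from (\ref{intmu}). Reality of $m,m_x,q$ together with the Pauli identities $\sigma_2\sigma_1\sigma_2=-\sigma_1$, $\sigma_2\sigma_3\sigma_2=-\sigma_3$ gives $\sigma_2\overline{P(\bar z)}\sigma_2=P(z)$; moreover complex conjugation returns the argument $\bar z$ to $z$ and flips the sign of the scalar phase, and a further sign flip from $\sigma_2\sigma_3\sigma_2=-\sigma_3$ restores the original kernel $e^{-\frac{i}{4}(z-1/z)(p(x)-p(y))\hat{\sigma}_3}$ exactly. Hence $\sigma_2\overline{\mu_\pm(\bar z)}\sigma_2$ solves (\ref{intmu}) and equals $\mu_\pm(z)$. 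The companion identity $\mu_\pm(z)=\sigma_1\overline{\mu_\pm(-z)}\sigma_1$ I would obtain either the same way or, more economically, by composing the Schwartz reflection just proved with the manifest involution $X(z)=\sigma_1X(-z)\sigma_1$ of the undressed operator, valid because $k$ and $\lambda$ in (\ref{uniformization55}) are odd under $z\to-z$.

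The second reduction (\ref{symPhi2}) is of a different nature: $z\to-1/z$ is the sheet involution $\lambda\to-\lambda$, $k\to k$ of the uniformization, and it is invisible at the level of the $z=\infty$-adapted kernel because the potential $P$ carries a factor $1/z$ that is not covariant under $z\to-1/z$. I would therefore establish it upstream, on the undressed operator $X=-\tfrac{k}{2}\sigma_3+\tfrac{i\lambda m}{2}\sigma_2$ of (\ref{lax0}): from (\ref{uniformization55}) one has $k(-1/z)=k(z)$ and $\lambda(-1/z)=-\lambda(z)$, whence $X(-1/z)=\sigma_3X(z)\sigma_3$ while the free phase $\tfrac14(z-\tfrac1z)p$ is invariant. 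Matching the common normalization of $\Phi_\pm$ at $x\to\pm\infty$ pins down the corresponding symmetry of $\Phi_\pm$, which I would then transport to $\mu_\pm$ through the dressing (\ref{F})--(\ref{transmu}), using $\det F=1$ and $\sigma_jF\sigma_j=F^{-1}$ for $j=2,3$ (both $\sigma_2$ and $\sigma_3$ anticommute with the $\sigma_1$ occurring in $F$).

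The main obstacle will be the bookkeeping of this dressing. Conjugating an undressed symmetry by the $z$-independent matrix $F$ does not leave the Pauli factor inert, and the exponential $e^{-\frac{i}{4}(z-1/z)p\sigma_3}$ commutes cleanly only with $\sigma_3$; one must verify that the $F$-factors and this exponential combine to collapse the reduction to precisely the stated $\sigma_2$-form, keeping in mind that the phase is \emph{invariant} (not reflected) under $z\to-1/z$, whereas it is reflected under $z\to-z$. A secondary subtlety is that $\mu_\pm$ is only piecewise analytic --- its two columns are analytic in opposite half-planes --- so each identity should first be read columnwise as a relation between boundary values and then propagated by analyticity, with the normalization appropriate to $z=\infty$ used on one side of the involution $z\leftrightarrow-1/z$ and the one appropriate to $z=0$ on the other.
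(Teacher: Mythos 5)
The paper states Proposition \ref{sym} with no proof at all, so the only question is whether your argument closes on its own. Your treatment of the Schwarz reflection does: conjugating the Volterra equation (\ref{intmu}) at $\bar z$, using that $p(x)-p(y)$ is real and $z$-independent, and checking $\sigma_2\overline{P(y,\bar z)}\sigma_2=P(y,z)$ shows that $\sigma_2\overline{\mu_\pm(\bar z)}\sigma_2$ solves the same Volterra problem with the same normalization, hence equals $\mu_\pm(z)$ by uniqueness.

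The genuine gap is in the step you defer: the bookkeeping cannot ``collapse to precisely the stated form,'' because the other two printed identities are false, and your own method, carried out to the end, proves corrected versions of them. (i) Your involution $X(z)=\sigma_1X(-z)\sigma_1$ is correct, but at the level of the Volterra equation it yields the bar-free parity identity $\mu_\pm(z)=\sigma_1\mu_\pm(-z)\sigma_1$ (one checks $\sigma_1P(y,-z)\sigma_1=P(y,z)$, the phase flip under $z\to-z$ being undone by $\sigma_1\sigma_3\sigma_1=-\sigma_3$); composing it with the Schwarz reflection gives
\[
\mu_\pm(z)=\sigma_1\sigma_2\,\overline{\mu_\pm(-\bar z)}\,\sigma_2\sigma_1=\sigma_3\,\overline{\mu_\pm(-\bar z)}\,\sigma_3 ,
\]
not $\sigma_1\overline{\mu_\pm(-z)}\sigma_1$: indeed, parity turns the printed identity into $\mu_\pm(z)=\overline{\mu_\pm(z)}$, i.e.\ $\mu_\pm$ real-valued, which is absurd. (ii) For (\ref{symPhi2}), since $k(-1/z)=k(z)$ and $\lambda(-1/z)=-\lambda(z)$, the Pauli conjugation restoring $X$ under $z\mapsto-1/z$ is $\sigma_3$, not $\sigma_2$ (it is $z\mapsto+1/z$, where $k\to-k$ and $\lambda\to\lambda$, that pairs with $\sigma_2$). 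Worse, the dressing does \emph{not} cancel the $F$'s: using $\sigma_3F=F^{-1}\sigma_3$, the invariance of $k$ and $p$, and (\ref{transmu}), transporting $\Phi_\pm(z)=\sigma_3\Phi_\pm(-1/z)\sigma_3$ gives
\[
\mu_\pm(z)=F^{-1}\sigma_3F\,\mu_\pm(-1/z)\,\sigma_3=F^{-2}\sigma_3\,\mu_\pm(-1/z)\,\sigma_3=\sigma_3F^{2}\mu_\pm(-1/z)\sigma_3,
\qquad
\mu_\pm(z)=\sigma_2F^{2}\mu_\pm(1/z)\sigma_2 .
\]
The surviving $F^{\pm2}$ is not an artifact of your method; it is exactly the factor the paper itself carries later, in the symmetries $M(z)=F^{2}\sigma_3M(-z^{-1})\sigma_3$ of RHP \ref{RHP1}--\ref{RHP2} and in the identity underlying (\ref{symu}), $[\mu_\pm]_1(x,z)=[\sigma_2F(x)^2\mu_\pm(x,1/z)\sigma_2]_1$. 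So keep your strategy (Volterra uniqueness plus undressing to $X$, which is the right one, and your remark that the $x\to\pm\infty$ normalization pins down the constant matrix is what makes it work), but aim it at the $F^{2}$-dressed identities and flag the second equality of (\ref{symPhi1}) and equation (\ref{symPhi2}) as misprints; any argument that terminates at the clean $\sigma_2$-form must contain an error.
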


Since   $\Phi_\pm$ are two fundamental matrix solutions of the  Lax  pair (\ref{lax0}),  there exists a linear  relation between $\Phi_+$ and $\Phi_-$, namely
\begin{equation}
	\Phi_-(z;x,t)=\Phi_+(z;x,t)S(z),\hspace{0.5cm} z\in \mathbb{R}\hspace{0.5cm},\label{scattering}
\end{equation}
where $S(z)$ is called scattering matrix, and it is only depended on $z$
\begin{equation}
		S(z) =\left(\begin{array}{cc}
			a(z) &-\overline{b(\bar{z})}   \\[4pt]
			 b(z) & \overline{a(\bar{z})}
		\end{array}\right),\hspace{0.5cm}\det[S(z)]=1.
\end{equation}
Combing with (\ref{transmu}), above equation trans to
\begin{align}
	\mu_-(z)=\mu_+(z)e^{-\frac{i}{4}(z-\frac{1}{z})p\hat{\sigma}_3}S(z).
\end{align}
Then $S(z)$ has following symmetry reduction:
\begin{equation}
	S(z)=\overline{S(\bar{z}^{-1})}=\sigma_3S\left( -z^{-1}\right) \sigma_3.\label{symS}
\end{equation}
And   the reflection coefficients is defined by
\begin{equation}
	r(z)=\frac{b(z)}{a(z)},\label{symr}
\end{equation}
with symmetry reduction:
\begin{equation}
	r(z)=\overline{r(\bar{z}^{-1})}=r(-z^{-1})=-\overline{r(-\bar{z})}.
\end{equation}
From (\ref{scattering}), $a(z)$, $b(z)$ can be expressed by $\mu_\pm$ as
\begin{align}
	&a(z)=\mu_-^{11}\overline{\mu_+^{11}}+\mu_-^{21}\overline{\mu_+^{21}},\hspace{0.5cm}b(z)=\overline{\mu_-^{11}\mu_+^{21}}-\overline{\mu_-^{21}\mu_+^{11}}.\label{scatteringcoefficient2}
\end{align}
So $a(z)$ is analytic on $\mathbb{C}^+$.
In addition,   $\mu_\pm$  admit the  asymptotics
\begin{align}
	\mu_\pm=I+\dfrac{D_1}{z}+\mathcal{O}(z^{-2}),\hspace{0.5cm}z \rightarrow \infty,\label{asymu}
\end{align}
where the off-diagonal entries of the matrix $D_1(x, t)$ are
\begin{equation}
	D_{12}(x,t)=-D_{21}(x,t)=\dfrac{m_{x}}{(1+m^2)^{3/2}}.
\end{equation}
From (\ref{asymu}) and (\ref{a}),    we obtain  the asymptotic   of $a(z)$
\begin{align}
	a(z)=1+\mathcal{O}(z^{-1}),\hspace{0.5cm}z \rightarrow \infty.\label{asya}
\end{align}
The zeros of $a(z)$ on $\mathbb{R}$   are known to
occur and they correspond to spectral singularities.  They are excluded from our analysis in the this paper. To deal with our following work,
we assume our initial data satisfy this assumption.
\begin{Assumption}\label{initialdata}
	The initial data $u \in  H^{4,2}(\mathbb{R})$     and it generates generic scattering data which satisfy that
	
	\textbf{1. }a(z) has no zeros on $\mathbb{R}$.
	
	\textbf{2. }a(z) only has finite number of simple zeros.
\end{Assumption}
And the proof of following proposition wil be given is section \ref{secr}.
\begin{Proposition}\label{pror}
	If the initial data $u \in  H^{4,2}(\mathbb{R})$, then $r(z)$ belongs to $H^{1,1}(\mathbb{R})$.
\end{Proposition}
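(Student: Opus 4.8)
The plan is to fix $t=0$, work solely with the $x$-part of the Lax pair, and control $r=b/a$ through the Volterra representation (\ref{intmu}) of the Jost functions. Since $u_0\in H^{4,2}(\mathbb{R})$ gives $m=u_0-u_{0,xx}\in H^{2,2}(\mathbb{R})$, the potential $P$ in (\ref{lax1.1}) (whose entries involve $m_x/q^2$ and $m/(zq)$ with $q=\sqrt{m^2+1}\ge 1$) satisfies a finite first moment estimate $\int_{\mathbb{R}}(1+|y|)\,\|P(y,z)\|\,dy<\infty$, locally uniformly for $|z|$ bounded away from $0$; this follows because $(1+|y|^2)m,(1+|y|^2)m_x\in L^2$ and $\tfrac{1+|y|}{1+|y|^2}\in L^2$, so the relevant moments lie in $L^1$. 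First I would run the standard contraction/Neumann-series argument on (\ref{intmu}) to produce $\mu_\pm(z;x)$, establish uniform bounds and continuity up to $\mathbb{R}$, and recover the column-wise analyticity already recorded in the excerpt. Throughout I restrict to the region $|z|\ge 1$ handled by Case I, deferring the neighborhood of $z=0$ to the symmetry step below.

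The core is to upgrade these bounds to the spectral derivative $\partial_z\mu_\pm$. Differentiating (\ref{intmu}) in $z$ generates two families of terms: those in which $\partial_z$ hits $P(y,z)$, which are harmless since $\partial_z P$ is again integrable for $|z|\ge 1$; and those in which $\partial_z$ hits the oscillatory kernel $e^{-\frac{i}{4}(z-\frac1z)(p(x)-p(y))\hat\sigma_3}$, producing a factor proportional to $(1+z^{-2})\,(p(x)-p(y))$. Because $p(x)-p(y)=\mathcal{O}(|x-y|)$ at $t=0$, closing the resulting Volterra system for $\partial_z\mu_\pm$ is exactly what consumes the first moment $\int(1+|y|)\|P\|\,dy$ supplied by $m\in H^{2,2}$. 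This yields $\partial_z\mu_\pm\in L^2$ together with the $z$-weighted bounds needed afterward, and the factor $z^{-2}$ is what forces the stronger hypothesis and careful tracking near the origin.

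Next I would pass from $\mu_\pm$ to the scattering data via (\ref{scatteringcoefficient2}). The mapping properties just established give $b\in H^{1,1}(\mathbb{R})$ in the region $|z|\ge 1$: membership $b\in L^2$, smoothness $\partial_z b\in L^2$, and the weighted estimate $z\,\partial_z b\in L^2$ all descend from the corresponding bounds on $\mu_\pm$ and $\partial_z\mu_\pm$, trading a power of $z$ (respectively a $z$-derivative) against a derivative (respectively a moment) of $m$ in the spirit of the Fourier correspondence. For $a(z)$, the asymptotics (\ref{asya}) and the symmetry (\ref{symS}) give $a\to 1$ at both $z=\infty$ and $z=0$, while Assumption \ref{initialdata} forbids real zeros; hence $a^{-1}\in L^\infty(\mathbb{R})$ with $\partial_z(a^{-1}),\,z\,\partial_z(a^{-1})\in L^2$, making $a^{-1}$ a bounded multiplier of $H^{1,1}$. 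Finally, the symmetry $r(z)=r(-z^{-1})$ from (\ref{symr}) transfers every estimate obtained for $|z|\ge 1$ to the region $|z|\le 1$: under $w=-1/z$ one checks that the weighted bound $z\,\partial_z r\in L^2$ near $z=\infty$ is precisely equivalent to the plain bound $\partial_z r\in L^2$ near $z=0$, so the two singular ends glue together and $r=b/a\in H^{1,1}(\mathbb{R})$ follows.

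The hard part will be the singularity of $P$ at $z=0$ (through the $\tfrac{m}{2zq}$ term) and, dually, the factor $(1+z^{-2})$ generated by differentiating the oscillatory kernel in $z$. Obtaining uniform control of $\partial_z\mu_\pm$ and verifying the weighted estimate $z\,\partial_z r(z)\in L^2$ down to $z=0$ is the delicate point, and it is exactly here that the extra regularity and decay encoded in $u_0\in H^{4,2}(\mathbb{R})$, rather than the minimal $H^{1,1}$ one might expect, is expended.
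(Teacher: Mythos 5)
Your plan reproduces the paper's architecture---work at $t=0$ with the Volterra equations (\ref{intmu0}), prove estimates directly for $|z|\ge 1$, transfer them to $|z|\le 1$ through the inversion symmetry (your dictionary, that a plain $L^2$ bound on $\partial_z r$ near $z=0$ is equivalent to a $z$-weighted bound near $z=\infty$, is exactly the one the paper uses), and finally divide by $a$ using Assumption \ref{initialdata} and $a\to 1$---but it has two genuine gaps at the technical core. The first is the mechanism you assign to the $L^2$-in-$z$ bounds for $\partial_z\mu_\pm$: closing the Volterra system with the first moment $\int(1+|y|)\,|P(y,z)|\,dy$ only yields bounds that are \emph{uniform} in $z$. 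The terms produced when $\partial_z$ hits the oscillatory kernel are $O(1)$ in $z$, so no moment condition by itself places them in $L^2_z(1,\infty)$; what does is oscillation. The paper substitutes $\eta=z-1/z$ (Lemma \ref{lemma2}) and treats $\int_x^{\pm\infty}f(y)e^{-\frac{i}{2}\eta(p(x)-p(y))}\,dy$ as a Fourier transform in the variable $s=\frac{1}{2}(p(x)-p(y))$, so that Plancherel-type estimates (Lemmas \ref{lemma1} and \ref{lemma3}) convert $L^2$ and weighted-$L^2$ information about $m$, $m_x$ into $L^2_\eta$ bounds. You invoke ``the Fourier correspondence'' only when passing from $\mu_\pm$ to $b$, but it is needed already at the level of the Jost functions.

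The second, more serious gap is that you leave the decisive estimate as a black box. After inversion, what must be controlled is $\gamma[K_\pm]_\gamma$, whose worst term (the paper's $H_3$) is $\int_x^{\pm\infty}\gamma\,\frac{i}{2}(p(x)-p(y))\,\frac{im_x}{2(m^2+1)}\,e^{-\frac{i}{2}(\gamma-1/\gamma)(p(x)-p(y))}\,dy$: a factor $\gamma$ growing at infinity multiplies an oscillatory integral, so neither moment bounds nor Plancherel alone can close it. The paper's resolution is an integration by parts in $y$ (absorbing $\gamma\,e^{-\frac{i}{2}(\gamma-1/\gamma)(p(x)-p(y))}$ into $\partial_y$ of the exponential), which trades the growing factor $\gamma$ for one more $y$-derivative on the amplitude; this is precisely where $m_{xx}\in L^{2,3/2}$, i.e. the hypothesis $u\in H^{4,2}(\mathbb{R})$, is consumed. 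Your statement that this is ``exactly where the extra regularity is expended'' locates the difficulty correctly but supplies no argument for it; without the integration-by-parts step the weighted bound $z\,\partial_z r\in L^2$ near $z=\infty$ (equivalently $\partial_z r\in L^2$ near $z=0$) is not established, and the proof cannot be completed as proposed.
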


Suppose that $a(z)$ has $N_1$ simple zeros $z_1,...,z_{N_1}$ on $\{z\in\mathbb{C}^+:{\rm Im}z>0,|z|>1\}$, and $N_2$ simple
zeros $w_1,...,w_m$ on the circle $\{z=e^{i\varphi}:\frac{\pi}{2}<\varphi<\pi\}$.   The  symmetries  (\ref{symS}) imply that
\begin{equation}
	a( z_n)=0 \Leftrightarrow a(- \bar{z}_n)=0 \Leftrightarrow a\left(- \frac{1}{z_n}\right)=0 \Leftrightarrow a\left( \frac{1}{\bar{z}}\right)=0, \hspace{0.5cm}n=1,...,N_1,\nonumber
\end{equation}
and on the circle
\begin{equation}
	a( w_m)=0\Leftrightarrow a(- \bar{w}_m)=0, \hspace{0.5cm}m=1,...,N_2. \nonumber
\end{equation}
So the zeros of $a(z)$  come in pairs.
It is convenient to define zeros of $a(z)$ as $\zeta_n=z_n$, $\zeta_n+N_1=-\bar{z}_n$, $\zeta_{n+2N_1}=\bar{z}_n^{-1}$ and $\zeta_{n+3N_1}=-z_n^{-1}$ for $n=1,\cdot\cdot\cdot,N_1$;  $\zeta_{m+4N_1}=w_m$ and $\zeta_{m+4N_1+N_2}=-\bar{w}_m$ for $m=1,\cdot\cdot\cdot,N_2$. Then $\bar{\zeta}_n$ is the zeros of $\overline{a(\bar{z})}$. Therefore, the discrete spectrum is
\begin{equation}
	\mathcal{Z}=\left\{ \zeta_n, \  \bar{\zeta}_n\right\}_{n=1}^{4N_1+2N_2}, \label{spectrals}
\end{equation}
with $\zeta_n\in \mathbb{C}^+$ and $\bar{\zeta}_n\in \mathbb{C}^-$. And the distribution  of $	\mathcal{Z}$ on the $z$-plane   is shown  in Figure \ref{fig:figure1}.
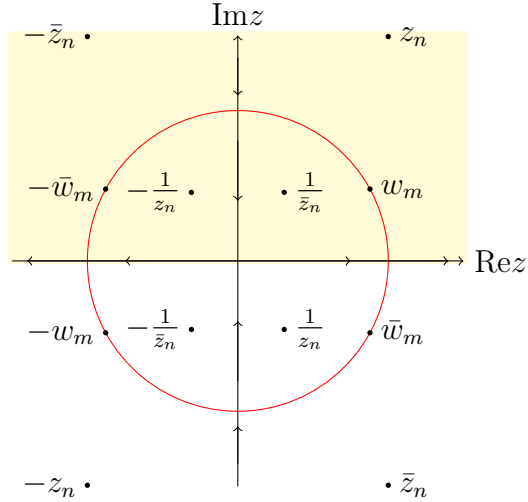
\begin{figure}[H]
	\centering
	\begin{tikzpicture}[node distance=2cm]
		\filldraw[yellow!20,line width=3] (3,0.01) rectangle (0.01,3);
		\filldraw[yellow!20,line width=3] (-3,0.01) rectangle (-0.01,3);
		\draw[->](-3,0)--(3,0)node[right]{Re$z$};
		\draw[->](0,-3)--(0,3)node[above]{Im$z$};
		\draw[red] (2,0) arc (0:360:2);
		\draw[->](0,0)--(-1.5,0);
		\draw[->](-1.5,0)--(-2.8,0);
		\draw[->](0,0)--(1.5,0);
		\draw[->](1.5,0)--(2.8,0);
		\draw[->](0,2.7)--(0,2.2);
		\draw[->](0,1.6)--(0,0.8);
		\draw[->](0,-2.7)--(0,-2.2);
		\draw[->](0,-1.6)--(0,-0.8);
	\coordinate (A) at (2,2.985);
\coordinate (B) at (2,-2.985);
\coordinate (C) at (-0.616996232,0.9120505887);
\coordinate (D) at (-0.616996232,-0.9120505887);
\coordinate (E) at (0.616996232,0.9120505887);
\coordinate (F) at (0.616996232,-0.9120505887);
\coordinate (G) at (-2,2.985);
\coordinate (H) at (-2,-2.985);
		\coordinate (J) at (1.7570508075688774,0.956);
		\coordinate (K) at (1.7570508075688774,-0.956);
		\coordinate (L) at (-1.7570508075688774,0.956);
		\coordinate (M) at (-1.7570508075688774,-0.956);
		\fill (A) circle (1pt) node[right] {$z_n$};
		\fill (B) circle (1pt) node[right] {$\bar{z}_n$};
		\fill (C) circle (1pt) node[left] {$-\frac{1}{z_n}$};
		\fill (D) circle (1pt) node[left] {$-\frac{1}{\bar{z}_n}$};
		\fill (E) circle (1pt) node[right] {$\frac{1}{\bar{z}_n}$};
		\fill (F) circle (1pt) node[right] {$\frac{1}{z_n}$};
		\fill (G) circle (1pt) node[left] {$-\bar{z}_n$};
		\fill (H) circle (1pt) node[left] {$-z_n$};
		\fill (J) circle (1pt) node[right] {$w_m$};
		\fill (K) circle (1pt) node[right] {$\bar{w}_m$};
		\fill (L) circle (1pt) node[left] {$-\bar{w}_m$};
		\fill (M) circle (1pt) node[left] {$-w_m$};
	\end{tikzpicture}
	\caption{Distribution of the discrete spectrum $\mathcal{Z}$. The red one is  unit circle.}
	\label{fig:figure1}
\end{figure}

Moreover, from trace formulae we have
\begin{equation}
	a(z)=\prod_{j=1}^{4N_1+2N_2}\frac{z-\zeta_j}{z-\bar{\zeta}_j}\exp\left\lbrace-\frac{1}{2\pi i}\int_{\mathbb{R}}\frac{\log (1+|r(s)|^2)}{s-z}ds \right\rbrace .\label{a}
\end{equation}
Then by taking $z\to\infty$, it implies
\begin{equation}
	0=\frac{1}{2\pi i}\int_{\mathbb{R}}\frac{\log (1+|r(s)|^2)}{s}ds.
\end{equation}
\noindent \textbf{Case II: $z=0$} (corresponding to  $\lambda\to\infty$).
\\
From the symmetry condition in Proposition \ref{sym}, we can obtain the property of $\mu(z)$ as $z\to 0$. In addition, (\ref{asya}) and (\ref{symS}) imply $a(0)=1$, which means $r(0)=0$.

\noindent \textbf{Case III:} $z=\pm i$ (corresponding to  $\lambda=0$).
\\
Consider the Jost solutions of  the Lax pair (\ref{lax0}), which are restricted by the boundary conditions
\begin{equation}
	\Phi_\pm \sim  e^{(-\frac{k}{2}x+\frac{k}{\lambda^2}t)\sigma_3}, \hspace{0.5cm}x\to \pm\infty.\label{asyx}
\end{equation}
Define a new transformation:
\begin{equation}
	\mu^0_\pm=\Phi_\pm e^{(\frac{k}{2}x-\frac{k}{\lambda^2}t)\sigma_3},\label{trans2}
\end{equation}
with
\begin{equation*}
	\mu^0_\pm \sim I, \hspace{0.5cm} x \rightarrow \pm\infty.
\end{equation*}
Then the Lax pair (\ref{lax0}) change to
\begin{align}
	&(\mu^0_\pm)_x = -\frac{k}{2}[\sigma_3,\mu^0_\pm]+L_0\mu^0_\pm,\label{lax0.1}\\
	&(\mu^0_\pm)_t = \frac{k}{\lambda^2}[\sigma_3,\mu^0_\pm]+M_0\mu^0_\pm, \label{lax0.2}
\end{align}
with
\begin{align}
	&L_0=\frac{\lambda mi}{2}\sigma_2,\\
	&M_{0}=\frac{\left(u^{2}-u_{x}^{2}\right)}{2}\left(\begin{array}{cc}
		k & -\lambda m \\
		\lambda m & -k
	\end{array}\right)+\frac{u}{\lambda}\left(\begin{array}{cc}
		0 & -1 \\
		1 & 0
	\end{array}\right)+\frac{k}{\lambda} u_{x} \sigma_{1}.
\end{align}
Similarly, we denote $\mu_\pm^0=\left(\left[ \mu_\pm^0\right] _1,\left[ \mu^0_\pm\right] _2 \right)$. To reconstruct $u(x,t)$, we analyze its asymptotic behavior as $z\to i$:
\begin{align}
	\mu^0=I+(z-i)\left(\begin{array}{cc}
		0 & -\frac{1}{2}(u+u_x) \\
		-\frac{1}{2}(u-u_x) & 0
	\end{array}\right)+\mathcal{O}\left( (z-i)^2\right) .\label{asymu0}
\end{align}
The relations  (\ref{trans2}) and (\ref{transmu}) is
\begin{equation}
	\mu_\pm(x,t,z)=F^{-1}(x,t)\mu^0_\pm e^{\frac{i}{4}(z-\frac{1}{z})c_\pm(x,t)\sigma_3},\label{mu0}
\end{equation}
where
\begin{align}
	c_\pm(x,t)=\int_{\pm\infty}^x (q-1)dy.\label{c+-}
\end{align}
Further,  taking $z \rightarrow i$ in (\ref{mu0}) and combining it to (\ref{asymu0}), we get  the asymptotic of $a(z)$ at $z \rightarrow i$:
\begin{align}
	a(z)=e^{\frac{1}{2}\int_{\mathbb{R}}(q-1)dx}\left(1+ \mathcal{O}\left( (z-i)^2\right)\right), \hspace{0.3cm}\text{as }z\to i.
\end{align}

\subsection{A RH problem}

\quad As shown in  \cite{Xurhp},   denote   norming constant    $c_n=b_n/a'(z_n)$.    Then we have  residue conditions as
\begin{align}
	\res_{z=z_n}\left[\frac{[\mu_{-}]^1(z)}{a(z)}\right]=c_ne^{-2k(z_n)p(z_n)}[\mu_{+}]^2(z_n),\label{resrelation1}
\end{align}
For $m=1,...,N_2$, there also have $c_{N_1+m}=b_{N_1+m}/a'(w_m)$ and
\begin{align}
	&\res_{z=w_m}\left[\frac{[\mu_{-}]^1(z)}{a(z)}\right]=c_{N_1+m}e^{-2 k (w_n) p(w_n)}[\mu_{+}]^2(w_m).
\end{align}
The symmetry of $a(z)$ and $\mu(z)$ in Proposition \ref{sym} leads to other norming constant for zeros of $a(z)$.
For brevity, we introduce a new constant $C_n$ as: for  $n=1,...,N_1$, $C_n=c_n$, $C_{n+N_1}=\bar{c}_n$ $C_{n+2N_1}=-\bar{z}_n^{-2}\bar{c}_n$ and $C_{n+3N_1}=-z^{-2}_nc_n$; for  $m=1,...,N_2$, $C_{m+4N_1}=\bar{C}_{m+4N_1+N_2}=c_{m+N_1}$,
and  the collection
$\sigma_d=  \left\lbrace \zeta_n,C_n\right\rbrace^{4N_1+2N_2}_{n=1}  $
is called the \emph{scattering data}.
define  a   sectionally meromorphic matrix
\begin{equation}
	N(z)\triangleq N(z;x,t)=\left\{ \begin{array}{ll}
		\left( \frac{\left[ \mu_-\right] _1}{a(z)}, \left[ \mu_+\right] _2\right),   &\text{as } z\in \mathbb{C}^+,\\[12pt]
		\left( \left[ \mu_+\right] _1,\frac{\left[ \mu_-\right] _2}{\overline{a(\bar{z})}}\right)  , &\text{as }z\in \mathbb{C}^-,\\
	\end{array}\right.
\end{equation}
which solves the following RHP.
\begin{RHP}\label{RHP1}
	 Find a matrix-valued function $	N(z)\triangleq N(z;x,t)$ which satisfies:
	
	$\blacktriangleright$ Analyticity: $N(z)$ is meromorphic in $\mathbb{C}\setminus \mathbb{R}$ and has single poles;
	
	$\blacktriangleright$ Symmetry: $N(z)=\sigma_3\overline{N(-\bar{z})}\sigma_3$=$\sigma_2\overline{N(\bar{z})}\sigma_2$=$F^{-2}\overline{N(-\bar{z}^{-1})}$;
	
	$\blacktriangleright$ Jump condition: $N$ has continuous boundary values $N_\pm(z)$ on $\mathbb{R}$ and
	\begin{equation}
		N_+(z)=N_-(z)\tilde{V}(z),\hspace{0.5cm}z \in \mathbb{R},
	\end{equation}
	where
	\begin{equation}
		\tilde{V}(z)=\left(\begin{array}{cc}
			1+|r(z)|^2 & e^{-kp}\overline{r(z)}\\
			e^{kp}r(z) & 1
		\end{array}\right);
	\end{equation}
	
	$\blacktriangleright$ Asymptotic behaviors:
	\begin{align}
		&N(z) = I+\mathcal{O}(z^{-1}),\hspace{0.5cm}z \rightarrow \infty,\\
		&N(z) =F^{-1} \left[ I+(z-i)\left(\begin{array}{cc}
			0 & -\frac{1}{2}(u+u_x) \\
			-\frac{1}{2}(u-u_x) & 0
		\end{array}\right)\right]e^{\frac{1}{2}c_+\sigma_3} +\mathcal{O}\left( (z-i)^2\right);
	\end{align}
	
	$\blacktriangleright$ Residue conditions: $N(z)$ has simple poles at each point in $ \mathcal{Z}\cup \bar{\mathcal{Z}}$ with:
	\begin{align}
		&\res_{z=\zeta_n}N(z)=\lim_{z\to \zeta_n}N(z)\left(\begin{array}{cc}
			0 & 0\\
			c_ne^{-\sqrt{1-\lambda(\zeta_n)^2}p(\zeta_n)} & 0
		\end{array}\right),\\
		&\res_{z=\bar{\zeta}_n}N(z)=\lim_{z\to \bar{\zeta}_n}N(z)\left(\begin{array}{cc}
			0 &-\bar{c}_ne^{\sqrt{1-\lambda(\bar{\zeta}_n)^2}p(\bar{\zeta}_n)}\\
			 0 & 0
		\end{array}\right).
	\end{align}
\end{RHP}

The solution of  mCH equation (\ref{mch}) is difficult to reconstruct , since  $p(x,t,z)$ is still unknown.  It has been a difficult problem when construct RHP of Camassa-Holm type equation until Boutet de Monvel and Shepelsky give the idea of changing the spatial variable in \cite{CH,CH1} which successfully applied to short-wave-type equations in \cite{RHPsp}.
So following \cite{Xurhp}, to make  the jump matrix become explicit, we introduce a  new   scale
\begin{equation}
	y(x,t)=x-\int_{x}^{+\infty} \left(\sqrt{m(k,t)^2+1}-1\right) dk=x-c_+(x,t).
\end{equation}

The price to pay for this is that the solution of the initial problem can be given only implicitly,
or parametrically: it will be given in terms of functions in the new scale, whereas the original scale will also be given in terms of functions in the new scale.
By the definition of the new scale $y(x, t)$, we define
\begin{equation}
	M(z)=M(z;y,t)\triangleq N(z;x(y,t),t),
\end{equation}
Denote the  phase function
\begin{equation}
	\theta(z)=-\frac{1}{4}(z-\frac{1}{z})\left[\frac{y}{t}-\frac{8}{(z+\frac{1}{z})^2} \right],\label{theta}
\end{equation}
and for convenience we denote $\theta_n=\theta(\zeta_n)$. Then, we can get the RH problem for the new variable $(y,t)$.
\begin{RHP}\label{RHP2}
	Find a matrix-valued function $M(z)=M(z;y,t)$ which satisfies:
	
	$\blacktriangleright$ Analyticity: $M(z)$ is meromorphic in $\mathbb{C}\setminus \mathbb{R}$ and has single poles;
	
	$\blacktriangleright$ Symmetry: $M(z)=\sigma_3\overline{M(-\bar{z})}\sigma_3$=$\sigma_2\overline{M(\bar{z})}\sigma_2$=$F^{-2}\overline{M(-\bar{z}^{-1})}$;
	
	$\blacktriangleright$ Jump condition: $M$ has continuous boundary values $M_\pm$ on $\mathbb{R}$ and
	\begin{equation}
		M_+(z)=M_-(z)V(z),\hspace{0.5cm}z \in \mathbb{R},
	\end{equation}
	where
	\begin{equation}
		V(z)=\left(\begin{array}{cc}
			1+|r(z)|^2 & e^{2it\theta}\overline{r(z)}\\
			e^{-2it\theta}r(z) & 1
		\end{array}\right);\label{jumpv}
	\end{equation}
	
	$\blacktriangleright$ Asymptotic behaviors:
	\begin{align}
		&M(z) = I+\mathcal{O}(z^{-1}),\hspace{0.5cm}z \rightarrow \infty,\\
		&M(z) =F^{-1} \left[ I+(z-i)\left(\begin{array}{cc}
			0 & -\frac{1}{2}(u+u_x) \\
			-\frac{1}{2}(u-u_x) & 0
		\end{array}\right)\right]e^{\frac{1}{2}c_+\sigma_3} +\mathcal{O}\left( (z-i)^2\right);\label{asyMi}
	\end{align}
	
	$\blacktriangleright$ Residue conditions: $M(z)$ has simple poles at each point in $ \mathcal{Z}$ with:
	\begin{align}
		&\res_{z=\zeta_n}M(z)=\lim_{z\to \zeta_n}M(z)\left(\begin{array}{cc}
			0 & 0\\
			c_ne^{-2it\theta_n} & 0
		\end{array}\right),\label{RES1}\\
		&\res_{z=\bar{\zeta}_n}M(z)=\lim_{z\to \bar{\zeta}_n}M(z)\left(\begin{array}{cc}
			0 &-\bar{c}_ne^{2it\theta_n}\\
			0 & 0
		\end{array}\right).\label{RES2}
	\end{align}
\end{RHP}
From the asymptotic behavior of the functions $\mu_\pm$ and (\ref{asyMi}), we arrive at following reconstruction formula of $u(x,t)=u(y(x,t),t)$:
\begin{equation}
	u(x,t)=u(y(x,t),t)=\lim_{z\to i}\frac{1}{z-i}\left(1- \dfrac{(M_{11}(z)+M_{21}(z))(M_{12}(z)+M_{22}(z)) }{(M_{11}(i)+M_{21}(i))(M_{12}(i)+M_{22}(i))}\right) ,\label{recons u}
\end{equation}
where
\begin{equation}
	x(y,t)=y+c_+(x,t)=y-\ln\left( \frac{M_{12}(i)+M_{22}(i)}{M_{11}(i)+M_{21}(i)}\right) .\label{recons x}
\end{equation}

\section{The reflection coefficient}\label{secr}
\quad We only consider the  $x$-part of Lax pair to give the proof of proposition (\ref{pror}) in this section. In fact,  taking account of $t$-part of Lax pair and though the standard direct scattering transform, then it deduce that $r(z)$ have linear time evolution:  $r(z,t)=e^{\frac{i}{4\lambda^2(z)}(z-\frac{1}{z})}r(z,0)$. So we can rewrite the steps as we shown in (Case I: $z=\infty$) at $t=0$.
Recall
\begin{align}
	F(x)=\sqrt{\frac{q+1}{2 q}}\left(\begin{array}{cc}
		1 & \frac{-i m}{q+1} \\
		\frac{-i m}{q+1} & 1
	\end{array}\right),\label{F}
\end{align}
and
\begin{equation}
	p(x)=x-\int_{x}^{\infty}\sqrt{m^2+1}-1dy.
\end{equation}
Making  a transformation
\begin{equation}
	\Phi_\pm(x,z)=F(x)\mu_\pm(x,z) e^{-\frac{i}{4}(z-\frac{1}{z})p(x)\sigma_3}\label{transmu0},
\end{equation}
then  we obtain  a new Lax pair
\begin{align}
	&(\mu_\pm)_x = -\frac{i}{4}(z-\frac{1}{z})p_x[\sigma_3,\mu_\pm]+P\mu_\pm,\label{lax3.1}
\end{align}
where
\begin{align}
	&P(x)=\frac{i m_{x}}{2\left(M^{(2)}+1\right)}\sigma_1+\frac{1}{2 z} \frac{m}{q}\left(\begin{array}{cc}
		-i m & 1 \\
		-1 & i m
	\end{array}\right).
\end{align}
Moreover
\begin{align}
	\mu_\pm \sim I, \hspace{0.5cm} x \rightarrow \pm\infty.
\end{align}
The standard AKNS method starts with the following two Volterra integral equations
\begin{equation}
	\mu_\pm(x,z)=I+\int_{x}^{\pm \infty}e^{-\frac{i}{4}(z-\frac{1}{z})(p(x)-p(y))\hat{\sigma}_3}P(y)\mu_\pm(y,z)dy\label{intmu0}.
\end{equation}

To obtain our result, we need estimates on the $L^2$-integral property of $\mu_\pm(z)$ and their derivatives. However, because of
the factor $1/z$ in the spectral problem (\ref{lax3.1}), we divided our approach into two cases: $|z|>1$ and $|z|<1$. And following  functional analysis results, namely estimates for Volterra-type integral equations (\ref{intmu0}) useful in the analysis of  direct scattering map.
\begin{lemma}\label{lemma1}
	For $f(x)\in L^1(\mathbb{R})$, following inequality hold.
	\begin{align}
		&\sup_{\pm x>0}\left(\int_{1}^{+\infty}|\int_{x}^{\pm \infty}\frac{f(y)}{z}dy|^2dz \right) ^{1/2}\lesssim \parallel f\parallel_1;\\
		&\left(\int_{0}^{\pm \infty}\int_{1}^{+\infty}|\int_{x}^{\pm \infty}\frac{f(y)}{z}dy|^2dzdx \right) ^{1/2}\lesssim \parallel f\parallel_{1,1/2}.
	\end{align}
\end{lemma}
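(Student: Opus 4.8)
The plan is to exploit the fact that the spectral variable $z$ enters the inner integral only through the constant prefactor $1/z$, so it can be pulled out of the $y$-integration. Setting $G_\pm(x)=\int_x^{\pm\infty}f(y)\,dy$, one has $\int_x^{\pm\infty}\frac{f(y)}{z}\,dy=\frac{G_\pm(x)}{z}$, which makes the $z$-dependence completely explicit. The elementary computation $\int_1^{+\infty}z^{-2}\,dz=1$ then collapses both double integrals: the left-hand side of the first inequality equals $\sup_{\pm x>0}|G_\pm(x)|$, and that of the second equals $\big(\int_0^{\pm\infty}|G_\pm(x)|^2\,dx\big)^{1/2}$. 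In this way the statement reduces to two one-variable estimates for the antiderivative $G_\pm$, and no genuine $z$-analysis is required.

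The first reduced estimate is immediate. For every $x$ one has $|G_\pm(x)|\le\int_x^{\pm\infty}|f(y)|\,dy\le\|f\|_1$, and taking the supremum over $\pm x>0$ yields the first inequality with constant $1$.

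The substantive step is the second inequality, which is a Hardy-type bound converting the weighted $L^1$ control of $f$ into $L^2$ control of $G_\pm$. First I would dominate $|G_\pm(x)|\le H_\pm(x):=\int_x^{\pm\infty}|f(y)|\,dy$ and expand the square as a double integral over $[x,\pm\infty)^2$. Integrating in $x$ and interchanging the order of integration (justified by nonnegativity of the integrand), the $x$-integration contributes exactly the factor $\min(|y|,|w|)$, so that
\begin{equation}
\int_0^{\pm\infty}H_\pm(x)^2\,dx=\int_0^{\pm\infty}\!\!\int_0^{\pm\infty}|f(y)|\,|f(w)|\,\min(|y|,|w|)\,dy\,dw.\nonumber
\end{equation}
The elementary inequality $\min(|y|,|w|)\le|y|^{1/2}|w|^{1/2}$ then factorizes the right-hand side as $\big(\int_0^{\pm\infty}|y|^{1/2}|f(y)|\,dy\big)^2\le\|f\|_{1,1/2}^2$, which is precisely the claimed bound. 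The only point needing care is the bookkeeping of the integration limits in the two $\pm$ cases — for the $-$ branch one checks that the $x$-integration produces $|\max(y,w)|=\min(|y|,|w|)$ with $y,w<0$, so the same factor appears — together with the routine justification of Fubini; beyond this the argument is an exact computation, so I do not expect a real obstacle.
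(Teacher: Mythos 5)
Your proof is correct, and in fact it is more than the paper offers: the paper states Lemma \ref{lemma1} and then simply declares ``the proof of above lemmas are trivial,'' omitting any argument, so there is no authorial proof to compare against. Your reduction is the natural one and every step checks out. Since $z$ enters only through the prefactor $1/z$, the inner integral is $G_\pm(x)/z$ with $G_\pm(x)=\int_x^{\pm\infty}f(y)\,dy$, and $\int_1^{+\infty}z^{-2}\,dz=1$ collapses the $z$-integration exactly, reducing the first bound to $\sup_{\pm x>0}|G_\pm(x)|\le\|f\|_1$ and the second to an $L^2$ bound on $G_\pm$ over the half-line. Your Fubini computation giving the kernel $\min(|y|,|w|)$ is right in both branches (for the $-$ branch the $x$-integration over $(\max(y,w),0)$ indeed yields $|\max(y,w)|=\min(|y|,|w|)$ for $y,w<0$), and the factorization $\min(|y|,|w|)\le|y|^{1/2}|w|^{1/2}$ closes the estimate with
\begin{equation}
\int_0^{\pm\infty}H_\pm(x)^2\,dx\le\Bigl(\int_0^{\pm\infty}|y|^{1/2}|f(y)|\,dy\Bigr)^2\le\|f\|_{1,1/2}^2,\nonumber
\end{equation}
which is exactly the claimed Hardy-type inequality. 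The only cosmetic caveat is the paper's slightly ambiguous convention for the $\|\cdot\|_{1,1/2}$ norm; under any of the standard readings (weight $|x|^{1/2}$ alone or $1+|x|^{1/2}$) your final domination still holds, so nothing breaks.
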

\begin{lemma}\label{lemma2}
	$f(k)$ is a function on ($\mathbb{R}$). Denote $\eta=z-\frac{1}{z}$, with $dz=\left( \frac{1}{2}+\frac{\eta}{\sqrt{4+\eta^2}}\right) d\eta$.Let $f(z)=g(z-\frac{1}{z})=g(\eta)$, then
	\begin{align}
		\int_{1}^{+\infty}|f(z)|^2dz=\int_{0}^{+\infty}|g(\eta)|^2\left( \frac{1}{2}+\frac{\eta}{\sqrt{4+\eta^2}}\right)d\eta\lesssim \int_{\mathbb{R}}|g(\eta)|^2d\eta.
	\end{align}
\end{lemma}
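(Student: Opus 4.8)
The plan is to treat this as a one-dimensional change of variables followed by a trivial boundedness estimate on the resulting Jacobian weight. First I would verify that the substitution $\eta = z - 1/z$ is admissible on the domain of integration. Since $\frac{d\eta}{dz} = 1 + z^{-2} > 0$ for every $z>0$, the map $z \mapsto \eta$ is strictly increasing; it sends the endpoints $z=1$ and $z\to +\infty$ to $\eta = 0$ and $\eta \to +\infty$ respectively. Hence $z\mapsto \eta$ is a smooth increasing bijection of $(1,+\infty)$ onto $(0,+\infty)$, so that $g(\eta) = f(z(\eta))$ is single-valued on $(0,+\infty)$ and the substitution in the integral is legitimate.

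Next I would compute the Jacobian explicitly. Solving $z^2 - \eta z - 1 = 0$ and keeping the root with $z>1$ gives $z = \tfrac{1}{2}(\eta + \sqrt{\eta^2+4})$, whence $z + z^{-1} = \sqrt{\eta^2+4}$. From $\frac{d\eta}{dz} = 1 + z^{-2} = (z^2+1)/z^2$ I obtain
\[
w(\eta) := \frac{dz}{d\eta} = \frac{z^2}{z^2+1} = \frac{z}{\,z+z^{-1}\,} = \frac{1}{2} + \frac{\eta}{2\sqrt{4+\eta^2}} .
\]
Substituting into $\int_1^{+\infty}|f(z)|^2\,dz$ and using $|f(z(\eta))| = |g(\eta)|$ then produces the claimed identity
\[
\int_1^{+\infty}|f(z)|^2\,dz = \int_0^{+\infty}|g(\eta)|^2\,w(\eta)\,d\eta .
\]

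Finally I would dispatch the inequality. The weight obeys $0 < w(\eta) \le 1$ for all $\eta \ge 0$, because $\eta/\sqrt{4+\eta^2}\le 1$ (equivalently $\eta \le \sqrt{4+\eta^2}$), so each summand is at most $\tfrac{1}{2}$. Therefore
\[
\int_0^{+\infty}|g(\eta)|^2\,w(\eta)\,d\eta \le \int_0^{+\infty}|g(\eta)|^2\,d\eta \le \int_{\mathbb{R}}|g(\eta)|^2\,d\eta ,
\]
the last step merely enlarging the domain of integration; this gives the asserted $\lesssim$ with implied constant $1$.

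There is no serious obstacle here: the only points demanding care are the strict monotonicity of the substitution, which guarantees both the validity of the change of variables and the single-valuedness of $g$ on the image interval, and the elementary bound on the Jacobian weight. I would double-check the exact constant in $w(\eta)$, since direct differentiation yields the factor $\tfrac{\eta}{2\sqrt{4+\eta^2}}$ in the second term rather than $\tfrac{\eta}{\sqrt{4+\eta^2}}$; this normalization is immaterial to the conclusion, since $w(\eta)$ remains bounded by an absolute constant in either form, and boundedness of the weight is all that the stated inequality requires.
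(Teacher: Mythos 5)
Your proof is correct and takes essentially the same route the paper itself regards as trivial (it offers no written proof of this lemma): the monotone substitution $\eta=z-\frac{1}{z}$, an explicit Jacobian, and the observation that the weight is bounded by an absolute constant. Your side remark is also right: direct differentiation gives $\frac{dz}{d\eta}=\frac{1}{2}+\frac{\eta}{2\sqrt{4+\eta^2}}$, so the factor $\frac{\eta}{\sqrt{4+\eta^2}}$ in the paper's displayed identity is a typo, and, as you note, this is harmless because either weight is bounded (by $1$ or by $\frac{3}{2}$), which is all the final inequality needs.
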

\begin{lemma}\label{lemma3}
	For $\psi(\eta)\in L^2(\mathbb{R})$, $f(x)\in L^{2,1/2}(\mathbb{R})$, following inequality hold.
	\begin{align}
		|\int_{\mathbb{R}}\int_{x}^{\pm \infty}f(y)e^{-\frac{i}{2}\eta(p(x)-p(y))}\psi(\eta)dyd\eta |&=|\int_{x}^{\pm \infty}f(y)\psi(\frac{1}{2}(p(x)-p(y)))dy|\nonumber\\
		&\lesssim \left( \int_{x}^{\pm \infty}|f(y)|^2dy\right)^{1/2}  \parallel \psi\parallel_2;\\
		\int_{0}^{\pm \infty}\int_{\mathbb{R}}|\int_{x}^{\pm \infty}f(y)e^{-\frac{i}{2}\eta(p(x)-p(y))}&dy|^2d\eta dx  \leq \parallel f\parallel_{2,1/2}^2.
	\end{align}
\end{lemma}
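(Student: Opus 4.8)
The plan is to reduce both estimates to Plancherel's theorem after exploiting the structure of the change-of-scale function. At $t=0$ one has $p(x)=x-\int_x^{+\infty}(q-1)\,dy$ with $q=\sqrt{m^2+1}$, so $p'(x)=q(x)\ge 1$; hence $y\mapsto p(y)$ is a strictly increasing bi-Lipschitz bijection of $\mathbb{R}$ whose Jacobian is bounded below by $1$. This single fact, together with the recognition that the $\eta$-integrals are Fourier integrals, drives everything. I treat the $+\infty$ / $x>0$ case; the $-\infty$ case is identical by reflection.

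For the first inequality, I interchange the order of integration (Fubini, legitimate once the integrand is seen to be absolutely integrable on the relevant strip) to write the left side as $\int_x^{+\infty}f(y)\big[\int_{\mathbb{R}}e^{-\frac{i}{2}\eta(p(x)-p(y))}\psi(\eta)\,d\eta\big]\,dy$. The bracket is the Fourier transform of $\psi$ evaluated at $\tfrac12(p(x)-p(y))$, which (by the slight abuse of notation in the statement) is written $\psi(\tfrac12(p(x)-p(y)))$; this gives the claimed identity. Applying Cauchy--Schwarz in $y$ then splits off the factor $(\int_x^{+\infty}|f|^2)^{1/2}$ and leaves $(\int_x^{+\infty}|\psi(\tfrac12(p(x)-p(y)))|^2\,dy)^{1/2}$. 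In the latter I substitute $s=\tfrac12(p(x)-p(y))$, so that $dy=2q(y)^{-1}\,ds\le 2\,ds$, and Plancherel bounds the result by a multiple of $\|\psi\|_2$.

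For the second inequality, I first factor the phase as $e^{-\frac{i}{2}\eta(p(x)-p(y))}=e^{-\frac{i}{2}\eta p(x)}e^{\frac{i}{2}\eta p(y)}$; the $x$-dependent factor has modulus one and drops out, so the inner modulus equals $|\int_x^{+\infty}f(y)e^{\frac{i}{2}\eta p(y)}\,dy|$. For fixed $x$ the substitution $s=p(y)$ turns this into the Fourier transform, in the variable $\eta$, of $g_x(s)=f(p^{-1}(s))q(p^{-1}(s))^{-1}\mathbf{1}_{\{s>p(x)\}}$. Plancherel in $\eta$ therefore gives $\int_{\mathbb{R}}|\cdots|^2\,d\eta\simeq\int_{p(x)}^{+\infty}|f(p^{-1}(s))|^2 q^{-2}\,ds=\int_x^{+\infty}|f(y)|^2 q^{-1}\,dy\le\int_x^{+\infty}|f(y)|^2\,dy$, using $q\ge 1$. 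Integrating in $x$ over $(0,+\infty)$ and swapping the order of integration over the triangle $\{0<x<y\}$ produces $\int_0^{+\infty}y\,|f(y)|^2\,dy$, which is exactly $\||y|^{1/2}f\|_2^2\le\|f\|_{2,1/2}^2$.

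The routine parts are the Fubini interchanges and the two Plancherel applications. \textbf{The main obstacle} is keeping the estimates uniform in $x$: in the second inequality the function $g_x$ depends on $x$ only through its support cutoff, and one must check that the Plancherel constant is independent of $x$ so that the $x$-integration can be performed and the Fubini swap over $\{0<x<y\}$ is valid. It is precisely this swap that converts the linear factor $y$ into the weight matching the exponent $1/2$ (squared to $1$) in $\|f\|_{2,1/2}$, so tracking that the derivative lower bound $p'=q\ge1$ is used in the right places --- to discard the harmless factor $q^{-1}$ rather than incur a growing one --- is the delicate bookkeeping that makes the weighted norm on the right the correct one.
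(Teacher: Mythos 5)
Your proof is correct and takes essentially the same approach as the paper: the paper's stated proof consists of only the change-of-variables step $s=\tfrac{1}{2}(p(x)-p(y))$ using $p'(y)=q=\sqrt{m^2+1}\ge 1$ (it declares the lemma ``trivial'' and omits the rest), and every step you add---the identification of the $\eta$-integral as a Fourier transform, Cauchy--Schwarz in $y$, factoring out the unimodular factor $e^{-\frac{i}{2}\eta p(x)}$, Plancherel in $\eta$, and the Fubini swap over $\{0<x<y\}$ that produces the weight $y$---is exactly the intended completion of that sketch. The one caveat is your Fubini justification (``absolutely integrable on the relevant strip''), which does not literally hold for general $f\in L^{2,1/2}$, $\psi\in L^{2}$; the interchange should first be carried out on a dense class (e.g.\ $\psi\in L^{1}\cap L^{2}$, $f$ compactly supported) and then extended by $L^{2}$-continuity using the very bounds you prove, a standard technicality that the paper glosses over entirely.
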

The proof of above lemmas are trivial. In lemma 3, let $s=\frac{p(x)-p(y)}{2}$ with $ds=\sqrt{m^2+1}dy$, then
\begin{align}
	\left( \int_{x}^{\pm \infty}|\psi(\frac{p(x)-p(y)}{2})|^2dy\right)^{1/2}\leq\left( \int_{\mathbb{R}}\frac{|\psi(s)|^2}{\sqrt{m^2+1}}ds\right)^{1/2}\lesssim \parallel \psi\parallel_2.
\end{align}
And we omit the rest part of prove.

\subsection{Large-z Estimates}
\quad From the symmetry reduction (\ref{symPhi1}), we will only consider $[\mu_\pm]_1(x,z)$ for $z\in(1,+\infty)$. For the sake of brevity, denote
\begin{align}
	[\mu_\pm]_1(x,z)-e_1\triangleq n_\pm(x,z)=\left(\begin{array}{cc}
		n_\pm^1 \\
		n_\pm^2
	\end{array}\right),\label{n}
\end{align}
where $e_1$ is identity vector $(1,0)^T$. And we abbreviate
$C_B^0(\mathbb{R}^\pm\times (1,+\infty))$, $ C^0(\mathbb{R}^\pm,L^2(1,+\infty))$, $ L^2(\mathbb{R}^\pm\times (1,+\infty))$ to $C_B^0$, $ C^0$, $ L^2_{xz}$ respectively. Introduce the integral operator $T_\pm$:
\begin{align}\label{T1}
	T_\pm(f)(x,z)=\int_{x}^{\pm\infty}K_\pm(x,y,z)f(y,z)dy,
\end{align}
where integral kernel $K_\pm(x,y,z)$ is
\begin{align}\label{K}
	&K_\pm(x,y,z)=\left(\begin{array}{cc}
		1 & 0 \\
		0 & e^{-\frac{i}{2}(z-1/z)(p(x)-p(y))}
	\end{array}\right)P\nonumber\\
	&=\frac{i m_{x}}{2\left(M^{(2)}+1\right)}\left(\begin{array}{cc}
		0 & 1 \\
		-e^{-\frac{i}{2}(z-1/z)(p(x)-p(y))} & 0
	\end{array}\right)\nonumber\\
	&+\frac{1}{2 z} \frac{m}{q}\left(\begin{array}{cc}
		-i m & 1 \\
		-e^{-\frac{i}{2}(z-1/z)(p(x)-p(y))} & e^{-\frac{i}{2}(z-1/z)(p(x)-p(y))}i m
	\end{array}\right).
\end{align}
Then (\ref{lax3.1}) trans to
\begin{align}\label{eqn}
	&n_\pm=T_\pm(e_1)+T_\pm(n_\pm).	
\end{align}
To obtain the $z$-derivative property  of $n_\pm$, we take the $z$-derivative of above equation and get
\begin{align}\label{eqnk}
	&[n_\pm]_z=n_\pm^1+T_\pm([n_\pm]_z),\hspace{0.5cm}n_\pm^1=[T_\pm]_z(e_1)+[T_\pm]_z(n_\pm).
\end{align}
$[T_\pm]_z$ is also a integral operator with integral kernel $[K_\pm]_z(x,y,z)$:
\begin{align}
	[K_\pm]_z(x,y,z)=&-\frac{i(p(x)-p(y))}{2}\frac{i m_{x}}{2\left(M^{(2)}+1\right)}\left(\begin{array}{cc}
		0 & 0 \\
		-e^{-\frac{i}{2}(z-1/z)(p(x)-p(y))} & 0
	\end{array}\right)\nonumber\\
	&-\frac{1}{z}\frac{mi}{4q}(p(x)-p(y))e^{-\frac{i}{2}(z-1/z)(p(x)-p(y))}\left(\begin{array}{cc}
		0 & 0 \\
		-1 & i m
	\end{array}\right)\nonumber\\
	&-\frac{1}{z^2}\frac{i(p(x)-p(y))}{2}\frac{i m_{x}}{2\left(M^{(2)}+1\right)}\left(\begin{array}{cc}
		0 & 0 \\
		-e^{-\frac{i}{2}(z-1/z)(p(x)-p(y))} & 0
	\end{array}\right)\nonumber\\
	&-\frac{1}{z^2}\frac{m}{q}\left(\begin{array}{cc}
		-i m & 1 \\
		-e^{-\frac{i}{2}(z-1/z)(p(x)-p(y))} & e^{-\frac{i}{2}(z-1/z)(p(x)-p(y))}i m
	\end{array}\right)\nonumber\\
	&-\frac{1}{z^3}\frac{mi}{4q}(p(x)-p(y))e^{-\frac{i}{2}(z-1/z)(p(x)-p(y))}\left(\begin{array}{cc}
		0 & 0 \\
		-1 & i m
	\end{array}\right).\label{Kz}
\end{align}
\begin{lemma}\label{lemma4}
	$T_\pm$ and $[T_\pm]_z$ are integral operators defined above, then  $T_\pm(e_1)(x,z)\in C_B^0\cap C^0\cap L^2_{xz}$ and  $[T_\pm]_z(e_1)(x,z)\in  C^0\cap L^2_{xz}$.
\end{lemma}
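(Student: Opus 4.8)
The plan is to apply the kernels $K_\pm$ and $[K_\pm]_z$ to the constant vector $e_1=(1,0)^T$, which extracts their first columns, and then to treat the two resulting scalar components separately according to whether they carry the oscillatory factor $e^{-\frac{i}{2}(z-1/z)(p(x)-p(y))}$. Applied to $e_1$, the first component of $T_\pm(e_1)$ is the non-oscillatory integral $\int_x^{\pm\infty}\frac{-im^2}{2zq}\,dy$, while its second component is $\int_x^{\pm\infty}\bigl(\frac{-im_x}{2(m^2+1)}-\frac{m}{2zq}\bigr)e^{-\frac{i}{2}(z-1/z)(p(x)-p(y))}\,dy$. Since $u\in H^{4,2}$ forces $m\in H^{2,2}$ and $q=\sqrt{m^2+1}\ge1$, each amplitude is pointwise dominated by $|m|$, $|m_x|$ or $m^2$, so that $m^2/q\in L^1\cap L^{1,1/2}$ while $m_x/(m^2+1)$ and $m/q$ lie in $L^2\cap L^{2,1/2}$; these are precisely the hypotheses demanded by Lemmas \ref{lemma1} and \ref{lemma3}.

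For the $C_B^0$ bound I would estimate crudely, replacing $|e^{-\frac{i}{2}(z-1/z)(p(x)-p(y))}|$ by $1$ and $1/z$ by $1$ for $z>1$; each component is then dominated by a fixed $L^1$ integral of $m^2/q$, $|m_x|$ or $|m|$, all finite because $H^{2,2}\subset L^1$, and continuity in $(x,z)$ follows from dominated convergence. For the $L^2_{xz}$ membership I would feed the first component into the second inequality of Lemma \ref{lemma1} with $f=m^2/(2q)\in L^{1,1/2}$, and the two oscillatory pieces of the second component into the second inequality of Lemma \ref{lemma3} with $f=m_x/(2(m^2+1))$ and $f=m/(2q)$ in $L^{2,1/2}$, the factor $1/z<1$ being harmless. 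The $C^0(\mathbb{R}^\pm,L^2)$ statement is obtained in the same way from the first inequalities of Lemmas \ref{lemma1} and \ref{lemma3}, which control the supremum over $x$ of the $L^2_z$ norm, with Lemma \ref{lemma2} converting the $\eta$-variable estimate of Lemma \ref{lemma3} back to the $z$-variable; continuity of $x\mapsto T_\pm(e_1)(x,\cdot)$ in $L^2(1,\infty)$ again follows from dominated convergence.

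The same scheme governs $[T_\pm]_z(e_1)$. Its first component receives a contribution only from the fourth matrix in $[K_\pm]_z$, reducing to $\int_x^{\pm\infty}\frac{im^2}{z^2q}\,dy$, which Lemma \ref{lemma1} handles with the extra decay $1/z^2\le1/z$. Its second component is a sum of oscillatory integrals, and the main obstacle is that four of them carry the growing weight $p(x)-p(y)$. I would resolve this with the substitution $s=\frac{1}{2}(p(x)-p(y))$, $ds=q\,dy$ already used for Lemma \ref{lemma3}, under which the weight becomes $p(x)-p(y)=2s$ and the Plancherel identity in $\eta$ turns the $L^2_{xz}$ norm of such a term into $\int_{\mathbb{R}^\pm}\int_x^{\pm\infty}|f(y)|^2\,(p(x)-p(y))^2\,q^{-1}\,dy\,dx$. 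Because $p(x)-p(y)=\int_y^x q\,dr$ grows only linearly, interchanging the order of integration reduces this to a weighted norm of the form $\int|f(y)|^2|y|^3\,dy$, which is finite since $m\in H^{2,2}\subset L^{2,2}$ supplies two powers of polynomial decay, comfortably more than the $3/2$ demanded by the weight. This is exactly the step where the weighted-Sobolev hypothesis is consumed, and since no uniform-in-$z$ bound is claimed for $[T_\pm]_z(e_1)$, only the $C^0$ and $L^2_{xz}$ estimates are needed, so the argument closes.
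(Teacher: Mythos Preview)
Your proposal is correct and follows essentially the same route as the paper: extract the first column of the kernel, separate non-oscillatory from oscillatory pieces, dispatch the former via Lemma~\ref{lemma1} and the latter via the Plancherel-type estimate of Lemma~\ref{lemma3} (passing through the change of variable $\eta=z-1/z$ of Lemma~\ref{lemma2}), and for $[T_\pm]_z(e_1)$ absorb the extra weight $p(x)-p(y)$ by noting it grows at most linearly in $|x-y|$, so that Fubini costs one additional power of $|y|$ and raises the required weight from $L^{2,1/2}$ to $L^{2,3/2}$.

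The only organizational difference is that the paper routes \emph{every} term decorated by a factor $1/z^k$ (including the oscillatory ones in $[K_\pm]_z$ carrying $(p(x)-p(y))$) through Lemma~\ref{lemma1}, leaving in each case a single undecorated oscillatory integral $H_1$, respectively $H_2$, to be handled by Lemma~\ref{lemma3}; you instead feed all oscillatory pieces into Lemma~\ref{lemma3}, observing that $1/z<1$ is harmless. The paper also records the linear growth of the weight explicitly as $|p(x)-p(y)|\le |x-y|+\|m\|_1$, whereas you invoke $p(x)-p(y)=\int_y^x q\,dr$ directly. Both routes are valid and yield the same regularity requirement $m_x\in L^{2,3/2}$, which your hypothesis $m\in H^{2,2}$ covers.
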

\begin{proof}
	$T_\pm(e_1)(x,z)$ is given by
	\begin{align}
		T_\pm(e_1)(x,z)=&\int_{x}^{\pm\infty}\frac{i m_{x}}{2\left(M^{(2)}+1\right)}\left(\begin{array}{cc}
			0  \\
			-e^{-\frac{i}{2}(z-1/z)(p(x)-p(y))}
		\end{array}\right)dy\nonumber\\
		&+\frac{1}{2 z}\int_{x}^{\pm\infty} \frac{m}{q}\left(\begin{array}{cc}
			-i m  \\
			-e^{-\frac{i}{2}(z-1/z)(p(x)-p(y))}
		\end{array}\right)dy.\label{Te1}
	\end{align}
	It   immediately derives to
	\begin{align}
		|T_\pm(e_1)(x,z)|\lesssim \parallel m \parallel_2^2+\parallel m \parallel_1.
	\end{align}
	And from lemma \ref{lemma1}, we only need to estimate the first item of $T_\pm(e_1)(x,z)$. Let $\eta=z-\frac{1}{z}$, with $dk=\left( \frac{1}{2}+\frac{\eta}{\sqrt{4+\eta^2}}\right) d\eta$. Then from lemma \ref{lemma2}, we only need to prove the first integral of (\ref{Te1}): $$H_1(x,\eta)\triangleq\int_{x}^{\pm\infty}\frac{i m_{x}}{2\left(M^{(2)}+1\right)}e^{-\frac{i}{2}\eta(p(x)-p(y))}dy\in C^0(\mathbb{R}^\pm,L^2(\mathbb{R}))\cap L^2(\mathbb{R}^\pm\times \mathbb{R}).$$
	From lemma \ref{lemma3},
	\begin{align}
		\parallel H_1(x,\eta) \parallel_{C^0}\lesssim \parallel m_x\parallel_2,\hspace{0.5cm}
		\parallel H_1(x,\eta) \parallel_{L^2_{xz}}\lesssim \parallel m_x\parallel_{2,1/2}.
	\end{align}
	And  $[T_\pm]_z(e_1)(x,z)$ is  given by
	\begin{align}
		[T_\pm]_z(e_1)(x,z)=&-\int_{x}^{\pm\infty}\frac{i(p(x)-p(y))}{2}\frac{i m_{x}}{2\left(M^{(2)}+1\right)}\left(\begin{array}{cc}
			0 & 0 \\
			-e^{-\frac{i}{2}(z-1/z)(p(x)-p(y))} & 0
		\end{array}\right)dy\nonumber\\
		&-\int_{x}^{\pm\infty}\frac{1}{z}\frac{mi}{4q}(p(x)-p(y))e^{-\frac{i}{2}(z-1/z)(p(x)-p(y))}\left(\begin{array}{cc}
			0 & 0 \\
			-1 & i m
		\end{array}\right)\nonumber\\
		&-\frac{1}{z^2}\frac{i(p(x)-p(y))}{2}\frac{i m_{x}}{2\left(M^{(2)}+1\right)}\left(\begin{array}{cc}
			0 & 0 \\
			-e^{-\frac{i}{2}(z-1/z)(p(x)-p(y))} & 0
		\end{array}\right)\nonumber\\
		&-\frac{1}{z^2}\frac{m}{q}\left(\begin{array}{cc}
			-i m & 1 \\
			-e^{-\frac{i}{2}(z-1/z)(p(x)-p(y))} & e^{-\frac{i}{2}(z-1/z)(p(x)-p(y))}i m
		\end{array}\right)\nonumber\\
		&-\frac{1}{z^3}\frac{mi}{4q}(p(x)-p(y))e^{-\frac{i}{2}(z-1/z)(p(x)-p(y))}\left(\begin{array}{cc}
			0 & 0 \\
			-1 & i m
		\end{array}\right)dy.
	\end{align}
	Similarly from  lemma \ref{lemma1}, we only need to evaluate the first integral. Denote $\eta=z-\frac{1}{z}$ and
	$$H_2(x,\eta)=\int_{x}^{\pm\infty}\frac{i}{2}(p(x)-p(y))\frac{i m_{x}}{2\left(M^{(2)}+1\right)}e^{-\frac{i}{2}\eta(p(x)-p(y))}dy.$$ Note that $|p(x)-p(y)|\leq |x-y|+\parallel m\parallel_{1}.$ Then it can be bound  by analogy with $H_1$
	\begin{align}
		&\parallel H_2(x,\eta) \parallel_{C^0}\lesssim \parallel m_x\parallel_{2,1}+\parallel m_x\parallel_{2}\parallel m\parallel_{1},\\
		&\parallel H_2(x,\eta) \parallel_{L^2_{xz}}\lesssim \parallel m_x\parallel_{2,3/2}+\parallel m_x\parallel_{2,1/2}\parallel m\parallel_{1}.
	\end{align}
\end{proof}

The operator $T_\pm$,  and $[T_\pm]_k$ induce linear mappings, which proposition are given in next lemma.
\begin{lemma}
	The integral operator $T_\pm$ maps $C_B^0\cap C^0\cap L^2_{xz}$ to itself while its $z$-derivative $[T_\pm]_k$ is a integral operator on $ C^0\cap L^2_{xz}$. Moreover, $(I-T^\pm)^{-1} $exists as a bounded	operator on  $C_B^0\cap C^0\cap L^2_{xz}$.
\end{lemma}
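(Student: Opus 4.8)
\emph{Proof plan.} The plan is to treat the three component norms separately, establish boundedness of $T_\pm$ on each, identify why the $z$-derivative sheds the $C_B^0$ estimate, and finally invert $I-T_\pm$ through the Volterra structure rather than a naive Neumann series. Throughout I use the symmetry reduction (\ref{symPhi1}) to restrict to the column $[\mu_\pm]_1$ and to $z\in(1,+\infty)$, so that $|z|>1$ and $q=\sqrt{m^2+1}\geq 1$; both entries of the kernel $K_\pm$ in (\ref{K}) are then dominated pointwise by $|m_x(y)|+|m(y)|$ once the oscillatory exponentials are estimated by $1$, and $\kappa:=|m_x|+|m|\in L^1(\mathbb{R})$ since $m\in H^{2,2}(\mathbb{R})$.

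First I would prove $T_\pm\colon X\to X$ is bounded, where $X=C_B^0\cap C^0\cap L^2_{xz}$. For $f\in C_B^0$ one pulls $\|f\|_{C_B^0}$ out of the integral and bounds $\int_x^{\pm\infty}\kappa(y)\,dy\lesssim \|m_x\|_1+\|m\|_1$, which gives the $C_B^0$ estimate uniformly in $x,z$. For the $C^0$ and $L^2_{xz}$ norms the argument is a verbatim repetition of the treatment of $T_\pm(e_1)$ in Lemma \ref{lemma4}, with the constant density $e_1$ replaced by $f$: the non-oscillatory $1/z$ contributions are controlled by Lemma \ref{lemma1}, while after the substitution $\eta=z-\tfrac1z$ (Lemma \ref{lemma2}) the oscillatory contributions are handled by Lemma \ref{lemma3} exactly as the term $H_1$ was, the density $\frac{m_x}{m^2+1}f$ being in $L^{2,1/2}$ because $f$ is bounded. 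This yields $\|T_\pm f\|_X\lesssim\bigl(\|m\|_{1,1/2}+\|m_x\|_{2,1/2}\bigr)\|f\|_X$, and, since the densities need only be bounded, it shows in particular the smoothing property $T_\pm\colon C_B^0\to C^0\cap L^2_{xz}$ (this is why $T_\pm(e_1)\in C^0\cap L^2_{xz}$ in Lemma \ref{lemma4}).

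Next, for $[T_\pm]_z$ with kernel (\ref{Kz}), the only new feature is the factor $p(x)-p(y)$, which I bound by $|p(x)-p(y)|\leq|x-y|+\|m\|_1$. The weight $|x-y|$ costs one extra power, so the estimates close on $C^0\cap L^2_{xz}$ using the higher weighted norms $\|m_x\|_{2,1}$ and $\|m_x\|_{2,3/2}$ (finite because $m\in H^{2,2}$), precisely as in the bound for $H_2$ in Lemma \ref{lemma4}. That same factor, however, forbids a uniform bound in $x$: $\sup_x\int_x^{\pm\infty}|x-y|\kappa(y)\,dy$ grows linearly in $|x|$, so $[T_\pm]_z$ is bounded only on $C^0\cap L^2_{xz}$ and not on $C_B^0$, consistent with $[T_\pm]_z(e_1)\in C^0\cap L^2_{xz}$ in Lemma \ref{lemma4}.

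The decisive step is the invertibility of $I-T_\pm$. Because $\|m\|_1,\|m_x\|_1$ need not be small, $\|T_\pm\|_{X\to X}$ can exceed $1$ and a geometric Neumann series is unavailable; the remedy is the Volterra character of $T_\pm$. Iterating the operator produces an ordered integral over the simplex $\{x\le y_1\le\cdots\le y_n\}$ (reversed in the $+$ case), and estimating every oscillatory exponential by $1$ dominates the $n$-fold kernel by $\kappa(y_1)\cdots\kappa(y_n)$; writing $G(x)=\int_x^{\pm\infty}\kappa$ the simplex volume gives the factorial bound $\|T_\pm^n f\|_{C_B^0}\le \tfrac{1}{n!}\|\kappa\|_1^{\,n}\|f\|_{C_B^0}$, so $(I-T_\pm)^{-1}=\sum_{n\ge0}T_\pm^n$ converges in operator norm on $C_B^0$. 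To upgrade this to $X$ I invoke the smoothing property $T_\pm\colon C_B^0\to C^0\cap L^2_{xz}$ from the first step: given $f\in X$, set $n:=(I-T_\pm)^{-1}f\in C_B^0$; the identity $n=f+T_\pm n$ forces $T_\pm n\in C^0\cap L^2_{xz}$, and since $f\in C^0\cap L^2_{xz}$ we conclude $n\in X$ with $\|n\|_X\lesssim\|f\|_X$. I expect this invertibility-and-bootstrap step to be the main obstacle, since the clean factorial estimate lives in $C_B^0$ while membership in $C^0\cap L^2_{xz}$ must be recovered afterward, and one must verify that bounding the oscillations by $1$ (which discards the cancellation exploited in Lemma \ref{lemma3}) is harmless because the surviving $C^0\cap L^2_{xz}$ control is supplied by the single final application of the bounded operator $T_\pm$.
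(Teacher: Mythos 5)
Your plan diverges from the paper at the invertibility step, and the divergence introduces a genuine gap: the ``smoothing property'' $T_\pm\colon C_B^0\to C^0\cap L^2_{xz}$ on which your bootstrap rests is false. The reason $T_\pm(e_1)\in C^0\cap L^2_{xz}$ in Lemma \ref{lemma4} is not that the density is merely bounded; it is that $e_1$ is \emph{independent of $z$}, so the oscillatory part of the kernel produces a genuine Fourier transform in the variable $p(y)$, to which the Plancherel-type Lemma \ref{lemma3} applies, while the non-oscillatory part carries an explicit factor $1/z$ and is handled by Lemma \ref{lemma1}. For a $z$-dependent bounded density both mechanisms can be destroyed. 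Take $f(y,z)=\bigl(e^{-\frac{i}{2}(z-\frac{1}{z})p(y)},0\bigr)^T$, which lies in $C_B^0$ since $|f_1|\equiv 1$. Inserting this into the kernel (\ref{K}), the oscillation cancels identically in the $m_x$-part of the second component, and since $\frac{m_x}{m^2+1}=\partial_y\arctan m$ and $m\to0$ at $\pm\infty$,
\begin{equation*}
\left[T_\pm(f)\right]_2(x,z)=\tfrac{i}{2}\,e^{-\frac{i}{2}(z-\frac{1}{z})p(x)}\arctan m(x)+\mathcal{O}(|z|^{-1}),
\end{equation*}
whose modulus stays near $\tfrac12|\arctan m(x)|\neq 0$ as $z\to\infty$ whenever $m(x)\neq0$. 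Hence $T_\pm(f)\notin C^0(\mathbb{R}^\pm,L^2(1,+\infty))$ and $T_\pm(f)\notin L^2_{xz}$. Consequently the identity $n=f+T_\pm n$ yields no upgrade of $z$-decay, your bootstrap collapses, and boundedness of $(I-T_\pm)^{-1}$ on $C^0\cap L^2_{xz}$ is left unproved. (The same misreading appears in your first step, where you call the $C^0$ and $L^2_{xz}$ bounds for $T_\pm f$ a ``verbatim repetition'' of the treatment of $T_\pm(e_1)$; Lemma \ref{lemma3} is unavailable for $z$-dependent densities. That conclusion nevertheless survives, for the simpler reason below.)

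The paper never uses smoothing: it runs the Volterra argument separately in each of the three norms, using only the modulus bound $|K_\pm(x,y,z)|=|P(y,z)|$, i.e.\ estimating every oscillation by $1$ from the outset. This gives boundedness of $T_\pm$ on $C_B^0$ by the sup bound; on $C^0(\mathbb{R}^\pm,L^2(1,+\infty))$ by Minkowski's integral inequality (once the density itself has $L^2_z$ decay, no cancellation is needed to preserve it); and on $L^2_{xz}$ by a Schur/Hilbert--Schmidt-type estimate, which is exactly where the paper's weighted quantity $\int_{\mathbb{R}}|yP^2(y,1)|dy$ enters. The iterated kernels are bounded over the ordered simplex by $\frac{1}{(n-1)!}\bigl(\int_x^{\pm\infty}|P|\bigr)^{n-1}|P(y)|$, and because this is a pure modulus estimate the factorial gain is inherited by the operator norm in each of the three spaces; thus $(I-T_\pm)^{-1}=\sum_{n\geq0}T_\pm^n$ converges in $\mathcal{B}(C_B^0)$, $\mathcal{B}(C^0)$ and $\mathcal{B}(L^2_{xz})$ separately, hence is bounded on the intersection. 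If you wish to keep your architecture, the repair is exactly this observation: the simplex/factorial estimate runs verbatim in every norm in which $T_\pm$ is bounded, so no bootstrap is needed; the cancellation machinery of Lemmas \ref{lemma1}--\ref{lemma3} is required only for the inhomogeneous term $T_\pm(e_1)$, whose density lacks $z$-decay but is $z$-independent.
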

\begin{proof}
	In fact,  (\ref{K}) leads to
	\begin{align}
		|K_\pm(x,y,z)|=|P(y,z)|.
	\end{align}
	Accordingly, for any $f(x,z) \in C_B^0(\mathbb{R}^\pm\times (1,+\infty))$,
	\begin{align}
		|T_\pm(f)(x,z)|\leq \int_{x}^{\pm\infty}|P(y,z)|dy \parallel f \parallel_{C_B^0}.
	\end{align}
	Denote $K^n_\pm$ is the integral kernel of Volterra operator $[T_\pm]^n$ as
	\begin{align}
		K^n_\pm(x,y,z)=\int_{x}^{y}\int_{y_1}^{y}...\int_{y_{n-2}}^{z}K_\pm(x,y_1,z)K_\pm(y_1,y_2,z)...K_\pm(y_{n-1},y,z)dy_{n-1}...dy_1,
	\end{align}
	with
	\begin{align}
		|K^n_\pm(x,y,z)|\leq \frac{1}{(n-1)!}\left( \int_{x}^{\pm\infty}|P(y,z)|dy \right) ^{n-1}|P(y,z)|.
	\end{align}
	Then  the standard Volterra theory gives the following operator norm:
	\begin{align}
		\parallel (I-T_\pm)^{-1} \parallel_{\mathcal{B}(C_B^0)}\leq e^{\int_{0}^{\pm\infty}|P(y,1)|dy}.
	\end{align}
	Analogously, $T_\pm$ is a bounded 	operator on $C^0$ with
	\begin{align}
		\parallel (I-T_\pm)^{-1} \parallel_{\mathcal{B}(C^0)}\leq e^{\int_{\mathbb{R}}|P(y,1)|dy}.
	\end{align}
	By some minute  modifications,  there is similar boundedness result for $T_\pm$ on $L^2_{xz}$ with
	\begin{align}
		&\parallel T_\pm \parallel_{\mathcal{B}(L^2_{xz})}\leq \int_{\mathbb{R}}|yP^2(y,1)|dy;\\
		&\parallel (I-T_\pm)^{-1} \parallel_{\mathcal{B}(L^2_{xz})}\leq e^{\int_{\mathbb{R}}|P(y,1)|dy}\int_{\mathbb{R}}|yP^2(y,1)|dy.
	\end{align}
\end{proof}
Then from above lemma, $$\parallel [T_\pm]_z(n_\pm)\parallel_{C^0}\leq\parallel [T_\pm]_z\parallel_{\mathcal{B}(C^0)}\parallel n_\pm \parallel_{L^2_{xz}}, \parallel [T_\pm]_z(n_\pm)\parallel_{L^2_{xz}}\leq\parallel [T_\pm]_z\parallel_{\mathcal{B}(L^2_{xz})}\parallel n_\pm \parallel_{C_B^0},$$
which implies $n_\pm^1\in C^0\cap L^2_{xz}$.
Since the operator $(I-T_\pm)^{-1}$
exist, the equations (\ref{eqn})-(\ref{eqnk})  are  solvable with:
\begin{align}
	&n_\pm(x,z)=(I-T_\pm)^{-1}(T_\pm(e_1))(x,z),\\
	&n_z^\pm(x,z)=(I-T_\pm)^{-1}([T_\pm]_z(e_1)+T_\pm([n_\pm]_z))(x,z).
\end{align}
Combining above Lemmas and the definition  of $n_\pm$ (\ref{n}), we immediately obtain the following property of $\mu_\pm(x,z)$.
\begin{Proposition}\label{mu1}
	Suppose that $u\in H^{4,2}(\mathbb{R})$, then $\mu_\pm(0,z)-I$ belongs in $C_B^0(1,+\infty)\cap L^2(1,+\infty)$, while its $z$-derivative $[\mu_\pm(0,z)]_z$ is in $ L^2(1,+\infty)$.
\end{Proposition}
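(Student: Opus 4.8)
The plan is to read the conclusion off the resolvent representations already established, feed in the function-space estimates of Lemma \ref{lemma4} together with the boundedness of $(I-T_\pm)^{-1}$, and then restrict the resulting two-variable estimates to the single slice $x=0$. First I would record that, since $(I-T_\pm)^{-1}$ is bounded on $C_B^0\cap C^0\cap L^2_{xz}$ and $T_\pm(e_1)\in C_B^0\cap C^0\cap L^2_{xz}$ by Lemma \ref{lemma4}, the representation $n_\pm=(I-T_\pm)^{-1}(T_\pm(e_1))$ places $n_\pm$ in $C_B^0\cap C^0\cap L^2_{xz}$. For the derivative I would use that $[n_\pm]_z$ solves $(I-T_\pm)[n_\pm]_z=n_\pm^1$ with inhomogeneous term $n_\pm^1=[T_\pm]_z(e_1)+[T_\pm]_z(n_\pm)$: the first summand lies in $C^0\cap L^2_{xz}$ by Lemma \ref{lemma4}, while the second is controlled by the operator-norm bounds for $[T_\pm]_z$ on $C^0$ and on $L^2_{xz}$ applied to $n_\pm$, which already sits in the relevant spaces. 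Applying $(I-T_\pm)^{-1}$ then yields $[n_\pm]_z\in C^0\cap L^2_{xz}$.

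Second, I would pass to the slice $x=0$. Because $C^0=C^0(\mathbb{R}^\pm,L^2(1,+\infty))$ consists of continuous $L^2(1,+\infty)$-valued functions of $x$, evaluation at the endpoint $x=0$ is a well-defined and continuous map into $L^2(1,+\infty)$; likewise the evaluation at $x=0$ of a $C_B^0(\mathbb{R}^\pm\times(1,+\infty))$ function is a bounded continuous function of $z$, i.e.\ an element of $C_B^0(1,+\infty)$. Hence $n_\pm(0,z)\in C_B^0(1,+\infty)\cap L^2(1,+\infty)$ and $[n_\pm]_z(0,z)\in L^2(1,+\infty)$. Here I would stress \emph{why} the derivative is only claimed in $L^2$ and not in $C_B^0$: the extra factor $p(x)-p(y)$ appearing in the kernel $[K_\pm]_z$ grows like $|x-y|$, so Lemma \ref{lemma4} only secures $[T_\pm]_z(e_1)\in C^0\cap L^2_{xz}$ and no uniform-in-$x$ bound survives.

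Finally, to upgrade the first-column statement $n_\pm=[\mu_\pm]_1-e_1$ into a statement about the whole matrix $\mu_\pm(0,z)-I$, I would invoke the first symmetry reduction of Proposition \ref{sym}. For real $z$ the relation $\mu_\pm(z)=\sigma_2\overline{\mu_\pm(\bar z)}\sigma_2$ becomes a pointwise identity expressing the entries of the second column as complex conjugates (up to sign) of the entries of the first column at the \emph{same} point $z$; since complex conjugation is an isometry of both $C_B^0(1,+\infty)$ and $L^2(1,+\infty)$, the estimates transfer verbatim to $[\mu_\pm]_2$, and therefore to all of $\mu_\pm(0,z)-I$. The only genuinely nontrivial bookkeeping, which I expect to be the main (and quite mild) obstacle, is justifying that $z$-differentiation commutes with both the endpoint evaluation at $x=0$ and the symmetry relation; this is routine once the resolvent representation for $n_\pm$ is known to be differentiable in $z$ with derivative given by the resolvent applied to $n_\pm^1$, so no further estimate beyond those of Lemma \ref{lemma4} and the boundedness of $(I-T_\pm)^{-1}$ is needed.
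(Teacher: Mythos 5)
Your proposal follows essentially the same route as the paper: the Volterra resolvent representation $n_\pm=(I-T_\pm)^{-1}(T_\pm(e_1))$ combined with Lemma \ref{lemma4}, the cross mapping bounds for $[T_\pm]_z$, and the invertibility of $I-T_\pm$, then restriction to the slice $x=0$ and the symmetry reduction (\ref{symPhi1}) to pass from the first column to the whole matrix. The paper compresses the endpoint evaluation and the symmetry transfer into ``we immediately obtain,'' so your additional bookkeeping there elaborates, rather than departs from, its argument.
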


\subsection{Small-z Estimates}
\quad  Analogously, we will only consider $n_\pm(x,z)$ for $z\in(0,1)$. Consider the change of variable: $z\in(0,1)\to \gamma=\frac{1}{z}\in(1,+\infty)$. From proposition \ref{sym},  we have
\begin{align}
	|[\mu_\pm]_1(x,z)|=|[\sigma_2F(x)^2\mu_\pm(x,\gamma)\sigma_2]_1|\lesssim|[\mu_\pm]_1(x,\gamma)|.\label{symu}
\end{align}
Then the boundedness  of $n_\pm(x,z)$ of $z\in(0,1)$ follows immediately.
Simple calculation gives that for $f(\gamma)\triangleq g(\frac{1}{z})$,
\begin{align}
	\int_{0}^1|g(\frac{1}{z})|^2dz=\int_{1}^{+\infty}\frac{|f(\gamma)|^2}{\gamma^2}d\gamma\leq\int_{1}^{+\infty}|f(\gamma)|^2d\gamma.
\end{align}
So $n_\pm(x,z)\in C^0(\mathbb{R}^\pm,L^2(0,1))\cap L^2(\mathbb{R}^\pm\times(0,1))$ is equivalent to $\mu_\pm(x,\gamma)$ in $ C^0(\mathbb{R}^\pm,L^2(1,+\infty)_\gamma)\cap L^2(\mathbb{R}^\pm\times(1,+\infty)_\gamma)$ which obtained from proposition  \ref{mu1}. However,
\begin{align}
	\int_{0}^1|g'(\frac{1}{z})|^2dz=\int_{1}^{+\infty}|f'(\gamma)|^2\gamma^2d\gamma.
\end{align}
Hence, our goal is to show that $[n_\pm]_z(x,\gamma)$ in $ C^0(\mathbb{R}^\pm,L^{2,1}(1,+\infty)_\gamma)\cap L^{2,1}(\mathbb{R}^\pm\times(1,+\infty)_\gamma)$. From (\ref{eqnk}), it is sufficiently  to show $\gamma[T_\pm]_\gamma(e_1)$ and $\gamma[T_\pm]_\gamma(n_\pm)$ in $ C^0(\mathbb{R}^\pm,L^{2}(1,+\infty)_\gamma)\cap L^{2}(\mathbb{R}^\pm\times(1,+\infty)_\gamma)$. For any bounded continuous function $f(x,\gamma)$ on $C_B^0(\mathbb{R}^\pm\times(1,+\infty)_\gamma)$,
\begin{align}
	|\gamma[T_\pm]_\gamma(f)(x,\gamma)|\leq |\int_{x}^{\pm\infty} \gamma[K_\pm]_\gamma(1,1)^T dy|\parallel f\parallel_{C_B^0(\mathbb{R}^\pm\times(1,+\infty)_\gamma)}.
\end{align}
Consequently,  we just need to prove the $L^2$-integrability of $|\int_{x}^{\pm\infty} \gamma[K_\pm]_\gamma dy|$ in following lemma.
\begin{lemma}
	Suppose that $u\in H^{4,2}(\mathbb{R})$,  then $\int_{x}^{\pm\infty} \gamma[K_\pm]_\gamma(1,1)^T dy$ belongs to $C^0(\mathbb{R}^\pm,L^{2}(1,+\infty)_\gamma)\cap L^{2}(\mathbb{R}^\pm\times(1,+\infty)_\gamma)$.
\end{lemma}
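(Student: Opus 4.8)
The plan is to push the whole weighted estimate through the resolvent identity (\ref{eqnk}). Since $(I-T_\pm)^{-1}$ is bounded on each of $C_B^0$, $C^0$ and $L^2_{xz}$ by the operator-norm lemma above, and since multiplication by the parameter $\gamma=1/z$ commutes with $T_\pm$, applying $\gamma\partial_\gamma$ to $n_\pm=T_\pm(e_1)+T_\pm(n_\pm)$ gives $(I-T_\pm)\big(\gamma[n_\pm]_\gamma\big)=\gamma[T_\pm]_\gamma(e_1+n_\pm)$; thus it suffices to control the source $\gamma[T_\pm]_\gamma([\mu_\pm]_1)$. Because $\|n_\pm\|_{C_B^0}<\infty$ on $(0,1)$ (from (\ref{symu}) and Proposition \ref{mu1}), the scalar majorant $\int_x^{\pm\infty}\big|\gamma[K_\pm]_\gamma(1,1)^T\big|\,dy$ dominates the oscillatory contributions. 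To compute $\gamma[K_\pm]_\gamma$ I would substitute $z=1/\gamma$ into (\ref{Kz}); by the chain rule $\gamma[K_\pm]_\gamma=-\gamma^{-1}[K_\pm]_z\big|_{z=1/\gamma}$, so the powers $z^{0},z^{-1},z^{-2},z^{-3}$ of (\ref{Kz}) become $\gamma^{-1},\gamma^{0},\gamma^{1},\gamma^{2}$ and the exponential becomes $\tilde E=e^{\frac i2(\gamma-\frac1\gamma)(p(x)-p(y))}$. I would then split $\gamma[K_\pm]_\gamma(1,1)^T$ into its non-oscillatory entries (free of $\tilde E$, with coefficients built from $m,m_x,q^{-1}$) and its oscillatory entries (carrying $\tilde E$, each with a power of $\gamma$ and, for all but one term, a factor $p(x)-p(y)$).

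The oscillatory part carries the weights $\gamma$ and $\gamma^{2}$ and is the bulk of the work. Following Lemma \ref{lemma2} I would set $\eta=\gamma-\frac1\gamma$, for which $d\gamma=(1+\gamma^{-2})^{-1}d\eta\le d\eta$, so that $\|\cdot\|_{L^2(1,+\infty)_\gamma}$ is majorized by $\|\cdot\|_{L^2(\mathbb{R})_\eta}$ and each $\gamma^{k}$ is replaced by $\eta^{k}$ up to bounded factors. Every oscillatory term then takes the shape $\int_x^{\pm\infty} f(y)\,\eta^{k}\,(p(x)-p(y))^{j}\,e^{\frac i2\eta(p(x)-p(y))}dy$ with $f$ a product of $m,m_x,q^{-1}$ and $k\le 2$. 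Writing $s=p(y)$ (so $ds=q\,dy$) these are Fourier integrals in $\eta$, and multiplication by $\eta$ corresponds under Plancherel to one $s$-derivative of the profile; the weights $\eta,\eta^{2}$ are thus traded for one and two $y$-derivatives of $f$, which is exactly what $u\in H^{4,2}(\mathbb{R})$ affords through $m=u-u_{xx}\in H^{2,2}(\mathbb{R})$. The prefactors are absorbed via $|p(x)-p(y)|\le|x-y|+\|m\|_1$ into the weighted norms $\|m_x\|_{2,s}$, $s\le2$, and Lemma \ref{lemma3} then delivers both the $C^0(\mathbb{R}^\pm,L^2(1,+\infty)_\gamma)$ bound (its supremum-in-$x$ estimate) and the $L^2(\mathbb{R}^\pm\times(1,+\infty)_\gamma)$ bound (its space-time estimate).

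For the non-oscillatory entries the scalar majorant is too lossy, since after the substitution a term such as $\tfrac{\gamma}{2}\tfrac{m}{q}(1-im)$ produces $\gamma\int_x^{\pm\infty}(\text{product of }m,q^{-1})\,dy$, which does not by itself lie in $L^2(1,+\infty)_\gamma$. Here I would keep the full vector structure of $[K_\pm]_\gamma[\mu_\pm]_1$ rather than the $(1,1)^T$ bound, and use the symmetry (\ref{symu}) together with $\det\mu_\pm\equiv1$ and the degeneration of the Jost data at $z=0$ (Case II, where $a(0)=1$) to extract the $\gamma$-decay absent from the individual entries. Combining the two parts controls the source term in $C^0(\mathbb{R}^\pm,L^2(1,+\infty)_\gamma)\cap L^2(\mathbb{R}^\pm\times(1,+\infty)_\gamma)$, and feeding this through the resolvent identity yields $\gamma[n_\pm]_\gamma$ in the same space, equivalently $[n_\pm]_z\in L^2(0,1)$; together with the large-$z$ bound of Proposition \ref{mu1} this gives the $H^{1,1}$-control needed for Proposition \ref{pror}. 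The main obstacle is precisely the tension between the two estimates: the top-order oscillatory term forces the Plancherel trade against two weighted derivatives of $m$, making the argument tight at the regularity $u\in H^{4,2}$, while the slowly-decaying non-oscillatory term is the delicate point that cannot be closed by the crude majorant and must instead rely on the structural cancellation supplied by the symmetries and the behavior of $\mu_\pm$ near $z=0$.
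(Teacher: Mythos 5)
Your reduction through the resolvent identity and most of your toolbox (the substitution $\eta=\gamma-1/\gamma$, Plancherel/duality, trading powers of $\eta$ for $y$-derivatives of the profile, the bound $|p(x)-p(y)|\le |x-y|+\parallel m\parallel_1$) do match the paper's, but the central object is misidentified, and this is fatal. In the paper, $[K_\pm]_\gamma$ is \emph{not} the chain-rule pullback of the small-$z$ kernel: it is the derivative $\partial_\gamma K_\pm(x,y,\gamma)$ of the kernel of the \emph{large-parameter} integral equation, i.e.\ formula (\ref{Kz}) with $z$ replaced by $\gamma\in(1,+\infty)$. The reason is that the symmetry (\ref{symu}) is applied to the Jost functions \emph{before} differentiating: $\mu_\pm(x,z)$ for $z\in(0,1)$ equals a bounded, $z$-independent matrix times $\mu_\pm(x,\gamma)$, so $\int_0^1|[\mu_\pm]_z|^2\,dz$ is comparable to $\int_1^{+\infty}|\gamma\,[\mu_\pm]_\gamma(x,\gamma)|^2\,d\gamma$, and the quantity to estimate is $\gamma\partial_\gamma$ of the large-$\gamma$ Jost function, governed by $\gamma\,\partial_\gamma K_\pm(x,y,\gamma)$. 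There the powers $z^0,z^{-1},z^{-2},z^{-3}$ of (\ref{Kz}) become $\gamma,\,1,\,\gamma^{-1},\,\gamma^{-2}$: every non-oscillatory entry carries a factor $1/\gamma$ and is disposed of by Lemma \ref{lemma1}, and the single growing term is oscillatory with coefficient $m_x\,(p(x)-p(y))$, tamed by writing $\gamma=(\gamma-1/\gamma)+1/\gamma$ and integrating by parts \emph{once} in $y$ (the paper's $H_3=H_{31}+\tfrac1\gamma H_{32}$), after which Lemma \ref{lemma3}-type duality closes the estimate with the $m_{xx}$, $m_x$, $m_x^2m$ profiles that $u\in H^{4,2}$ provides.

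Your version, $\gamma[K_\pm]_\gamma=-\gamma^{-1}[K_\pm]_z\big|_{z=1/\gamma}$, is a tautological change of variables that gains nothing --- as you in effect note, the resulting requirement is \emph{equivalent} to $[n_\pm]_z\in L^2(0,1)$ --- and it retains the full $z\to0$ singularity: powers up to $\gamma^2$, and, worse, the non-oscillatory term coming from $-z^{-2}\frac{m}{q}(\cdots)$ in (\ref{Kz}) becomes $+\gamma\,\frac{m}{q}(1-im)$ in the first component of $\gamma[K_\pm]_\gamma(1,1)^T$, whose $y$-integral is $\gamma\cdot c(x)$ with $c(x)\not\equiv0$, hence never in $L^2(1,+\infty)_\gamma$. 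Under your reading the lemma's statement is simply false, which shows that reading cannot be the intended one. You correctly recognize this obstruction, but your rescue --- keeping the vector structure and invoking $\det\mu_\pm\equiv1$, the symmetry, and $a(0)=1$ to ``extract $\gamma$-decay'' --- is not an argument: no mechanism is given, and the cancellation you hope for does not live in the kernel at all. It lives in the solution, and it is precisely the statement that $\partial_\gamma\mu_\pm(x,\gamma)$ decays as $\gamma\to\infty$, which is what the paper's formulation encodes. The correct move is therefore to transfer to the large-parameter problem first, as the paper does; then no hidden cancellation is needed, and one integration by parts suffices rather than the two your $\gamma^2$ term would demand.
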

\begin{proof}
	Recall (\ref{Kz}) and have
	\begin{align}
		\gamma[K_\pm]_\gamma(x,y,\gamma)=&-\gamma\frac{i}{2}(p(x)-p(y))\frac{i m_{x}}{2\left(M^{(2)}+1\right)}\left(\begin{array}{cc}
			0 & 0 \\
			-e^{-\frac{i}{2}(\gamma-1/\gamma)(p(x)-p(y))} & 0
		\end{array}\right)\nonumber\\
		&-\frac{mi}{4q}(p(x)-p(y))e^{-\frac{i}{2}(\gamma-1/\gamma)(p(x)-p(y))}\left(\begin{array}{cc}
			0 & 0 \\
			-1 & i m
		\end{array}\right)\nonumber\\
		&-\frac{1}{\gamma}K^0_\pm(x,y,\gamma),
	\end{align}
	where
	\begin{align}
		K^0_\pm(x,y,\gamma)=
		&-\frac{i(p(x)-p(y))}{2}\frac{i m_{x}}{2\left(M^{(2)}+1\right)}\left(\begin{array}{cc}
			0 & 0 \\
			-e^{-\frac{i}{2}(\gamma-1/\gamma)(p(x)-p(y))} & 0
		\end{array}\right)\nonumber\\
		&-\frac{m}{q}\left(\begin{array}{cc}
			-i m & 1 \\
			-e^{-\frac{i}{2}(\gamma-1/\gamma)(p(x)-p(y))} & e^{-\frac{i}{2}(\gamma-1/\gamma)(p(x)-p(y))}i m
		\end{array}\right)\nonumber\\
		&-\frac{1}{\gamma}\frac{mi}{4q}(p(x)-p(y))e^{-\frac{i}{2}(\gamma-1/\gamma)(p(x)-p(y))}\left(\begin{array}{cc}
			0 & 0 \\
			-1 & i m
		\end{array}\right).
	\end{align}
	Then from lemma \ref{lemma1}, we have $\int_{x}^{\pm\infty}\frac{1}{\gamma} K^0_\pm(x,y,\gamma)(1,1)^T dy\in C^0(\mathbb{R}^\pm,L^{2}(1,+\infty)_\gamma)\cap L^{2}(\mathbb{R}^\pm\times(1,+\infty)_\gamma)$ right away. For the second item of $\gamma[K_\pm]_\gamma$, we multiply it by  $(1,1)^T$ and  obtain
	\begin{align}
		\gamma[K_\pm]_\gamma(1,1)^T=-\frac{mi}{4q}(p(x)-p(y))e^{-\frac{i}{2}(\gamma-1/\gamma)(p(x)-p(y))}\left(\begin{array}{cc}
			0  \\
			-1+im
		\end{array}\right).
	\end{align}
	Then we denote
	\begin{align}
		f_1(x,y)=\frac{mi}{4q}(p(x)-p(y))(1-im).
	\end{align}
	 Introduce a new variable $\eta=\gamma-1/\gamma$, with $d\gamma=\left( \frac{1}{2}+\frac{\eta}{\sqrt{4+\eta^2}}\right) d\eta$. Then from lemma \ref{lemma2}, our goal change to seek the $L^2$-integrability for $\eta\in\mathbb{R}$. For any $\psi(\eta)\in L^2(\mathbb{R})$,
	\begin{align}
		|\int_{\mathbb{R}}\int_{x}^{\pm \infty}f(x,y)e^{-\frac{i}{2}\eta(p(x)-p(y))}\psi(\eta)dyd\eta |&=|\int_{x}^{\pm \infty}f(x,y)\psi(\frac{p(x)-p(y)}{2})dy|\nonumber\\
		&\lesssim \left( \int_{x}^{\pm \infty}|f(x,y)|^2dy\right)^{1/2}  \parallel \psi\parallel_2,
	\end{align}
while
\begin{align}
	\left( \int_{x}^{\pm \infty}|f(x,y)|^2dy\right)^{1/2}  &\lesssim\left( \int_{x}^{\pm \infty} \left( |m|+|m|^2\right)^2 \left(|x-y|+\parallel m\parallel_{1} \right)^2dy\right) ^{1/2}\nonumber\\
	&\lesssim\parallel m\parallel_{2,1}+\parallel m\parallel_{2}.
\end{align}
And 	
\begin{align}
	\int_{0}^{\pm \infty}&\int_{\mathbb{R}}|\int_{x}^{\pm \infty}f(x,y)e^{-\frac{i}{2}\eta(p(x)-p(y))}dy|^2d\eta dx  \leq \int_{0}^{\pm \infty}\int_{x}^{\pm \infty}|f(x,y)|^2dydx\nonumber\\
	&\lesssim \int_{0}^{\pm \infty}\int_{x}^{\pm \infty} \left( |m|+|m|^2\right)^2 \left(|x-y|+\parallel m\parallel_{1} \right)^2dydx\nonumber\\
	&\lesssim \int_{0}^{\pm \infty}\int_{0}^{y} (y-x)^2|m|^2+|m|^2dxdy\leq\parallel m\parallel_{2,3/2}+\parallel m\parallel_{2,1/2}.
\end{align}
Finally, we deal with
\begin{align}
	H_3(x,\gamma)\triangleq\int_{x}^{\pm \infty}\gamma\frac{i}{2}(p(x)-p(y))\frac{i m_{x}}{2\left(M^{(2)}+1\right)}e^{-\frac{i}{2}(\gamma-1/\gamma)(p(x)-p(y))}dy.
\end{align}
By partial integration,
\begin{align}
	H_3(x,\gamma)=&\int_{x}^{\pm \infty}\frac{\partial}{\partial y}\left(\frac{i m_{x}(p(x)-p(y))}{2\left(M^{(2)}+1\right)^{3/2}} \right) e^{-\frac{i}{2}(\gamma-1/\gamma)(p(x)-p(y))}dy\nonumber\\
	&-\frac{1}{\gamma}\int_{x}^{\pm \infty}\frac{i m_{x}(p(x)-p(y))}{2\left(M^{(2)}+1\right)}  e^{-\frac{i}{2}(\gamma-1/\gamma)(p(x)-p(y))}dy\nonumber\\
	&=H_{31}(x,\gamma)+\frac{1}{\gamma}H_{32}(x,y,\gamma).
\end{align}
From lemma \ref{lemma1}, only $H_{31}(x,\gamma)$ need to be control. Rewrite it as
\begin{align}
	\int_{x}^{\pm \infty}&\left(\frac{i m_{xx}(p(x)-p(y))}{2\left(M^{(2)}+1\right)^{3/2}}-\frac{i m_{x}}{2\left(M^{(2)}+1\right)}-\frac{3i m_{x}^2m(p(x)-p(y))}{2\left(M^{(2)}+1\right)^{5/2}} \right)\nonumber\\
	& e^{-\frac{i}{2}(\gamma-1/\gamma)(p(x)-p(y))}dy.
\end{align}
Analogously,  it admit
\begin{align}
	\parallel &H_{31}\parallel_{C^0(\mathbb{R}^\pm,L^{2}(1,+\infty)_\gamma)}\lesssim\parallel m_{xx}\parallel_{2,1}+\parallel m_{x}\parallel_{2}+\parallel m\parallel_{2,1}\nonumber;\\
	&\parallel H_{31}\parallel_{L^{2}(\mathbb{R}^\pm\times(1,+\infty)_\gamma)}\lesssim\parallel m_{xx}\parallel_{2,3/2}+\parallel m_{x}\parallel_{2,1/2}+\parallel m\parallel_{2,3/2}.
\end{align}
Combining above proof we obtain the result.
\end{proof}
\begin{Proposition}\label{mu2}
	Suppose that $u\in H^{4,2}(\mathbb{R})$, then $\mu_\pm(0,z)-I$ belongs in $C_B^0(0,1)\cap L^2(0,1)\cap L^{2,1}(1,\infty)$, while its $z$-derivative $[\mu_\pm(0,z)]_z$ is in $ L^2(0,1)$.
\end{Proposition}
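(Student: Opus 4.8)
The plan is to transfer every small-$z$ estimate, i.e. for $z\in(0,1)$, to the large-argument estimates already proved in Proposition \ref{mu1} by inverting the spectral parameter $\gamma=1/z\in(1,+\infty)$. The second symmetry reduction of Proposition \ref{sym} gives, through (\ref{symu}), the pointwise domination $|[\mu_\pm]_1(x,z)|\lesssim|[\mu_\pm]_1(x,\gamma)|$, so that boundedness of $\mu_\pm(0,z)-I$ on $(0,1)$ is immediate from its $C_B^0(1,+\infty)$ bound. For the $L^2(0,1)$ claim I would invoke the Jacobian identity $\int_0^1|g(1/z)|^2\,dz=\int_1^{+\infty}\gamma^{-2}|f(\gamma)|^2\,d\gamma\le\int_1^{+\infty}|f(\gamma)|^2\,d\gamma$, which loses no weight and hence follows directly from the $L^2(1,+\infty)$ bound of Proposition \ref{mu1}.

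The substantive part is the $z$-derivative. Differentiating the inversion $z=1/\gamma$ produces a Jacobian factor $\gamma^2$, so that $\int_0^1|[n_\pm]_z|^2\,dz=\int_1^{+\infty}\gamma^2|[n_\pm]_\gamma|^2\,d\gamma$; controlling $[\mu_\pm(0,z)]_z$ in $L^2(0,1)$ is therefore exactly the statement that $[n_\pm]_\gamma$ lies in the \emph{weighted} space $L^{2,1}(1,+\infty)_\gamma$. I would read this off the differentiated Volterra equation (\ref{eqnk}) written in the $\gamma$-variable: since $(I-T_\pm)^{-1}$ is bounded on the relevant function spaces, it suffices to place the source $\gamma[T_\pm]_\gamma(e_1)+\gamma[T_\pm]_\gamma(n_\pm)$ in $C^0(\mathbb{R}^\pm,L^2(1,+\infty)_\gamma)\cap L^2(\mathbb{R}^\pm\times(1,+\infty)_\gamma)$. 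Pulling $e_1$ and the (already bounded) $n_\pm$ out in the $C_B^0$ norm, both source terms are dominated by $\int_x^{\pm\infty}\gamma[K_\pm]_\gamma(1,1)^T\,dy$, which is precisely the quantity controlled in the preceding lemma.

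The main obstacle is the bare factor $\gamma$ inside $\gamma[K_\pm]_\gamma$, which blows up as $\gamma\to+\infty$ and kills any naive bound. The device that saves the estimate---and the reason $u\in H^{4,2}(\mathbb{R})$ is required---is an integration by parts in $y$ using $\partial_y e^{-\frac{i}{2}(\gamma-1/\gamma)(p(x)-p(y))}=\frac{i}{2}(\gamma-\frac{1}{\gamma})\,q(y)\,e^{-\frac{i}{2}(\gamma-1/\gamma)(p(x)-p(y))}$, which converts the dangerous $\gamma$ into a $y$-derivative falling on the smooth, decaying amplitude; this is the split $H_3=H_{31}+\frac{1}{\gamma}H_{32}$, where $H_{31}$ spends one extra derivative of $m$ (hence the $m_{xx}$ terms, forcing the fourth-order Sobolev regularity of $u$) and the remainder carries a harmless $1/\gamma$ absorbed by Lemma \ref{lemma1}. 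The surviving non-oscillatory pieces, after contraction with $(1,1)^T$, reduce to amplitudes of the form $\frac{m}{q}(p(x)-p(y))$ that are estimated by Lemmas \ref{lemma2}--\ref{lemma3} together with $|p(x)-p(y)|\le|x-y|+\|m\|_1$, while the $\gamma^{-1}K^0_\pm$ term is directly integrable by Lemma \ref{lemma1}. Assembling these bounds and applying boundedness of $(I-T_\pm)^{-1}$ yields $[n_\pm]_\gamma\in L^{2,1}(1,+\infty)_\gamma$, and transporting back through the Jacobian gives the claimed memberships of $\mu_\pm(0,z)-I$ and the $L^2(0,1)$ control of its $z$-derivative.
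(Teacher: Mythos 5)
Your proposal is correct and follows essentially the same route as the paper: the inversion $\gamma=1/z$ combined with the symmetry (\ref{symu}) and the two Jacobian identities, the reduction through the differentiated Volterra equation (\ref{eqnk}) and boundedness of $(I-T_\pm)^{-1}$ to $L^2$-control of $\int_x^{\pm\infty}\gamma[K_\pm]_\gamma(1,1)^T\,dy$, and the integration-by-parts split $H_3=H_{31}+\tfrac{1}{\gamma}H_{32}$ that trades the bare factor $\gamma$ for one extra derivative of $m$ (the source of the $H^{4,2}(\mathbb{R})$ hypothesis), with the remaining pieces handled by Lemmas \ref{lemma1}--\ref{lemma3}. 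There are no differences of substance; this is the paper's own argument.
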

Then we begin to prove proposition \ref{pror}.  From (\ref{symr}), it is requisite to shown
$$r(z)=\frac{b(z)}{a(z)},\hspace{0.3cm}r'(z)=\frac{b'(z)}{a(z)}-\frac{b(z)a'(z)}{a^2(z)},\hspace{0.3cm}zr(z)=\frac{zb(z)}{a(z)}\text{ in }L^2(\mathbb{R}).$$
Rewrite (\ref{scatteringcoefficient2}) as
\begin{align}
	&a(z)=(n_-^{1}(0,z)+1)\overline{(n_+^{1}(0,z)+1)}+n_-^{2}(0,z)\overline{n_+^{2}(0,z)},\nonumber\\
	&\overline{b(z)}=n_+^{2}(0,z)-n_-^{2}(0,z)+n_+^{2}(0,z)n_-^{1}(0,z)-n_-^{2}(0,z)n_+^{1}(0,z).
\end{align}
Then proposition \ref{mu1} and \ref{mu2} give the boundedness of $a(z)$, $a'(z)$, $b(z)$, $b'(z)$ and the $L^2$-integrability of $b(z)$, $b'(z)$. So we just need to show $zb(z)\in L^2(\mathbb{R})$. For $|z|>1$, proposition \ref{mu1} and \ref{mu2}  provide $zb(z)\in L^2(1,+\infty)$.  For $|z|<1$,  by change of variable: $z\in(0,1)\to \gamma=\frac{1}{z}\in(1,+\infty)$, simple calculation gives that for $f(\gamma)\triangleq g(\frac{1}{z})$
\begin{align}
	\int_{0}^1|zg(\frac{1}{z})|^2dz=\int_{1}^{+\infty}\frac{|f(\gamma)|^2}{\gamma^4}d\gamma\leq\int_{1}^{+\infty}|f(\gamma)|^2d\gamma.
\end{align}
Together with (\ref{symu}), we have $zn_\pm(0,z)\in L^2(0,1)$. Then from the symmetry (\ref{symPhi1}), we conclude that $zb(z)\in L^2(\mathbb{R})$ and finally obtain proposition \ref{pror}.

\section{Deformation of  the RH problem}\label{sec3}

\quad
The long-time asymptotic  of RHP \ref{RHP2}  is affected by the growth and decay of the exponential function $$e^{\pm2it\theta},\ with \ \theta(z)=-\frac{1}{4}(z-\frac{1}{z})\left[\frac{y}{t}-\frac{8}{(z+\frac{1}{z})^2} \right],$$
which is appearing in both the jump relation and the residue conditions. So we need control the real part of $\pm2it\theta$.
Therefore, in this section, we introduce  a new transform  $M(z)\to M^{(1)}(z)$,  which  make that the  $M^{(1)}(z)$ is well behaved as $t\to \infty$ along any characteristic line.
Let $\xi=\frac{y}{t}$. To obtain asymptotic behavior  of $e^{2it\theta}$ as $t\to \infty$, we consider the real part of $2it\theta$:
\begin{align}
&\text{Re}(2it\theta)=-2t\text{Im}\theta\nonumber\\
&=-2t\text{Im}z\left[ -\frac{\xi}{4}\left(1+|z|^{-2} \right) +2\dfrac{-|z|^6+2|z|^4+(3\text{Re}^2z-\text{Im}^2z)(1+|z|^2)+2|z|^2-1}{\left((\text{Re}^2z-\text{Im}^2z+1)^2+4\text{Re}^2z\text{Im}^2z \right)^2 }  \right]  .\label{Reitheta}
\end{align}
The property of $\text{Im}\theta$ are shown in Figure \ref{figtheta}.

\begin{figure}[p]
	\centering
	\subfigure[]{\includegraphics[width=0.3\linewidth]{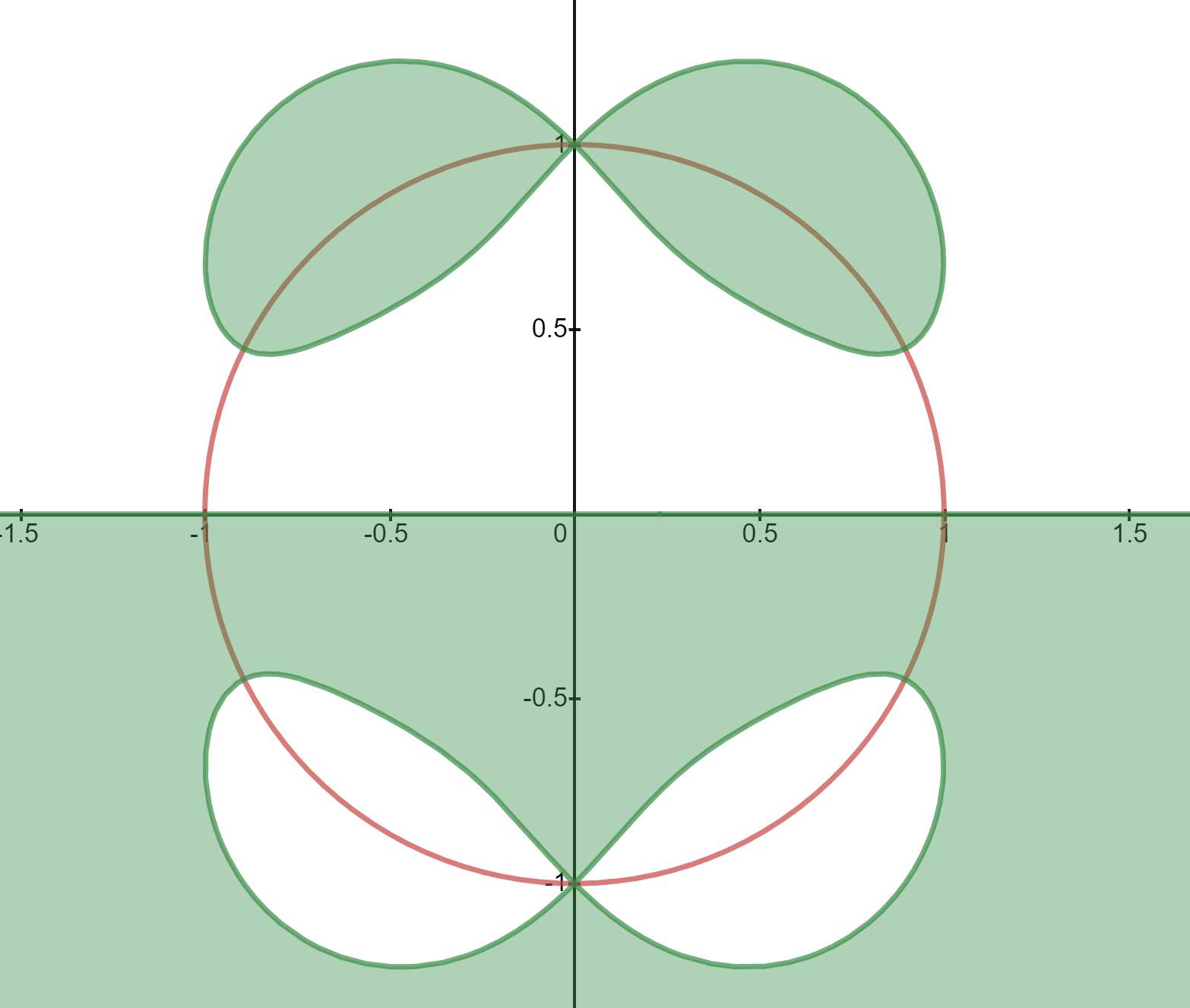}
	\label{fig:desmos-graph}}
	\subfigure[]{\includegraphics[width=0.3\linewidth]{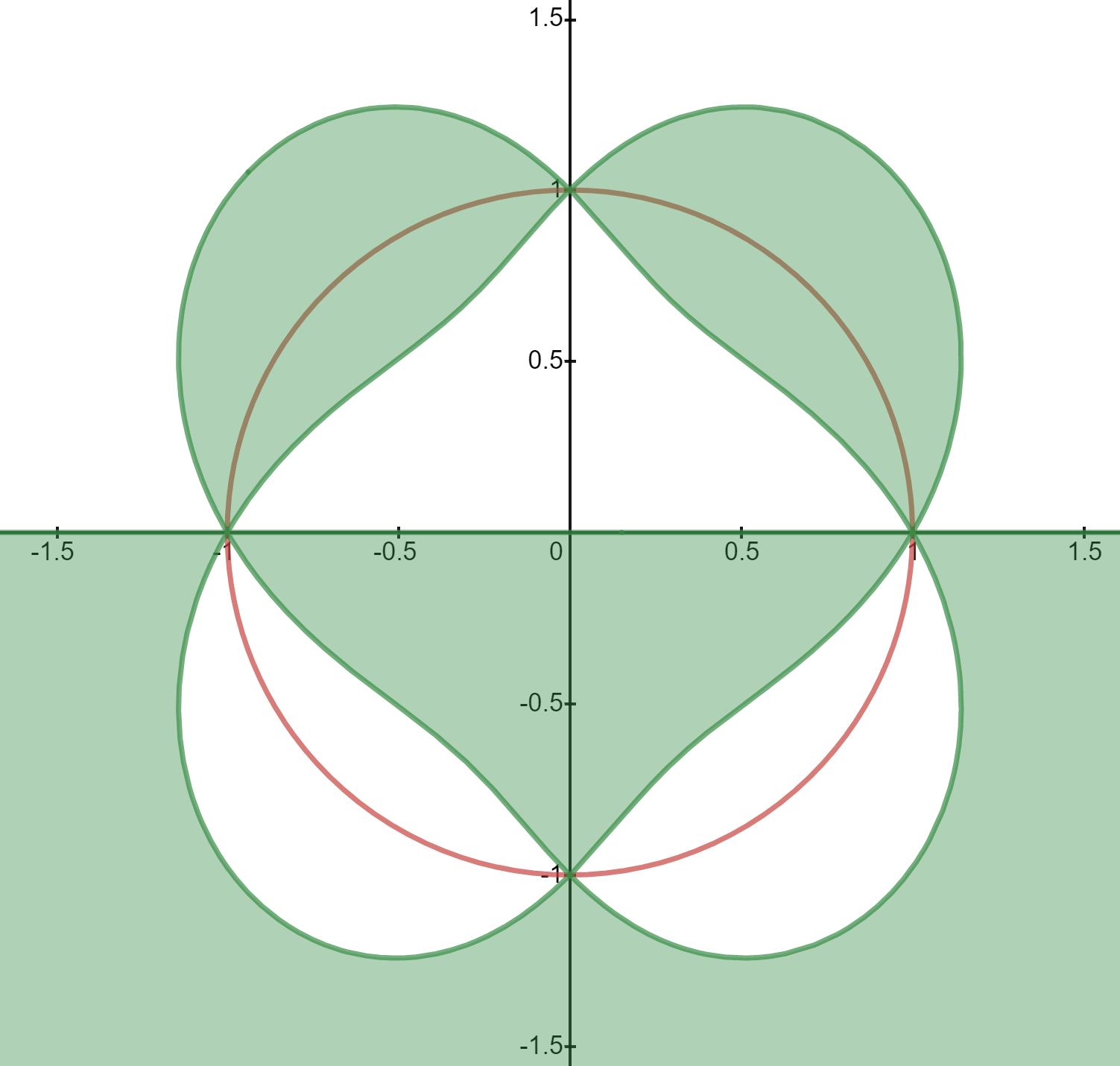}
	\label{fig:desmos-graph-1}}
	\subfigure[]{\includegraphics[width=0.3\linewidth]{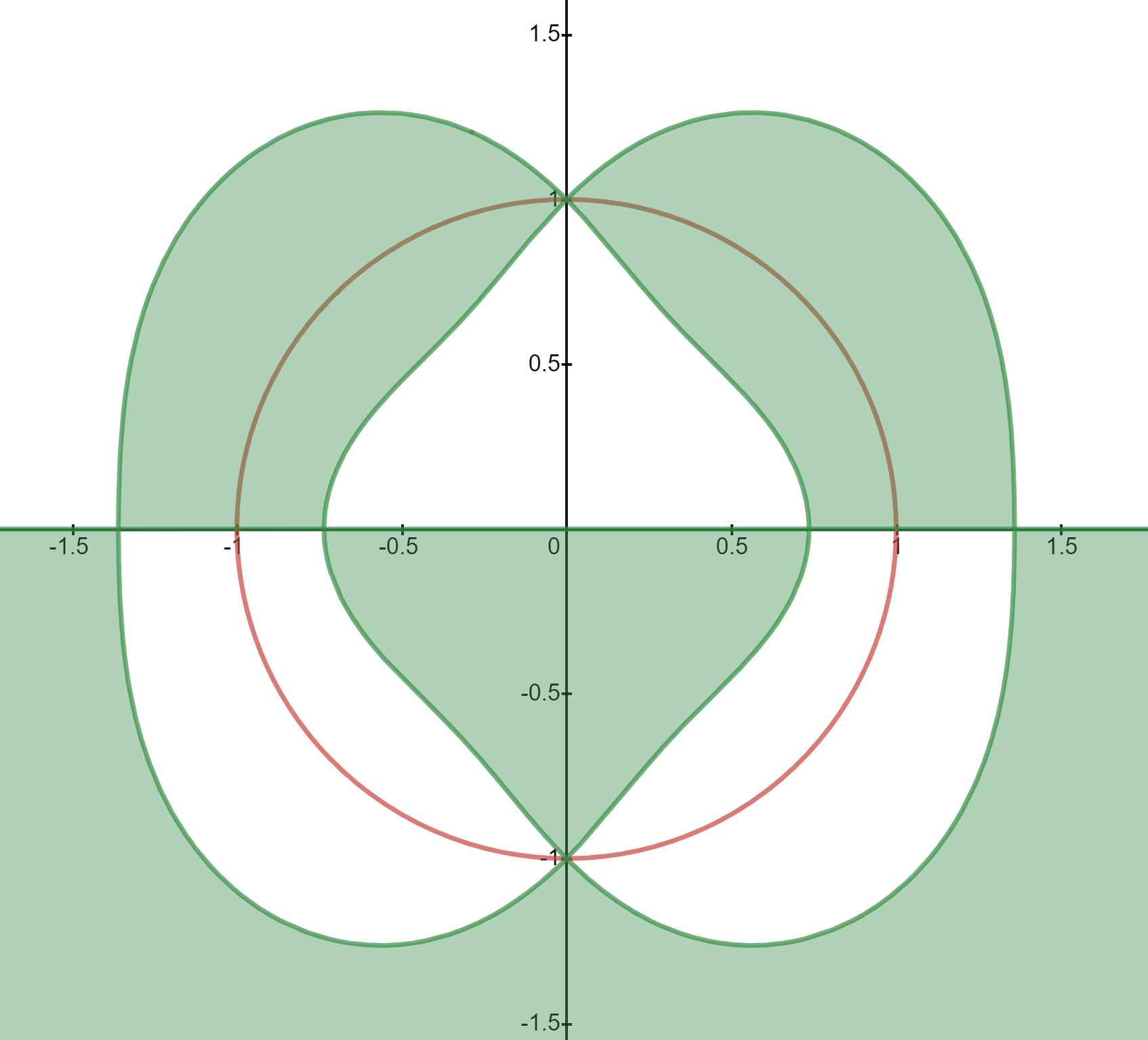}
	\label{fig:desmos-graph-5}}	\subfigure[]{\includegraphics[width=0.3\linewidth]{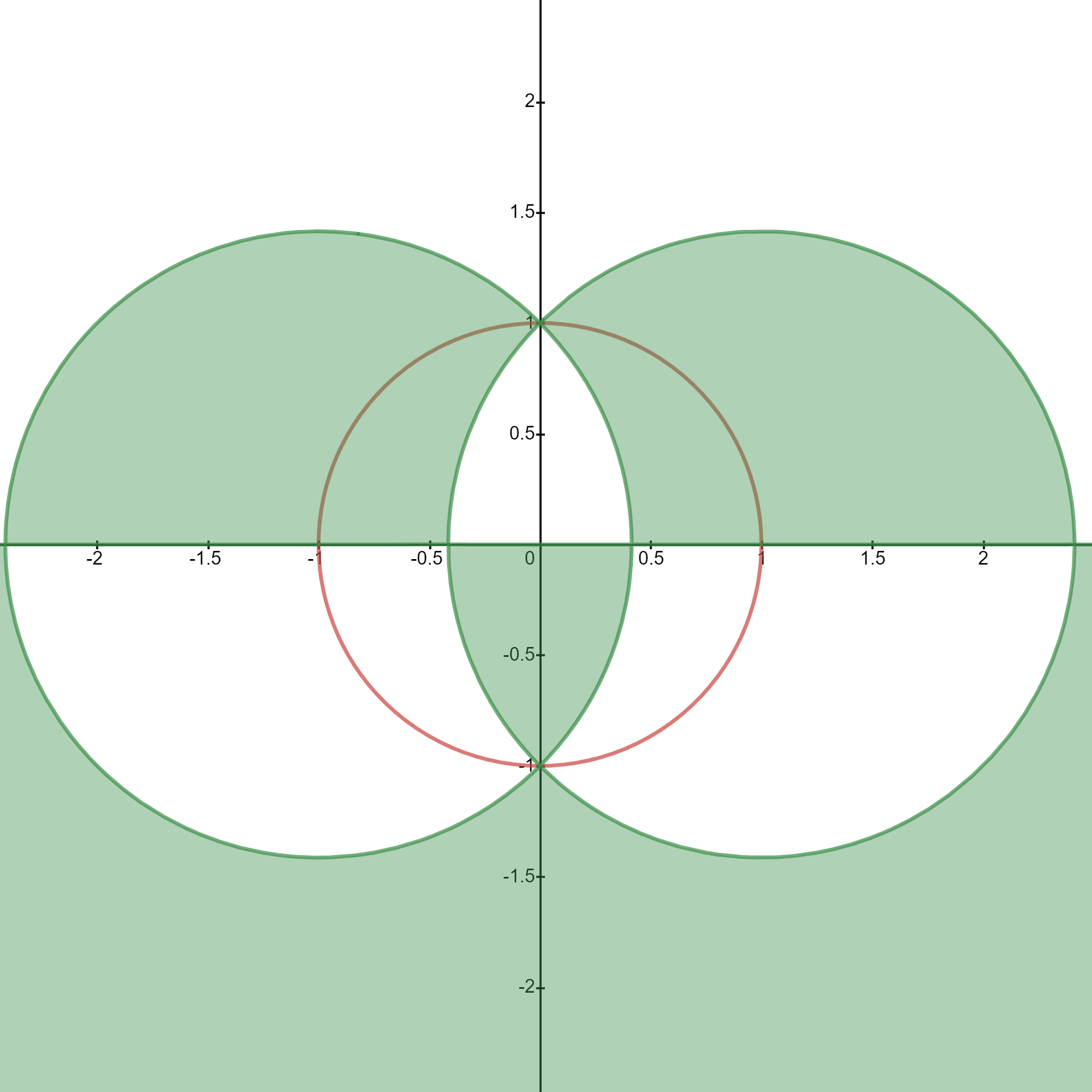}
	\label{fig:desmos-graph-2}}
	\subfigure[]{\includegraphics[width=0.3\linewidth]{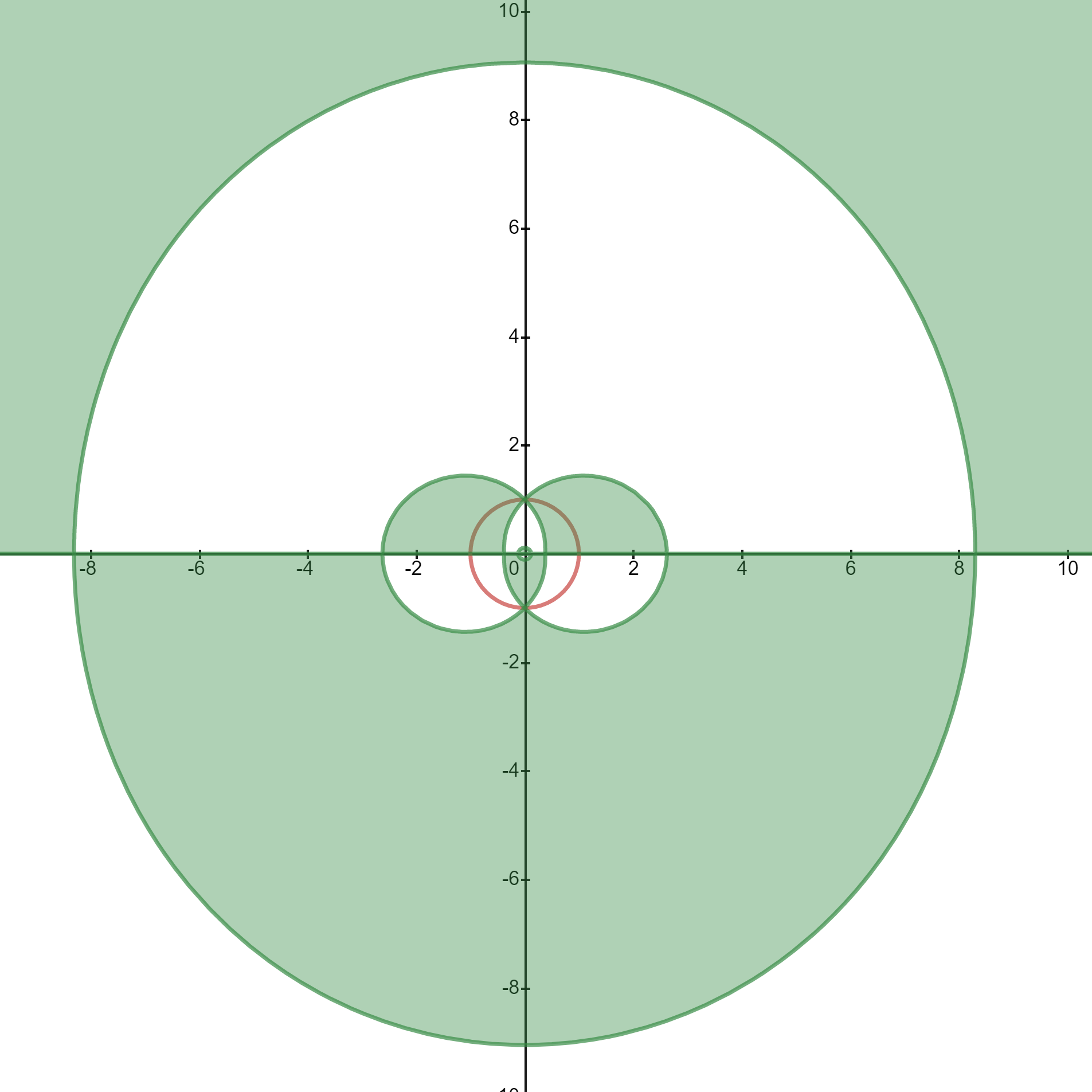}
	\label{fig:desmos-graph-3}}
	\subfigure[]{\includegraphics[width=0.3\linewidth]{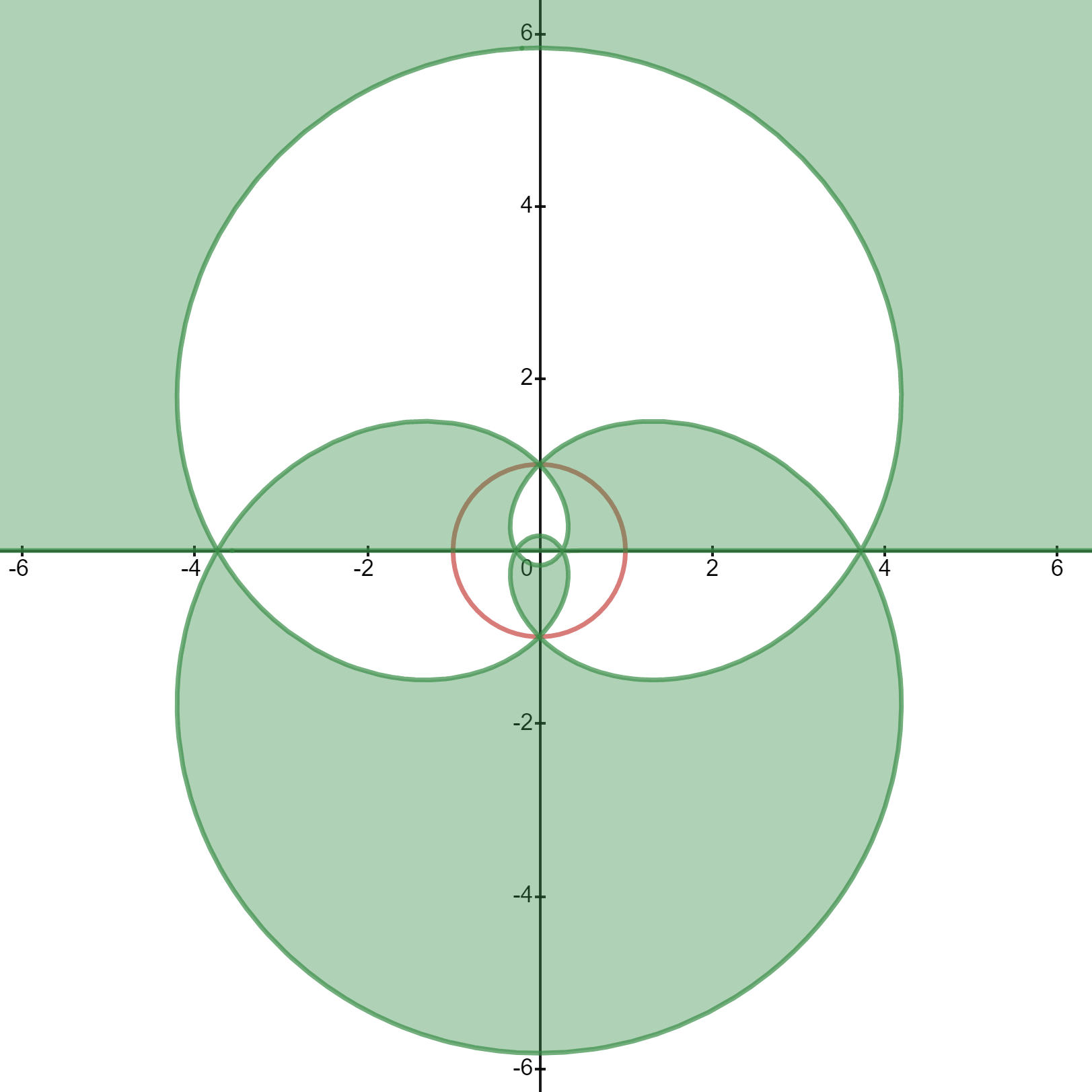}
	\label{fig:desmos-graph-4}}
	\subfigure[]{\includegraphics[width=0.3\linewidth]{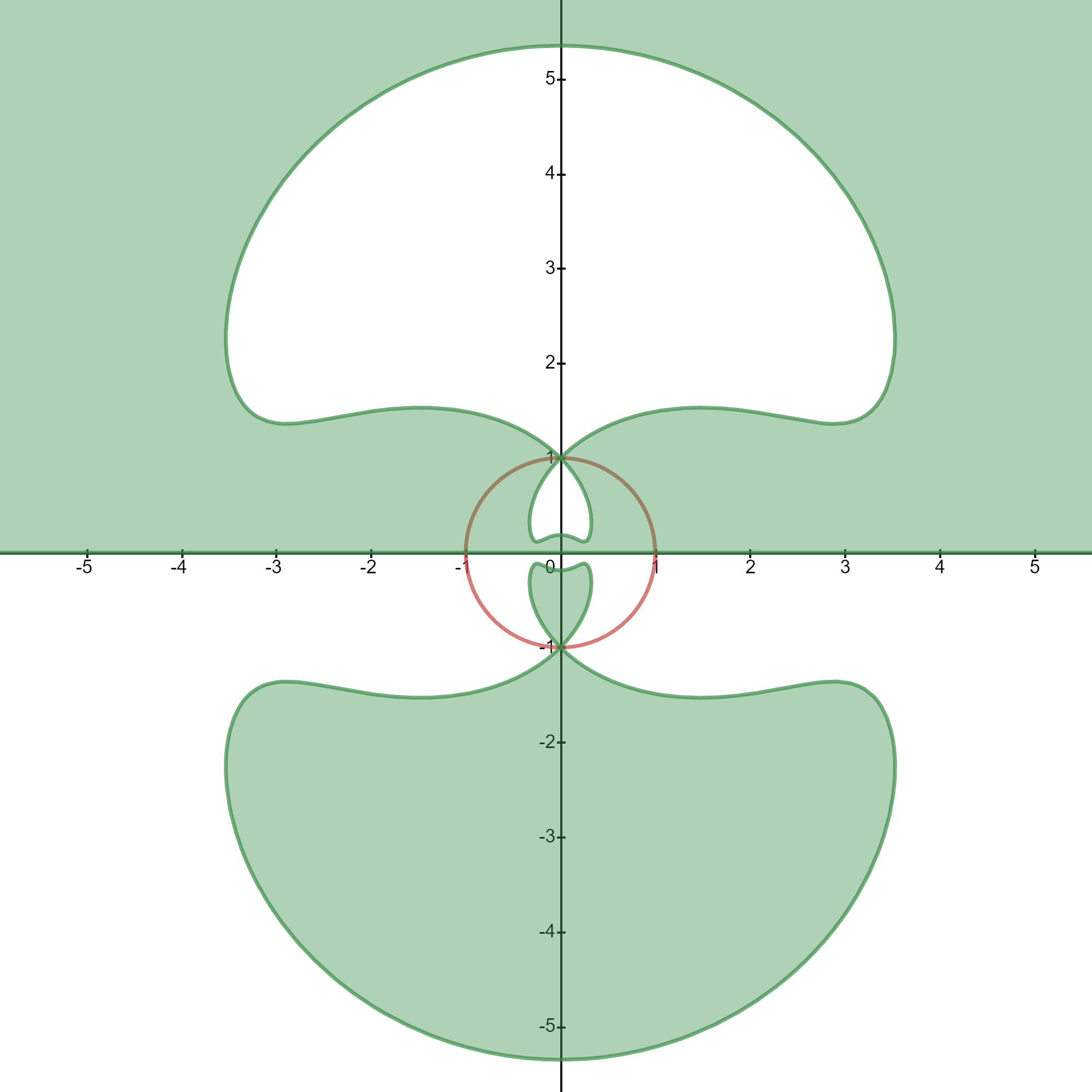}
	\label{fig:desmos-graph-6}}
	\caption{In these figure we take $\xi=2.5,2,1.5,0,-0.1,-0.25,-0.3$ respectively to show all type of $\text{Im}\theta$. The red curve is  unit circle. In the green region, $\text{Im}\theta>0$. It implies that $|e^{2it\theta}|\to 0$ as $t\to\infty$. And $\text{Im}\theta<0$ in the white region, which implies  $|e^{-2it\theta}|\to 0$ as $t\to\infty$.  Moreover,  $\text{Im}\theta=0$ on the green  curve.  }
	\label{figtheta}
\end{figure}
According to the figure, in our paper, we  divide $\xi$ in four case:

Case I: $\xi>2$ (Figure \ref{figtheta} (a)),\hspace{0.7cm} Case II: $0\leq\xi<2$ (Figure \ref{figtheta} (c) and (d)),

Case III: $-\frac{1}{4}<\xi<0$ (Figure \ref{figtheta} (e)),\hspace{0.5cm} Case IV: $\xi<-\frac{1}{4}$ (Figure \ref{figtheta} (g)).\\
In Case I $\xi>2$ and Case IV $\xi<-\frac{1}{4}$, the stationary phase point absents, while  in Case II $0\leq\xi<2$ and Case III $-\frac{1}{4}<\xi<0$, there exist four and eight  stationary phase points denoted as $\xi_1>...>\xi_4$ and $\xi_1>...>\xi_8$ respectively (see figure \ref{phase}). Moreover,  denote $\xi_0=-\infty$, $\xi_{n(\xi)+1}=+\infty$, and introduce some  intervals  when $j=1,...,n(\xi)$, for $0\leq\xi<2$
\begin{align}
I_{j1}=I_{j2}=\left\{ \begin{array}{ll}
\left( \frac{\xi_j+\xi_{j+1}}{2},\xi_j\right) ,\    & j\text{ is  odd number} ,\\[10pt]
\left(\xi_j ,\frac{\xi_j+\xi_{j-1}}{2}\right),   &j\text{ is  even number},
\end{array}\right.\label{In1}\\
I_{j3}=I_{j4}=\left\{ \begin{array}{ll}
\left(\xi_j ,\frac{\xi_j+\xi_{j-1}}{2}\right),\    & j\text{ is  odd number} ,\\[10pt]
\left( \frac{\xi_j+\xi_{j+1}}{2},\xi_j\right) ,   &j\text{ is  even number},
\end{array}\right.
\end{align}
and for $-\frac{1}{4}<\xi<0$,
\begin{align}
I_{j1}=I_{j2}=\left\{ \begin{array}{ll}
\left(\xi_j ,\frac{\xi_j+\xi_{j-1}}{2}\right),\    & j\text{ is  odd number} ,\\[10pt]
\left( \frac{\xi_j+\xi_{j+1}}{2},\xi_j\right) ,   &j\text{ is  even number},
\end{array}\right.\\
I_{j3}=I_{j4}=\left\{ \begin{array}{ll}
\left( \frac{\xi_j+\xi_{j+1}}{2},\xi_j\right) ,\    & j\text{ is  odd number} ,\\[10pt]
\left(\xi_j ,\frac{\xi_j+\xi_{j-1}}{2}\right),   &j\text{ is  even number},
\end{array}\right..\label{In2}
\end{align}
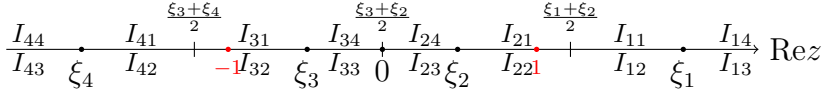
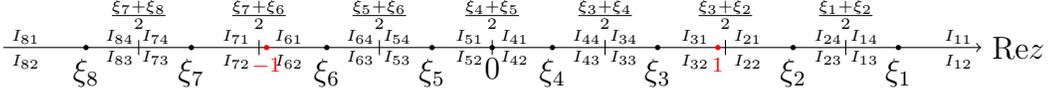
\begin{figure}[h]
	\subfigure[]{
		\begin{tikzpicture}
			\draw[->](-5,0)--(5,0)node[right]{ Re$z$};
			\draw(2.5,0)--(2.5,0.1)node[above]{\scriptsize$\frac{\xi_1+\xi_2}{2}$};
			\draw(2.5,0)--(2.5,-0.1);
			\draw(-2.5,0)--(-2.5,0.1)node[above]{\scriptsize$\frac{\xi_3+\xi_4}{2}$};
			\draw(-2.5,0)--(-2.5,-0.1);
			\draw(0,0)--(0,0.1)node[above]{\scriptsize$\frac{\xi_3+\xi_2}{2}$};
			\draw(0,0)--(0,-0.1);
			\coordinate (I) at (0,0);
			\fill (I) circle (1pt) node[below] {$0$};
			\coordinate (A) at (-4,0);
			\fill (A) circle (1pt) node[below] {$\xi_4$};
			\coordinate (b) at (-1,0);
			\fill (b) circle (1pt) node[below] {$\xi_3$};
			\coordinate (e) at (4,0);
			\fill (e) circle (1pt) node[below] {$\xi_1$};
			\coordinate (f) at (1,0);
			\fill (f) circle (1pt) node[below] {$\xi_2$};
			\coordinate (ke) at (4.7,0.1);
			\fill (ke) circle (0pt) node[below] {\footnotesize$I_{13}$};
			\coordinate (k1e) at (4.7,-0.1);
			\fill (k1e) circle (0pt) node[above] {\footnotesize$I_{14}$};
			\coordinate (le) at (3.3,0.1);
			\fill (le) circle (0pt) node[below] {\footnotesize$I_{12}$};
			\coordinate (l1e) at (3.3,-0.1);
			\fill (l1e) circle (0pt) node[above] {\footnotesize$I_{11}$};
			\coordinate (n2) at (0.57,0.1);
			\fill (n2) circle (0pt) node[below] {\footnotesize$I_{23}$};
			\coordinate (n12) at (0.57,-0.1);
			\fill (n12) circle (0pt) node[above] {\footnotesize$I_{24}$};
			\coordinate (m2) at (1.8,0.1);
			\fill (m2) circle (0pt) node[below] {\footnotesize$I_{22}$};
			\coordinate (m12) at (1.8,-0.1);
			\fill (m12) circle (0pt) node[above] {\footnotesize$I_{21}$};
			\coordinate (k) at (-4.7,0.1);
			\fill (k) circle (0pt) node[below] {\footnotesize$I_{43}$};
			\coordinate (k1) at (-4.7,-0.1);
			\fill (k1) circle (0pt) node[above] {\footnotesize$I_{44}$};
			\coordinate (l) at (-3.2,0.1);
			\fill (l) circle (0pt) node[below] {\footnotesize$I_{42}$};
			\coordinate (l1) at (-3.2,-0.1);
			\fill (l1) circle (0pt) node[above] {\footnotesize$I_{41}$};
			\coordinate (n) at (-0.5,0.1);
			\fill (n) circle (0pt) node[below] {\footnotesize$I_{33}$};
			\coordinate (n1) at (-0.5,-0.1);
			\fill (n1) circle (0pt) node[above] {\footnotesize$I_{34}$};
			\coordinate (m) at (-1.7,0.1);
			\fill (m) circle (0pt) node[below] {\footnotesize$I_{32}$};
			\coordinate (m1) at (-1.7,-0.1);
			\fill (m1) circle (0pt) node[above] {\footnotesize$I_{31}$};
			\coordinate (c) at (-2.05,0);
			\fill[red] (c) circle (1pt) node[below] {\scriptsize$-1$};
			\coordinate (d) at (2.05,0);
			\fill[red] (d) circle (1pt) node[below] {\scriptsize$1$};
			\end{tikzpicture}
			\label{pcase1}}
		\subfigure[]{
		\begin{tikzpicture}
		\draw[->](-6.5,0)--(6.5,0)node[right]{ Re$z$};
		\coordinate (I) at (0,0);
		\fill (I) circle (1pt) node[below] {$0$};
		\coordinate (c) at (-3,0);
		\fill[red] (c) circle (1pt) node[below] {\scriptsize$-1$};
		\coordinate (D) at (3,0);
		\fill[red] (D) circle (1pt) node[below] {\scriptsize$1$};
		\draw(-1.5,0)--(-1.5,0.1)node[above]{\scriptsize$\frac{\xi_5+\xi_6}{2}$};
		\draw(-1.5,0)--(-1.5,-0.1);
		\draw(-4.7,0)--(-4.7,0.1)node[above]{\scriptsize$\frac{\xi_7+\xi_8}{2}$};
		\draw(-4.7,0)--(-4.7,-0.1);
		\draw(-3.1,0)--(-3.1,0.1)node[above]{\scriptsize$\frac{\xi_7+\xi_6}{2}$};
		\draw(-3.1,0)--(-3.1,-0.1);
		\draw(1.5,0)--(1.5,0.1)node[above]{\scriptsize$\frac{\xi_3+\xi_4}{2}$};
		\draw(1.5,0)--(1.5,-0.1);
		\draw(4.7,0)--(4.7,0.1)node[above]{\scriptsize$\frac{\xi_1+\xi_2}{2}$};
		\draw(4.7,0)--(4.7,-0.1);
		\draw(3.1,0)--(3.1,0.1)node[above]{\scriptsize$\frac{\xi_3+\xi_2}{2}$};
		\draw(3.1,0)--(3.1,-0.1);
		\draw(0,0)--(0,0.1)node[above]{\scriptsize$\frac{\xi_4+\xi_5}{2}$};
		\draw(0,0)--(0,-0.1);
		\coordinate (A) at (-5.4,0);
		\fill (A) circle (1pt) node[below] {$\xi_8$};
		\coordinate (b) at (-4,0);
		\fill (b) circle (1pt) node[below] {$\xi_7$};
		\coordinate (C) at (-0.8,0);
		\fill (C) circle (1pt) node[below] {$\xi_5$};
		\coordinate (d) at (-2.2,0);
		\fill (d) circle (1pt) node[below] {$\xi_6$};
		\coordinate (E) at (5.4,0);
		\fill (E) circle (1pt) node[below] {$\xi_1$};
		\coordinate (R) at (4,0);
		\fill (R) circle (1pt) node[below] {$\xi_2$};
		\coordinate (T) at (0.8,0);
		\fill (T) circle (1pt) node[below] {$\xi_4$};
		\coordinate (Y) at (2.2,0);
		\fill (Y) circle (1pt) node[below] {$\xi_3$};
		\coordinate (q) at (6.2,-0.1);
		\fill (q) circle (0pt) node[above] {\tiny$I_{11}$};
		\coordinate (q1) at (6.2,0.05);
		\fill (q1) circle (0pt) node[below] {\tiny$I_{12}$};
		\coordinate (w) at (4.95,-0.1);
		\fill (w) circle (0pt) node[above] {\tiny$I_{14}$};
		\coordinate (w1) at (4.95,0.1);
		\fill (w1) circle (0pt) node[below] {\tiny$I_{13}$};
		\coordinate (e) at (4.47,-0.1);
		\fill (e) circle (0pt) node[above] {\tiny$I_{24}$};
		\coordinate (e1) at (4.47,0.1);
		\fill (e1) circle (0pt) node[below] {\tiny$I_{23}$};
		\coordinate (r) at (3.4,-0.1);
		\fill (r) circle (0pt) node[above] {\tiny$I_{21}$};
		\coordinate (r1) at (3.4,0.05);
		\fill (r1) circle (0pt) node[below] {\tiny$I_{22}$};
		\coordinate (t) at (2.7,-0.1);
		\fill (t) circle (0pt) node[above] {\tiny$I_{31}$};
		\coordinate (t1) at (2.7,0.05);
		\fill (t1) circle (0pt) node[below] {\tiny$I_{32}$};
		\coordinate (y) at (1.75,-0.1);
		\fill (y) circle (0pt) node[above] {\tiny$I_{34}$};
		\coordinate (y1) at (1.75,0.1);
		\fill (y1) circle (0pt) node[below] {\tiny$I_{33}$};
		\coordinate (l) at (1.26,-0.1);
		\fill (l) circle (0pt) node[above] {\tiny$I_{44}$};
		\coordinate (l1) at (1.26,0.1);
		\fill (l1) circle (0pt) node[below] {\tiny$I_{43}$};
		\coordinate (k) at (0.3,-0.1);
		\fill (k) circle (0pt) node[above] {\tiny$I_{41}$};
		\coordinate (k1) at (0.3,0.1);
		\fill (k1) circle (0pt) node[below] {\tiny$I_{42}$};
		\coordinate (q8) at (-6.2,-0.1);
		\fill (q8) circle (0pt) node[above] {\tiny$I_{81}$};
		\coordinate (q18) at (-6.2,0.05);
		\fill (q18) circle (0pt) node[below] {\tiny$I_{82}$};
		\coordinate (w8) at (-4.95,-0.1);
		\fill (w8) circle (0pt) node[above] {\tiny$I_{84}$};
		\coordinate (w18) at (-4.95,0.1);
		\fill (w18) circle (0pt) node[below] {\tiny$I_{83}$};
		\coordinate (e7) at (-4.47,-0.1);
		\fill (e7) circle (0pt) node[above] {\tiny$I_{74}$};
		\coordinate (e17) at (-4.47,0.1);
		\fill (e17) circle (0pt) node[below] {\tiny$I_{73}$};
		\coordinate (7r) at (-3.4,-0.1);
		\fill (7r) circle (0pt) node[above] {\tiny$I_{71}$};
		\coordinate (r17) at (-3.4,0.05);
		\fill (r17) circle (0pt) node[below] {\tiny$I_{72}$};
		\coordinate (t6) at (-2.7,-0.1);
		\fill (t6) circle (0pt) node[above] {\tiny$I_{61}$};
		\coordinate (t16) at (-2.7,0.05);
		\fill (t16) circle (0pt) node[below] {\tiny$I_{62}$};
		\coordinate (y6) at (-1.75,-0.1);
		\fill (y6) circle (0pt) node[above] {\tiny$I_{64}$};
		\coordinate (y16) at (-1.75,0.1);
		\fill (y16) circle (0pt) node[below] {\tiny$I_{63}$};
		\coordinate (l5) at (-1.26,-0.1);
		\fill (l5) circle (0pt) node[above] {\tiny$I_{54}$};
		\coordinate (l15) at (-1.26,0.1);
		\fill (l15) circle (0pt) node[below] {\tiny$I_{53}$};
		\coordinate (k5) at (-0.3,-0.1);
		\fill (k5) circle (0pt) node[above] {\tiny$I_{51}$};
		\coordinate (k15) at (-0.3,0.1);
		\fill (k15) circle (0pt) node[below] {\tiny$I_{52}$};
		\end{tikzpicture}
		\label{phase2}}
	\caption{Figure (a) and (b) are corresponding to the  $0\leq\xi<2$ and  $-\frac{1}{4}<\xi<0$ respectively. In (a), there are four stationary phase points $\xi_1,...\xi_4$ with $\xi_1=-\xi_4=1/\xi_2=-1/\xi_3$. And in (b), there are eight stationary phase points $\xi_1,...\xi_8$ with $\xi_1=-\xi_8=1/\xi_4=-1/\xi_5$ and $\xi_2=-\xi_7=1/\xi_3=-1/\xi_6$.}
	\label{phase}
\end{figure}

For brevity, we denote
\begin{align}
	n(\xi)=\left\{ \begin{array}{ll}
		0,   &\text{as } \xi>2 \text{ and } \xi<-\frac{1}{4},\\[10pt]
		4 , &\text{as } 0\leq\xi<2,\\[10pt]
		8,   &\text{as } -\frac{1}{4}<\xi<0,
	\end{array}\right.
\end{align}
as the number of stationary phase points, and $\mathcal{N}\triangleq\left\lbrace 1,...,4N_1+2N_2\right\rbrace $. Moreover, we introduce a small positive constant $\delta_0$ to give the  partitions $\Delta,\nabla$ and $\Lambda$  of $\mathcal{N}$   as follow:
\begin{align}
&\nabla=\left\lbrace n \in  \mathcal{N}  ;\text{Im}\theta_n< 0\right\rbrace,
\Delta=\left\lbrace n \in  \mathcal{N} ;\text{Im}\theta_n> 0\right\rbrace,\Lambda=\left\lbrace n \in  \mathcal{N}  ;|\text{Im}\theta_n|\leq \delta_0\right\rbrace.\label{devide}
\end{align}
For $\zeta_n$ with $n\in\Delta$, the residue of $M(z)$ at $\zeta_n$ in (\ref{RES1}) grows without bound as $t\to\infty$. Similarly, for $\zeta_n$ with $n\in\nabla$, the residue are  approaching to  $0$. Denote two constants $\mathcal{N}(\Lambda)=|\Lambda|$ and
\begin{equation}
	\rho_0=\min_{n\in\mathcal{N}\setminus \Lambda}\left\lbrace |\text{Im}\theta_n|\right\rbrace >\delta_0.\label{rho0}
\end{equation}
To distinguish different  type of zeros, we further give
\begin{align}
	&\nabla_1=\left\lbrace j \in \left\lbrace 1,...,N_1\right\rbrace  ;\text{Im}\theta(z_j)< 0\right\rbrace,
	\Delta_1=\left\lbrace j \in \left\lbrace 1,...,N_1\right\rbrace  ;\text{Im}\theta(z_j)> 0\right\rbrace,\\
	&\nabla_2=\left\lbrace i \in \left\lbrace 1,...,N_2\right\rbrace  ;\text{Im}\theta(w_i)< 0\right\rbrace,
	\Delta_2=\left\lbrace i \in \left\lbrace 1,...,N_2\right\rbrace  ;\text{Im}\theta(w_i)> 0\right\rbrace,\\
	&\Lambda_1=\left\lbrace j_0 \in \left\lbrace 1,...,N_1\right\rbrace  ;|\text{Im}\theta(z_{j_0})|\leq\delta_0\right\rbrace,\Lambda_2=\left\lbrace i_0 \in \left\lbrace 1,...,N_2\right\rbrace  ;|\text{Im}\theta(w_{i_0)}|\leq\delta_0\right\rbrace.
\end{align}
For the poles $\zeta_n$ with $n\notin\Lambda$, we want to trap them for jumps along small closed loops enclosing themselves respectively. And the jump matrix $V(z)$ (\ref{jumpv})  also needs to  be restricted. Recall the  well known factorizations of  $V(z)$:
\begin{align}
	V(z)&=\left(\begin{array}{cc}
	1 & \bar{r} e^{2it\theta}\\
	0 & 1
\end{array}\right)\left(\begin{array}{cc}
1 & 0 \\
r e^{-2it\theta} & 1
\end{array}\right)\\
&=\left(\begin{array}{cc}
	1 & \\
	\frac{r e^{-2it\theta}}{1+|r|^2} & 1
\end{array}\right)(1+|r|^2)^{\sigma_3}\left(\begin{array}{cc}
1 & \frac{\bar{r} e^{2it\theta}}{1+|r|^2} \\
0 & 1
\end{array}\right).
\end{align}
We will utilize  these factorizations to deform the jump contours so that the oscillating factor $e^{\pm2it\theta}$ are decaying in corresponding region respectively. Note that, $\text{Im}\theta$ has different identities for different case. Namely,  the functions which  will be used  following depend on $\xi$. Denote
\begin{align}
	I(\xi)=\left\{ \begin{array}{ll}
		\emptyset,   &\text{as } \xi>2,\\[4pt]
		(\xi_4,\xi_3)\cup(\xi_2,\xi_1),   &\text{as } 0\geq\xi<2,\\[4pt]
		(-\infty,\xi_8)\cup_{j=1}^3(\xi_{2j+1},\xi_{2j})\cup(\xi_1,+\infty),   &\text{as } -\frac{1}{4}<\xi<0,\\[4pt]
		\mathbb{R} , &\text{as }\xi<-\frac{1}{4}.\\
	\end{array}\right.
\end{align}
Define  functions
\begin{align}
\nu(z)&=-\frac{1}{2\pi }\log (1+|r(z)|^2),\delta (z)=\delta (z,\xi)=\exp\left(-i\int _{I(\xi)}\dfrac{\nu(s) ds}{s-z}\right);\\
T(z)&=T(z,\xi)=\prod_{n\in \Delta}\dfrac{z-\zeta_n}{\bar{\zeta}_n^{-1}z-1}\delta (z,\xi)\nonumber\\
&=\prod_{j\in \Delta_1}\dfrac{z-z_j}{\bar{z}_j^{-1}z-1}\dfrac{z+\bar{z}_j}{z_j^{-1}z+1}\dfrac{z-\bar{z}_j^{-1}}{z_jz-1}\dfrac{z+z_j^{-1}}{\bar{z}_jz+1}\prod_{i\in \Delta_2}\dfrac{z-w_i}{w_iz-1}\dfrac{z+\bar{w}_i}{\bar{w}_iz+1}\delta (z,\xi) \label{T}.
\end{align}
In  the above formulas, we choose the principal branch of power and logarithm functions.

\begin{Proposition}\label{proT}
	The function defined by (\ref{T}) has following properties:\\
	(a) $T$ is meromorphic in $\mathbb{C}\setminus \mathbb{R}$, and for each $n\in\Delta$, $T(z)$ has a simple pole at $\zeta_n$ and a simple zero at $\bar{\zeta}_n$;\\
	(b) $T(z)=\overline{T(-\bar{z})}=T(-z^{-1})=\overline{T^{-1}(\bar{z})}$;\\
	(c) For $z\in I(\xi)$, as z approaching the real axis from above and below, $T$ has boundary values $T_\pm$, which satisfy:
	\begin{equation}
	T_-(z)=(1+|r(z)|^2)T_+(z),\hspace{0.5cm}z\in I(\xi);
	\end{equation}
	(d) $\lim_{z\to \infty}T(z)\triangleq T(\infty)$ with $T(\infty)=1$;\\
	(e) for $z=i$,
	\begin{equation}
	T(i)=\prod_{j\in \Delta_1}\left( \dfrac{i-z_j}{i-\bar{z}_j}\dfrac{i+\bar{z}_j}{i+z_j}\right) \prod_{h\in \Delta_2}\dfrac{i-w_h}{i+w_h}\dfrac{i+\bar{w}_h}{i-\bar{w}_h}\delta (i,\xi)\label{Ti},
	\end{equation}
	and as $z\to i$, $T(z)$ has asymptotic expansion  as
	\begin{equation}
	T(z)=T(i)\left( 1-I_0(\xi)(z-i)\right) +	\mathcal{O}((z-i)^2) \label{expT0},
	\end{equation}
	with
	\begin{equation}
		I_0(\xi)=\frac{1}{2\pi i}\int _{I(\xi)}\dfrac{ \log (1+|r(s)|^2)}{(s-i)^2}ds;
	\end{equation}
	(f) $T(z)$ is continuous at $z=0$, and
	\begin{equation}
	T(0)=T(\infty)=1 \label{T0};
	\end{equation}
	(g) As $z\to \xi_j$ along any ray $\xi_j+e^{i\phi}\mathbb{R}^+$ with $|\phi|<\pi$ ,
	\begin{align}
	|T(z,\xi)-T_j(\xi)(z-\xi_j)^{i\nu(\xi_j)}|\lesssim \parallel r\parallel_{H^{1,1}(\mathbb{R})}|z-\xi_j|^{1/2},\label{T-TJ}
	\end{align}
	where $T_j(\xi) $ is the complex unit
	\begin{align}
	T_j(\xi)=\prod_{n\in \Delta}\dfrac{z-\zeta_n}{\bar{\zeta}_n^{-1}z-1}e^{i\beta(\xi_j,\xi)},
	\end{align}
	for  $j=1,...,n(\xi)$. In above function,
	\begin{align}
	\beta_j(z,\xi)=\int_{I(\xi)}\frac{\nu(s)}{s-z}ds-\log(z-\xi_j)\nu(\xi_j).
	\end{align}
\end{Proposition}
\begin{proof}
	 Properties (a), (b), (d) and (f)  can be obtain by simple calculation from the definition of $T(z)$ in (\ref{T}). And (c)  follows from the Plemelj formula. By the Laurent expansion (e) can be obtained immediately. And for (g), analogously to  \cite{fNLS}, rewrite
	 \begin{align}
	 \delta (z,\xi)=\exp\left(i\beta_j(z,\xi)+i\log(z-\xi_j)\nu(\xi_j) \right) ,
	 \end{align}
	 and note the fact that
	 \begin{align}
	 |(z-\xi_j)^{i\nu(\xi_j)}|\leq e^{-\pi\nu(\xi_j)=\sqrt{1+|r(\xi_j)|^2}}\label{key},
	 \end{align}
	 and
	 $$|\beta_j(z,\xi)-\beta_j(\xi_j,\xi)|\lesssim \parallel r\parallel_{H^{1,0}(\mathbb{R})}|z-\xi_j|^{1/2}.$$
	 The result then follows promptly.
	 For brevity, we  omit computation.
\end{proof}

Additionally, Introduce   a positive constant $\varrho$:
\begin{equation}
	\varrho=\frac{1}{2}\min\left\lbrace \min_{j\in \mathcal{N}}\left\lbrace |\text{Im}\zeta_j|, \right\rbrace ,\min_{j\in \mathcal{N}\setminus\Lambda,\text{Im}\theta(z)=0}|\zeta_j-z|, \min_{  j\in \mathcal{N}}|\zeta_j-i|\right\rbrace .
\end{equation}
By above definition, for every $n\in  \mathcal{N}  $,  $\mathbb{D}_n\triangleq\mathbb{D}(\zeta_n,\varrho)$  are pairwise disjoint and are disjoint  with $\left\lbrace z\in \mathbb{C}|\text{Im} \theta(z)=0 \right\rbrace $ and $\mathbb{R}$. Moreover, $i\notin \mathbb{D}_n$. Further, from the symmetry of poles and $\theta$, this definition guarantee   $\overline{\mathbb{D}}_n\triangleq\mathbb{D}(\bar{\zeta}_n,\varrho)$ have same property. Denote
 a piecewise matrix function
\begin{equation}
	G(z)=\left\{ \begin{array}{ll}
		\left(\begin{array}{cc}
			1 & 0\\
			-C_n(z-\zeta_n)^{-1}e^{-2it\theta_n} & 1
		\end{array}\right),   &\text{as } z\in\mathbb{D}_n,n\in\nabla;\\[12pt]
		\left(\begin{array}{cc}
			1 & -C_n^{-1}(z-\zeta_n)e^{2it\theta_n}\\
			0 & 1
		\end{array}\right),   &\text{as } z\in\mathbb{D}_n,n\in\Delta;\\
		\left(\begin{array}{cc}
		1 & \bar{C}_n(z-\bar{\zeta}_n)^{-1}e^{2it\bar{\theta}_n}\\
		0 & 1
		\end{array}\right),   &\text{as } 	z\in\overline{\mathbb{D}}_n,n\in\nabla;\\
		\left(\begin{array}{cc}
		1 & 0	\\
		\bar{C}_n^{-1}(z-\bar{\zeta}_n)e^{-2it\bar{\theta}_n} & 1
		\end{array}\right),   &\text{as } 	z\in\overline{\mathbb{D}}_n,n\in\Delta;\\
	I &\text{as } 	z \text{ in elsewhere};
	\end{array}\right..\label{funcG}
\end{equation}
Then by using $T(z)$ and $G(z)$, the new  matrix-valued   function $M^{(1)}(z)$  is defined as
\begin{equation}
M^{(1)}(z;y,t)\triangleq M^{(1)}(z)=M(z)G(z)T(z)^{\sigma_3},\label{transm1}
\end{equation}
which then satisfies the following RH problem.

\begin{RHP}\label{RHP3}
	Find a matrix-valued function  $  M^{(1)}(z )$ which satisfies:
	
	$\blacktriangleright$ Analyticity: $M^{(1)}(z)$ is meromorphic in $\mathbb{C}\setminus \Sigma^{(1)}$, where
	\begin{equation}
		\Sigma^{(1)}=\mathbb{R}\cup\left[\underset{n\in\mathcal{N}\setminus\Lambda}{\cup}\left( \overline{\mathbb{D}}_n\cup\mathbb{D}_n\right)  \right] ,
	\end{equation}
	is shown in Figure \ref{fig:zero};
	
	$\blacktriangleright$ Symmetry: $M^{(1)}(z)=\sigma_3\overline{M^{(1)}(-\bar{z})}\sigma_3=\sigma_2\overline{M^{(1)}(\bar{z})}\sigma_2$=$F^{-2}\overline{M^{(1)}(-\bar{z}^{-1})}$;
	
	$\blacktriangleright$ Jump condition: $M^{(1)}$ has continuous boundary values $M^{(1)}_\pm$ on $\Sigma^{(1)}$ and
	\begin{equation}
		M^{(1)}_+(z)=M^{(1)}_-(z)V^{(1)}(z),\hspace{0.5cm}z \in \Sigma^{(1)},
	\end{equation}
	where
	\begin{equation}
		V^{(1)}(z)=\left\{\begin{array}{ll}\left(\begin{array}{cc}
				1 & e^{2it\theta}\bar{r}(z)T^{-2}(z) \\
				0 & 1
			\end{array}\right)
			\left(\begin{array}{cc}
				1 & 0\\
				e^{-2it\theta}r(z)T^2(z) & 1
			\end{array}\right),   &\text{as } z\in 	\mathbb{R}\setminus I(\xi);\\[12pt]
			\left(\begin{array}{cc}
				1 & 0\\
				\frac{e^{-2it\theta}r(z)T_-^{2}(z)}{1+|r(z)|^2} & 1
			\end{array}\right)\left(\begin{array}{cc}
				1 & \frac{e^{2it\theta}\bar{r}(z)T_+^{-2}(z)}{1+|r(z)|^2}\\
				0 & 1
			\end{array}\right),   &\text{as } z\in I(\xi);\\[12pt]
			\left(\begin{array}{cc}
				1 & 0\\
				-C_n(z-\zeta_n)^{-1}T^2(z)e^{-2it\theta_n} & 1
			\end{array}\right),   &\text{as } 	z\in\partial\mathbb{D}_n,n\in\nabla;\\[12pt]
			\left(\begin{array}{cc}
				1 & -C_n^{-1}(z-\zeta_n)T^{-2}(z)e^{2it\theta_n}\\
				0 & 1
			\end{array}\right),   &\text{as } z\in\partial\mathbb{D}_n,n\in\Delta;\\
			\left(\begin{array}{cc}
				1 & \bar{C}_n(z-\bar{\zeta}_n)^{-1}T^{-2}(z)e^{2it\bar{\theta}_n}\\
				0 & 1
			\end{array}\right),   &\text{as } 	z\in\partial\overline{\mathbb{D}}_n,n\in\nabla;\\
			\left(\begin{array}{cc}
				1 & 0	\\
				\bar{C}_n^{-1}(z-\bar{\zeta}_n)e^{-2it\bar{\theta}_n}T^2(z) & 1
			\end{array}\right),   &\text{as } 	z\in\partial\overline{\mathbb{D}}_n,n\in\Delta;\\
		\end{array}\right.;\label{jumpv1}
	\end{equation}
	
	$\blacktriangleright$ Asymptotic behaviors:
	\begin{align}
		M^{(1)}(z;y,t) =& I+\mathcal{O}(z^{-1}),\hspace{0.5cm}z \rightarrow \infty,\\
	M^{(1)}(z;y,t) =&F^{-1} \left[ I+(z-i)\left(\begin{array}{cc}
		0 & -\frac{1}{2}(u+u_x) \\
		-\frac{1}{2}(u-u_x) & 0
	\end{array}\right)\right]\nonumber\\ &e^{\frac{1}{2}c_+\sigma_3}T(i)^{\sigma_3}\big(I-I_0\sigma_3(z-i) \big) +\mathcal{O}\left( (z-i)^2\right);\label{asyM1i}
\end{align}

	$\blacktriangleright$ Residue conditions: $M^{(1)}$ has simple poles at each point $\zeta_n$ and $\bar{\zeta}_n$ for $n\in\Lambda$ with:
	\begin{align}
		&\res_{z=\zeta_n}M^{(1)}(z)=\lim_{z\to \zeta_n}M^{(1)}(z)\left(\begin{array}{cc}
			0 & 0\\
			C_ne^{-2it\theta_n}T^2(\zeta_n) & 0
		\end{array}\right),\\
		&\res_{z=\bar{\zeta}_n}M^{(1)}(z)=\lim_{z\to \bar{\zeta}_n}M^{(1)}(z)\left(\begin{array}{cc}
			0 & -\bar{C}_nT^{-2}(\bar{\zeta}_n)e^{2it\bar{\theta}_n}\\
			0 & 0
		\end{array}\right).
	\end{align}
\end{RHP}

\begin{proof}
	The triangular factors  (\ref{funcG})  trades 	poles $\zeta_n$  and $\bar{\zeta}_n$  to jumps on the disk boundaries $\partial \mathbb{D}_n$ and $\partial \overline{\mathbb{D}}_n$ respectively  for $n\in\mathcal{N}\setminus\Lambda$. Then by simple calculation we can obtain the residues condition and jump condition from (\ref{RES1}), (\ref{RES2}) (\ref{jumpv}), (\ref{funcG}) and (\ref{transm1}). The   analyticity and symmetry of $M^{(1)}(z)$ is directly from its definition, the Proposition \ref{proT}, (\ref{funcG}) and the identities of $M$. As for asymptotic behaviors, from $\lim_{z\to i}G(z)=\lim_{z\to \infty}G(z)=I$ and Proposition \ref{proT} (e), we  obtain the asymptotic behaviors of $M^{(1)}(z)$.
\end{proof}

\begin{figure}[p]
	\centering	
		\subfigure[]{\includegraphics[width=0.4\linewidth]{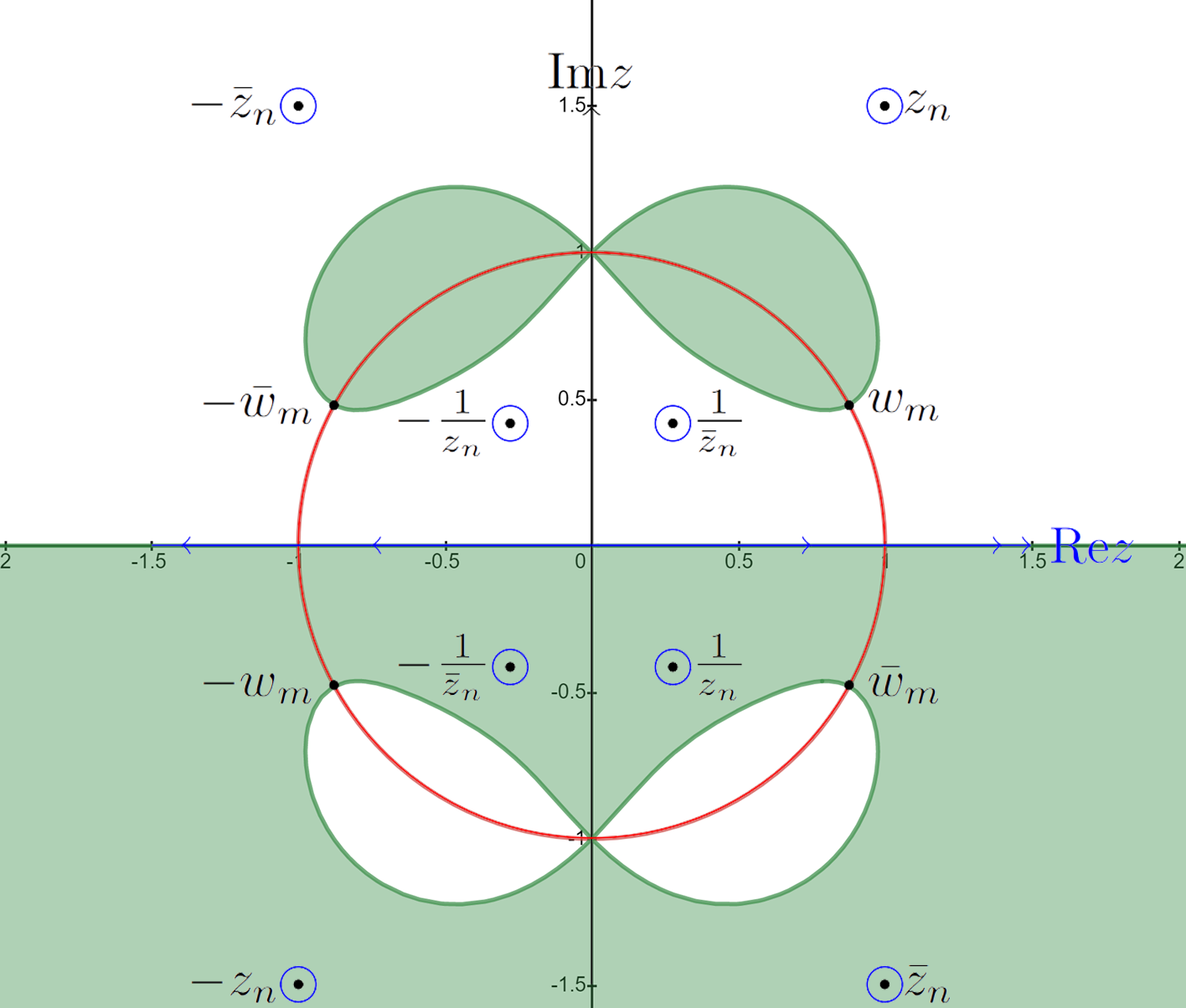}
		\label{zero1}}
	\subfigure[]{\includegraphics[width=0.5\linewidth]{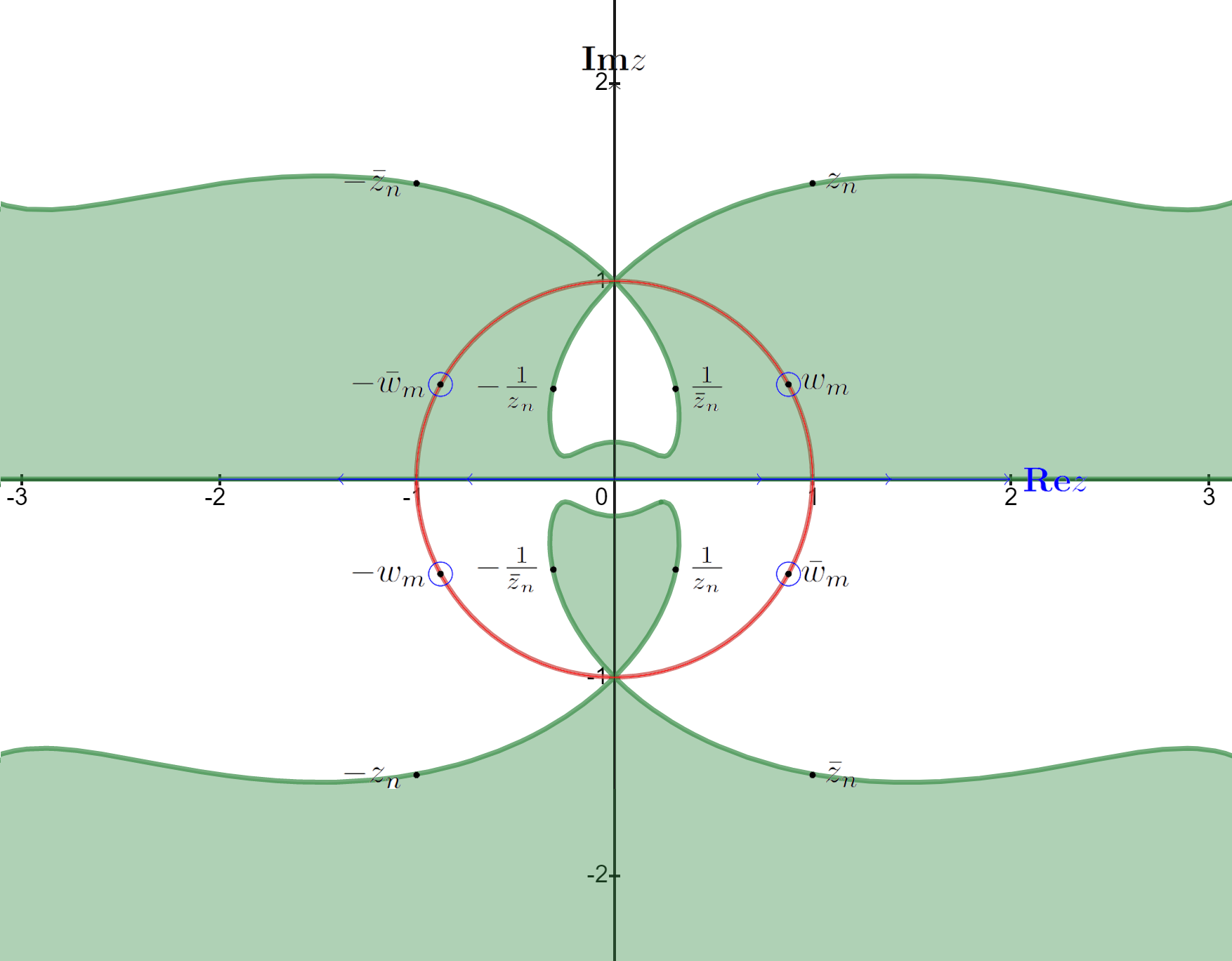}
		\label{zero2}}
	\caption{Subfigure (a) and (b) are respectively corresponding to $\xi=2.5$ and $\xi=-0.3$.  $\mathbb{R}$ and  the small circles  constitute $\Sigma^{(1)}$. And the other cases of $\xi$ are similar. For (a), because Im$\theta(w_m)=0$, it remain the pole of $M^{(1)}$. And Im$\theta(z_n)\neq0$, so we change it to  jump on $\mathbb{D}_n$. As for (b), Im$\theta(z_n)=0$ while Im$\theta(w_m)\neq0$. so we keep $w_m$ as a pole and trad $z_n$ for jumps.    }	\label{fig:zero}
\end{figure}

\section{Mixed $\bar{\partial}$-RH Problem }\label{sec4}

\quad  In this section,  we make continuous extension to the jump matrix $V^{1}$  to remove the jump from $\mathbb{R}$. Besides, the new problem is hoped to  takes advantage of the decay/growth of $e^{2it\theta(z)}$ for $z\notin\mathbb{R}$. For this purpose, we   introduce some new  regions and contours relyed on $\xi$:

1. for the case  $\xi<-\frac{1}{4}$ and  $\xi>2$,
\begin{align}
	&\Omega_{2n+1}=\left\lbrace z\in\mathbb{C}|n\pi \leq\arg z \leq n\pi+\varphi \right\rbrace ,\\
	&\Omega_{2n+2}=\left\lbrace z\in\mathbb{C}|(n+1)\pi -\varphi\leq\arg z \leq (n+1)\pi \right\rbrace,
\end{align}
where $n=0,1$. And
\begin{align}
&\Sigma_k=e^{(k-1)i\pi/2+i\varphi}R_+,\hspace{0.5cm}k=1,3,\\
&\Sigma_k=e^{ki\pi/2-i\varphi}R_+,\hspace{0.5cm}k=2,4,
\end{align}
which is the boundary of $\Omega_k$ respectively. In addition, for these cases, let
\begin{align}
&\Omega(\xi)=\underset{k=1,...,4}{\cup}\Omega_k,\\
&\Sigma^{(2)}(\xi)=\underset{n\in\mathcal{N}\setminus\Lambda}{\cup}\left( \partial\overline{\mathbb{D}}_n\cup\partial\mathbb{D}_n\right)  ,\
\tilde{\Sigma}(\xi)=\Sigma_1\cup\Sigma_2\cup\Sigma_{3}\cup\Sigma_{4},
\end{align}
 which are shown in Figure \ref{figR2}.
And $\frac{\pi}{8}>\varphi>0$ is an fixed sufficiently small angle  achieving following conditions:\\
a.  $\cos 2 \varphi>\frac{4}{\xi}-1$ for $\xi>2$;\\
b.  $\cos 2 \varphi>-\frac{1}{2\xi}-1$ for $\xi<-\frac{1}{4}$;\\
c.  each $\Omega_i$ doesn't intersect $\left\lbrace z\in\mathbb{C}|\text{Im }\theta(z)=0\right\rbrace  $ and  any of $\mathbb{D}_n$ or $\overline{\mathbb{D}}_n$.

2.  for the case $-\frac{1}{4}<\xi<2$, $l\in(0,\frac{|\xi_{j+(-1)^j}-\xi_j|}{2\cos\varphi})$
\begin{align}
&\Sigma_{jk}(\xi)=\left\{\begin{array}{lll}
\xi_j+e^{i[(k/2+1/2+j)\pi+(-1)^{j+1}\varphi]}l,&\ 0>\xi>-0.25\\
\xi_j+e^{i[(k/2+j)\pi+(-1)^j\varphi]}l,&\ 0\leq\xi< 2\end{array}\right.,\hspace{0.2cm}k=1,3,\\
&\Sigma_{jk}(\xi)=\left\{\begin{array}{lll}
\xi_j+e^{i[(k/2+j)\pi+(-1)^j\varphi]}l,&\ 0>\xi>-0.25\\
\xi_j+e^{i[(k/2+1/2+j)\pi+(-1)^{j+1}\varphi]}l,&\ 0\leq\xi< 2\end{array}\right.,\hspace{0.2cm}k=2,4,
\end{align}
where $j=2,...,n(\xi)-1$ and for $j=1,n(\xi)$
\begin{align}
&\Sigma_{j1}(\xi)=\left\{\begin{array}{lll}
\xi_j+e^{(1+j)\pi i+(-1)^{j+1}i\varphi}\mathbb{R}^+,&\ 0>\xi>-0.25\\
\xi_j+e^{j\pi i+(-1)^ji\varphi}l,&\ 0\leq\xi< 2\end{array}\right.,\\
&\Sigma_{j2}(\xi)=\left\{\begin{array}{lll}
\xi_j+e^{(1+j)\pi i+(-1)^ji\varphi}\mathbb{R}^+,&\ 0>\xi>-0.25\\
\xi_j+e^{j\pi i+(-1)^{j+1}i\varphi }l,&\ 0\leq\xi< 2\end{array}\right.,\\
&\Sigma_{j3}(\xi)=\left\{\begin{array}{lll}
\xi_j+e^{j\pi i+(-1)^{j+1}i\varphi }l,&\ 0>\xi>-0.25\\
\xi_j+e^{(1+j)\pi i+(-1)^ji\varphi}\mathbb{R}^+,&\ 0\leq\xi< 2\end{array}\right.,\\
&\Sigma_{j4}(\xi)=\left\{\begin{array}{lll}
\xi_j+e^{j\pi i+(-1)^ji\varphi}l,&\ 0>\xi>-0.25\\
\xi_j+e^{(1+j)\pi i+(-1)^{j+1}i\varphi}\mathbb{R}^+,&\ 0\leq\xi< 2\end{array}\right..
\end{align}
Moreover, for $ l\in(0,\frac{|\xi_{j+(-1)^j}-\xi_j|}{2\cos\varphi})$,
\begin{align}
&\Sigma_{j\pm}'=\left\{\begin{array}{lll}
\frac{\xi_{j+1}+\xi_j}{2}+e^{i\pi }l,&\ 0>\xi>-0.25,\ j=1,...,n(\xi)-1\\
\frac{\xi_{j-1}+\xi_j}{2}-e^{i\pi }l,&\ 0\leq\xi< 2,\ j=2,...,n(\xi)\end{array}\right..
\end{align}
For convenience, denote $\Sigma_{n(\xi)\pm}'=\emptyset$ when $0>\xi>-0.25$ and $\Sigma_{1\pm}'=\emptyset$ when $ 0\leq\xi< 2$.
And $\frac{\pi}{8}>\varphi>0$ is an fixed sufficiently small angle  achieving following conditions:\\
1.  each $\Omega_i$ doesn't intersect $\left\lbrace z\in\mathbb{C};\text{Im }\theta(z)=0\right\rbrace  $ and  any of $\mathbb{D}_n$ or $\overline{\mathbb{D}}_n$,\\
2. $2\tan\varphi>\xi_{n(\xi)/2}-\xi_{n(\xi)/2+1}$.\\
This contours separate complex plane $\mathbb{C}$ into  sectors  shown in Figure \ref{FigOmig}.
\begin{figure}[p]
	\subfigure[]{
		\begin{tikzpicture}
		\draw[cyan!20, fill=cyan!20] (-5,0.5)--(-5,-0.5)--(-4,0)--(-2.5,-0.6)--(-1,0)--(0,-0.5)--(1,0)--(2.5,-0.6)--(4,0)--(5,-0.5)--(5,0.5)--(4,0)--(2.5,0.6)--(1,0)--(0,0.5)--(-1,0)--(-2.5,0.6)--(-4,0)--(-5,0.5);
		\draw(-4,0)--(-5,0.5)node[above]{\scriptsize$\Sigma_{44}$};
		\draw[-<](-4,0)--(-4.5,0.25);
		\draw(-4,0)--(-2.5,0.6);
		\draw[->](-4,0)--(-3.25,-0.3)node[below]{\scriptsize$\Sigma_{42}$};
		\draw(-4,0)--(-5,-0.5)node[below]{\scriptsize$\Sigma_{43}$};
		\draw[->](-4,0)--(-3.25,0.3)node[above]{\scriptsize$\Sigma_{41}$};
		\draw(-4,0)--(-2.5,-0.6);
		\draw[-<](-4,0)--(-4.5,-0.25);
		\draw(-1,0)--(0,0.5);
		\draw[->](-1,0)--(-0.5,0.25)node[above]{\scriptsize$\Sigma_{34}$};
		\draw(-1,0)--(-2.5,0.6);
		\draw[-<](-1,0)--(-1.75,-0.3)node[below]{\scriptsize$\Sigma_{32}$};
		\draw(-1,0)--(0,-0.5);
		\draw[-<](-1,0)--(-1.75,0.3)node[above]{\scriptsize$\Sigma_{31}$};
		\draw(-1,0)--(-2.5,-0.6);
		\draw[->](-1,0)--(-0.5,-0.25)node[below]{\scriptsize$\Sigma_{33}$};
		\draw[dashed](-5,0)--(5,0)node[right]{ Re$z$};
		\draw(1,0)--(0,0.5);
		\draw[-<](1,0)--(0.5,0.25)node[above]{\scriptsize$\Sigma_{24}$};
		\draw(1,0)--(2.5,0.6);
		\draw[->](1,0)--(1.75,-0.3)node[below]{\scriptsize$\Sigma_{22}$};
		\draw(1,0)--(0,-0.5);
		\draw[->](1,0)--(1.75,0.3)node[above]{\scriptsize$\Sigma_{21}$};
		\draw(1,0)--(2.5,-0.6);
		\draw[-<](1,0)--(0.5,-0.25)node[below]{\scriptsize$\Sigma_{23}$};
		\draw(4,0)--(5,0.5)node[above]{\scriptsize$\Sigma_{14}$};
		\draw[->](4,0)--(4.5,0.25);
		\draw(4,0)--(2.5,0.6);
		\draw[-<](4,0)--(3.25,-0.3)node[below]{\scriptsize$\Sigma_{12}$};
		\draw(4,0)--(5,-0.5)node[below]{\scriptsize$\Sigma_{13}$};
		\draw[-<](4,0)--(3.25,0.3)node[above]{\scriptsize$\Sigma_{11}$};
		\draw(4,0)--(2.5,-0.6);
		\draw[->](4,0)--(4.5,-0.25);
		\draw[->](2.5,0)--(2.5,0.6)node[above]{\scriptsize$\Sigma_{2+}'$};
		\draw[->](2.5,0)--(2.5,-0.6)node[below]{\scriptsize$\Sigma_{2-}'$};
		\draw[->](-2.5,0)--(-2.5,0.6)node[above]{\scriptsize$\Sigma_{4+}'$};
		\draw[->](-2.5,0)--(-2.5,-0.6)node[below]{\scriptsize$\Sigma_{4-}'$};
		\draw[->](0,0)--(0,0.5)node[above]{\scriptsize$\Sigma_{3+}'$};
		\draw[->](0,0)--(0,-0.5)node[below]{\scriptsize$\Sigma_{3-}'$};
		\coordinate (I) at (0,0);
		\fill (I) circle (1pt) node[below] {$0$};
		\coordinate (A) at (-4,0);
		\fill (A) circle (1pt) node[below] {$\xi_4$};
		\coordinate (b) at (-1,0);
		\fill (b) circle (1pt) node[below] {$\xi_3$};
		\coordinate (e) at (4,0);
		\fill (e) circle (1pt) node[below] {$\xi_1$};
		\coordinate (f) at (1,0);
		\fill (f) circle (1pt) node[below] {$\xi_2$};
		\coordinate (ke) at (4.7,0.1);
		\fill (ke) circle (0pt) node[below] {\tiny$\Omega_{13}$};
		\coordinate (k1e) at (4.7,-0.1);
		\fill (k1e) circle (0pt) node[above] {\tiny$\Omega_{14}$};
		\coordinate (le) at (3,0.1);
		\fill (le) circle (0pt) node[below] {\tiny$\Omega_{12}$};
		\coordinate (l1e) at (3,-0.1);
		\fill (l1e) circle (0pt) node[above] {\tiny$\Omega_{11}$};
		\coordinate (n2) at (0.27,0.1);
		\fill (n2) circle (0pt) node[below] {\tiny$\Omega_{23}$};
		\coordinate (n12) at (0.27,-0.1);
		\fill (n12) circle (0pt) node[above] {\tiny$\Omega_{24}$};
		\coordinate (m2) at (2.25,0.1);
		\fill (m2) circle (0pt) node[below] {\tiny$\Omega_{22}$};
		\coordinate (m12) at (2.25,-0.1);
		\fill (m12) circle (0pt) node[above] {\tiny$\Omega_{21}$};
		\coordinate (k) at (-4.7,0.1);
		\fill (k) circle (0pt) node[below] {\tiny$\Omega_{43}$};
		\coordinate (k1) at (-4.7,-0.1);
		\fill (k1) circle (0pt) node[above] {\tiny$\Omega_{44}$};
		\coordinate (l) at (-3,0.1);
		\fill (l) circle (0pt) node[below] {\tiny$\Omega_{42}$};
		\coordinate (l1) at (-3,-0.1);
		\fill (l1) circle (0pt) node[above] {\tiny$\Omega_{41}$};
			\coordinate (n) at (-0.27,0.1);
		\fill (n) circle (0pt) node[below] {\tiny$\Omega_{33}$};
		\coordinate (n1) at (-0.27,-0.1);
		\fill (n1) circle (0pt) node[above] {\tiny$\Omega_{34}$};
		\coordinate (m) at (-2.2,0.1);
		\fill (m) circle (0pt) node[below] {\tiny$\Omega_{32}$};
		\coordinate (m1) at (-2.2,-0.1);
		\fill (m1) circle (0pt) node[above] {\tiny$\Omega_{31}$};
		\coordinate (c) at (-2,0);
		\fill[red] (c) circle (1pt) node[below] {\scriptsize$-1$};
		\coordinate (d) at (2,0);
		\fill[red] (d) circle (1pt) node[below] {\scriptsize$1$};
		\end{tikzpicture}
		\label{case1}}
	\subfigure[]{
		\begin{tikzpicture}
		\draw[cyan!20, fill=cyan!20](-6.5,0.9)--(-6.5,-0.9)--(-5.4,0)--(-4.7,-0.6)--(-4,0)--(-3.1,-0.7)--(-2.2,0)--(-1.5,-0.6)--(-0.8,0)
--(-0,-0.7)--(0.8,0)--(1.5,-0.6)--(2.2,0)--(3.1,-0.7)--(4,0)--(4.7,-0.6)--(5.4,0)--(6.5,-0.9)--(6.5,0.9)--(5.4,0)--(4.7,0.6)
--(4,0)--(3.1,0.7)--(2.2,0)--(1.5,0.6)--(0.8,0)--(-0,0.7)--(-0.8,0)--(-1.5,0.6)--(-2.2,0)--(-3.1,0.7)--(-4,0)--(-4.7,0.6)--(-5.4,0)--(-6.5,0.9);
		\draw[dashed](-6.5,0)--(6.5,0)node[right]{ Re$z$};
		\coordinate (I) at (0,0);
		\fill (I) circle (1pt) node[below] {$0$};
		\coordinate (c) at (-3,0);
		\fill[red] (c) circle (1pt) node[below] {\scriptsize$-1$};
		\coordinate (D) at (3,0);
		\fill[red] (D) circle (1pt) node[below] {\scriptsize$1$};
			\draw(-0.8,0)--(-0,0.7);
		\draw[->](-0.8,0)--(-0.4,0.35)node[above]{\scriptsize$\Sigma_{51}$};
		\draw(-0.8,0)--(-1.5,0.6);
		\draw[-<](-0.8,0)--(-1.15,-0.3)node[below]{\scriptsize$\Sigma_{53}$};
		\draw(-0.8,0)--(-0,-0.7);
		\draw[-<](-0.8,0)--(-1.15,0.3)node[above]{\scriptsize$\Sigma_{54}$};
		\draw(-0.8,0)--(-1.5,-0.6);
		\draw[->](-0.8,0)--(-0.4,-0.35)node[below]{\scriptsize$\Sigma_{52}$};
		\draw(-2.2,0)--(-1.5,0.6);
		\draw[-<](-2.2,0)--(-2.65,0.35)node[above]{\scriptsize$\Sigma_{61}$};
		\draw(-2.2,0)--(-1.5,-0.6);
		\draw[->](-2.2,0)--(-1.85,-0.3)node[below]{\scriptsize$\Sigma_{63}$};
		\draw(-2.2,0)--(-3.1,0.7);
		\draw[->](-2.2,0)--(-1.85,0.3)node[above]{\scriptsize$\Sigma_{64}$};
		\draw(-2.2,0)--(-3.1,-0.7);
		\draw[-<](-2.2,0)--(-2.65,-0.35)node[below]{\scriptsize$\Sigma_{62}$};
		\draw(-5.4,0)--(-6.5,0.9)node[above]{\scriptsize$\Sigma_{81}$};
		\draw[-<](-5.4,0)--(-5.95,0.45);
		\draw(-5.4,0)--(-4.7,0.6);
		\draw[->](-5.4,0)--(-5.05,-0.3)node[below]{\scriptsize$\Sigma_{83}$};
		\draw(-5.4,0)--(-6.5,-0.9)node[below]{\scriptsize$\Sigma_{82}$};
		\draw[->](-5.4,0)--(-5.05,0.3)node[above]{\scriptsize$\Sigma_{84}$};
		\draw(-5.4,0)--(-4.7,-0.6);
		\draw[-<](-5.4,0)--(-5.95,-0.45);
		\draw(-4,0)--(-3.1,0.7);
		\draw[->](-4,0)--(-3.55,0.35)node[above]{\scriptsize$\Sigma_{71}$};
		\draw(-4,0)--(-4.7,0.6);
		\draw[-<](-4,0)--(-4.35,-0.3)node[below]{\scriptsize$\Sigma_{73}$};
		\draw(-4,0)--(-3.1,-0.7);
		\draw[-<](-4,0)--(-4.35,0.3)node[above]{\scriptsize$\Sigma_{74}$};
		\draw(-4,0)--(-4.7,-0.6);
		\draw[->](-4,0)--(-3.55,-0.35)node[below]{\scriptsize$\Sigma_{72}$};
		\draw[->](-1.5,0)--(-1.5,0.6)node[above]{\scriptsize$\Sigma_{5,+}'$};
		\draw[->](-1.5,0)--(-1.5,-0.6)node[below]{\scriptsize$\Sigma_{5,-}'$};
		\draw[->](-4.7,0)--(-4.7,0.6)node[above]{\scriptsize$\Sigma_{7,+}'$};
		\draw[->](-4.7,0)--(-4.7,-0.6)node[below]{\scriptsize$\Sigma_{7,-}'$};
		\draw[->](-3.1,0)--(-3.1,0.7)node[above]{\scriptsize$\Sigma_{6,+}'$};
		\draw[->](-3.1,0)--(-3.1,-0.7)node[below]{\scriptsize$\Sigma_{6,-}'$};
		\draw(0.8,0)--(0,0.7);
		\draw[-<](0.8,0)--(0.4,0.35)node[above]{\scriptsize$\Sigma_{41}$};
		\draw(0.8,0)--(1.5,0.6);
		\draw[->](0.8,0)--(1.15,-0.3)node[below]{\scriptsize$\Sigma_{43}$};
		\draw(0.8,0)--(0,-0.7);
		\draw[->](0.8,0)--(1.15,0.3)node[above]{\scriptsize$\Sigma_{44}$};
		\draw(0.8,0)--(1.5,-0.6);
		\draw[-<](0.8,0)--(0.4,-0.35)node[below]{\scriptsize$\Sigma_{42}$};
		\draw(2.2,0)--(1.5,0.6);
		\draw[->](2.2,0)--(2.65,0.35)node[above]{\scriptsize$\Sigma_{31}$};
		\draw(2.2,0)--(1.5,-0.6);
		\draw[-<](2.2,0)--(1.85,-0.3)node[below]{\scriptsize$\Sigma_{33}$};
		\draw(2.2,0)--(3.1,0.7);
		\draw[-<](2.2,0)--(1.85,0.3)node[above]{\scriptsize$\Sigma_{34}$};
		\draw(2.2,0)--(3.1,-0.7);
		\draw[->](2.2,0)--(2.65,-0.35)node[below]{\scriptsize$\Sigma_{32}$};
		\draw(5.4,0)--(6.5,0.9)node[above]{\scriptsize$\Sigma_{11}$};
		\draw[->](5.4,0)--(5.95,0.45);
		\draw(5.4,0)--(4.7,0.6);
		\draw[-<](5.4,0)--(5.05,-0.3)node[below]{\scriptsize$\Sigma_{13}$};
		\draw(5.4,0)--(6.5,-0.9)node[below]{\scriptsize$\Sigma_{12}$};
		\draw[-<](5.4,0)--(5.05,0.3)node[above]{\scriptsize$\Sigma_{14}$};
		\draw(5.4,0)--(4.7,-0.6);
		\draw[->](5.4,0)--(5.95,-0.45);
		\draw(4,0)--(3.1,0.7);
		\draw[-<](4,0)--(3.55,0.35)node[above]{\scriptsize$\Sigma_{21}$};
		\draw(4,0)--(4.7,0.6);
		\draw[->](4,0)--(4.35,-0.3)node[below]{\scriptsize$\Sigma_{23}$};
		\draw(4,0)--(3.1,-0.7);
		\draw[->](4,0)--(4.35,0.3)node[above]{\scriptsize$\Sigma_{24}$};
		\draw(4,0)--(4.7,-0.6);
		\draw[-<](4,0)--(3.55,-0.35)node[below]{\scriptsize$\Sigma_{22}$};
		\draw[->](1.5,0)--(1.5,0.6)node[above]{\scriptsize$\Sigma_{3+}'$};
		\draw[->](1.5,0)--(1.5,-0.6)node[below]{\scriptsize$\Sigma_{3-}'$};
		\draw[->](4.7,0)--(4.7,0.6)node[above]{\scriptsize$\Sigma_{1+}'$};
		\draw[->](4.7,0)--(4.7,-0.6)node[below]{\scriptsize$\Sigma_{1-}'$};
		\draw[->](3.1,0)--(3.1,0.7)node[above]{\scriptsize$\Sigma_{2+}'$};
		\draw[->](3.1,0)--(3.1,-0.7)node[below]{\scriptsize$\Sigma_{2-}'$};
		\draw[->](0,0)--(0,0.7)node[above]{\scriptsize$\Sigma_{4+}'$};
		\draw[->](0,0)--(0,-0.7)node[below]{\scriptsize$\Sigma_{4-}'$};
		\coordinate (A) at (-5.4,0);
		\fill (A) circle (1pt) node[below] {$\xi_8$};
		\coordinate (b) at (-4,0);
		\fill (b) circle (1pt) node[below] {$\xi_7$};
		\coordinate (C) at (-0.8,0);
		\fill (C) circle (1pt) node[below] {$\xi_5$};
		\coordinate (d) at (-2.2,0);
		\fill (d) circle (1pt) node[below] {$\xi_6$};
		\coordinate (E) at (5.4,0);
		\fill (E) circle (1pt) node[below] {$\xi_1$};
		\coordinate (R) at (4,0);
		\fill (R) circle (1pt) node[below] {$\xi_2$};
		\coordinate (T) at (0.8,0);
		\fill (T) circle (1pt) node[below] {$\xi_4$};
		\coordinate (Y) at (2.2,0);
		\fill (Y) circle (1pt) node[below] {$\xi_3$};
		\coordinate (q) at (6.2,-0.1);
		\fill (q) circle (0pt) node[above] {\tiny$\Omega_{11}$};
		\coordinate (q1) at (6.2,0.05);
		\fill (q1) circle (0pt) node[below] {\tiny$\Omega_{12}$};
		\coordinate (w) at (4.95,-0.1);
		\fill (w) circle (0pt) node[above] {\tiny$\Omega_{14}$};
		\coordinate (w1) at (4.95,0.1);
		\fill (w1) circle (0pt) node[below] {\tiny$\Omega_{13}$};
		\coordinate (e) at (4.47,-0.1);
		\fill (e) circle (0pt) node[above] {\tiny$\Omega_{24}$};
		\coordinate (e1) at (4.47,0.1);
		\fill (e1) circle (0pt) node[below] {\tiny$\Omega_{23}$};
		\coordinate (r) at (3.4,-0.1);
		\fill (r) circle (0pt) node[above] {\tiny$\Omega_{21}$};
		\coordinate (r1) at (3.4,0.05);
		\fill (r1) circle (0pt) node[below] {\tiny$\Omega_{22}$};
		\coordinate (t) at (2.7,-0.1);
		\fill (t) circle (0pt) node[above] {\tiny$\Omega_{31}$};
		\coordinate (t1) at (2.7,0.05);
		\fill (t1) circle (0pt) node[below] {\tiny$\Omega_{32}$};
		\coordinate (y) at (1.75,-0.1);
		\fill (y) circle (0pt) node[above] {\tiny$\Omega_{34}$};
		\coordinate (y1) at (1.75,0.1);
		\fill (y1) circle (0pt) node[below] {\tiny$\Omega_{33}$};
		\coordinate (l) at (1.26,-0.1);
		\fill (l) circle (0pt) node[above] {\tiny$\Omega_{44}$};
		\coordinate (l1) at (1.26,0.1);
		\fill (l1) circle (0pt) node[below] {\tiny$\Omega_{43}$};
		\coordinate (k) at (0.3,-0.1);
		\fill (k) circle (0pt) node[above] {\tiny$\Omega_{41}$};
		\coordinate (k1) at (0.3,0.1);
		\fill (k1) circle (0pt) node[below] {\tiny$\Omega_{42}$};
			\coordinate (q8) at (-6.2,-0.1);
		\fill (q8) circle (0pt) node[above] {\tiny$\Omega_{81}$};
		\coordinate (q18) at (-6.2,0.05);
		\fill (q18) circle (0pt) node[below] {\tiny$\Omega_{82}$};
		\coordinate (w8) at (-4.95,-0.1);
		\fill (w8) circle (0pt) node[above] {\tiny$\Omega_{84}$};
		\coordinate (w18) at (-4.95,0.1);
		\fill (w18) circle (0pt) node[below] {\tiny$\Omega_{83}$};
		\coordinate (e7) at (-4.47,-0.1);
		\fill (e7) circle (0pt) node[above] {\tiny$\Omega_{74}$};
		\coordinate (e17) at (-4.47,0.1);
		\fill (e17) circle (0pt) node[below] {\tiny$\Omega_{73}$};
		\coordinate (7r) at (-3.4,-0.1);
		\fill (7r) circle (0pt) node[above] {\tiny$\Omega_{71}$};
		\coordinate (r17) at (-3.4,0.05);
		\fill (r17) circle (0pt) node[below] {\tiny$\Omega_{72}$};
		\coordinate (t6) at (-2.7,-0.1);
		\fill (t6) circle (0pt) node[above] {\tiny$\Omega_{61}$};
		\coordinate (t16) at (-2.7,0.05);
		\fill (t16) circle (0pt) node[below] {\tiny$\Omega_{62}$};
		\coordinate (y6) at (-1.75,-0.1);
		\fill (y6) circle (0pt) node[above] {\tiny$\Omega_{64}$};
		\coordinate (y16) at (-1.75,0.1);
		\fill (y16) circle (0pt) node[below] {\tiny$\Omega_{63}$};
		\coordinate (l5) at (-1.26,-0.1);
		\fill (l5) circle (0pt) node[above] {\tiny$\Omega_{54}$};
		\coordinate (l15) at (-1.26,0.1);
		\fill (l15) circle (0pt) node[below] {\tiny$\Omega_{53}$};
		\coordinate (k5) at (-0.3,-0.1);
		\fill (k5) circle (0pt) node[above] {\tiny$\Omega_{51}$};
		\coordinate (k15) at (-0.3,0.1);
		\fill (k15) circle (0pt) node[below] {\tiny$\Omega_{52}$};
		\end{tikzpicture}
		\label{case2}}
	\caption{Figure (a) and (b) are corresponding to the  $0\leq\xi<2$ and  $-\frac{1}{4}<\xi<0$ respectively. $\Sigma_{ij}$ separate
		complex plane $\mathbb{C}$ into some  sectors denoted by $\Omega_{ij}$.}
	\label{FigOmig}
\end{figure}
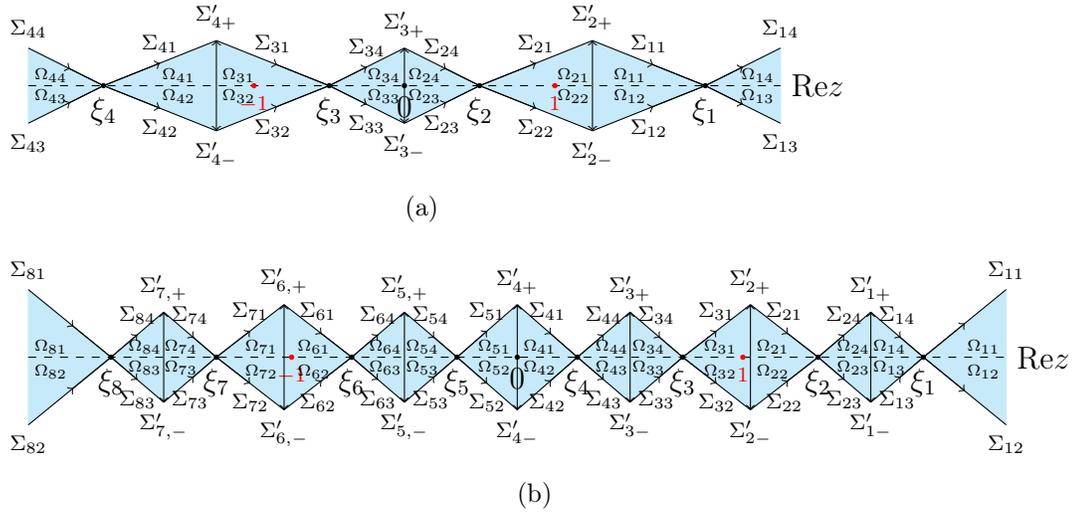
 In addition, for these two cases, let
\begin{align}
	&\Omega(\xi)=\underset{j=1,..,n(\xi)}{\underset{k=1,...,4,}{\cup}}\Omega_{jk},\hspace{0.5cm} \Omega_\pm(\xi)=\mathbb{C}\setminus\Omega,\\
	&\tilde{\Sigma}(\xi)=\left( \underset{j=1,..,n(\xi)}{\underset{k=1,...,4,}{\cup}}\Sigma_{jk}\right) \cup\left(\underset{j=1,...,n(\xi)}{\cup}\Sigma'{j\pm} \right) ,\hspace{0.5cm}\\
	&\Sigma^{(2)}(\xi)=\tilde{\Sigma}(\xi)\underset{n\in\mathcal{N}\setminus\Lambda}{\cup}\left( \partial\overline{\mathbb{D}}_n\cup\partial\mathbb{D}_n\right)  .
\end{align}

\begin{figure}[H]
	\centering
		\begin{tikzpicture}[node distance=2cm]
		\draw[pink!30, fill=pink!30] (0,0)--(3,-0.5)--(3,0.5)--(0,0)--(-3,-0.5)--(-3,0.5)--(0,0);
		\draw(0,0)--(3,0.5)node[above]{$\Sigma_1$};
		\draw(0,0)--(-3,0.5)node[left]{$\Sigma_2$};
		\draw(0,0)--(-3,-0.5)node[left]{$\Sigma_3$};
		\draw(0,0)--(3,-0.5)node[right]{$\Sigma_4$};
		\draw[->](-4,0)--(4,0)node[right]{ Re$z$};
		\draw[->](0,-3)--(0,3)node[above]{ Im$z$};
		\draw[-latex](0,0)--(-1.5,-0.25);
		\draw[-latex](0,0)--(-1.5,0.25);
		\draw[-latex](0,0)--(1.5,0.25);
		\draw[-latex](0,0)--(1.5,-0.25);
		\coordinate (C) at (-0.2,2.2);
		\coordinate (D) at (2.2,0.2);
		\fill (D) circle (0pt) node[right] {\footnotesize $\Omega_1$};
		\coordinate (J) at (-2.2,-0.2);
		\fill (J) circle (0pt) node[left] {\footnotesize $\Omega_3$};
		\coordinate (k) at (-2.2,0.2);
		\fill (k) circle (0pt) node[left] {\footnotesize $\Omega_2$};
		\coordinate (k) at (2.2,-0.2);
		\fill (k) circle (0pt) node[right] {\footnotesize $\Omega_4$};
		\coordinate (I) at (0.2,0);
		\fill (I) circle (0pt) node[below] {$0$};
		\draw[red] (2,0) arc (0:360:2);
		\draw[blue] (2,3) circle (0.12);
		\draw[blue][->](0,0)--(-1.5,0);
		\draw[blue][->](-1.5,0)--(-2.8,0);
		\draw[blue][->](0,0)--(1.5,0);
		\draw[blue][->](1.5,0)--(2.8,0);
		\draw[blue][->](0,2.7)--(0,2.2);
		\draw[blue][->](0,1.6)--(0,0.8);
		\draw[blue][->](0,-2.7)--(0,-2.2);
		\draw[blue][->](0,-1.6)--(0,-0.8);
		\coordinate (A) at (2,3);
		\coordinate (B) at (2,-3);
		\coordinate (C) at (-0.5546996232,0.8320505887);
		\coordinate (D) at (-0.5546996232,-0.8320505887);
		\coordinate (E) at (0.5546996232,0.8320505887);
		\coordinate (F) at (0.5546996232,-0.8320505887);
		\coordinate (G) at (-2,3);
		\coordinate (H) at (-2,-3);
		\coordinate (I) at (2,0);
		\draw[blue] (2,-3) circle (0.12);
		\draw[blue] (-0.55469962326,0.8320505887) circle (0.12);
		\draw[blue] (0.5546996232,0.8320505887) circle (0.12);
		\draw[blue] (-0.5546996232,-0.8320505887) circle (0.12);
		\draw[blue] (0.5546996232,-0.8320505887) circle (0.12);
		\draw[blue] (-2,3) circle (0.12);
		\draw[blue] (-2,-3) circle (0.12);
		\coordinate (J) at (1.7320508075688774,1);
		\coordinate (K) at (1.7320508075688774,-1);
		\coordinate (L) at (-1.7320508075688774,1);
		\coordinate (M) at (-1.7320508075688774,-1);
		\fill (A) circle (1pt) node[right] {$z_n$};
		\fill (B) circle (1pt) node[right] {$\bar{z}_n$};
		\fill (C) circle (1pt) node[left] {$-\frac{1}{z_n}$};
		\fill (D) circle (1pt) node[left] {$-\frac{1}{\bar{z}_n}$};
		\fill (E) circle (1pt) node[right] {$\frac{1}{\bar{z}_n}$};
		\fill (F) circle (1pt) node[right] {$\frac{1}{z_n}$};
		\fill (G) circle (1pt) node[left] {$-\bar{z}_n$};
		\fill (H) circle (1pt) node[left] {$-z_n$};
		\fill (I) circle (1pt) node[above] {$1$};
		\fill (J) circle (1pt) node[right] {$w_m$};
		\fill (K) circle (1pt) node[right] {$\bar{w}_m$};
		\fill (L) circle (1pt) node[left] {$-\bar{w}_m$};
		\fill (M) circle (1pt) node[left] {$-w_m$};
		\end{tikzpicture}
	\caption{The yellow region is $\Omega(\xi)$. The blue circle around poles not on $\left\lbrace z\in\mathbb{C}|\text{Im }\theta(z)=0\right\rbrace  $(here take $z_n$ as an example) constitute $\Sigma^{(2)}(\xi)$ together. }
	\label{figR2}
\end{figure}
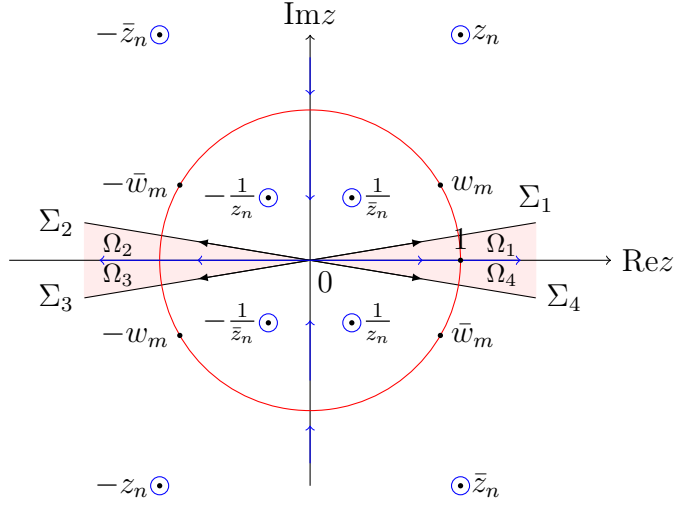

\begin{lemma}
	Set $\xi=\frac{y}{t}\in(-\infty,-0.25)\cup(2,+\infty)$. And $f(x)=x+\frac{1}{x}$ is a  real-valued function for $x\in\mathbb{R}$. Then the imaginary part of phase function (\ref{Reitheta}) $\text{Im }\theta(z)$ have following estimation:
		
	Case I: for $\xi=\frac{y}{t}\in(2,+\infty)$,
	\begin{align}
	&\text{Im }\theta(z)\geq |\sin \varphi|f(l)\left(\frac{\xi}{4}-\frac{1}{\cos2\varphi+1}\right) ,\hspace{0.5cm} \text{as }z\in\Omega_3, \Omega_4;\\
	&\text{Im }\theta(z)\leq -|\sin \varphi|f(l)\left(\frac{\xi}{4}-\frac{1}{\cos2\varphi+1}\right) ,\hspace{0.5cm} \text{as }z\in\Omega_1, \Omega_2.
	\end{align}
	
	Case IV: for $\xi=\frac{y}{t}\in(-\infty,-0.25)$,
	\begin{align}
		&\text{Im }\theta(z)\geq |\sin \varphi|f(l)\left(-\frac{\xi}{4}-\frac{1}{8(\cos2\varphi+1)} \right) ,\hspace{0.5cm} \text{as }z\in\Omega_1, \Omega_2;\\
		&\text{Im }\theta(z)\leq -|\sin \varphi|f(l)\left(-\frac{\xi}{4}-\frac{1}{8(\cos2\varphi+1)} \right) ,\hspace{0.5cm} \text{as }z\in\Omega_3, \Omega_4.
	\end{align}
\end{lemma}
\begin{proof}
	We  take $z\in\Omega_1$ as an example, and the other regions are similarly.
	From (\ref{Reitheta}),  for $z=le^{i\phi}$, rewrite $\text{Im }\theta(z)$ as
	\begin{align}
	\text{Im }\theta(z)=f(l)\sin\phi\left(-\frac{\xi}{4}+2\frac{2\cos2\phi+6-f(l)^2}{\left( f(l)^2+2\cos2\phi-2\right)^2 } \right) .
	\end{align}
	Denote
	\begin{align}
	h(x;a)=\frac{2a+6-x}{\left( x+2a-2\right)^2 },
	\end{align}
	with $x\geq 4$ and $0<a\leq 1$. Then
	\begin{align}
	\frac{\partial h}{\partial x}=\frac{x-(6a+10)}{\left( x+2a-2\right)^3 }.
	\end{align}
	So $h(x;a)$ has minimum value
	\begin{align}
	h(6a+10;a)=-\frac{1}{8(a+1)}.
	\end{align}
	Together with
	\begin{align}
	h(4;a)=\frac{1}{2(a+1)},\hspace{0.5cm}\lim_{x\to +\infty}h(x;a)=0,
	\end{align}
	we  have that $h(x;a)\in\left(-\frac{1}{8(a+1)},\frac{1}{2(a+1)} \right) $. Then the result is obtained.
\end{proof}
\begin{corollary}\label{Imtheta}
		Set $\xi=\frac{y}{t}\in(-\infty,-0.25)\cup(2,+\infty)$. There exist a constant $c(\xi)>0$ relied on $\xi$ that the imaginary part of phase function (\ref{Reitheta}) $\text{Im }\theta(z)$ have following evaluation for $z=le^{i\phi}=u+vi$:

	Case I: for $\xi=\frac{y}{t}\in(2,+\infty)$,
	\begin{align}
		&\text{Im }\theta(z)\geq c(\xi)v ,\hspace{0.5cm} \text{as }z\in\Omega_3, \Omega_4;\\
		&\text{Im }\theta(z)\leq -c(\xi)v ,\hspace{0.5cm} \text{as }z\in\Omega_1, \Omega_2.\label{3}
	\end{align}

	 Case IV: for $\xi=\frac{y}{t}\in(-\infty,-0.25)$
	 \begin{align}
	 &\text{Im }\theta(z)\geq c(\xi)v ,\hspace{0.5cm} \text{as }z\in\Omega_1, \Omega_2 \label{2};\\
	 &\text{Im }\theta(z)\leq  -c(\xi)v ,\hspace{0.5cm} \text{as }z\in\Omega_3, \Omega_4.
	 \end{align}
\end{corollary}
\begin{lemma}\label{theta2}
 There exist a constant $c(\xi)>0$ relied on $\xi=\frac{y}{t}\in(-0.25,2)$ that the imaginary part of phase function (\ref{Reitheta}) $\text{Im }\theta(z)$ have following estimation for $i=1,...,n(\xi)$:
	\begin{align}
	&\text{Im }\theta(z)\geq c(\xi)\text{Im}z\frac{|z|^2-\xi_i^2}{4+|z|^2} ,\hspace{0.5cm} \text{as }z\in\Omega_{i1}, \Omega_{i3};\\
	&\text{Im }\theta(z)\leq -c(\xi)\text{Im}z\frac{|z|^2-\xi_i^2}{4+|z|^2}  ,\hspace{0.5cm} \text{as }z\in\Omega_{i2}, \Omega_{i4}.
	\end{align}	
\end{lemma}
\begin{proof}
	We only give the detail of Case III( $\xi=\frac{y}{t}\in(-0.25,0)$) and take $z\in\Omega_{11}$ as an example, and the other regions are similarly.
	Denote $z=u+\xi_1+vi$ with $u,v\in \mathbb{R}$ and
	\begin{align}
	k=-\frac{1}{4}\left( z-\frac{1}{z}\right) ,\hspace{0.5cm}	k_1=-\frac{1}{4}\left( \xi_1-\frac{1}{\xi_1}\right).
	\end{align}
	Take notice of that $\xi_1>1$, so $k_1<1$. Moreover,
	 \begin{align}
	 \xi=2\frac{1-4k_1^2}{(1+4k_1^2)^2}<0.\label{xi1}
	 \end{align}
	 And denote
	\begin{align}
	&x\triangleq\text{Re}(k-k_1)=-\frac{1}{4}\left[ u+\frac{u^2+2\xi_1u+v^2}{\xi_1^2[(u+\xi_1)^2+v^2]}\right] ,\\
	&y\triangleq\text{Im}(k-k_1)=-\frac{1}{4}v\left( 1+\frac{1}{|z|^2}\right).
	\end{align}
	Then  the imaginary part of phase function (\ref{Reitheta}) $\text{Im }\theta(z)$ can be rewrite as
	\begin{align}
	\text{Im }\theta(z)=y\left[ \xi-2\frac{1-4|k|^2}{|1+4k^2|^2}\right] .
	\end{align}
	Obviously, a simple calculation gives that $\frac{1-4|k|^2}{|1+4k^2|^2}$ is a monotone increasing function of $y$, so
	 \begin{align}
	 \text{Im }\theta(z)\geq y\left[ \xi-2\frac{1-4(x+k_1)^2}{(1+4(x+k_1)^2)^2}\right] .
	 \end{align}
	 Substitute (\ref{xi1}) into $\xi-2\frac{1-4(x+k_1)^2}{(1+4(x+k_1)^2)^2}$ and obtained
	 \begin{align}
	 \xi-2\frac{1-4(x+k_1)^2}{(1+4(x+k_1)^2)^2}=8\left[ (x+k_1)^2-k_1^2\right] \frac{3-16k_1^2(x+k_1)^2+4(k_1^2+(x+k_1)^2)}{\left[1+4(x+k_1)^2 \right]^2(1+4k_1^2)^2 }
	 \end{align}
	 In the product above, the last item has nonzero  upper and lower bound for $x\geq0$, so
	 \begin{align}
	 \text{Im }\theta(z)&\gtrsim v\left( 1+\frac{1}{|z|^2}\right)\frac{(x+k_1)^2-k_1^2}{1+4(x+k_1)^2}\nonumber\\
	 &\gtrsim v\frac{|z|^2-\xi_1^2}{1+4|k|^2}
	 \gtrsim v\frac{|z|^2-\xi_1^2}{4+|z|^2}.
	 \end{align}
\end{proof}

For Case I and Case VI, introduce following functions for brief:
\begin{align}
	&p_1(z,\xi)=p_2(z,\xi)=\left\{\begin{array}{lll}-\dfrac{\bar{r}(z) }{1+|r(z)|^2},\text{ for }\xi<-0.25\\
	-r(z),\text{ for }\xi> 2\end{array}\right.,\\
	&p_3(z,\xi)=p_4(z,\xi)=\left\{\begin{array}{lll}\dfrac{r(z) }{1+|r(z)|^2},\text{ for }\xi<-0.25\\
	\bar{r}(z),\text{ for }\xi> 2\end{array}\right..
\end{align}
As in Case II and Case III, for $j=1,...,n(\xi)$,
\begin{align}
&p_{j1}(z,\xi)=-\dfrac{\bar{r}(z) }{1+|r(z)|^2},\hspace{0.5cm}p_{j3}(z,\xi)=-r(z),\\
&p_{j2}(z,\xi)=\dfrac{r(z) }{1+|r(z)|^2},\hspace{0.6cm}p_{j4}(z,\xi)=\bar{r}(z).
\end{align}
Besides, from $r\in W^{2,2}(\mathbb{R})$, it also has that $p_1'(z)$ and $p_3'(z)$ exist and are in $L^2(\mathbb{R})\cup L^\infty(\mathbb{R})$. And $\parallel p_1'(z)\parallel_p\lesssim \parallel r'(z)\parallel_p$ for $p=2,\infty$.
Then the next step is to construct a matrix function $R^{(2)}$. We need to remove jump on $\mathbb{R}$ and $i\mathbb{R}$, and  have some mild control on $\bar{\partial}R^{(2)}$ sufficient to ensure that the $\bar{\partial}$-contribution to the long-time asymptotics of $q(x, t)$ is negligible. Note that $\theta(z)$ has different property in different cases, so the construction of $R^{(2)}(z)$ depend on $x\xi$.
Then we choose $R^{(2)}(z,\xi)$ as:

Case I: for $\xi=\frac{y}{t}\in(2,+\infty)$,
\begin{equation}
	R^{(2)}(z,\xi)=\left\{\begin{array}{lll}
		\left(\begin{array}{cc}
			1 & 0\\
			R_j(z,\xi)e^{-2it\theta} & 1
		\end{array}\right), & z\in \Omega_j,j=1,2;\\
		\\
		\left(\begin{array}{cc}
			1 & R_j(z,\xi)e^{2it\theta}\\
			0 & 1
		\end{array}\right),  &z\in \Omega_j,j=3,4;\\
		\\
		I,  &elsewhere;\\
	\end{array}\right.\label{R(2)+}
\end{equation}

Case VI: for  $\xi=\frac{y}{t}\in(-\infty,-0.25)$,
\begin{equation}
R^{(2)}(z,\xi)=\left\{\begin{array}{lll}
\left(\begin{array}{cc}
1 & R_j(z,\xi)e^{2it\theta}\\
0 & 1
\end{array}\right), & z\in \Omega_j,j=1,2;\\
\\
\left(\begin{array}{cc}
1 & 0\\
R_j(z,\xi)e^{-2it\theta} & 1
\end{array}\right),  &z\in \Omega_j,j=3,4;\\
\\
I,  &elsewhere;\\
\end{array}\right.\label{R(2)-}
\end{equation}

where  the functions $R_j$, $j=1,2,..,8$, is defined in following Proposition.
\begin{Proposition}\label{proR}
	 $R_j$: $\bar{\Omega}_j\to C$, $j=1,2,..,8$ have boundary values as follow:
	
	 Case I: for $\xi=\frac{y}{t}\in(-\infty,-0.25)$,
	\begin{align}
&R_1(z,\xi)=\Bigg\{\begin{array}{ll}
	p_1(z,\xi)T_+(z)^{-2} & z\in \mathbb{R}^+,\\
	0  &z\in \Sigma_1,\\
\end{array} ,\hspace{0.6cm}
R_2(z,\xi)=\Bigg\{\begin{array}{ll}
	0  &z\in \Sigma_2,\\
	p_2(z,\xi)T_+(z)^{-2} &z\in  \mathbb{R}^-,\\
\end{array} \\
&R_3(z,\xi)=\Bigg\{\begin{array}{ll}
	p_3(z,\xi)T_-(z)^{2} &z\in \mathbb{R}^-, \\
	0 &z\in \Sigma_3,\\
\end{array} ,\hspace{0.6cm}
R_4(z,\xi)=\Bigg\{\begin{array}{ll}
	0  &z\in \Sigma_4,\\
	p_4(z,\xi)T_-(z)^{2} &z\in  \mathbb{R}^+,\\
\end{array} .
	\end{align}	
	Case II: for $\xi=\frac{y}{t}\in(2,+\infty)$,
		\begin{align}
		&R_1(z,\xi)=\Bigg\{\begin{array}{ll}
			p_1(z,\xi)T(z)^{2} & z\in \mathbb{R}^+,\\
			0  &z\in \Sigma_1,\\
		\end{array} ,\hspace{0.6cm}
		R_2(z,\xi)=\Bigg\{\begin{array}{ll}
			0  &z\in \Sigma_2,\\
			p_2(z,\xi)T(z)^{2} &z\in  \mathbb{R}^-,\\
		\end{array} \\
		&R_3(z,\xi)=\Bigg\{\begin{array}{ll}
			p_3(z,\xi)T(z)^{-2} &z\in \mathbb{R}^-, \\
			0 &z\in \Sigma_3,\\
		\end{array} ,\hspace{0.6cm}
		R_4(z,\xi)=\Bigg\{\begin{array}{ll}
			0  &z\in \Sigma_4,\\
			p_4(z,\xi)T(z)^{-2} &z\in  \mathbb{R}^+,\\
		\end{array} .
	\end{align}	
	
	And $R_j$  have following property:
	for $j=1,2,3,4,$
	\begin{align}
	&|\bar{\partial}R_j(z)|\lesssim|p_j'(|z|)|+|z|^{-1/2}, \text{for all $z\in \Omega_j$,}\label{dbarRj}
	\end{align}
	moreover
	\begin{align}
	 	&|\bar{\partial}R_j(z)|\lesssim|p_j'(|z|)|+|z|^{-1}, \text{for all $z\in \Omega_j$.}\label{dbarRj2}
	\end{align}
	And
	\begin{equation}
	\bar{\partial}R_j(z)=0,\hspace{0.5cm}\text{if } z\in elsewhere.
	\end{equation}
\end{Proposition}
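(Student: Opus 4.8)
The plan is to build each $R_j$ as a fixed-angle interpolation between the prescribed value on the real axis and zero on the ray $\Sigma_j$, multiplied by the appropriate power of $T(z)$, and then to read off the $\bar\partial$-estimates by differentiating this explicit interpolant in polar coordinates. Concretely, writing $z=le^{i\phi}$, I would set in Case I on $\Omega_1$
\[
R_1(z)=p_1(l)\,T(z)^{-2}\cos\!\Big(\tfrac{\pi\phi}{2\varphi}\Big),
\]
and define $R_2,R_3,R_4$ analogously, using $T^{-2}$ or $T^{2}$ as prescribed and the cosine of the angular distance from the relevant half-line, so that the angular factor equals $1$ on $\mathbb{R}^\pm$ and $0$ on $\Sigma_j$; Case II is identical after interchanging $T^{\pm2}$, and the construction on the four sectors $\Omega_1,\dots,\Omega_4$ extends by the symmetry reductions of $r$ and $T$ in Propositions~\ref{sym} and~\ref{proT}. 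The boundary values are then immediate: on $\mathbb{R}^+$ (resp.\ $\mathbb{R}^-$) the cosine equals $1$ and $T\to T_\pm$ by Proposition~\ref{proT}(c), while on $\Sigma_j$ the cosine vanishes so $R_j=0$, which makes $R^{(2)}$ continuous across each $\Sigma_j$ into the region where it equals $I$.

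For the analytic input I would first record that each $\Omega_j$ avoids the disks $\mathbb{D}(\zeta_n,\varrho)$ and $\mathbb{D}(\bar\zeta_n,\varrho)$ by the choice of $\varphi$, so that $T(z)^{\pm2}$ is holomorphic and bounded on $\overline{\Omega}_j$ (the Blaschke factors are nonsingular there and $\delta$ is bounded); in particular $\bar\partial(T^{\pm2})=0$ on $\Omega_j$. Using $\bar\partial=\tfrac12 e^{i\phi}\big(\partial_l+\tfrac{i}{l}\partial_\phi\big)$ one then computes
\[
\bar\partial R_1(z)=\tfrac12 e^{i\phi}\Big[p_1'(l)\cos\!\big(\tfrac{\pi\phi}{2\varphi}\big)-\tfrac{i\pi}{2\varphi l}\,p_1(l)\sin\!\big(\tfrac{\pi\phi}{2\varphi}\big)\Big]T(z)^{-2},
\]
whence $|\bar\partial R_1(z)|\lesssim|p_1'(l)|+l^{-1}|p_1(l)|$, and likewise for $j=2,3,4$. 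The term $|p_j'(|z|)|$ is left as is, exactly as in the statement, because $p_j'$ is only $L^2$ and not pointwise bounded.

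The heart of the matter is to convert $l^{-1}|p_j(l)|$ into the claimed $l^{-1/2}$ and $l^{-1}$ bounds. Here I would use two properties of $r$: it is bounded and decays at $\pm\infty$ (from $r\in H^{1,1}(\mathbb{R})$, Proposition~\ref{pror}, by Sobolev embedding), and $r(0)=0$ (established from $a(0)=1$). Since $p_j$ is $r$ or $\bar r$ divided by $1+|r|^2$, it inherits $p_j(0)=0$ and boundedness, and Cauchy--Schwarz gives $|p_j(l)|=\big|\int_0^l p_j'(s)\,ds\big|\le l^{1/2}\|p_j'\|_{L^2}\lesssim l^{1/2}\|r'\|_{L^2}$. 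This yields $l^{-1}|p_j(l)|\lesssim l^{-1/2}$ for $l\le1$, while boundedness of $p_j$ gives $l^{-1}|p_j(l)|\lesssim l^{-1}$ for $l\ge1$; since $l^{-1/2}\le l^{-1}$ on $(0,1]$ and $l^{-1}\le l^{-1/2}$ on $[1,\infty)$, each of the two bounds in fact holds on all of $(0,\infty)$, giving both (\ref{dbarRj}) and (\ref{dbarRj2}).

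The main obstacle is precisely this uniform control of $p_j$ near the origin, where the phase $\theta$ is singular and the naive factor $l^{-1}|p_j(l)|$ would blow up. The vanishing condition $r(0)=0$ together with the $H^{1,1}$ regularity is exactly what rescues the estimate, so the genuine content of the proof lies in invoking these properties of $r$ and in checking the holomorphy and boundedness of $T(z)^{\pm2}$ on the sectors; everything else is the routine polar-coordinate differentiation sketched above.
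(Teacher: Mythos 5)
Your proposal is correct and takes essentially the same route as the paper's own proof: the identical cosine angular interpolant $R_1(z)=p_1(|z|)T^{-2}(z)\cos(k_0\arg z)$ with $k_0=\pi/(2\varphi)$, the same polar-coordinate computation of $\bar\partial R_1$, and the same two bounds on $|z|^{-1}|p_1(|z|)|$ — Cauchy--Schwarz giving $|p_1(l)|\lesssim l^{1/2}$ for estimate (\ref{dbarRj}) and $p_1\in L^\infty$ for estimate (\ref{dbarRj2}). The details you make explicit (boundedness and holomorphy of $T^{\pm 2}$ on $\bar{\Omega}_j$, the role of $r(0)=0$ in writing $p_1(l)=\int_0^l p_1'(s)\,ds$, and the symmetry extension to the other sectors) are exactly the points the paper uses implicitly.
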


\begin{proof}
	For brief, we only proof case I.
	 Taking $R_1(z)$ as an example, its extensions can be constructed by:
	\begin{equation}
		R_1(z)=p_1(|z|)T^{-2}(z)\cos(k_0 \arg z),\hspace{0.5cm}k_0=\frac{ \pi}{2\varphi}.
	\end{equation}
	The other cases are easily inferred.  Denote $z=le^{i\phi}$, then we have $\bar{\partial}=\frac{e^{i\phi}}{2}\left(\partial_r+\frac{i}{r} \partial_\phi\right) $. So
	\begin{align}
	\bar{\partial}R_1(z)=\frac{e^{i\phi}}{2}T^2(z)\left(p_1'(r)\cos(k_0\phi)-\frac{i}{l}p_1(l)k_0\sin(k_0\phi) \right) .
	\end{align}
	There are two way to bound second term. First we use Cauchy-Schwarz inequality and obtain
	\begin{equation}
	|p_1(l)|=  |p_1(l)-p_1(0)|=|\int_{0}^lp_1'(s)ds|\leq \parallel p_1'(s)\parallel_{L^2} l^{1/2}\lesssim l^{1/2}.
	\end{equation}
	
	And note that $T(z)$ is a bounded function in $\bar{\Omega}_1$. Then the boundedness of (\ref{dbarRj})  follows immediately. On the side, $p_1(l)\in L^\infty$ ,  which implies (\ref{dbarRj2}).
\end{proof}

As in Case II($\xi=\frac{y}{t}\in[0,2)$) and Case III($\xi=\frac{y}{t}\in(-0.25,0)$),
\begin{equation}
R^{(2)}(z,\xi)=\left\{\begin{array}{lll}
\left(\begin{array}{cc}
1 & R_{kj}(z,\xi)e^{2it\theta}\\
0 & 1
\end{array}\right), & z\in \Omega_{kj},j=1,3,\ k=1,...,n(\xi);\\
\\
\left(\begin{array}{cc}
1 & 0\\
R_{kj}(z,\xi)e^{-2it\theta} & 1
\end{array}\right),  &z\in \Omega_{kj},j=2,4\ k=1,...,n(\xi);\\
\\
I,  &elsewhere;\\
\end{array}\right.\label{R(2)1}
\end{equation}

where  the functions $R_{kj}$, $j=1,2,3,4$, $k=1,...,n(\xi)$ are defined in following Proposition.
\begin{Proposition}\label{proR1}
	As in Case II($\xi=\frac{y}{t}\in[0,2)$) and Case III($\xi=\frac{y}{t}\in(-0.25,0)$),  the functions $R_{kj}$: $\bar{\Omega}_{kj}\to \mathbb{C}$, $j=1,2,3,4$, $k=1,...,n(\xi)$ have boundary values as follow:
	\begin{align}
	&R_{k1}(z,\xi)=\Bigg\{\begin{array}{ll}
	p_{k1}(z,\xi)T_+(z)^{-2} & z\in I_{k1},\\
	p_{k1}(\xi_k,\xi)T_k(\xi)^{-2}(z-\xi_k)^{-2i\nu(\xi_k)}  &z\in \Sigma_{k1},\\
	\end{array} ,\\
	&R_{k2}(z,\xi)=\Bigg\{\begin{array}{ll}
	p_{k2}(\xi_k,\xi)T_k(\xi)^{2}(z-\xi_k)^{2i\nu(\xi_k)}  &z\in \Sigma_{k2},\\
	p_{k2}(z,\xi)T_-(z)^{2} &z\in  I_{k2},\\
	\end{array} \\
	&R_{k3}(z,\xi)=\Bigg\{\begin{array}{ll}
	p_{k3}(z,\xi)T(z)^{-2} &z\in I_{k3}, \\
	p_{k3}(\xi_k,\xi)T_k(\xi)^{-2}(z-\xi_k)^{-2i\nu(\xi_k)} &z\in \Sigma_{k3},\\
	\end{array} \\
	&R_{k4}(z,\xi)=\Bigg\{\begin{array}{ll}
	p_{k4}(\xi_k,\xi)T_k(\xi)^{2}(z-\xi_k)^{2i\nu(\xi_k)}  &z\in \Sigma_{k4},\\
	p_{k4}(z,\xi)T(z)^{2} &z\in I_{k4},\\
	\end{array} ,
	\end{align}	
	where $I_{kj}$ is specified in (\ref{In1})-(\ref{In2}). And $R_{kj}$  have following property:
	\begin{align}
	&|R_{kj}(z,\xi)|\lesssim \sin^2(k_0\arg(z-\xi_k))+ \left(1+ \text{Re}(z)^2\right) ^{-1/2}, \text{for all $z\in \Omega_{kj}$},\label{R}\\
	&|\bar{\partial}R_{kj}(z,\xi)|\lesssim|p_{kj}'(\text{Re}z)|+|z-\xi_k|^{-1/2}, \text{for all $z\in \Omega_{kj}$.}\label{dbarRj3}
	\end{align}
	And
	\begin{equation}
	\bar{\partial}R_{kj}(z,\xi)=0,\hspace{0.5cm}\text{if } z\in elsewhere.
	\end{equation}
\end{Proposition}
\begin{proof}
	We give the  details for $R_{11}$ only. The other cases are easily inferred. Using the constants $T_k(\xi)$ defined in proposition \ref{proT}, give the extension of $R_{11}(z,\xi)$ on $\Omega_{11}$:
	\begin{align}
	R_{11}(z,\xi)=&p_{11}(\xi_1,\xi)T_1(\xi)^{-2}(z-\xi_1)^{-2i\nu(\xi_1)}\left[1-\cos\big(k_0\arg(z-\xi_1) \big)\right] \\
	&+\cos\big(k_0\arg(z-\xi_1)\big)p_{11}(\text{Re}z,\xi)T(z)^{-2}.
	\end{align}
	Let $z-\xi_1=le^{i\psi}=u+vi$, $l,\psi,u,v\in\mathbb{R}$. And from $r\in H^{1,1}(\mathbb{R})$, which means $p_{11}\in H^{1,1}(R)$ we have $|p_{11}(u)|\lesssim (1+u^2)^{-1/2}$. Together with (\ref{key}) we have (\ref{R}). Since
	\begin{equation*}
	\bar{\partial}=\frac{1}{2}\left( \partial_u+i\partial_v\right) =\frac{e^{i\psi}}{2}\left( \partial_l+il^{-1}\partial_\psi\right),
	\end{equation*}
	we have
	\begin{align}
	\bar{\partial}R_{11}&=\left(p_{11}(u,\xi)T(z)^{-2}-p_{11}(\xi_1,\xi)T_1(\xi)^{-2}(z-\xi_1)^{-2i\nu(\xi_1)} \right)\bar{\partial}\cos (k_0\psi)\\
	& +\frac{1}{2}T(z)^{-2}p_{11}'(u,\xi)\cos (k_0\psi).
	\end{align}
	Substitute (\ref{T-TJ}) into above equation, (\ref{dbarRj3}) comes immediately.
\end{proof}
In addition, from Proposition \ref{sym}, $R^{(2)}$ achieve the symmetry:
\begin{equation}
	R^{(2)}(z)=\sigma_3\overline{R^{(2)}(-\bar{z})}\sigma_3=\overline{R^{(2)}(-1/\bar{z})}=\sigma_3R^{(2)}(-1/z)\sigma_3.
\end{equation}
We now  use $R^{(2)}$ to define the new transformation \begin{equation}
	M^{(2)}(z;y,t)\triangleq M^{(2)}(z)=M^{(1)}(z)R^{(2)}(z)\label{transm2},
\end{equation}
which satisfies the following mixed $\bar{\partial}$-RH problem.

\begin{RHP}\label{RHP4}
Find a matrix valued function  $ M^{(2)}(z)$ with following properties:

$\blacktriangleright$ Analyticity:  $M^{(2)}(z)$ is continuous in $\mathbb{C}$,  sectionally continuous first partial derivatives in
$\mathbb{C}\setminus \left( \Sigma^{(2)}\cup \left\lbrace\zeta_n,\bar{\zeta}_n \right\rbrace_{n\in\Lambda} \right) $  and meromorphic out $\bar{\Omega}$;

$\blacktriangleright$ Symmetry: $M^{(2)}(z)=\sigma_3\overline{M^{(2)}(-\bar{z})}\sigma_3$=$F^{-2}\overline{M^{(2)}(-\bar{z}^{-1})}$=$F^2\sigma_3M^{(2)}(-z^{-1})\sigma_3$;

$\blacktriangleright$ Jump condition: $M^{(2)}$ has continuous boundary values $M^{(2)}_\pm$ on $\Sigma^{(2)}$ and
\begin{equation}
	M^{(2)}_+(z)=M^{(2)}_-(z)V^{(2)}(z),\hspace{0.5cm}z \in \Sigma^{(2)},
\end{equation}
where for $\xi=\frac{y}{t}\in(2,+\infty)$ or $\xi=\frac{y}{t}\in(-\infty,-0.25)$
\begin{equation}
	V^{(2)}(z)=\left\{ \begin{array}{ll}
		\left(\begin{array}{cc}
			1 & 0\\
			-C_n(z-\zeta_n)^{-1}T^2(z)e^{-2it\theta_n} & 1
		\end{array}\right),   &\text{as } 	z\in\partial\mathbb{D}_n,n\in\nabla;\\[12pt]
		\left(\begin{array}{cc}
			1 & -C_n^{-1}(z-\zeta_n)T^{-2}(z)e^{2it\theta_n}\\
			0 & 1
		\end{array}\right),   &\text{as } z\in\partial\mathbb{D}_n,n\in\Delta;\\
		\left(\begin{array}{cc}
			1 & \bar{C}_n(z-\bar{\zeta}_n)^{-1}T^{-2}(z)e^{2it\bar{\theta}_n}\\
			0 & 1
		\end{array}\right),   &\text{as } 	z\in\partial\overline{\mathbb{D}}_n,n\in\nabla;\\
		\left(\begin{array}{cc}
			1 & 0	\\
			\bar{C}_n^{-1}(z-\bar{\zeta}_n)e^{-2it\bar{\theta}_n}T^2(z) & 1
		\end{array}\right),   &\text{as } 	z\in\partial\overline{\mathbb{D}}_n,n\in\Delta;\\
	\end{array}\right.,\label{jumpv2}
\end{equation}
and for $\xi=\frac{y}{t}\in(-0.25,2)$
\begin{equation}
V^{(2)}(z)=\left\{
\begin{array}{ll}
R^{(2)}(z)|_{\Sigma_{k1}\cup\Sigma_{k4}}&\text{as } 	z\in\Sigma_{k1}\cup\Sigma_{k4};\\[12pt]
R^{(2)}(z)^{-1}|_{\Sigma_{k2}\cup\Sigma_{k3}}&\text{as } 	z\in\Sigma_{k2}\cup\Sigma_{k3};\\[12pt]
R^{(2)}(z)^{-1}|_{\Sigma_{k\ (3\pm1)/2}}R^{(2)}(z)|_{\Sigma_{(k-1)\ (3\pm1)/2}}&\text{as } 	z\in\Sigma_{k\pm}',\ k \text{ is even} ;\\[12pt]
R^{(2)}(z)^{-1}|_{\Sigma_{k\ (7\pm1)/2}}R^{(2)}(z)|_{\Sigma_{(k-1)\ (7\pm1)/2}}&\text{as } 	z\in\Sigma_{k\pm}',\ k \text{ is odd} ;\\[12pt]
\left(\begin{array}{cc}
1 & 0\\
-C_n(z-\zeta_n)^{-1}T^2(z)e^{-2it\theta_n} & 1
\end{array}\right),   &\text{as } 	z\in\partial\mathbb{D}_n,n\in\nabla;\\[12pt]
\left(\begin{array}{cc}
1 & -C_n^{-1}(z-\zeta_n)T^{-2}(z)e^{2it\theta_n}\\
0 & 1
\end{array}\right),   &\text{as } z\in\partial\mathbb{D}_n,n\in\Delta;\\
\left(\begin{array}{cc}
1 & \bar{C}_n(z-\bar{\zeta}_n)^{-1}T^{-2}(z)e^{2it\bar{\theta}_n}\\
0 & 1
\end{array}\right),   &\text{as } 	z\in\partial\overline{\mathbb{D}}_n,n\in\nabla;\\
\left(\begin{array}{cc}
1 & 0	\\
\bar{C}_n^{-1}(z-\bar{\zeta}_n)e^{-2it\bar{\theta}_n}T^2(z) & 1
\end{array}\right),   &\text{as } 	z\in\partial\overline{\mathbb{D}}_n,n\in\Delta;\\
\end{array}\right.;\label{jumpv21}
\end{equation}

$\blacktriangleright$ Asymptotic behaviors:
	\begin{align}
	M^{(2)}(z) =& I+\mathcal{O}(z^{-1}),\hspace{0.5cm}z \rightarrow \infty,\\
	M^{(2)}(z) =&F^{-1} \left[ I+(z-i)\left(\begin{array}{cc}
		0 & -\frac{1}{2}(u+u_x) \\
		-\frac{1}{2}(u-u_x) & 0
	\end{array}\right)\right]\nonumber\\
	&e^{\frac{1}{2}c_+\sigma_3}T(i)^{\sigma_3}\big(I-I_0\sigma_3(z-i) \big) +\mathcal{O}\left( (z-i)^2\right) +\mathcal{O}\left( (z-i)^2\right);\label{asyM2}
\end{align}

$\blacktriangleright$ $\bar{\partial}$-Derivative: For $z\in\mathbb{C}$
we have
\begin{align}
	\bar{\partial}M^{(2)}=M^{(2)}\bar{\partial}R^{(2)},
\end{align}
where
Case I: for $\xi=\frac{y}{t}\in(-\infty,-0.25)$
\begin{equation}
	\bar{\partial}R^{(2)}(z,\xi)=\left\{\begin{array}{lll}
		\left(\begin{array}{cc}
			0 & \bar{\partial}R_j(z,\xi)e^{2it\theta}\\
			0 & 0
		\end{array}\right), & z\in \Omega_j,j=1,2;\\
		\\
		\left(\begin{array}{cc}
			0 & 0\\
			\bar{\partial}R_j(z,\xi)e^{-2it\theta} & 0
		\end{array}\right),  &z\in \Omega_j,j=3,4;\\
		\\
		0,  &elsewhere;\\
	\end{array}\right.\label{DBARR1}
\end{equation}

Case II: for $\xi=\frac{y}{t}\in(2,+\infty)$
\begin{equation}
	\bar{\partial}R^{(2)}(z,\xi)=\left\{\begin{array}{lll}
		\left(\begin{array}{cc}
			0 & 0\\
			\bar{\partial}R_j(z,\xi)e^{-2it\theta} & 0
		\end{array}\right), & z\in \Omega_j,j=1,2;\\
		\\
		\left(\begin{array}{cc}
			0 & \bar{\partial}R_j(z,\xi)e^{2it\theta}\\
			0 & 0
		\end{array}\right),  &z\in \Omega_j,j=3,4;\\
		\\
		0,  &elsewhere;\\
	\end{array}\right.\label{DBARR2}
\end{equation}
Case II($\xi=\frac{y}{t}\in[0,2)$) and Case III($\xi=\frac{y}{t}\in(-0.25,0)$)
\begin{equation}
\bar{\partial}R^{(2)}(z,\xi)=\left\{\begin{array}{lll}
\left(\begin{array}{cc}
0 & 0\\
\bar{\partial}R_{kj}(z,\xi)e^{-2it\theta} & 0
\end{array}\right), & z\in \Omega_{kj},j=1,3,\ k=1,...,n(\xi);\\
\\
\left(\begin{array}{cc}
0 & \bar{\partial}R_{kj}(z,\xi)e^{2it\theta}\\
0 & 0
\end{array}\right),  &z\in \Omega_{kj},j=2,4\ k=1,...,n(\xi);\\
\\
0,  &elsewhere;\\
\end{array}\right.\label{DbarR(2)1}
\end{equation}

$\blacktriangleright$ Residue conditions: $M^{(2)}$ has simple poles at each point $\zeta_n$ and $\bar{\zeta}_n$ for $n\in\Lambda$ with:
\begin{align}
	&\res_{z=\zeta_n}M^{(2)}(z)=\lim_{z\to \zeta_n}M^{(2)}(z)\left(\begin{array}{cc}
		0 & 0\\
		C_ne^{-2it\theta_n}T^2(\zeta_n) & 0
	\end{array}\right),\\
	&\res_{z=\bar{\zeta}_n}M^{(2)}(z)=\lim_{z\to \bar{\zeta}_n}M^{(2)}(z)\left(\begin{array}{cc}
		0 & -\bar{C}_nT^{-2}(\bar{\zeta}_n)e^{2it\bar{\theta}_n}\\
		0 & 0
	\end{array}\right).
\end{align}
	
\end{RHP}

\section{ Decomposition of the mixed $\bar{\partial}$-RH problem }\label{sec5}
\quad To solve RHP2,  we decompose it into a model   RH  problem  for $M^{R}(z;y,t)\triangleq M^{R}(z)$  with $\bar\partial R^{(2)}\equiv0$   and a pure $\bar{\partial}$-Problem with nonzero $\bar{\partial}$-derivatives.
First  we establish  a   RH problem  for the  $M^{R}(z)$   as follows.

\begin{RHP}\label{RHP5}
Find a matrix-valued function  $  M^{R}(z)$ with following properties:

$\blacktriangleright$ Analyticity: $M^{R}(z)$ is  meromorphic  in $\mathbb{C}\setminus \Sigma^{(2)}$;

$\blacktriangleright$ Jump condition: $M^{R}$ has continuous boundary values $M^{R}_\pm$ on $\Sigma^{(2)}$ and
\begin{equation}
	M^{R}_+(z)=M^{R}_-(z)V^{(2)}(z),\hspace{0.5cm}z \in \Sigma^{(2)};\label{jump5}
\end{equation}

$\blacktriangleright$ Symmetry: $M^{R}(z)=\sigma_3\overline{M^{R}(-\bar{z})}\sigma_3$=$F^{-2}\overline{M^{R}(-\bar{z}^{-1})}$=$F^2\sigma_3M^{R}(-z^{-1})\sigma_3$;

$\blacktriangleright$ $\bar{\partial}$-Derivative:  $\bar{\partial}R^{(2)}=0$, for $ z\in \mathbb{C}$;

$\blacktriangleright$ Asymptotic behaviors:
	\begin{align}
	M^{R}(z) =& I+\mathcal{O}(z^{-1}),\hspace{0.5cm}z \rightarrow \infty,\\
	M^{R}(z) =&F^{-1} \left[ I+(z-i)\left(\begin{array}{cc}
		0 & -\frac{1}{2}(u+u_x) \\
		-\frac{1}{2}(u-u_x) & 0
	\end{array}\right)\right] \nonumber\\
	&e^{\frac{1}{2}c_+\sigma_3}T(i)^{\sigma_3}\big(I-I_0\sigma_3(z-i) \big) +\mathcal{O}\left( (z-i)^2\right);\label{asyMr}
\end{align}

$\blacktriangleright$ Residue conditions: $M^{R}$ has simple poles at each point $\zeta_n$ and $\bar{\zeta}_n$ for $n\in\Lambda$ with:
\begin{align}
	&\res_{z=\zeta_n}M^{R}(z)=\lim_{z\to \zeta_n}M^{R}(z)\left(\begin{array}{cc}
		0 & 0\\
		C_ne^{-2it\theta_n}T^2(\zeta_n) & 0
	\end{array}\right),\\
	&\res_{z=\bar{\zeta}_n}M^{R}(z)=\lim_{z\to \bar{\zeta}_n}M^{R}(z)\left(\begin{array}{cc}
		0 & -\bar{C}_nT^{-2}(\bar{\zeta}_n)e^{2it\bar{\theta}_n}\\
		0 & 0
	\end{array}\right).\label{resMr}
\end{align}	
\end{RHP}

In the case of $\xi=\frac{y}{t}\in(-0.25,2)$, it can be found that compared with $\xi=\frac{y}{t}\in(2,+\infty)\cup(-\infty,-0.25)$, its jump matrix $V^{(2)}$ has additional portion on $\Sigma_{jk}$ and $\Sigma_{j\pm}$. So this case is  more difficult  to deal with. And denote $ U(\xi)$ as the union set of neighborhood of $\xi_j$ for $j=1,...,n(\xi)$
\begin{equation}
U(\xi)=\underset{j=1,...,n(\xi)}{\cup}U_{\xi_j},\ U_{\xi_j}= \left\lbrace z:|z-\xi_j|\leq \min\left\lbrace \varrho, \frac{1}{3}\min_{ j\neq i\in \mathcal{N}}|\zeta_i-\zeta_j|\right\rbrace \right\rbrace .
\end{equation}
Then this additional part of  jump matrix $V^{(2)}$ has following  estimation.
\begin{Proposition}\label{prov2}
	As $t\to\infty$, for $1\leq p\leq+\infty$, there exist a positive constant $K_p$ relied on $p$ satisfies that the jump matrix $V^{(2)}$ defined in (\ref{jumpv21}) admits
	\begin{align}
	\parallel V^{(2)}-I\parallel_{L^p(\Sigma_{kj}\setminus U(\xi_k))}= \mathcal{O}( e^{-K_pt}),\\
	\end{align}
	for $k=1,...,n(\xi)$ and $j=1,...,4$.
	And when $1\leq p<+\infty$, there also exist a positive constant $K_p'$ relied on $p$ satisfies that the jump matrix $V^{(2)}$  admits
	\begin{align}
	\parallel V^{(2)}-I\parallel_{L^p(\Sigma_{k\pm}')}= \mathcal{O}( e^{-K_p't}),\\
	\end{align}
	for $k=1,...,n(\xi)$.
\end{Proposition}
\begin{proof}
	We prove the case $\xi=\frac{y}{t}\in(-0.25,0)$, and the another case can be proved in similar way. For $z\in\Sigma_{11}\setminus U_{\xi_1}$, when $1\leq p<+\infty$, by using  definition of $V^{(2)}$ and (\ref{R}),  we have
	\begin{align}
	\parallel V^{(2)}-I\parallel_{L^p(\Sigma_{11}\setminus U_{\xi_1})}&=\parallel p_{11}(\xi_1,\xi)T_1(\xi)^{-2}(z-\xi_1)^{-2i\nu(\xi_1)} e^{2it\theta}\parallel_{L^p(\Sigma_{11}\setminus U_{\xi_1})}\nonumber\\
	&\lesssim \parallel e^{2it\theta}\parallel_{L^p(\Sigma_{11}\setminus U_{\xi_1})}.
	\end{align}
	For $z\in\Sigma_{11}\setminus U_{\xi_1}$, denote $z=\xi_1+le^{i\varphi}$, $l\in(\varrho,+\infty)$. Then lemma \ref{theta2} gives that
	\begin{align}
	\parallel V^{(2)}-I\parallel_{L^p(\Sigma_{11}\setminus U_{\xi_1})}^p&\lesssim \int_{\Sigma_{11}\setminus U_{\xi_1}}\exp\left( -pc(\xi)t\text{Im}z\frac{|z|^2-\xi_1^2}{4+|z|^2}\right)dz \nonumber\\
	&\lesssim \int_{\varrho}^{+\infty}\exp\left( -pc'(\xi)tl\right)dl\lesssim t^{-1}\exp\left( -pc'(\xi)t\varrho\right).
	\end{align}
	The second step is from  $\frac{|z|^2-\xi_1^2}{4+|z|^2}$ has nonzero  boundary on $\Sigma_{11}\setminus U_{\xi_1}$. And when $p=+\infty$ is obviously. For $z\in\Sigma_{k\pm}'$, we only give the details of $\Sigma_{1+}'$. there also has that
	\begin{align}
	\parallel V^{(2)}-I\parallel_{L^p(\Sigma_{1+}')}&=\parallel (R_{24}-R_{14})e^{-2it\theta}\parallel_{L^p(\Sigma_{1+}')}\lesssim\parallel e^{-2it\theta}\parallel_{L^p(\Sigma_{1+}')}\nonumber\\
	&\lesssim t^{-1/p}\exp\left( -c''(\xi)t\right).
	\end{align}
\end{proof}
This proposition means that the jump matrix $V^{(2)}(z)$    uniformly goes to  $I$  on     $\tilde{\Sigma}\setminus U(\xi)$.
So outside the $U(\xi)$ there is only exponentially small error (in $t$) by completely ignoring the jump condition of  $M^{R}(z)$.
And this proposition enlightens  us to construct the solution $M^{R}(z)$ as follow:
\begin{equation}
M^{R}(z)=\left\{\begin{array}{ll}
E(z,\xi)M^{(r)}(z) & z\notin U(\xi)\\
E(z,\xi)M^{(r)}(z)M^{lo}(z)  &z\in U(\xi)\\
\end{array}\right..\label{transm4}
\end{equation}
Note that, when $\xi=\frac{y}{t}\in(2,+\infty)$ or $\xi=\frac{y}{t}\in(-\infty,-0.25)$, $M^{(r)}(z)$ has no jump except the circle around poles not in $\Lambda$, and it has  no  phase point. So $U(\xi)=\emptyset$ in these case, which means $M^{R}(z)=M^{(r)}(z)$. And it is more easy.  And for the case $\xi=\frac{y}{t}\in(-0.25,2)$, from the definition we can easily find that $M^{R}$ is pole free. This construction  decomposes $M^{R}$ to two part: $M^{(o)}$ solves the pure RHP obtained by ignoring the jump conditions of RHP \ref{RHP5}, which is shown in Section \ref{sec6}; $M^{lo}$ uses parabolic cylinder functions to build a matrix to match  jumps  of $M^{(2)}$ in a neighborhood of each critical point $\xi_j$ which is shown in Section \ref{secpc}. And $E(z,\xi)$ is the error function, which will be different in different case of $\xi$ and is a solution of a small-norm Riemann-Hilbert problem shown in Section \ref{sec7}.

We now use $M^{R}(z)$ to construct  a new matrix function
\begin{equation}
M^{(3)}(z;y,t)\triangleq M^{(3)}(z)=M^{(2)}(z)M^{R}(z)^{-1}.\label{transm3}
\end{equation}
which   removes   analytical component  $M^{R}$    to get  a  pure $\bar{\partial}$-problem.

\noindent\textbf{$\bar{\partial}$-problem}. Find a matrix-valued function  $ M^{(3)}(z;y,t)\triangleq M^{(3)}(z)$ with following identities:

$\blacktriangleright$ Analyticity: $M^{(3)}(z)$ is continuous   and has sectionally continuous first partial derivatives in $\mathbb{C}$.

$\blacktriangleright$ Asymptotic behavior:
\begin{align}
&M^{(3)}(z) \sim I+\mathcal{O}(z^{-1}),\hspace{0.5cm}z \rightarrow \infty;\label{asymbehv7}
\end{align}

$\blacktriangleright$ $\bar{\partial}$-Derivative: We have
$$\bar{\partial}M^{(3)}=M^{(3)}W^{(3)},\ \ z\in \mathbb{C},$$
where
\begin{equation}
W^{(3)}=M^{R}(z)\bar{\partial}R^{(2)}(z)M^{R}(z)^{-1}.
\end{equation}

\begin{proof}
	By using  properties  of  the   solutions   $M^{(2)}$ and $M^{R}$  for  RHP \ref{RHP5}  and $\bar{\partial}$-problem ,
 the analyticity is obtained   immediately.
Since $M^{(2)}$ and $M^{R}$ achieve same jump matrix, we have
	\begin{align*}
	M_-^{(3)}(z)^{-1}M_+^{(3)}(z)&=M_-^{(2)}(z)^{-1}M_-^{R}(z)M_+^{R}(z)^{-1}M_+^{(2)}(z)\\
	&=M_-^{(2)}(z)^{-1}V^{(2)}(z)^{-1}M_+^{(2)}(z)=I,
	\end{align*}
	which means $ M^{(3)}$ has no jumps and is everywhere continuous.  We also can show  that $ M^{(3)}$ has no pole. For
 $\lambda \in \left\lbrace \zeta_n,\bar{\zeta}_n \right\rbrace_{n\in\Lambda} $,  let $\mathcal{W}$ denote the  nilpotent matrix which appears in the left side of the
corresponding residue condition of RHP \ref{RHP4}  and  RHP \ref{RHP5},
 we have the Laurent expansions in $z-\lambda$
	\begin{align}
&M^{(2)}(z)=a(\lambda) \left[ \dfrac{\mathcal{W}}{z-\lambda}+I\right] +\mathcal{O}(z-\lambda),\nonumber\\
&	M^{R}(z)=A(\lambda) \left[ \dfrac{\mathcal{W}}{z-\lambda}+I\right] +\mathcal{O}(z-\lambda),\nonumber
\end{align}
	where $a(\lambda)$ and $A(\lambda)$ are the constant  matrix in their respective expansions.
Then
	\begin{align}
	M^{(3)}(z)&=\left\lbrace a(\lambda) \left[ \dfrac{\mathcal{W}}{z-\lambda}+I\right]\right\rbrace \left\lbrace\left[ \dfrac{-\mathcal{W}}{z-\lambda}+I\right]\sigma_2A(\lambda)^T\sigma_2\right\rbrace + \mathcal{O}(z-\lambda)\nonumber\\
	&=\mathcal{O}(1),
	\end{align}
	which  implies that  $ M^{(3)}(z)$ has removable singularities at $\lambda$.
 And the $\bar{\partial}$-derivative of  $ M^{(3)}(z)$ come  from    $ M^{(3)}(z)$  due to   analyticity of $M^{R}(z)$.
\end{proof}
The unique existence  and asymptotic  of  $M^{(3)}(z)$  will shown in   section \ref{sec7}.

\section{ The asymptotic $\mathcal{N}(\Lambda)$-soliton solution } \label{sec6}

\quad In this subsection, we build a reflectionless case of  RHP \ref{RHP2} to  show that  its solution can  approximated  with  $M^{(r)}(z)$. As $W^{(3)}(z)\equiv0$, RHP \ref{RHP4} reduces to RHP \ref{RHP5} for the sectionally meromorphic function $M^{(r)}(z)$ with jump discontinuities on the union of circles. Then, by relate $M^{(r)}(z)$ with original Riemann Hilbert problem \ref{RHP2}, we  show the existence and uniqueness of solution of the above RHP \ref{RHP5}.
\begin{Proposition}
	If $M^{(r)}(z)$ is the solution of the RH problem \ref{RHP5} with scattering data $\mathcal{D}=\left\lbrace  r(z),\left\lbrace \zeta_n,C_n\right\rbrace_{n\in\mathcal{N}}\right\rbrace$, $M^{(r)}(z)$ exists unique.
\end{Proposition}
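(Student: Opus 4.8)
The plan is to establish uniqueness by a Liouville argument and then obtain existence by reducing RHP \ref{RHP5} to a finite linear system whose solvability is controlled by a vanishing lemma. The preliminary observation is that every branch of the jump matrix $V^{(2)}$ in (\ref{jumpv2}) is unipotent, so $\det V^{(2)}\equiv 1$, while each residue matrix in (\ref{resMr}) is nilpotent; together with the normalization $M^{(r)}\to I$ at infinity, $\det M^{(r)}$ extends to an entire function tending to $1$, hence $\det M^{(r)}\equiv 1$. In particular $M^{(r)}$ is invertible with $(M^{(r)})^{-1}=\sigma_2 (M^{(r)})^{T}\sigma_2$.

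For uniqueness, let $M_1,M_2$ be two solutions and set $\Xi=M_1 M_2^{-1}$. Because $M_1$ and $M_2$ carry the same jump across $\Sigma^{(2)}$, one has $\Xi_+=M_{1,-}V^{(2)}(V^{(2)})^{-1}M_{2,-}^{-1}=\Xi_-$, so $\Xi$ has no jump. At each $\zeta_n,\bar\zeta_n$ with $n\in\Lambda$ I would insert the Laurent expansions $M_i=a_i[\mathcal{W}/(z-\lambda)+I]+\mathcal{O}(z-\lambda)$ and use $M_2^{-1}=\sigma_2 M_2^{T}\sigma_2$ together with the nilpotency $\mathcal{W}^2=0$; the principal parts then cancel exactly as in the removable-singularity computation for $M^{(3)}$ in Section \ref{sec5}, giving $\Xi=\mathcal{O}(1)$ at every pole. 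Thus $\Xi$ is entire with $\Xi\to I$ at infinity, so $\Xi\equiv I$ and $M_1\equiv M_2$.

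For existence I would first undo the transformation $G(z)$ of (\ref{funcG}) on each disk $\mathbb{D}(\zeta_n,\varrho)$, $\mathbb{D}(\bar\zeta_n,\varrho)$: this converts the circle jumps of $\Sigma^{(2)}$ back into simple-pole residue conditions, recasting RHP \ref{RHP5} as a purely meromorphic, reflectionless problem with simple poles at all $\zeta_n,\bar\zeta_n$, $n\in\mathcal{N}$, normalized to $I$ at infinity and respecting the three symmetry reductions. Seeking $M^{(r)}$ in the rational form $I$ plus a sum of simple-pole terms with unknown vector coefficients, the residue conditions reduce to an inhomogeneous linear system for those coefficients; this is exactly the reflectionless (soliton) reduction of RHP \ref{RHP2} with norming constants modulated by $T^{\pm2}(\zeta_n)$, which is solved explicitly in the sequel via $M^{(r)}_\Lambda$.

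The main obstacle is to show that this linear system is nonsingular. I would settle it through the vanishing lemma: any solution of the same problem normalized to vanish at infinity must be identically zero. The proof of the vanishing lemma again uses $\det\equiv 1$ and the Schwarz-type symmetry $M(z)=\sigma_3\overline{M(-\bar z)}\sigma_3$, which pairs the poles in $\mathbb{C}^+$ with those in $\mathbb{C}^-$ so that a suitable scalar quantity built from $M$ is forced to vanish by Liouville. A nontrivial element of the kernel of the coefficient matrix would produce exactly such a nonzero vanishing solution, a contradiction; hence the square system is invertible and existence follows, re-confirming uniqueness. I expect the only genuinely delicate bookkeeping to be tracking the symmetry-related poles and verifying that the reconstructed $M^{(r)}$ meets the normalization (\ref{asyMr}) at $z=i$.
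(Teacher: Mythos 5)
Your proposal follows the paper's route in its essentials: both arguments convert RHP \ref{RHP5} back into a purely meromorphic, reflectionless problem by undoing the disk transformation $G(z)$ of (\ref{funcG}) (the paper additionally undoes the $T^{\sigma_3}$ factor and writes the explicit map (\ref{N}) onto a reflectionless instance of RHP \ref{RHP2}), and both settle uniqueness by Liouville's theorem. Your uniqueness half is sound and in fact more detailed than the paper's one-line appeal to Liouville: the observations $\det V^{(2)}\equiv 1$, nilpotency of the residue matrices, $\det M^{(r)}\equiv 1$, and the removable-singularity computation for $M_1M_2^{-1}$ are exactly the manipulations the paper performs for $M^{(3)}$ in Section \ref{sec5}, and they transfer correctly. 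The genuine divergence is in existence: having reduced to the reflectionless problem, the paper simply cites \cite{SandRNLS} (Appendix A) for its solvability, whereas you propose to prove solvability from scratch via the rational ansatz, the induced finite linear algebraic system, and a vanishing lemma. That is more self-contained (it is essentially what the cited appendix does), but it obliges you to actually establish nonsingularity of that system --- a point the paper never has to confront.

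That is where your sketch has a concrete error. The symmetry $M(z)=\sigma_3\overline{M(-\bar z)}\sigma_3$ does not ``pair the poles in $\mathbb{C}^+$ with those in $\mathbb{C}^-$'': the map $z\mapsto-\bar z$ preserves each half-plane, so this symmetry pairs $\zeta_n$ with $-\bar\zeta_n$, both of which lie in $\mathbb{C}^+$ (this is precisely the pairing $\zeta_{n+N_1}=-\bar z_n$ in the construction of the discrete spectrum (\ref{spectrals})). A vanishing-lemma argument built on it never couples the data at $\zeta_n$ to the data at $\bar\zeta_n$ and yields no positivity. What actually drives the vanishing lemma is not a reflection symmetry of the $z$-plane but the conjugation structure of the residue conditions (\ref{resMr}) themselves: the norming constant at $\bar\zeta_n$, namely $-\bar C_nT^{-2}(\bar\zeta_n)e^{2it\bar\theta_n}$, equals (up to a positive multiplicative constant) $-\overline{C_nT^{2}(\zeta_n)e^{-2it\theta_n}}$, a minus-conjugate relation descending from the Jost symmetry $\mu_\pm(z)=\sigma_2\overline{\mu_\pm(\bar z)}\sigma_2$ of Proposition \ref{sym}. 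It is this structure that makes the relevant Cauchy-type matrix $\left[ i/(\zeta_k-\bar\zeta_h)\right]_{k,h}$ a Gram matrix of the functions $e^{i\zeta_k s}$ on $(0,\infty)$, hence positive definite, forcing the homogeneous solution to vanish; your argument should be rebuilt on that relation. One smaller correction: the reflectionless problem you obtain by undoing $G$ has poles at \emph{all} $\zeta_n,\bar\zeta_n$, $n\in\mathcal{N}$, with norming constants $C_n\delta(\zeta_n)$ (the paper's $N(z;\tilde{\mathcal{D}})$), and is not the problem ``solved explicitly in the sequel'': RHP \ref{RHP6} and Proposition \ref{unim} keep only the poles with $n\in\Lambda$ and constants $C_nT^2(\zeta_n)$, so your linear system must be solved for the full pole set rather than borrowed from $M^{(r)}_\Lambda$.
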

\begin{proof}
	To transform $M^{(r)}(z)$ to the soliton-solution  of RHP \ref{RHP2}, the jumps and poles need to be restored. We reverses the triangularity effected in (\ref{transm1}) and (\ref{transm2}):
	\begin{equation}
		N(z;\tilde{\mathcal{D}})=\left(\prod_{n\in \Delta}\zeta_n \right)^{-\sigma_3} M^{(r)}(z)T^{-\hat{\sigma}_3}G^{-1}(z)\left( \prod_{n\in \Delta}\dfrac{z-\zeta_n}{\bar{\zeta}_n^{-1}z-1}\right) ^{-\sigma_3},\label{N}
	\end{equation}
with $G(z)$ defined in (\ref{funcG}) and $\tilde{\mathcal{D}}=\left\lbrace  r(z),\left\lbrace \zeta_n,C_n\delta(\zeta_n)\right\rbrace_{n\in\mathcal{N}}\right\rbrace$. First we verify $N(z;\tilde{\mathcal{D}})$ satisfying RHP \ref{RHP2}. This transformation to $N(z;\tilde{\mathcal{D}})$ preserves the normalization conditions at the origin and infinity obviously. And comparing with (\ref{transm1}), this transformation  restore the jump on   $\overline{\mathbb{D}}_n$ and $\mathbb{D}_n$ to residue for $n\notin\Lambda$.  As for $n\in\Lambda$, take $\zeta_n$ as an example. Substitute (\ref{resMr}) into the transformation:
\begin{align}
	\res_{z=\zeta_n}N(z;\tilde{\mathcal{D}})=&\left(\prod_{n\in \Delta}\zeta_n \right)^{-\sigma_3}\res_{z=\zeta_n}M^{(r)}(z)T^{-\hat{\sigma}_3}G(z)^{-1}\left( \prod_{n\in \Delta}\dfrac{z-\zeta_n}{\bar{\zeta}_n^{-1}z-1}\right) ^{-\sigma_3}\nonumber\\
	=&\lim_{z\to \zeta_n}-\left(\prod_{n\in \Delta}\zeta_n \right)^{-\sigma_3}M^{(r)}(z)\left(\begin{array}{cc}
		0 & 0\\
		C_ne^{-2it\theta_n}T^2(\zeta_n) & 0
	\end{array}\right)\left( \prod_{n\in \Delta}\dfrac{z-\zeta_n}{\bar{\zeta}_n^{-1}z-1}\right) ^{-\sigma_3}\nonumber\\
		=&\lim_{z\to \zeta_n}N(z;\tilde{\mathcal{D}})\left(\begin{array}{cc}
			0 & 0\\
			C_n\delta(\zeta_n)e^{-2it\theta_n} & 0
		\end{array}\right).
\end{align}
Its analyticity and symmetry follow from the Proposition of $M^{(r)}(z)$, $T(z)$ and $G(z)$ immediately. Although $N(z;\tilde{\mathcal{D}})$ doesn't preserve the normalization conditions at $z=i$ as (\ref{asyMi}), $z=i$ isn't the pole of $N(z)$. So it make no difference. Then $N(z;\tilde{\mathcal{D}})$ is solution of RHP \ref{RHP2} with absence of reflection, whose  exact solution  exists and can be obtained as described similarly in \cite{SandRNLS} Appendix A. And its uniqueness comes from  Liouville's theorem. Then the uniqueness and existences of $M^{(r)}(z)$  come from (\ref{N}).
\end{proof}

Although $M^{(r)}(z)$ has uniqueness and  existence, not all discrete spectra have contribution as $t\to\infty$. Following Lemma give that   the jump matrices is uniformly near identity and do not meaningfully, contribute to the asymptotic behavior of the solution.
\begin{lemma}\label{lemmav2}
	The jump matrix $ V^{(2)}(z)$ in (\ref{jumpv2}) satisfies
	\begin{align}
	&\parallel V^{(2)}(z)-I\parallel_{L^\infty(\Sigma^{(2)})}=\mathcal{\mathcal{O}}(e^{- 2\rho_0t} ),\label{7.1}\hspace{0.5cm}\text{ with $\rho_0$ specified in (\ref{rho0}).}
\end{align}
\end{lemma}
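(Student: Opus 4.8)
The plan is to exploit the fact that on every circle comprising $\Sigma^{(2)}$ the matrix $V^{(2)}(z)-I$ is strictly triangular with a single nonzero off-diagonal entry, and that this entry splits as a factor that is uniformly bounded on the circle times a purely exponential factor whose modulus decays like $e^{-2\rho_0 t}$. Since $\Sigma^{(2)}$ is a union of only finitely many circles, a uniform bound on each of them immediately yields the $L^\infty$ estimate (\ref{7.1}).

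First I would record that $\theta_n=\theta(\zeta_n)$ is a constant on the circle $\partial\mathbb{D}(\zeta_n,\varrho)$, so that the entire $t$-dependence of each entry of $V^{(2)}(z)-I$ resides in the exponential. Writing $\theta_n=\mathrm{Re}\,\theta_n+i\,\mathrm{Im}\,\theta_n$ and using $\mathrm{Im}\,\bar{\theta}_n=-\mathrm{Im}\,\theta_n$, one has $|e^{-2it\theta_n}|=|e^{2it\bar\theta_n}|=e^{2t\,\mathrm{Im}\,\theta_n}$ and $|e^{2it\theta_n}|=|e^{-2it\bar\theta_n}|=e^{-2t\,\mathrm{Im}\,\theta_n}$. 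I then match signs case by case against (\ref{jumpv2}). For $n\in\nabla$ the exponentials that occur are $e^{-2it\theta_n}$ on $\partial\mathbb{D}(\zeta_n,\varrho)$ and $e^{2it\bar\theta_n}$ on $\partial\mathbb{D}(\bar\zeta_n,\varrho)$, both of modulus $e^{2t\,\mathrm{Im}\,\theta_n}=e^{-2t|\mathrm{Im}\,\theta_n|}$ since $\mathrm{Im}\,\theta_n<0$; for $n\in\Delta$ the exponentials are $e^{2it\theta_n}$ and $e^{-2it\bar\theta_n}$, both of modulus $e^{-2t\,\mathrm{Im}\,\theta_n}=e^{-2t|\mathrm{Im}\,\theta_n|}$ since $\mathrm{Im}\,\theta_n>0$. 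In every case the exponential is therefore bounded by $e^{-2\rho_0 t}$, by the definition (\ref{rho0}) of $\rho_0=\min_{n\in\Delta\cup\nabla}|\mathrm{Im}\,\theta_n|$, which is nonzero precisely because $n\in\mathcal{N}\setminus\Lambda$.

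It remains to bound the non-exponential prefactors uniformly. On $\partial\mathbb{D}(\zeta_n,\varrho)$ one has $|z-\zeta_n|=\varrho$, so the rational factors $(z-\zeta_n)^{-1}$ and $(z-\zeta_n)$ are bounded by $\varrho^{-1}$ and $\varrho$ respectively, while the norming constants $C_n$ are fixed. For the factors $T^{\pm 2}(z)$ I would invoke the choice of $\varrho$: the disks $\mathbb{D}(\zeta_n,\varrho)$ and $\mathbb{D}(\bar\zeta_n,\varrho)$ are pairwise disjoint and disjoint from $\mathbb{R}$, so by Proposition \ref{proT}(a) each disk contains at most one singularity of $T$ (a simple pole at $\zeta_n$ or a simple zero at $\bar\zeta_n$ for $n\in\Delta$), located at its center and hence at distance $\varrho>0$ from the boundary circle; consequently both $T(z)$ and $T(z)^{-1}$ extend continuously to the circle and are bounded there, and the particular power $T^{\pm2}$ appearing in each line of (\ref{jumpv2}) is uniformly bounded. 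Collecting these estimates, each entry of $V^{(2)}(z)-I$ is at most $C\,e^{-2\rho_0 t}$ with $C$ depending only on the finitely many $C_n$, on $\varrho$, and on the suprema of $T^{\pm2}$ over the circles, which is exactly (\ref{7.1}). The argument is pure bookkeeping; the only steps requiring care are the case-by-case sign matching between $\mathrm{Im}\,\theta_n$ and the exponent, and the verification that the powers of $T$ do not blow up on the circles, both of which reduce to the definitions of $\nabla,\Delta$ and of $\varrho$, so no genuine analytic obstacle arises.
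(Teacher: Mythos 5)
Your proposal is correct and follows essentially the same route as the paper: on each of the finitely many circles the entry of $V^{(2)}(z)-I$ factors into a prefactor bounded by constants (using $|z-\zeta_n|=\varrho$, the fixed $C_n$, and boundedness of $T^{\pm2}$ away from the centers) times an exponential whose modulus is $e^{-2t|\mathrm{Im}\,\theta_n|}\leq e^{-2\rho_0 t}$ by the sign conventions defining $\nabla$, $\Delta$ and the definition of $\rho_0$. The paper carries this out only for the representative case $z\in\partial\mathbb{D}(\zeta_n,\varrho)$, $n\in\nabla$, whereas you verify all four cases and justify the boundedness of $T^{\pm 2}$ explicitly, which is just a more complete write-up of the same argument.
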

\begin{proof}
	Take $z\in\partial\mathbb{D}_n,$ $n\in\nabla$ as an example.
	\begin{align}
		\parallel V^{(2)}(z)-I\parallel_{L^\infty(\partial\mathbb{D}_n)}&=|C_n(z-\zeta_n)^{-1}T^2(z)e^{-2it\theta_n}|\nonumber\\
		&\lesssim \varrho^{-1}e^{-\text{Re}(2it\theta_n)}\lesssim e^{2t\text{Im}(\theta_n)}\nonumber\\
		&\leq e^{-2\rho_0t}.
	\end{align}
The last step follows from that for $n\in\nabla$, $ \text{Im}\theta_n<0$.
\end{proof}
\begin{corollary}\label{v2p}
	For $1\leq p\leq +\infty$, the jump matrix $V^{(2)}(z)$ satisfies
	\begin{equation}
		\parallel V^{(2)}(z)-I\parallel_{L^p(\Sigma^{(2)})}\leq K_pe^{- 2\rho_0t} ,
	\end{equation}
for some constant $K_p\geq 0$ depending on $p$.
\end{corollary}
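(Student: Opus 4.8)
The plan is to obtain the $L^p$ bound as an immediate consequence of the $L^\infty$ estimate already established in Lemma \ref{lemmav2}, using only the fact that the contour $\Sigma^{(2)}$ has finite total arc length that is independent of $t$. The essential observation is that $\Sigma^{(2)}$ is a finite union of circles: by its definition $\Sigma^{(2)}=\cup_{n\in\mathcal{N}\setminus\Lambda}\left(\partial\mathbb{D}(\bar{\zeta}_n,\varrho)\cup\partial\mathbb{D}(\zeta_n,\varrho)\right)$, and since Assumption \ref{initialdata} guarantees that $a(z)$ has only finitely many zeros, the index set $\mathcal{N}=\{1,\dots,4N_1+2N_2\}$ is finite, hence so is $\mathcal{N}\setminus\Lambda$. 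Each boundary circle has radius $\varrho$ and circumference $2\pi\varrho$, so the total length is
\begin{equation}
|\Sigma^{(2)}|=4\pi\varrho\,|\mathcal{N}\setminus\Lambda|<\infty,\nonumber
\end{equation}
a constant that does not depend on $t$.

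First I would record the elementary interpolation inequality: for any matrix-valued function $f$ supported on a set of finite measure $|\Sigma^{(2)}|$ and any $1\le p<\infty$, one has the pointwise-to-integral estimate
\begin{equation}
\parallel f\parallel_{L^p(\Sigma^{(2)})}\leq |\Sigma^{(2)}|^{1/p}\parallel f\parallel_{L^\infty(\Sigma^{(2)})}.\nonumber
\end{equation}
Applying this with $f=V^{(2)}(z)-I$ and invoking Lemma \ref{lemmav2}, which furnishes a constant $C>0$ with $\parallel V^{(2)}(z)-I\parallel_{L^\infty(\Sigma^{(2)})}\leq C e^{-2\rho_0 t}$, yields directly
\begin{equation}
\parallel V^{(2)}(z)-I\parallel_{L^p(\Sigma^{(2)})}\leq |\Sigma^{(2)}|^{1/p}\,C\,e^{-2\rho_0 t}.\nonumber
\end{equation}
Setting $K_p=C\,|\Sigma^{(2)}|^{1/p}=C\,(4\pi\varrho|\mathcal{N}\setminus\Lambda|)^{1/p}$ then gives the claimed bound for every finite $p$. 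For the endpoint $p=+\infty$ the asserted inequality is precisely the statement of Lemma \ref{lemmav2} with $K_\infty=C$, so no extra argument is needed; alternatively one notes $|\Sigma^{(2)}|^{1/p}\to 1$ as $p\to\infty$, so the family $\{K_p\}$ stays bounded.

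I do not anticipate any genuine obstacle here, since the corollary is a routine upgrade of an $L^\infty$ decay estimate to $L^p$ decay on a compact contour. The only points requiring a moment of care are verifying that the total measure of $\Sigma^{(2)}$ is finite and $t$-independent (which reduces to the finiteness of the discrete spectrum from Assumption \ref{initialdata}) and confirming that the same exponential rate $e^{-2\rho_0 t}$ is uniform across all the circles, which is already built into Lemma \ref{lemmav2} through the definition $\rho_0=\min_{n\in\Delta\cup\nabla}|\mathrm{Im}\,\theta_n|$ in \eqref{rho0}.
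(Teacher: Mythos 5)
Your proposal is correct and is precisely the argument the paper intends: the corollary is stated without proof as an immediate consequence of Lemma \ref{lemmav2}, since $\Sigma^{(2)}$ is a finite, $t$-independent union of circles of radius $\varrho$, so $\parallel V^{(2)}-I\parallel_{L^p(\Sigma^{(2)})}\leq |\Sigma^{(2)}|^{1/p}\parallel V^{(2)}-I\parallel_{L^\infty(\Sigma^{(2)})}$ yields the claim with $K_p=C|\Sigma^{(2)}|^{1/p}$.
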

This  estimation of $V^{(2)}(z)$ inspires us to consider to completely ignore the jump condition on $M^{(r)}(z)$, because there is only exponentially small error (in t). We decompose $M^{(r)}(z)$ as
\begin{equation}
	M^{(r)}(z)=\tilde{E}(z)M^{(r)}_\Lambda(z).\label{transMr}
\end{equation}

$\tilde{E}(z)$ is a error function, which is a solution of a small-norm RH problem and we will discuss it in next subsection \ref{sec61}. And $M^{(r)}_\Lambda(z)$ solves RHP \ref{RHP5} with $V^{(2)}(z)\equiv0$.

Then the RHP \ref{RHP5}   reduces to the following RH problem.

\begin{RHP}\label{RHP6}
Find a matrix-valued function  $ M^{(r)}_\Lambda(z)$ with following properties:

$\blacktriangleright$ Analyticity: $M^{(r)}_\Lambda(z)$ is analytical  in $\mathbb{C}\setminus \left\lbrace\zeta_n,\bar{\zeta}_n \right\rbrace_{n\in\Lambda} $;

$\blacktriangleright$ Symmetry: $M^{(r)}_\Lambda(z)=\sigma_3\overline{M^{(r)}_\Lambda(-\bar{z})}\sigma_3$=$F^{-2}\overline{M^{(r)}_\Lambda(-\bar{z}^{-1})}$=$F^2\sigma_3M^{(r)}_\Lambda(-z^{-1})\sigma_3$;

$\blacktriangleright$ Asymptotic behaviors:
	\begin{align}
	M^{(r)}_\Lambda(z) =& I+\mathcal{O}(z^{-1}),\hspace{0.5cm}z \rightarrow \infty;\label{asyMrL}
\end{align}

$\blacktriangleright$ Residue conditions: $M^{(r)}_\Lambda$ has simple poles at each point $\zeta_n$ and $\bar{\zeta}_n$ for $n\in\Lambda$ with:
\begin{align}
	&\res_{z=\zeta_n}M^{(r)}_\Lambda(z)=\lim_{z\to \zeta_n}M^{(r)}_\Lambda(z)\left(\begin{array}{cc}
		0 & 0\\
		C_ne^{-2it\theta_n}T^2(\zeta_n) & 0
	\end{array}\right),\\
	&\res_{z=\bar{\zeta}_n}M^{(r)}_\Lambda(z)=\lim_{z\to \bar{\zeta}_n}M^{(r)}_\Lambda(z)\left(\begin{array}{cc}
		0 & -\bar{C}_nT^{-2}(\bar{\zeta}_n)e^{2it\bar{\theta}_n}\\
		0 & 0
	\end{array}\right).\label{resMrsol}
\end{align}	
\end{RHP}
For  convenience, denote the asymptotic expansion of $M^{(r)}_\Lambda(z)$ as $z\to i$:
\begin{align}
M^{(r)}_\Lambda(z)=M^{(r)}_\Lambda(i)+M^{(r)}_{\Lambda,1}(z-i)+\mathcal{O}((z-i)^{-2}).\label{asymr}
\end{align}

\begin{Proposition}\label{unim}	 The RHP \ref{RHP6}  exists an  unique solution.  Moreover, $M^{(r)}_\Lambda(z)$ is equivalent  to a reflectionless solution of the original RHP \ref{RHP2} with modified scattering data $\tilde{\mathcal{D}}_\Lambda=\left\lbrace  0,\left\lbrace \zeta_n,C_nT^2(\zeta_n)\right\rbrace_{n\in\Lambda}\right\rbrace$ as follows:\\
	\textbf{Case I}: if $\Lambda=\varnothing$, then
	\begin{equation}
		M^{(r)}_\Lambda(z)=I;\label{msol1}
	\end{equation}
	\textbf{Case I}: if $\Lambda\neq\varnothing$ with $\Lambda=\left\lbrace \zeta_{j_k}\right\rbrace_{k=1}^{\mathcal{N}} $, then
	\begin{align}
		M^{(r)}_\Lambda(z)&=I+
\sum_{k=1}^{\mathcal{N}}\left(\begin{array}{cc}
	\frac{\beta_k}{z-\zeta_{j_k}} & \frac{-\overline{\varsigma_k}}{z-\bar{\zeta}_{j_k}}\\
	\frac{\varsigma_k}{z-\zeta_{j_k}} & \frac{\overline{\beta_k}}{z-\bar{\zeta}_{j_k}}
\end{array}\right) ,\label{msol2}
	\end{align}
where  $\beta_s=\beta_s(x,t)$ and $\varsigma_s=\varsigma_s(x,t)$   with linearly dependant equations:
\begin{align}
	c_{j_k}^{-1}T(z_{j_k})^{-2}e^{-2i\theta(z_{j_k})t}\beta_k&=\sum_{h=1}^{\mathcal{N}}\frac{-\overline{\varsigma_h}}{\zeta_{j_k}-\bar{\zeta}_{j_h}} , \\	
	c_{j_k}^{-1}T(z_{j_k})^{-2}e^{-2i\theta(z_{j_k})t}\varsigma_k&=1+\sum_{h=1}^{\mathcal{N}}\frac{\overline{\beta_h}}{\zeta_{j_k}-\bar{\zeta}_{j_h}} ,
\end{align}
 for $k=1,...,\mathcal{N}$ respectively.
\end{Proposition}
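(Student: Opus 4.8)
The plan is to prove uniqueness first by a determinant/Liouville argument, then to establish existence through an explicit rational ansatz whose coefficients solve a finite linear system, and finally to identify $M^{(r)}_\Lambda(z)$ with a reflectionless solution of RHP \ref{RHP2}.

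\textbf{Uniqueness.} First I would note that the residue conditions (\ref{resMrsol}) are governed by rank-one, strictly triangular (hence nilpotent) matrices. Writing the local Laurent expansion $M^{(r)}_\Lambda(z)=A(z-\zeta_n)^{-1}+B+\mathcal{O}(z-\zeta_n)$ near each pole, the residue relation reads $A=BN$ with $N$ nilpotent, so that $\det M^{(r)}_\Lambda=\det B\cdot\det\left(I+(z-\zeta_n)^{-1}N\right)=\det B$ is regular there. Together with the normalization (\ref{asyMrL}), Liouville's theorem then forces $\det M^{(r)}_\Lambda(z)\equiv1$, so every solution is invertible. If $M_1,M_2$ are two solutions, the quotient $M_1M_2^{-1}$ has no jump, and the shared residue relations make the apparent poles at $\zeta_n,\bar{\zeta}_n$ cancel, so $M_1M_2^{-1}$ is entire, bounded, and tends to $I$ at infinity; hence $M_1\equiv M_2$. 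This same vanishing argument will also show that the homogeneous version of the linear system below admits only the trivial solution.

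\textbf{Existence and explicit form.} When $\Lambda=\varnothing$ there are no poles and the constant matrix $I$ satisfies every condition of RHP \ref{RHP6}, giving (\ref{msol1}). When $\Lambda\neq\varnothing$ I would posit the ansatz (\ref{msol2}): the structure of (\ref{resMrsol}) forces the first column to carry the poles at $\zeta_{j_k}$ and the second column the poles at $\bar{\zeta}_{j_k}$, while the symmetry reductions of RHP \ref{RHP6} fix the entries of the second column as the Schwarz-reflected counterparts of the first, which is precisely the displayed form. Substituting (\ref{msol2}) into (\ref{resMrsol}) and matching the coefficient of $(z-\zeta_{j_k})^{-1}$ against the regular part of $M^{(r)}_\Lambda$ at $\zeta_{j_k}$ yields exactly the stated linear system for $\{\beta_k,\varsigma_k\}_{k=1}^{\mathcal{N}}$. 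Since the uniqueness argument shows the associated homogeneous system is nonsingular, the Fredholm alternative for finite linear systems guarantees a unique solution of the inhomogeneous system, which produces $M^{(r)}_\Lambda(z)$ and establishes existence.

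\textbf{Equivalence to a reflectionless RHP \ref{RHP2}.} Finally I would undo the triangular deformations (\ref{transm1}) and (\ref{transm2}) through a transformation of the form (\ref{N}), mapping $M^{(r)}_\Lambda(z)$ to a matrix $N_\Lambda(z;\tilde{\mathcal{D}}_\Lambda)$, and verify, exactly as in the preceding proposition, that it solves RHP \ref{RHP2} with vanishing reflection coefficient and modified norming constants $C_nT^2(\zeta_n)$; this identifies $M^{(r)}_\Lambda(z)$ with the $\mathcal{N}(\Lambda)$-soliton. I expect the main obstacle to be the solvability of the linear system: rather than computing its determinant directly, I rely on the vanishing lemma for the homogeneous RHP, which is the cleanest route, and I must check that the pole-cancellation and determinant computations genuinely exploit the precise nilpotent and symmetry structure so that Liouville's theorem applies.
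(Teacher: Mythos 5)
Your overall architecture matches the paper's own (very terse) proof: uniqueness via Liouville's theorem, Case I trivially, Case II via the symmetry-forced partial fraction ansatz (\ref{msol2}) substituted into the residue conditions (\ref{resMrsol}) to produce the stated linear system, and the identification with a reflectionless solution of RHP \ref{RHP2} by undoing the transformations exactly as in the preceding proposition. The uniqueness argument (nilpotent residues $\Rightarrow$ $\det M^{(r)}_\Lambda\equiv 1$ $\Rightarrow$ quotient of two solutions is entire and normalized) and the equivalence argument are sound, and are in fact more detailed than what the paper writes.

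The gap is in the existence step. You assert that "the uniqueness argument shows the associated homogeneous system is nonsingular" and then invoke the Fredholm alternative; as written this is circular. A nontrivial solution of the homogeneous algebraic system corresponds to a matrix $\tilde{M}(z)$, built from the ansatz with $I$ replaced by $0$, satisfying the residue conditions and behaving as $\mathcal{O}(z^{-1})$ at infinity. To kill it by a Liouville argument you would form $\tilde{M}(z)M(z)^{-1}$ with $M$ an invertible solution of the normalized problem --- but the existence of such an $M$ is precisely what you are trying to prove; uniqueness of the normalized RHP implies triviality of the homogeneous system only when existence is already known. The non-circular routes are: (i) a genuine vanishing lemma for the homogeneous problem, i.e.\ a positivity/Schwarz-symmetry argument applied to $\tilde{M}(z)\overline{\tilde{M}(\bar{z})}^{T}$, which additionally requires checking that the modified norming constants $C_nT^2(\zeta_n)$ retain the requisite conjugation symmetry (not automatic here, since Proposition \ref{proT} gives $T(z)=\overline{T(-\bar{z})}$ rather than a symmetry under $z\mapsto\bar{z}$); or (ii) the route the paper implicitly takes, namely mapping RHP \ref{RHP6} to the reflectionless RHP \ref{RHP2} with data $\tilde{\mathcal{D}}_\Lambda$ through the explicit invertible transformation and invoking the known solvability of the pure-soliton problem, as cited in the preceding proposition from \cite{SandRNLS}, Appendix A. To be fair, the paper's printed proof is silent on solvability altogether, so your proposal is no less complete than the original --- but the specific justification you give for solvability would not survive scrutiny as written.
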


\begin{proof}
	The uniqueness of solution follows from the Liouville's theorem. Case I can be simple obtain. As for Case II, the symmetries  of $M^{(r)}_\Lambda(z)$  means that
it  admits a partial fraction expansion of following form as above. And  in order to obtain $\beta_k$, $\varsigma_k$, $\alpha_s$ and $\kappa_s$, we substitute  (\ref{msol2}) into (\ref{resMrsol}) and obtain four linearly dependant equations set above.
\end{proof}
\begin{corollary}\label{sol}
When $r(s)\equiv0$, the scattering matrices $S(z)\equiv I$. Denote $u^r(x,t;\tilde{\mathcal{D}})$ is the $\mathcal{N}(\Lambda)$-soliton with   scattering data $\tilde{\mathcal{D}}_\Lambda=\left\lbrace  0,\left\lbrace \zeta_n,C_nT^2(\zeta_n)\right\rbrace_{n\in\Lambda}\right\rbrace$. By the reconstruction formula (\ref{recons u}) and (\ref{recons x}), the solution $u^r(x,t;\tilde{\mathcal{D}})$  of (\ref{mch}) with  scattering data $\tilde{\mathcal{D}}_\Lambda=\left\lbrace  0,\left\lbrace \zeta_n,C_nT^2(\zeta_n)\right\rbrace_{n\in\Lambda}\right\rbrace$   is given by:
\begin{align}
	u^r(x,t;\tilde{\mathcal{D}}_\Lambda)&=u^r(y(x,t),t;\tilde{\mathcal{D}}_\Lambda)\nonumber\\
	&=\lim_{z\to i}\frac{1}{z-i}\left(1- \dfrac{([M^{(r)}_\Lambda]_{11}(z)+[M^{(r)}_\Lambda]_{21}(z))([M^{(r)}_\Lambda]_{12}(z)+[M^{(r)}_\Lambda]_{22}(z)) }{([M^{(r)}_\Lambda]_{11}(i)+[M^{(r)}_\Lambda]_{21}(i))([M^{(r)}_\Lambda]_{12}(i)+[M^{(r)}_\Lambda]_{22}(i))}\right) ,\label{recons ur}
\end{align}
where
\begin{equation}
	x(y,t;\tilde{\mathcal{D}}_\Lambda)=y+c_+^r(x,t;\tilde{\mathcal{D}}_\Lambda)=y-\ln\left( \frac{[M^{(r)}_\Lambda]_{12}(i)+[M^{(r)}_\Lambda]_{22}(i)}{[M^{(r)}_\Lambda]_{11}(i)+[M^{(r)}_\Lambda]_{21}(i)}\right) .
\end{equation}
Then in case I,
\begin{equation}
	u^r(x,t;\tilde{\mathcal{D}}_\Lambda)=c_+^r(x,t;\tilde{\mathcal{D}}_\Lambda)=0.\label{u1}
\end{equation}
As for case II,
\begin{align}
	u^r(x,t;\tilde{\mathcal{D}}_\Lambda)&=\lim_{z\to i}\frac{1}{z-i}\left(1- \dfrac{([M^{(r)}_\Lambda]_{11}(z)+[M^{(r)}_\Lambda]_{21}(z))([M^{(r)}_\Lambda]_{12}(z)+[M^{(r)}_\Lambda]_{22}(z)) }{([M^{(r)}_\Lambda]_{11}(i)+[M^{(r)}_\Lambda]_{21}(i))([M^{(r)}_\Lambda]_{12}(i)+[M^{(r)}_\Lambda]_{22}(i))}\right) \nonumber\\
	&=\left[\sum_{k=1}^{\mathcal{N}}\left( \frac{-\overline{\varsigma_k}}{(i-\bar{\zeta}_{j_k})^2}+\frac{\overline{\beta_k}}{(i-\bar{\zeta}_{j_k})^2}\right) \right]/\left[ 1+\sum_{k=1}^{\mathcal{N}}\left( \frac{-\overline{\varsigma_k}}{i-\bar{\zeta}_{j_k}}+\frac{\overline{\beta_k}}{i-\bar{\zeta}_{j_k}}\right)  \right] \nonumber\\
	&+ \left[\sum_{k=1}^{\mathcal{N}}\frac{\beta_k}{(i-\zeta_{j_k})^2}+\frac{\varsigma_k}{(i-\zeta_{j_k})^2} \right]/\left[ 1+\sum_{k=1}^{\mathcal{N}}\left( \frac{\beta_k}{i-\zeta_{j_k}}+\frac{\varsigma_k}{i-\zeta_{j_k}}\right)  \right], \label{u2}
\end{align}
and
\begin{align}	
	x(y,t;\tilde{\mathcal{D}}_\Lambda)&=y+c_+^r(x,t;\tilde{\mathcal{D}}_\Lambda)=y-\ln\left( \frac{[M^{(r)}_\Lambda]_{12}(i)+[M^{(r)}_\Lambda]_{22}(i)}{[M^{(r)}_\Lambda]_{11}(i)+[M^{(r)}_\Lambda]_{21}(i)}\right)\nonumber\\
	&= y-\ln\left( \frac{ 1+\sum_{k=1}^{\mathcal{N}}\left( \frac{-\overline{\varsigma_k}}{i-\bar{\zeta}_{j_k}}+\frac{\overline{\beta_k}}{i-\bar{\zeta}_{j_k}}\right) }{1+\sum_{k=1}^{\mathcal{N}}\left( \frac{\beta_k}{i-\zeta_{j_k}}+\frac{\varsigma_k}{i-\zeta_{j_k}}\right) }\right).
\end{align}
\end{corollary}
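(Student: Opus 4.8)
The plan is to feed the explicit reflectionless matrix $M^{(r)}_\Lambda(z)$ from Proposition \ref{unim} into the reconstruction formulas (\ref{recons u}) and (\ref{recons x}). The first task is to argue that these formulas, written originally for the matrix $M$ of RHP \ref{RHP2}, apply verbatim with $M$ replaced by $M^{(r)}_\Lambda$. This follows from Proposition \ref{unim}: when $r\equiv0$ one has $S(z)\equiv I$, and $M^{(r)}_\Lambda(z)$ is precisely the unique reflectionless solution of RHP \ref{RHP2} carrying the modified data $\tilde{\mathcal{D}}_\Lambda$, so it is the matrix to which (\ref{recons u})--(\ref{recons x}) should be applied. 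It is worth recording why the superficial mismatch in normalization at $z=i$ (the factor $F^{-1}e^{c_+\sigma_3/2}$ in (\ref{asyMi}) is not visible in (\ref{msol2})) is harmless: the right-hand side of (\ref{recons u}) depends only on the product $([M]_{11}+[M]_{21})([M]_{12}+[M]_{22})$ evaluated at $z$ and at $z=i$, and under $M\mapsto F^{-1}Me^{c_+\sigma_3/2}$ this product merely picks up a $z$-independent scalar, since $(1,1)$ is a left eigenvector of $F^{-1}$ and the diagonal factors $e^{\pm c_+/2}$ attached to the two column sums cancel in the product; the ratio, and hence the extracted $u^r$, is therefore unchanged.

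For Case I ($\Lambda=\varnothing$) the computation is immediate: substituting $M^{(r)}_\Lambda\equiv I$ from (\ref{msol1}) gives $[M^{(r)}_\Lambda]_{11}+[M^{(r)}_\Lambda]_{21}\equiv1$ and $[M^{(r)}_\Lambda]_{12}+[M^{(r)}_\Lambda]_{22}\equiv1$ identically in $z$, so the bracket in (\ref{recons ur}) is $1-1=0$ near $z=i$ and the limit vanishes, while the logarithm in (\ref{recons x}) is $\ln 1=0$. This yields $u^r=c_+^r=0$, i.e. (\ref{u1}).

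For Case II I would read off from (\ref{msol2}) the two column sums
\begin{align}
[M^{(r)}_\Lambda]_{11}(z)+[M^{(r)}_\Lambda]_{21}(z)&=1+\sum_{k=1}^{\mathcal{N}}\frac{\beta_k+\varsigma_k}{z-\zeta_{j_k}},\nonumber\\
[M^{(r)}_\Lambda]_{12}(z)+[M^{(r)}_\Lambda]_{22}(z)&=1+\sum_{k=1}^{\mathcal{N}}\frac{\overline{\beta_k}-\overline{\varsigma_k}}{z-\bar{\zeta}_{j_k}},\nonumber
\end{align}
and denote their product by $f(z)$. Because $f(i)$ appears in the denominator of (\ref{recons ur}), the bracket equals $1-f(z)/f(i)$, which vanishes at $z=i$; hence the limit defining $u^r$ equals $-f'(i)/f(i)$, the negative logarithmic derivative of $f$ at $z=i$. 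Since $f$ is a product, this splits into the sum of the logarithmic derivatives of the two column sums, each obtained by differentiating the partial fractions term-by-term (which raises the denominators to $(z-\zeta_{j_k})^2$ and $(z-\bar{\zeta}_{j_k})^2$ and flips a sign). Evaluating at $z=i$ and recombining the two contributions reproduces exactly (\ref{u2}); substituting the $z=i$ values of the same two column sums into (\ref{recons x}) gives the displayed formula for $x(y,t;\tilde{\mathcal{D}}_\Lambda)$.

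The bulk of the work is mechanical once the $0/0$ limit is recast as a logarithmic derivative and the product structure is exploited. The only genuinely delicate step is the first one --- establishing that (\ref{recons u})--(\ref{recons x}) may be applied to $M^{(r)}_\Lambda$ notwithstanding its different normalization --- which I expect to be the main obstacle and which rests on the scaling invariance of the reconstruction ratio together with the identification in Proposition \ref{unim}.
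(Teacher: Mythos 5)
Your proposal is correct and takes essentially the same route the paper intends for this corollary: use Proposition \ref{unim} to identify $M^{(r)}_\Lambda$ as the reflectionless solution of RHP \ref{RHP2} with data $\tilde{\mathcal{D}}_\Lambda$, substitute its explicit form (\ref{msol1})--(\ref{msol2}) into the reconstruction formulas (\ref{recons u}) and (\ref{recons x}), and evaluate the $z\to i$ limit, which your logarithmic-derivative computation does correctly, reproducing (\ref{u1}), (\ref{u2}) and the formula for $x(y,t;\tilde{\mathcal{D}}_\Lambda)$ exactly. Your one addition --- checking that the reconstruction ratio is unchanged under the normalization $M\mapsto F^{-1}M e^{c_+\sigma_3/2}$ because $(1,1)$ is a left eigenvector of $F$ and the diagonal factors $e^{\pm c_+/2}$ cancel in the product --- is a detail the paper leaves implicit, not a departure from its argument.
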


\subsection{The error function $\tilde{E}(z)$ between $M^{(r)}$ and $M^{(r)}_\Lambda$}\label{sec61}
In this section,  we consider the error matrix-function $\tilde{E}(z)$ and  show that  the error function $\tilde{E}(z)$ solves a small norm RH problem which  can be expanded asymptotically for large times.
From the definition (\ref{transMr}), we can obtain a RH problem  for the matrix function  $\tilde{E}(z)$.

\begin{RHP}\label{RHP7}
	Find a matrix-valued function $\tilde{E}(z)$  with following identities:
	
	$\blacktriangleright$ Analyticity: $\tilde{E}(z)$ is analytical  in $\mathbb{C}\setminus  \Sigma^{(2)} $;

	$\blacktriangleright$ Asymptotic behaviors:
	\begin{align}
	&\tilde{E}(z) \sim I+\mathcal{O}(z^{-1}),\hspace{0.5cm}|z| \rightarrow \infty;
	\end{align}

	$\blacktriangleright$ Jump condition: $\tilde{E}$ has continuous boundary values $\tilde{E}_\pm$ on $\Sigma^{(2)}$ satisfying
	$$\tilde{E}_+(z)=\tilde{E}_-(z)V^{\tilde{E}}(z),$$
	where the jump matrix $V^{\tilde{E}}(z)$ is given by
	\begin{equation}
	V^{\tilde{E}}(z)=M^{(r)}_\Lambda(z)V^{(2)}(z)M^{(r)}_\Lambda(z)^{-1}. \label{tVE}
	\end{equation}
\end{RHP}

{Proposition \ref{unim}} implies that $M^{(r)}_\Lambda(z)$ is bound on $\Sigma^{(2)}$. By using Lemma \ref{lemmav2} and Corollary \ref{v2p}, we have the following evaluation
\begin{equation}
\parallel V^{\tilde{E}}(z)-I \parallel_p\lesssim \parallel V^{(2)}-I \parallel_p=\mathcal{O}(e^{- 2\rho_0t} ) ,\hspace{0.3cm}\text{for $1\leq p \leq +\infty$.} \label{tVE-I}
\end{equation}
This uniformly vanishing bound $\parallel V^{\tilde{E}}-I \parallel$ establishes RHP \ref{RHP7} as a small-norm RH problem.
Therefore,    the   existence and uniqueness  of  the RHP \ref{RHP7} is  shown  by using  a  small-norm RH problem \cite{RN9,RN10} with
\begin{equation}
\tilde{E}(z)=I+\frac{1}{2\pi i}\int_{\Sigma^{(2)}}\dfrac{\left( I+\eta(s)\right) (V^{\tilde{E}}-I)}{s-z}ds,\label{tEz}
\end{equation}
where the $\eta\in L^2(\Sigma^{(2)})$ is the unique solution of following equation:
\begin{equation}
(1-C_{\tilde{E}})\eta=C_{\tilde{E}}\left(I \right).
\end{equation}
Here $C_{\tilde{E}}$:$L^2(\Sigma^{(2)})\to L^2(\Sigma^{(2)})$ is a integral operator defined by
\begin{equation}
C_{\tilde{E}}(f)(z)=C_-\left( f(V^{\tilde{E}}-I)\right) ,
\end{equation}
with  the Cauchy projection operator $C_-$    on $\Sigma^{(2)}$ :
\begin{equation}
C_-(f)(s)=\lim_{z\to \Sigma^{(2)}_-}\frac{1}{2\pi i}\int_{\Sigma^{(2)}}\dfrac{f(s)}{s-z}ds.
\end{equation}
Then by (\ref{tVE}) we have
\begin{equation}
\parallel C_{\tilde{E}}\parallel\leq\parallel C_-\parallel \parallel V^{\tilde{E}}-I\parallel_\infty \lesssim \mathcal{O}(e^{- 2\rho_0t} ),
\end{equation}
which means $\parallel C_{\tilde{E}}\parallel<1$ for sufficiently large t,   therefore  $1-C_{\tilde{E}}$ is invertible,  and   $\eta$  exists and is unique.
Moreover,
\begin{equation}
\parallel \eta\parallel_{L^2(\Sigma^{(2)})}\lesssim\dfrac{\parallel C_{\tilde{E}}\parallel}{1-\parallel C_{\tilde{E}}\parallel}\lesssim\mathcal{O}(e^{- 2\rho_0t} ).\label{normeta}
\end{equation}
Then we have the existence and boundedness of $\tilde{E}(z)$. In order to reconstruct the solution $q(x,t)$ of (\ref{mch}), we need the asymptotic behavior of $\tilde{E}(z)$ as $z\to \infty$ and the long time asymptotic behavior of $\tilde{E}(i)$.
\begin{Proposition}\label{tasyE}
	For $\tilde{E}(z)$ defined in (\ref{tEz}), it stratifies
	\begin{equation}
	|\tilde{E}(z)-I|\lesssim\mathcal{O}(e^{- 2\rho_0t}) .
	\end{equation}
	When $z=i$,
	\begin{equation}
	\tilde{E}(i)=I+\frac{1}{2\pi i}\int_{\Sigma^{(2)}}\dfrac{\left( I+\eta(s)\right) (V^{\tilde{E}}-I)}{s-i}ds,\label{tEi}
	\end{equation}
	As $z\to i$, $\tilde{E}(z)$ has expansion  at $z=i$
	\begin{align}
	\tilde{E}(z)=\tilde{E}(i)+\tilde{E}_1(z-i)+\mathcal{O}((z-i)^{2}),\label{texpE}
	\end{align}
	where
	\begin{equation}
	\tilde{E}_1=\frac{1}{2\pi i}\int_{\Sigma^{(2)}}\frac{\left( I+\eta(s)\right) (V^{\tilde{E}}-I)}{(s-i)^2}ds.
	\end{equation}
	Moreover, $\tilde{E}(i)$ and  $\tilde{E}_1$ satisfy following long time asymptotic behavior condition:
	\begin{equation}
	|\tilde{E}(i)-I|\lesssim\mathcal{O}(e^{- 2\rho_0t}) ,\hspace{0.5cm}\tilde{E}_1\lesssim\mathcal{O}(e^{- 2\rho_0t}).\label{tE1t}
	\end{equation}
\end{Proposition}
\begin{proof}
	By combining (\ref{normeta}) and (\ref{tVE-I}), we obtain
	\begin{equation}
	|\tilde{E}(z)-I|\leq|(1-C_{\tilde{E}})(\eta)|+|C_{\tilde{E}}(\eta)|\lesssim\mathcal{O}(e^{- 2\rho_0t}).
	\end{equation}
	And the asymptotic behavior $\tilde{E}(i)$ in (\ref{tE1t}) is obtained by taking $z=i$ in above estimation. As $z\to i$, geometrically expanding $(s-z)^{-1}$
	for $z$ large in (\ref{tEz}) leads to (\ref{texpE}). Finally for $\tilde{E}_1$, noting that $|s-i|^{-2}$ is bounded on $\Sigma^{(2)}$, then
	\begin{align}
	|\tilde{E}_1|\lesssim \parallel V^{\tilde{E}}-I \parallel_1+\parallel \eta \parallel_2\parallel V^{\tilde{E}}-I \parallel_2\lesssim\mathcal{O}(e^{- 2\rho_0t}).
	\end{align}
\end{proof}

\section{ A local solvable  RH model near phase points for $\xi\in(-0.25,2)$}\label{secpc}
\quad When $\xi\in(-0.25,2)$, proposition  \ref{prov2} gives that out of $U(\xi)$, the jumps are exponentially close to the identity. Hence we need to continue our investigation near  the stationary phase points in this section. Denote a new contour $\Sigma^{(0)}= (\underset{j=1,..,n(\xi)}{\underset{k=1,...,4,}{\cup}}\Sigma_{jk} )\cap U(\xi)$ in Figure \ref{sigma0}.
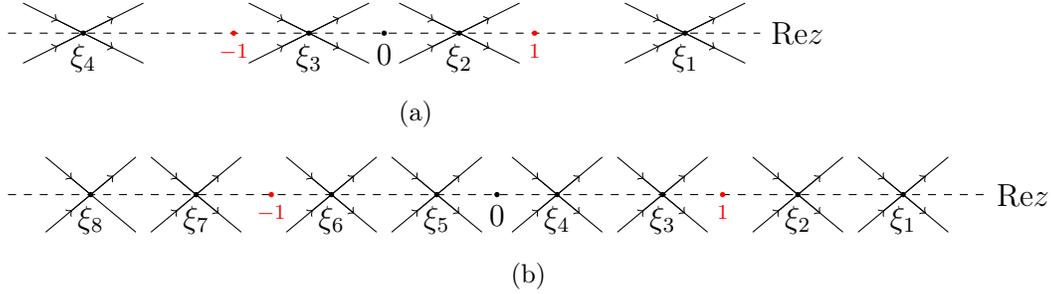
\begin{figure}[h]
	\subfigure[]{
		\begin{tikzpicture}
		\draw(-4,0)--(-4.8,0.4);
		\draw[-<](-4,0)--(-4.4,0.2);
		\draw(-4,0)--(-3.2,0.4);
		\draw[->](-4,0)--(-3.6,-0.2);
		\draw(-4,0)--(-4.8,-0.4);
		\draw[->](-4,0)--(-3.6,0.2);
		\draw(-4,0)--(-3.2,-0.4);
		\draw[-<](-4,0)--(-4.4,-0.2);
		\draw(-1,0)--(-0.2,0.4);
		\draw[->](-1,0)--(-0.6,0.2);
		\draw(-1,0)--(-1.8,0.4);
		\draw[-<](-1,0)--(-1.4,-0.2);
		\draw(-1,0)--(-0.2,-0.4);
		\draw[-<](-1,0)--(-1.4,0.2);
		\draw(-1,0)--(-1.8,-0.4);
		\draw[->](-1,0)--(-0.6,-0.2);
		\draw[dashed](-5,0)--(5,0)node[right]{ Re$z$};
		\draw(1,0)--(0.2,0.4);
		\draw[-<](1,0)--(0.6,0.2);
		\draw(1,0)--(0.2,-0.4);
		\draw[->](1,0)--(1.4,-0.2);
		\draw(1,0)--(1.8,0.4);
		\draw[->](1,0)--(1.4,0.2);
		\draw(1,0)--(1.8,-0.4);
		\draw[-<](1,0)--(0.6,-0.2);
		\draw(4,0)--(4.8,0.4);
		\draw[->](4,0)--(4.4,0.2);
		\draw(4,0)--(3.2,0.4);
		\draw[-<](4,0)--(3.6,-0.2);
		\draw(4,0)--(4.8,-0.4);
		\draw[-<](4,0)--(3.6,0.2);
		\draw(4,0)--(3.2,-0.4);
		\draw[->](4,0)--(4.4,-0.2);
		\coordinate (I) at (0,0);
		\fill (I) circle (1pt) node[below] {$0$};
		\coordinate (A) at (-4,0);
		\fill (A) circle (1pt) node[below] {$\xi_4$};
		\coordinate (b) at (-1,0);
		\fill (b) circle (1pt) node[below] {$\xi_3$};
		\coordinate (e) at (4,0);
		\fill (e) circle (1pt) node[below] {$\xi_1$};
		\coordinate (f) at (1,0);
		\fill (f) circle (1pt) node[below] {$\xi_2$};
		\coordinate (c) at (-2,0);
		\fill[red] (c) circle (1pt) node[below] {\scriptsize$-1$};
		\coordinate (d) at (2,0);
		\fill[red] (d) circle (1pt) node[below] {\scriptsize$1$};
		\end{tikzpicture}
		\label{si1}}
	\subfigure[]{
		\begin{tikzpicture}
		\draw[dashed](-6.5,0)--(6.5,0)node[right]{ Re$z$};
		\coordinate (I) at (0,0);
		\fill (I) circle (1pt) node[below] {$0$};
		\coordinate (c) at (-3,0);
		\fill[red] (c) circle (1pt) node[below] {\scriptsize$-1$};
		\coordinate (D) at (3,0);
		\fill[red] (D) circle (1pt) node[below] {\scriptsize$1$};
		\draw(-0.8,0)--(-0.2,0.5);
		\draw[->](-0.8,0)--(-0.5,0.25);
		\draw(-0.8,0)--(-1.4,0.5);
		\draw[-<](-0.8,0)--(-1.1,-0.25);
		\draw(-0.8,0)--(-0.2,-0.5);
		\draw[-<](-0.8,0)--(-1.1,0.25);
		\draw(-0.8,0)--(-1.4,-0.5);
		\draw[->](-0.8,0)--(-0.5,-0.25);
		\draw(-2.2,0)--(-1.6,0.5);
		\draw[-<](-2.2,0)--(-2.5,0.25);
		\draw(-2.2,0)--(-1.6,-0.5);
		\draw[->](-2.2,0)--(-1.9,-0.25);
		\draw(-2.2,0)--(-2.8,0.5);
		\draw[->](-2.2,0)--(-1.9,0.25);
		\draw(-2.2,0)--(-2.8,-0.5);
		\draw[-<](-2.2,0)--(-2.5,-0.25);
		\draw(-5.4,0)--(-6,0.5);
		\draw[-<](-5.4,0)--(-5.7,0.25);
		\draw(-5.4,0)--(-4.8,0.5);
		\draw(-5.4,0)--(-6,-0.5);
		\draw[->](-5.4,0)--(-5.1,0.25);
		\draw(-5.4,0)--(-4.8,-0.5);
		\draw[-<](-5.4,0)--(-5.7,-0.25);
		\draw(-4,0)--(-3.4,0.5);
		\draw[->](-4,0)--(-3.7,0.25);
		\draw(-4,0)--(-4.6,0.5);
		\draw[-<](-4,0)--(-4.3,-0.25);
		\draw(-4,0)--(-3.4,-0.5);
		\draw[-<](-4,0)--(-4.3,0.25);
		\draw(-4,0)--(-4.6,-0.5);
		\draw[->](-4,0)--(-3.7,-0.25);
		\draw(0.8,0)--(0.2,0.5);
		\draw[-<](0.8,0)--(0.5,0.25);
		\draw(0.8,0)--(1.4,0.5);
		\draw[->](0.8,0)--(1.1,-0.25);
		\draw(0.8,0)--(0.2,-0.5);
		\draw[->](0.8,0)--(1.1,0.25);
		\draw(0.8,0)--(1.4,-0.5);
		\draw[-<](0.8,0)--(0.5,-0.25);
		\draw(2.2,0)--(1.6,0.5);
		\draw[->](2.2,0)--(2.5,0.25);
		\draw(2.2,0)--(1.6,-0.5);
		\draw[-<](2.2,0)--(1.9,-0.25);
		\draw(2.2,0)--(2.8,0.5);
		\draw[-<](2.2,0)--(1.9,0.25);
		\draw(2.2,0)--(2.8,-0.5);
		\draw[->](2.2,0)--(2.5,-0.25);
		\draw(5.4,0)--(6,0.5);
		\draw[->](5.4,0)--(5.7,0.25);
		\draw(5.4,0)--(4.8,0.5);
		\draw[-<](5.4,0)--(5.1,-0.25);
		\draw(5.4,0)--(6,-0.5);
		\draw[-<](5.4,0)--(5.1,0.25);
		\draw(5.4,0)--(4.8,-0.5);
		\draw[->](5.4,0)--(5.7,-0.25);
		\draw(4,0)--(3.4,0.5);
		\draw[-<](4,0)--(3.7,0.25);
		\draw(4,0)--(4.6,0.5);
		\draw[->](4,0)--(4.3,-0.25);
		\draw(4,0)--(3.4,-0.5);
		\draw[->](4,0)--(4.3,0.25);
		\draw(4,0)--(4.6,-0.5);
		\draw[-<](4,0)--(3.7,-0.25);
		\coordinate (A) at (-5.4,0);
		\fill (A) circle (1pt) node[below] {$\xi_8$};
		\coordinate (b) at (-4,0);
		\fill (b) circle (1pt) node[below] {$\xi_7$};
		\coordinate (C) at (-0.8,0);
		\fill (C) circle (1pt) node[below] {$\xi_5$};
		\coordinate (d) at (-2.2,0);
		\fill (d) circle (1pt) node[below] {$\xi_6$};
		\coordinate (E) at (5.4,0);
		\fill (E) circle (1pt) node[below] {$\xi_1$};
		\coordinate (R) at (4,0);
		\fill (R) circle (1pt) node[below] {$\xi_2$};
		\coordinate (T) at (0.8,0);
		\fill (T) circle (1pt) node[below] {$\xi_4$};
		\coordinate (Y) at (2.2,0);
		\fill (Y) circle (1pt) node[below] {$\xi_3$};
		\end{tikzpicture}
		\label{si2}}
	\caption{Figure (a) and (b) shows $\Sigma^{(0)}$, and are corresponding to the  $0\leq\xi<2$ and  $-\frac{1}{4}<\xi<0$ respectively.}
	\label{sigma0}
\end{figure}
Consider following RHP:
\begin{RHP}
	Find a matrix-valued function  $ M^{lo}(z)$ with following properties:
	
	$\blacktriangleright$ Analyticity: $M^{lo}(z)$ is analytical  in $\mathbb{C}\setminus \Sigma^{(0)} $;
	
	$\blacktriangleright$ Symmetry: $M^{lo}(z)=\sigma_3\overline{M^{lo}(-\bar{z})}\sigma_3$=$F^{-2}\overline{M^{lo}(-\bar{z}^{-1})}$=$F^2\sigma_3M^{lo}(-z^{-1})\sigma_3$;
	
	$\blacktriangleright$ Jump condition: $M^{lo}$ has continuous boundary values $M^{lo}_\pm$ on $\Sigma^{(0)}$ and
	\begin{equation}
	M^{lo}_+(z)=M^{lo}_-(z)V^{(2)}(z),\hspace{0.5cm}z \in \Sigma^{(0)};\label{jump6}
	\end{equation}
	
	$\blacktriangleright$ Asymptotic behaviors:
	\begin{align}
	M^{lo}(z) =& I+\mathcal{O}(z^{-1}),\hspace{0.5cm}z \rightarrow \infty;
	\end{align}
\end{RHP}	
This RHP only has jump condition and has no poles. The  matrix $V^{(x)}(z)$ is a  upper/lower matrix with l's on the  diagonal. For $k=1,...,n(\xi)$, we denote
\begin{align}
w_{kj}(z)=\left\{\begin{array}{lll}
\left(\begin{array}{cc}
0 & -R_{kj}(z,\xi)e^{2it\theta}\\
0 & 0
\end{array}\right), &z\in \Sigma_{kj},j=1,3,\\[10pt]
\left(\begin{array}{cc}
0 & 0\\
-R_{kj}(z,\xi)e^{-2it\theta} & 0
\end{array}\right),  &z\in \Sigma_{kj},j=2,4.
\end{array}\right.
\end{align}
Then $V^{(2)}(z)=I-w_{kj}(z)$ for $z\in \Sigma_{kj}$. Moreover, let $\Sigma^{(0)}_k=\cup_{j=1,...,4}\Sigma_{kj}$, $w_k(z)=\sum_{j=1,...,4} w_{kj}(z)$, $w_{kj}^\pm(z)=w_{kj}(z)|_{\mathbb{C}^\pm}$,  $w_k^\pm(z)=w_k(z)|_{\mathbb{C}^\pm}$ and $w^\pm(z)=w(z)|_{\mathbb{C}^\pm}$. Recall the Cauchy projection operator $C_\pm$    on $\Sigma^{(2)}$ :
\begin{equation}
C_{\pm}(f)(s)=\lim_{z\to \Sigma^{(2)}_\pm}\frac{1}{2\pi i}\int_{\Sigma^{(2)}}\dfrac{f(s)}{s-z}ds.
\end{equation}
By using it, define operator
\begin{align}
C_w(f)=C_+(fw^-)+C_-(fw^+),\hspace{0.5cm}C_{w_k}(f)=C_+(fw_k^-)+C_-(fw_k^+).
\end{align}
Then $C_w=\sum_{k=1}^{n(\xi)}C_{w_k}$.
\begin{lemma}
	The matrix functions $w_{kj}$ defined above admits following  estimation:
	\begin{align}
	\parallel w_{kj}\parallel_{L^p(\Sigma_{kj})}=\mathcal{O}(t^{-1/2}),\ 1\leq p<+\infty.
	\end{align}
\end{lemma}
This lemma can be obtained by simple calculation. And it implies that $I-C_w$ and $I-C_{w_k}$ are reversible. So the solution of above RHP exist unique, and it can be written as
\begin{align}
M^{lo}=I+\frac{1}{2\pi i}\int_{\Sigma^{(0)}}\frac{(I-C_w)^{-1}I\ w}{s-z}ds.
\end{align}
Next, we show the  contributions of every crosses $\Sigma^{(0)}_k$ can be separated out.
\begin{corollary}
	As $t\to+\infty$,
	\begin{align}
	\parallel C_{w_k}C_{w_j}\parallel_{B(L^2(\Sigma^{(0)}))}\lesssim t^{-1},\hspace{0.5cm}\parallel C_{w_k}C_{w_j}\parallel_{L^\infty(\Sigma^{(0)})\to L^2(\Sigma^{(0)})}\lesssim t^{-1}.
	\end{align}
\end{corollary}
Direct calculation establishes that
\begin{align}
&\left(I- C_w\right) \left(I+\sum_{k=1}^{n(\xi)}C_{w_k}(I-C_{w_k})^{-1} \right) =I-\sum_{1\leq k\neq j\leq n(\xi)}C_{w_j}C_{w_k}(I-C_{w_k})^{-1},\\
& \left(I+\sum_{k=1}^{n(\xi)}C_{w_k}(I-C_{w_k})^{-1} \right) \left(I- C_w\right)=I-\sum_{1\leq k\neq j\leq n(\xi)}(I-C_{w_k})^{-1}C_{w_k}C_{w_j}.
\end{align}
Then following the step of \cite{RN6}, we derive the proposition:
\begin{Proposition}\label{dividepc}
	As $t\to +\infty$,
	\begin{align}
	\int_{\Sigma^{(0)}}\frac{(I-C_w)^{-1}I\ w}{s-z}ds=\sum_{ k=1 }^{n(\xi)}\int_{\Sigma^{(0)}_k}\frac{(I-C_{w_k})^{-1}I\ w_k}{s-z}ds+\mathcal{O}(t^{-3/2}).
	\end{align}
\end{Proposition}
So as $t\to +\infty$, we can only consider to reduce above RHP to a model RHP whose solution can be given explicitly in terms of parabolic cylinder
functions on every contour $\Sigma^{(0)}_k$ respectively. And we only give the details of $\Sigma^{(0)}_1$, the model of other  critical point can be  constructed similar. We denote $\hat{\Sigma}^{(0)}_1$ as the contour $\{z=\xi_1+le^{\pm\varphi i},\ l\in\mathbb{R}\}$ oriented from $\Sigma^{(0)}_1$, and $\hat{\Sigma}_{1j}$ is the extension of $\Sigma_{1j}$  respectively. And for $z$ near $\xi_1$, rewrite phase function as
\begin{align}
\theta(z)=\theta(\xi_1)+(z-\xi_1)^2\theta''(\xi_1)+\mathcal{O}((z-\xi_1)^3).
\end{align}
When $\xi\in[0,2)$, $\theta''(\xi_1)<0$ and when $\xi\in(-0.25,0)$, $\theta''(\xi_1)>0$.
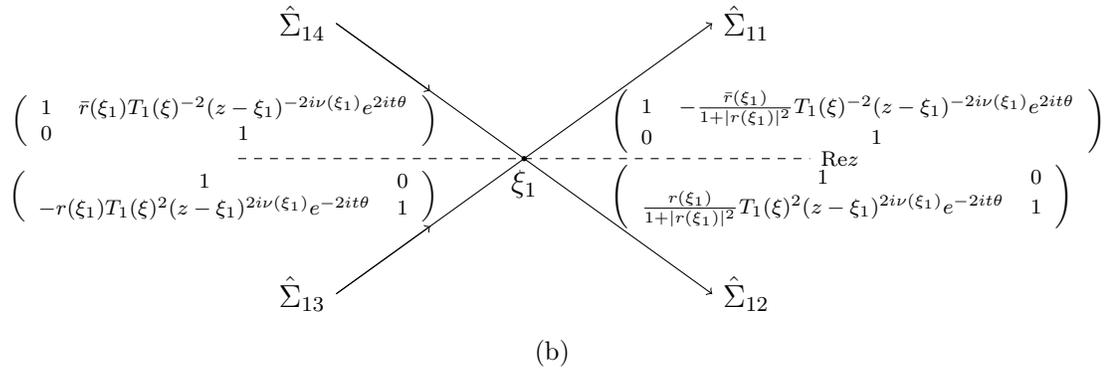
\begin{figure}
	\centering
	\subfigure[]{
		\begin{tikzpicture}[node distance=2cm]
		\draw[](0,2.7)node[above]{$\xi\in[0,2)$};
		\draw[->](0,0)--(2.5,1.4)node[right]{$\hat{\Sigma}_{14}$};
		\draw(0,0)--(-2.5,1.4)node[left]{$\hat{\Sigma}_{11}$};
		\draw(0,0)--(-2.5,-1.4)node[left]{$\hat{\Sigma}_{12}$};
		\draw[->](0,0)--(2.5,-1.4)node[right]{$\hat{\Sigma}_{13}$};
		\draw[dashed](-3.8,0)--(3.8,0)node[right]{\scriptsize Re$z$};
		\draw[->](-2.5,-1.4)--(-1.25,-0.7);
		\draw[->](-2.5,1.4)--(-1.25,0.7);
		\coordinate (A) at (-1.2,0.5);
		\coordinate (B) at (-1.2,-0.5);
		\coordinate (G) at (1.4,0.5);
		\coordinate (H) at (1.4,-0.5);
		\coordinate (I) at (0,0);
		\fill (A) circle (0pt) node[left] {\scriptsize$\left(\begin{array}{cc}
			1 & -\frac{\bar{r}(\xi_1)}{1+|r(\xi_1)|^2}T_1(\xi)^{-2}(z-\xi_1)^{-2i\nu(\xi_1)}e^{2it\theta}\\
			0 & 1
			\end{array}\right)$};
		\fill (B) circle (0pt) node[left] {\scriptsize$\left(\begin{array}{cc}
			1 & 0\\
			\frac{r(\xi_1)}{1+|r(\xi_1)|^2}T_1(\xi)^{2}(z-\xi_1)^{2i\nu(\xi_1)}e^{-2it\theta} & 1
			\end{array}\right)$};
		\fill (G) circle (0pt) node[right] {\scriptsize$\left(\begin{array}{cc}
			1& \bar{r}(\xi_1)T_1(\xi)^{-2}(z-\xi_1)^{-2i\nu(\xi_1)}e^{2it\theta}\\
			0&1
			\end{array}\right)$};
		\fill (H) circle (0pt) node[right] {\scriptsize$\left(\begin{array}{cc}
			1 & 0\\
			-r(\xi_1)T_1(\xi)^{2}(z-\xi_1)^{2i\nu(\xi_1)}e^{-2it\theta} & 1
			\end{array}\right)$};
		\fill (I) circle (1pt) node[below] {$\xi_1$};
		\end{tikzpicture}
	}
	\subfigure[]{
		\begin{tikzpicture}[node distance=2cm]
		\draw[](0,2.7) node[above]{$\xi\in(-0.25,0)$};
		\draw[->](0,0)--(2.5,1.8)node[right]{$\hat{\Sigma}_{11}$};
		\draw(0,0)--(-2.5,1.8)node[left]{$\hat{\Sigma}_{14}$};
		\draw(0,0)--(-2.5,-1.8)node[left]{$\hat{\Sigma}_{13}$};
		\draw[->](0,0)--(2.5,-1.8)node[right]{$\hat{\Sigma}_{12}$};
		\draw[dashed](-3.8,0)--(3.8,0)node[right]{\scriptsize Re$z$};
		\draw[->](-2.5,-1.8)--(-1.25,-0.9);
		\draw[->](-2.5,1.8)--(-1.25,0.9);
		\coordinate (A) at (1,0.5);
		\coordinate (B) at (1,-0.5);
		\coordinate (G) at (-1,0.5);
		\coordinate (H) at (-1,-0.5);
		\coordinate (I) at (0,0);
		\fill (A) circle (0pt) node[right] {\scriptsize$\left(\begin{array}{cc}
		1 & -\frac{\bar{r}(\xi_1)}{1+|r(\xi_1)|^2}T_1(\xi)^{-2}(z-\xi_1)^{-2i\nu(\xi_1)}e^{2it\theta}\\
		0 & 1
		\end{array}\right)$};
	\fill (B) circle (0pt) node[right] {\scriptsize$\left(\begin{array}{cc}
		1 & 0\\
		\frac{r(\xi_1)}{1+|r(\xi_1)|^2}T_1(\xi)^{2}(z-\xi_1)^{2i\nu(\xi_1)}e^{-2it\theta} & 1
		\end{array}\right)$};
	\fill (G) circle (0pt) node[left] {\scriptsize$\left(\begin{array}{cc}
		1& \bar{r}(\xi_1)T_1(\xi)^{-2}(z-\xi_1)^{-2i\nu(\xi_1)}e^{2it\theta}\\
		0&1
		\end{array}\right)$};
	\fill (H) circle (0pt) node[left] {\scriptsize$\left(\begin{array}{cc}
		1 & 0\\
		-r(\xi_1)T_1(\xi)^{2}(z-\xi_1)^{2i\nu(\xi_1)}e^{-2it\theta} & 1
		\end{array}\right)$};
		\fill (I) circle (1pt) node[below] {$\xi_1$};
		\end{tikzpicture}
	}
	\caption{The contour $\hat{\Sigma}^{(0)}_1$ and the jump matrix on it in case $\xi\in[0,2)$ and $\xi\in(-0.25,0)$ respectively.}
	\label{figS0}
\end{figure}
Consider following local RHP:
\begin{RHP}\label{RHPlo1}
	Find a matrix-valued function  $ M^{lo,1}(z)$ with following properties:
	
	$\blacktriangleright$ Analyticity: $M^{lo,1}(z)$ is analytical  in $\mathbb{C}\setminus \hat{\Sigma}_1 $;

	$\blacktriangleright$ Jump condition: $M^{lo,1}$ has continuous boundary values $M^{lo,1}_\pm$ on $\hat{\Sigma}_1$ and
	\begin{equation}
	M^{lo,1}_+(z)=M^{lo,1}_-(z)V^{lo,1}(z),\hspace{0.5cm}z \in \hat{\Sigma}^{(0)}_1,
	\end{equation}
	where
	\begin{align}
	V^{lo,1}(z)=\left\{\begin{array}{ll}
	\left(\begin{array}{cc}
	1 & -\frac{\bar{r}(\xi_1)}{1+|r(\xi_1)|^2}T_1(\xi)^{-2}(z-\xi_1)^{-2i\nu(\xi_1)}e^{2it\theta}\\
	0 & 1
	\end{array}\right),  & z\in \hat{\Sigma}_{11},\\[10pt]
	\left(\begin{array}{cc}
	1 & 0\\
	\frac{r(\xi_1)}{1+|r(\xi_1)|^2}T_1(\xi)^{2}(z-\xi_1)^{2i\nu(\xi_1)}e^{-2it\theta} & 1
	\end{array}\right),   & z\in \hat{\Sigma}_{12},\\[10pt]
	\left(\begin{array}{cc}
	1& \bar{r}(\xi_1)T_1(\xi)^{-2}(z-\xi_1)^{-2i\nu(\xi_1)}e^{2it\theta}\\
	0&1
	\end{array}\right),   & z\in \hat{\Sigma}_{13},\\[10pt]
	\left(\begin{array}{cc}
	1 & 0\\
	-r(\xi_1)T_1(\xi)^{2}(z-\xi_1)^{2i\nu(\xi_1)}e^{-2it\theta} & 1
	\end{array}\right),   & z\in \hat{\Sigma}_{14}.
	\end{array}\right.
	\end{align}
	
	$\blacktriangleright$ Asymptotic behaviors:
	\begin{align}
	M^{lo,1}(z) =& I+\mathcal{O}(z^{-1}),\hspace{0.5cm}z \rightarrow \infty;
	\end{align}
\end{RHP}	

RHP \ref{RHPlo1} does not possess the symmetry condition shared by preceding RHP, because it is a local model and will only be used for bounded values of $z$.
In order to motivate the model, let $\zeta = \zeta(z)$ denote the rescaled local variable
\begin{align}
\zeta(z)=t^{1/2}\sqrt{4\eta(\xi)\theta''(\xi_1)}(z-\xi_1),
\end{align}
where, $\eta(\xi)=-1$, when $\xi\in[0,2)$,  and $\eta(\xi)=1$ when $\xi\in(-0.25,0)$. This change of variable maps $U_{\xi_1}$ to an expanding neighborhood of $\zeta= 0$. Additionally, let
\begin{align}
r_{\xi_1}=r(\xi_1)T_1(\xi)^{2}e^{-2it\theta(\xi_1)}\exp\left\lbrace -i\nu(\xi_1)\log \left( 4t\theta''(\xi_1)\eta(\xi_1)\right) \right\rbrace ,
\end{align}
with $|r_{\xi_1}|=|r(\xi_1)|$.
In the above expression, the complex powers are defined by choosing the branch of
the logarithm with  $-\pi< \arg \zeta < \pi$ in the cases$\xi\in[0,2)$, and the branch of the logarithm with $0 < \arg \zeta < 2\pi$ in the case $\xi\in(-0.25,0)$.

Through this change of variable,  the jump $V^{lo,1}(z)$ approximates to  the jump of a parabolic cylinder model problem as follow:
 \begin{RHP}\label{RHPpc}
 	Find a matrix-valued function  $ M^{pc}(\zeta;\xi)$ with following properties:
 	
 	$\blacktriangleright$ Analyticity: $M^{pc}(\zeta;\xi)$ is analytical  in $\mathbb{C}\setminus \Sigma^{pc} $ with $\Sigma^{pc}=\left\lbrace\mathbb{R}e^{\varphi i} \right\rbrace \cup \left\lbrace\mathbb{R}e^{(\pi-\varphi) i} \right\rbrace$ shown in Figure \ref{sigpc};

 	$\blacktriangleright$ Jump condition: $M^{pc}$ has continuous boundary values $M^{pc}_\pm$ on $\Sigma^{pc}$ and
 	\begin{equation}
 	M^{pc}_+(\zeta;\xi)=M^{pc}_-(\zeta;\xi)V^{pc}(\zeta),\hspace{0.5cm}\zeta \in \Sigma^{\zeta},
 	\end{equation}
 	where   in the case $\xi\in[0,2)$
 	\begin{align}
 		V^{pc}(\zeta;\xi)=\left\{\begin{array}{ll}
 	\left(\begin{array}{cc}
 	1 & 0\\
 	-r_{\xi_1}\zeta^{2i\nu(\xi_1)}e^{-\frac{i}{2}\zeta^2} & 1
 	\end{array}\right),  & \zeta\in\mathbb{R}^+e^{\varphi i},\\[10pt]
 	\left(\begin{array}{cc}
 	1& \bar{r}_{\xi_1}\zeta^{-2i\nu(\xi_1)}e^{\frac{i}{2}\zeta^2}\\
 	0&1
 	\end{array}\right),   & \zeta\in \mathbb{R}^+e^{-\varphi i},\\[10pt]
 	\left(\begin{array}{cc}
 	1 & 0\\
 	\frac{r_{\xi_1}}{1+|r_{\xi_1}|^2}\zeta^{2i\nu(\xi_1)}e^{-\frac{i}{2}\zeta^2} & 1
 	\end{array}\right),   & \zeta\in \mathbb{R}^+e^{(-\pi+\varphi) i},\\[10pt]
 	\left(\begin{array}{cc}
 	1 & -\frac{\bar{r}_{\xi_1}}{1+|r_{\xi_1}|^2}\zeta^{-2i\nu(\xi_1)}e^{\frac{i}{2}\zeta^2}\\
 	0 & 1
 	\end{array}\right),   & \zeta\in \mathbb{R}^+e^{(\pi-\varphi) i}.
 	\end{array}\right.
 	\end{align}
 	and in the case $\xi\in(-0.25,0)$
 		\begin{align}
	V^{pc}(\zeta;\xi)=\left\{\begin{array}{ll}
 \left(\begin{array}{cc}
 1 & -\frac{\bar{r}_{\xi_1}}{1+|r_{\xi_1}|^2}\zeta^{-2i\nu(\xi_1)}e^{\frac{i}{2}\zeta^2}\\
 0 & 1
 \end{array}\right),  & \zeta\in\mathbb{R}^+e^{\varphi i},\\[10pt]
 \left(\begin{array}{cc}
 1 & 0\\
 \frac{r_{\xi_1}}{1+|r_{\xi_1}|^2}\zeta^{2i\nu(\xi_1)}e^{-\frac{i}{2}\zeta^2} & 1
 \end{array}\right),   & \zeta\in \mathbb{R}^+e^{(2\pi-\varphi) i},\\[10pt]
 \left(\begin{array}{cc}
 1& \bar{r}_{\xi_1}\zeta^{-2i\nu(\xi_1)}e^{\frac{i}{2}\zeta^2}\\
 0&1
 \end{array}\right),   & \zeta\in \mathbb{R}^+e^{(\pi+\varphi) i},\\[10pt]
 \left(\begin{array}{cc}
 1 & 0\\
 -r_{\xi_1}\zeta^{2i\nu(\xi_1)}e^{-\frac{i}{2}\zeta^2} & 1
 \end{array}\right),   & \zeta\in \mathbb{R}^+e^{ (\pi-\varphi)i}.
 \end{array}\right.
 	\end{align}

 	$\blacktriangleright$ Asymptotic behaviors:
 	\begin{align}
 	M^{pc}(\zeta;\xi) =& I+M^{pc}_1\zeta^{-1}+\mathcal{O}(\zeta^{-2}),\hspace{0.5cm}\zeta \rightarrow \infty.
 	\end{align}
 \end{RHP}	
\begin{figure}
	\centering
	\subfigure[]{
		\begin{tikzpicture}[node distance=2cm]
		\draw[](0,2.7)node[above]{$\xi\in[0,2)$};
		\draw[->](0,0)--(2,1.2)node[above]{$\mathbb{R}^+e^{\varphi i}$};
		\draw(0,0)--(-2,1.2)node[above]{$\mathbb{R}^+e^{(\pi-\varphi)i}$};
		\draw(0,0)--(-2,-1.2)node[below]{$\mathbb{R}^+e^{(-\pi+\varphi)i}$};
		\draw[->](0,0)--(2,-1.2)node[below]{$\mathbb{R}^+e^{-\varphi i}$};
		\draw[dashed](-2,0)--(2,0)node[right]{\scriptsize Re$z$};
		\draw[->](-2,-1.2)--(-1,-0.6);
		\draw[->](-2,1.2)--(-1,0.6);
		\coordinate (A) at (-1.2,0.5);
		\coordinate (B) at (-1.2,-0.5);
		\coordinate (G) at (1.4,0.5);
		\coordinate (H) at (1.4,-0.5);
		\coordinate (I) at (0,0);
		\fill (I) circle (1pt) node[below] {$0$};
		\end{tikzpicture}
	}
	\subfigure[]{
		\begin{tikzpicture}[node distance=2cm]
		\draw[](0,2.7) node[above]{$\xi\in(-0.25,0)$};
		\draw[->](0,0)--(2,1.5)node[above]{$\mathbb{R}^+e^{\varphi i}$};
		\draw(0,0)--(-2,1.5)node[above]{$\mathbb{R}^+e^{(\pi-\varphi)i}$};
		\draw(0,0)--(-2,-1.5)node[below]{$\mathbb{R}^+e^{(\pi+\varphi)i}$};
		\draw[->](0,0)--(2,-1.5)node[below]{$\mathbb{R}^+e^{(2\pi-\varphi)i}$};
		\draw[dashed](-2,0)--(2,0)node[right]{\scriptsize Re$z$};
		\draw[->](-2,-1.5)--(-1,-0.75);
		\draw[->](-2,1.5)--(-1,0.75);
		\coordinate (A) at (1,0.5);
		\coordinate (B) at (1,-0.5);
		\coordinate (G) at (-1,0.5);
		\coordinate (H) at (-1,-0.5);
		\coordinate (I) at (0,0);
		\fill (I) circle (1pt) node[below] {$0$};
		\end{tikzpicture}
	}
	\caption{The contour $\Sigma^{pc}$ in case $\xi\in[0,2)$ and $\xi\in(-0.25,0)$ respectively.}
	\label{sigpc}
\end{figure}
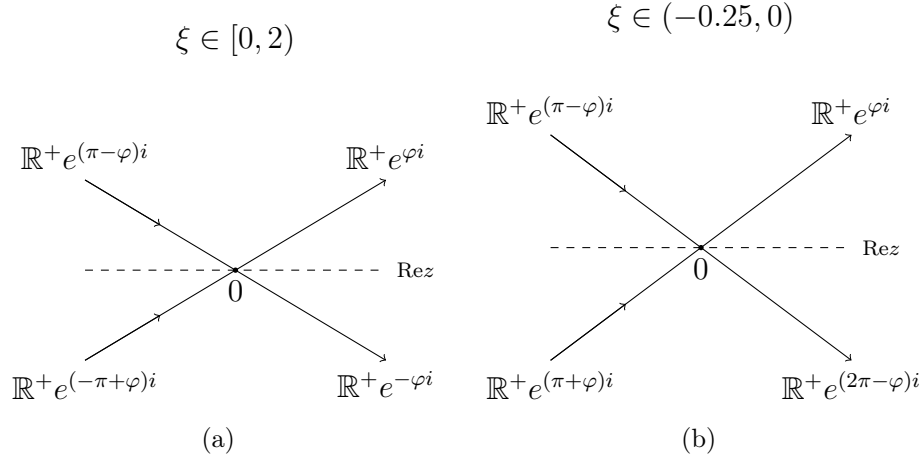
Then \cite{HG2009} Theorem A.1-A.6 proved that
\begin{align}
M^{lo,1}(z)=I+\frac{t^{-1/2}}{z-\xi_1}\frac{i\eta}{2} \left(\begin{array}{cc}
0 & [M^{pc}_1]_{12}\\
-[M^{pc}_1]_{21} & 0
\end{array}\right)+\mathcal{O}(t^{-1}).\label{asyMlo1}
\end{align}
And RHP \ref{RHPpc} has an explicit solution $	M^{pc}(\zeta)$, which is expressed in terms of solutions of the parabolic cylinder equation $\left(\frac{\partial^2}{\partial z^2}+\left(\frac{1}{2}-\frac{z^2}{2}+a \right)  \right)D_a(z)=0 $. In fact, Let
\begin{align}
M^{pc}(\zeta;\xi)=\Psi(\zeta;\xi)P(\xi)e^{\frac{i}{4}\eta\zeta^2\sigma_3}\zeta^{-i\eta\nu(\xi_1)\sigma_3},
\end{align}
where  in the case $\xi\in[0,2)$
\begin{align}
P(\xi)=\left\{\begin{array}{ll}
\left(\begin{array}{cc}
1 & 0\\
r_{\xi_1} & 1
\end{array}\right),  & \arg\zeta\in(0,\varphi) ,\\[10pt]
\left(\begin{array}{cc}
1& \bar{r}_{\xi_1}\\
0&1
\end{array}\right),   & \arg\zeta\in (-\varphi,0) ,\\[10pt]
\left(\begin{array}{cc}
1 & 0\\
\frac{r_{\xi_1}}{1+|r_{\xi_1}|^2} & 1
\end{array}\right),   & \arg\zeta\in (-\pi,-\pi+\varphi),\\[10pt]
\left(\begin{array}{cc}
1 & \frac{\bar{r}_{\xi_1}}{1+|r_{\xi_1}|^2}\\
0 & 1
\end{array}\right),   & \arg\zeta\in (\pi-\varphi,\pi),\\[10pt]
I,   & else.
\end{array}\right.
\end{align}
and in the case $\xi\in(-0.25,0)$
\begin{align}
P(\xi)=\left\{\begin{array}{ll}
\left(\begin{array}{cc}
1 & \frac{\bar{r}_{\xi_1}}{1+|r_{\xi_1}|^2}\\
0 & 1
\end{array}\right),  & \arg\zeta\in(0,\varphi) ,\\[10pt]
\left(\begin{array}{cc}
1 & 0\\
\frac{r_{\xi_1}}{1+|r_{\xi_1}|^2} & 1
\end{array}\right),   & \arg\zeta\in(2\pi-\varphi,2\pi) ,\\[10pt]
\left(\begin{array}{cc}
1& \bar{r}_{\xi_1}\\
0&1
\end{array}\right),   & \arg\zeta\in (\pi,\pi+\varphi) ,\\[10pt]
\left(\begin{array}{cc}
1 & 0\\
r_{\xi_1} & 1
\end{array}\right),   & \arg\zeta\in(\pi-\varphi,\pi),\\[10pt]
I,   & else.
\end{array}\right.
\end{align}
By construction, the matrix $\Psi$ is continuous along the rays of $\Sigma^{pc}$. And Due to the branch cut of the logarithmic function along $\eta\mathbb{R}^+$, the matrix $\Psi$  has the same (constant) jump matrix along the negative and positive real axis. The function $\Psi(\zeta;\xi)$  satisfies the following model RHP.
 \begin{RHP}\label{RHPpsi}
	Find a matrix-valued function  $ \Psi(\zeta;\xi)$ with following properties:
	
	$\blacktriangleright$ Analyticity: $\Psi(\zeta;\xi)$ is analytical  in $\mathbb{C}\setminus \mathbb{R}$;

	$\blacktriangleright$ Jump condition: $\Psi(\zeta;\xi)$ has continuous boundary values $\Psi_\pm(\zeta;\xi)$ on $\mathbb{R}$ and
	\begin{equation}
	\Psi_+(\zeta;\xi)=\Psi_-(\zeta;\xi)V^{\Psi}(\zeta),\hspace{0.5cm}\zeta \in \mathbb{R},
	\end{equation}
	where
	\begin{align}
	V^{\Psi}(\xi)=	\left(\begin{array}{cc}
	1+|r_{\xi_1}|^2 & \bar{r}_{\xi_1}\\
	r_{\xi_1} & 1
	\end{array}\right).
	\end{align}

	$\blacktriangleright$ Asymptotic behaviors:
	\begin{align}
	\Psi(\zeta;\xi) \sim& \left( I+M^{pc}_1\zeta^{-1}\right) \zeta^{i\eta\nu(\xi_1)\sigma_3}e^{-\frac{i}{4}\eta\zeta^2\sigma_3},\hspace{0.5cm}\zeta \rightarrow \infty.
	\end{align}
\end{RHP}	
For  brevity, denote $\tilde{\beta}^1_{12}=i\eta[M^{pc}_1]_{12}$ and $\tilde{\beta}^1=-i\eta[M^{pc}_1]_{21}$. The unique solution to Problem \ref{RHPpsi} is:

1. $\xi\in(-0.25,0)$,
when $\zeta\in\mathbb{C}^+$,
\begin{align}
\Psi(\zeta;\xi)=	\left(\begin{array}{cc}
e^{\frac{3}{4}\pi\nu(\xi_1)}D_{-i\nu(\xi_1)}(e^{-\frac{3}{4}\pi i}\zeta) & \frac{i\nu(\xi_1)}{\tilde{\beta}^1_{21}}e^{-\frac{\pi}{4}(\nu(\xi_1)+i)}D_{i\nu(\xi_1)-1}(e^{-\frac{\pi i}{4} }\zeta)\\
-\frac{i\nu(\xi_1)}{\tilde{\beta}^1_{12}}e^{\frac{3\pi}{4}(\nu(\xi_1)-i)}D_{-i\nu(\xi_1)-1}(e^{-\frac{3\pi i}{4} }\zeta) & e^{-\frac{\pi}{4}\nu(\xi_1)}D_{i\nu(\xi_1)}(e^{-\frac{\pi}{4} i}\zeta)
\end{array}\right),
\end{align}
when $\zeta\in\mathbb{C}^-$,
\begin{align}
\Psi(\zeta;\xi)=	\left(\begin{array}{cc}
e^{\frac{7\pi}{4}\nu(\xi_1)}D_{-i\nu(\xi_1)}(e^{-\frac{7\pi}{4} i}\zeta) & \frac{i\nu(\xi_1)}{\tilde{\beta}^1_{21}}e^{-\frac{5\pi}{4}(\nu(\xi_1)+i)}D_{i\nu(\xi_1)-1}(e^{-\frac{5\pi i}{4} }\zeta)\\
-\frac{i\nu(\xi_1)}{\tilde{\beta}^1_{12}}e^{\frac{7\pi}{4}(\nu(\xi_1)-i)}D_{-i\nu(\xi_1)-1}(e^{-\frac{7\pi i}{4} }\zeta) & e^{-\frac{5}{4}\pi\nu(\xi_1)}D_{i\nu(\xi_1)}(e^{-\frac{5}{4}\pi i}\zeta)
\end{array}\right).
\end{align}

2. $\xi\in[0,2)$,
when $\zeta\in\mathbb{C}^+$,
\begin{align}
\Psi(\zeta;\xi)=	\left(\begin{array}{cc}
e^{-\frac{\pi}{4}\nu(\xi_1)}D_{i\nu(\xi_1)}(e^{-\frac{\pi}{4} i}\zeta) & -\frac{i\nu(\xi_1)}{\tilde{\beta}^1_{21}}e^{\frac{3\pi}{4}(\nu(\xi_1)-i)}D_{-i\nu(\xi_1)-1}(e^{-\frac{3\pi i}{4} }\zeta)\\
\frac{i\nu(\xi_1)}{\tilde{\beta}^1_{12}}e^{-\frac{\pi}{4}(\nu(\xi_1)+i)}D_{i\nu(\xi_1)-1}(e^{-\frac{\pi i}{4} }\zeta) & e^{\frac{3}{4}\pi\nu(\xi_1)}D_{-i\nu(\xi_1)}(e^{-\frac{3}{4}\pi i}\zeta)
\end{array}\right),
\end{align}
when $\zeta\in\mathbb{C}^-$,
\begin{align}
\Psi(\zeta;\xi)=	\left(\begin{array}{cc}
 e^{\frac{3}{4}\pi\nu(\xi_1)}D_{i\nu(\xi_1)}(e^{\frac{3}{4}\pi i}\zeta) & -\frac{i\nu(\xi_1)}{\tilde{\beta}^1_{21}}e^{-\frac{\pi}{4}(\nu(\xi_1)-i)}D_{-i\nu(\xi_1)-1}(e^{\frac{\pi i}{4} }\zeta)\\
\frac{i\nu(\xi_1)}{\tilde{\beta}^1_{12}}e^{\frac{3\pi}{4}(\nu(\xi_1)+i)}D_{i\nu(\xi_1)-1}(e^{\frac{3\pi i}{4} }\zeta) &e^{-\frac{\pi}{4}\nu(\xi_1)}D_{-i\nu(\xi_1)}(e^{\frac{\pi}{4} i}\zeta)
\end{array}\right).
\end{align}
And when $\xi\in(-0.25,0)$,
\begin{align}
&\tilde{\beta}^1_{21}=\frac{\sqrt{2\pi}e^{\frac{5}{2}\pi\nu(\xi_1)}e^{-\frac{7\pi}{4} i}}{r_{\xi_1}\Gamma(-i\nu(\xi_1))},\hspace{0.5cm}\tilde{\beta}^1_{21}\tilde{\beta}^1_{12}=-\nu(\xi_1),\\
&|\tilde{\beta}^1_{21}|=-\frac{\nu(\xi_1)}{(1+|r(\xi_1)|^2)^3},\\
&\arg(\tilde{\beta}^1_{21})=\frac{5}{2}\pi\nu(\xi_1)-\frac{7\pi}{4} i-\arg r_{\xi_1}-\arg \Gamma(-i\nu(\xi_1));
\end{align}
when $\xi\in[0,2)$,
\begin{align}
&\tilde{\beta}^1_{21}=\frac{\sqrt{2\pi}e^{\frac{\pi}{2}\nu(\xi_1)}e^{-\frac{\pi}{4} i}}{r_{\xi_1}\Gamma(i\nu(\xi_1))},\hspace{0.5cm}\tilde{\beta}^1_{21}\tilde{\beta}^1_{12}=-\nu(\xi_1),\\
&|\tilde{\beta}^1_{21}|=-\frac{\nu(\xi_1)}{1+|r(\xi_1)|^2},\\
&\arg(\tilde{\beta}^1_{21})=\frac{\pi}{2}\nu(\xi_1)-\frac{\pi}{4} i-\arg r_{\xi_1}-\arg \Gamma(i\nu(\xi_1)).
\end{align}

A derivation of this result is given in \cite{RN6}, and a direct verification of the solution is given in  \cite{Liu3}. Substitute above consequence into (\ref{asyMlo1}) and obtain:
\begin{align}
M^{lo,1}(z)=I+\frac{t^{-1/2}}{z-\xi_1} \left(\begin{array}{cc}
0 & \tilde{\beta}^1_{12}\\
\tilde{\beta}^1_{21} & 0
\end{array}\right)+\mathcal{O}(t^{-1}).\label{asyMpc}
\end{align}
For the model around other stationary phase points, it  also admits
\begin{align}
M^{lo,k}(z)=I+\frac{t^{-1/2}}{z-\xi_k} \left(\begin{array}{cc}
0 & \tilde{\beta}^k_{12}\\
\tilde{\beta}^k_{21} & 0
\end{array}\right)+\mathcal{O}(t^{-1}),\label{asyMpck}
\end{align}
for $k=2,...,n(\xi)$.
When $\xi\in(-0.25,0)$, $k$ is  odd number or  $\xi\in[0,2)$, $k$ is  even number,
\begin{align}
r_{\xi_k}=r(\xi_k)T_k(\xi)^{2}e^{-2it\theta(\xi_k)}\exp\left\lbrace -i\nu(\xi_k)\log \left( 4t\theta''(\xi_k)\right) \right\rbrace,
\end{align}
and
\begin{align}
&\tilde{\beta}^k_{21}=\frac{\sqrt{2\pi}e^{\frac{5}{2}\pi\nu(\xi_k)}e^{-\frac{7\pi}{4} i}}{r_{\xi_k}\Gamma(-i\nu(\xi_k))},\hspace{0.5cm}\tilde{\beta}^k_{21}\tilde{\beta}^k_{12}=-\nu(\xi_k),\\
&|\tilde{\beta}^k_{21}|=-\frac{\nu(\xi_k)}{(1+|r(\xi_k)|^2)^3},\\
&\arg(\tilde{\beta}^k_{21})=\frac{5}{2}\pi\nu(\xi_k)-\frac{7\pi}{4} i-\arg r_{\xi_k}-\arg \Gamma(-i\nu(\xi_k));
\end{align}
and when $\xi\in(-0.25,0)$, $k$ is  even number or  $\xi\in[0,2)$, $k$ is  odd number,
\begin{align}
r_{\xi_k}=r(\xi_k)T_k(\xi)^{2}e^{-2it\theta(\xi_k)}\exp\left\lbrace -i\nu(\xi_k)\log \left(- 4t\theta''(\xi_k)\right) \right\rbrace,
\end{align}
and
\begin{align}
&\tilde{\beta}^k_{21}=\frac{\sqrt{2\pi}e^{\frac{\pi}{2}\nu(\xi_k)}e^{-\frac{\pi}{4} i}}{r_{\xi_k}\Gamma(i\nu(\xi_k))},\hspace{0.5cm}\tilde{\beta}^k_{21}\tilde{\beta}^k_{12}=-\nu(\xi_k),\\
&|\tilde{\beta}^k_{21}|=-\frac{\nu(\xi_k)}{1+|r(\xi_k)|^2},\\
&\arg(\tilde{\beta}^k_{21})=\frac{\pi}{2}\nu(\xi_k)-\frac{\pi}{4} i-\arg r_{\xi_k}-\arg \Gamma(i\nu(\xi_k)).
\end{align}
Then together with proposition \ref{dividepc}, wo final obtain
\begin{Proposition}\label{asymlo}
	As $t\to+\infty$,
	\begin{align}
	M^{lo}(z)=I+t^{-1/2}\sum_{ k=1 }^{n(\xi)}\frac{A_k(\xi)}{z-\xi_k} +\mathcal{O}(t^{-1}),
	\end{align}
	where
	\begin{align}
	A_k(\xi)=\left(\begin{array}{cc}
	0 & \tilde{\beta}^k_{12}\\
	\tilde{\beta}^k_{21} & 0
	\end{array}\right).
	\end{align}
\end{Proposition}

\section{The small norm RH problem  for error function }\label{sec7}

\quad \quad In this section,  we consider the error matrix-function $E(z;\xi)$. When $\xi\in(-\infty,-0.25)$ or $\xi\in(2,+\infty)$,
the definition (\ref{transm4})  implies that $E(z;\xi)\equiv I$, so only the case $\xi\in(-0.25,2)$ need to be investigate. And
we can obtain a RH problem  for the matrix function  $E(z;\xi)$ for $\xi\in(-0.25,2)$.

\noindent\textbf{RHP11.}   Find a matrix-valued function $E(z;\xi)$  with following properties:

$\blacktriangleright$ Analyticity: $E(z;\xi)$ is analytical  in $\mathbb{C}\setminus  \Sigma^{(E)} $, where
$$\Sigma^{(E)}= \partial U_(\xi)\cup
(\Sigma^{(2)}\setminus U_(\xi);$$

$\blacktriangleright$ Asymptotic behaviors:
\begin{align}
&E(z;\xi) \sim I+\mathcal{O}(z^{-1}),\hspace{0.5cm}|z| \rightarrow \infty;
\end{align}

$\blacktriangleright$ Jump condition: $E(z;\xi)$ has continuous boundary values $E_\pm(z;\xi)$ on $\Sigma^{(E)}$ satisfying
$$E_+(z;\xi)=E_-(z;\xi)V^{(E)}(z),$$
where the jump matrix $V^{(E)}(z)$ is given by
\begin{equation}
V^{(E)}(z)=\left\{\begin{array}{llll}
M^{(r)}(z)V^{(2)}(z)M^{(r)}(z)^{-1}, & z\in \Sigma^{(2)}\setminus U_(\xi),\\[4pt]
M^{(r)}(z)M^{lo}(z)M^{(r)}(z)^{-1},  & z\in \partial U_(\xi),
\end{array}\right. \label{deVE}
\end{equation}
which is  shown in  Figure \ref{figE}.

We will show  that for large times, the error function  $E(z;\xi)$  solves following small norm  RH problem.

\begin{figure}[H]
	\centering
\subfigure[]{
	\begin{tikzpicture}
\draw(4.35,0.18)--(5,0.5);
\draw[-<](4.35,0.18)--(4.5,0.25);
\draw(3.64,0.15)--(2.5,0.6);
\draw[->](3.64,-0.15)--(3.25,-0.3);
\draw(4.35,-0.18)--(5,-0.5);
\draw[->](3.64,0.15)--(3.25,0.3);
\draw(3.64,-0.15)--(2.5,-0.6);
\draw[-<](4.35,-0.18)--(4.5,-0.25);
\draw(-4.35,0.18)--(-5,0.5);
\draw[-<](-4.35,0.18)--(-4.5,0.25);
\draw(-3.64,0.15)--(-2.5,0.6);
\draw[->](-3.64,-0.15)--(-3.25,-0.3);
\draw(-4.35,-0.18)--(-5,-0.5);
\draw[->](-3.64,0.15)--(-3.25,0.3);
\draw(-3.64,-0.15)--(-2.5,-0.6);
\draw[-<](-4.35,-0.18)--(-4.5,-0.25);
\draw(-0.64,0.15)--(0,0.5);
\draw[->](-0.64,0.15)--(-0.4,0.28);
\draw(-1.35,0.16)--(-2.5,0.6);
\draw[-<](-1.35,-0.16)--(-1.75,-0.31);
\draw(-0.64,-0.15)--(0,-0.5);
\draw[-<](-1.35,0.16)--(-1.75,0.31);
\draw(-1.35,-0.16)--(-2.5,-0.6);
\draw[->](-0.64,-0.15)--(-0.4,-0.28);
\draw[dashed](-5,0)--(5,0)node[right]{ Re$z$};
\draw(0.64,0.15)--(0,0.5);
\draw[->](0.64,0.15)--(0.4,0.28);
\draw(1.35,0.16)--(2.5,0.6);
\draw[-<](1.35,-0.16)--(1.75,-0.31);
\draw(0.64,-0.15)--(0,-0.5);
\draw[-<](1.35,0.16)--(1.75,0.31);
\draw(1.35,-0.16)--(2.5,-0.6);
\draw[->](0.64,-0.15)--(0.4,-0.28);
\draw[->](2.5,0)--(2.5,0.6);
\draw[->](2.5,0)--(2.5,-0.6);
\draw[->](-2.5,0)--(-2.5,0.6);
\draw[->](-2.5,0)--(-2.5,-0.6);
\draw[->](0,0)--(0,0.5);
\draw[->](0,0)--(0,-0.5);
\coordinate (I) at (0,0);
\fill (I) circle (1pt) node[below] {$0$};
\coordinate (A) at (-4,0);
\fill (A) circle (1pt) node[below] {$\xi_4$};
\coordinate (b) at (-1,0);
\fill (b) circle (1pt) node[below] {$\xi_3$};
\coordinate (e) at (4,0);
\fill (e) circle (1pt) node[below] {$\xi_1$};
\coordinate (f) at (1,0);
\draw[thick,red](1,0) circle (0.4);
\fill (f) circle (1pt) node[below] {$\xi_2$};
\draw[thick,red](4,0) circle (0.4);
\draw[thick,red](-1,0) circle (0.4);
\draw[thick,red](-4,0) circle (0.4);
\coordinate (c) at (-2,0);
\fill[red] (c) circle (1pt) node[below] {\scriptsize$-1$};
\coordinate (d) at (2,0);
\fill[red] (d) circle (1pt) node[below] {\scriptsize$1$};
\end{tikzpicture}
}
\subfigure[]{
\begin{tikzpicture}
\draw[dashed](-6.5,0)--(6.5,0)node[right]{ Re$z$};
\coordinate (I) at (0,0);
\fill (I) circle (1pt) node[below] {$0$};
\coordinate (c) at (-3,0);
\fill[red] (c) circle (1pt) node[below] {\scriptsize$-1$};
\coordinate (D) at (3,0);
\fill[red] (D) circle (1pt) node[below] {\scriptsize$1$};
\draw(-0.575,0.187)--(-0,0.7);
\draw[->](-0.575,0.187)--(-0.4,0.35);
\draw(-1.03,0.186)--(-1.5,0.6);
\draw[-<](-1.03,-0.186)--(-1.15,-0.3);
\draw(-0.575,-0.187)--(-0,-0.7);
\draw[-<](-1.03,0.186)--(-1.15,0.3);
\draw(0.575,0.187)--(0,0.7);
\draw[-<](0.575,0.187)--(0.4,0.35);
\draw(1.03,0.186)--(1.5,0.6);
\draw[->](1.03,-0.186)--(1.15,-0.3);
\draw(0.575,-0.187)--(0,-0.7);
\draw[->](1.03,0.186)--(1.15,0.3);
\draw(1.03,-0.186)--(1.5,-0.6);
\draw[-<](0.575,-0.187)--(0.4,-0.35);
\draw(-1.03,-0.186)--(-1.5,-0.6);
\draw[->](-0.575,-0.187)--(-0.4,-0.35);
\draw(-1.97,0.187)--(-1.5,0.6);
\draw[-<](-2.43,0.187)--(-2.65,0.35);
\draw(-1.97,-0.187)--(-1.5,-0.6);
\draw[->](-1.97,-0.187)--(-1.85,-0.3);
\draw(-2.43,0.187)--(-3.1,0.7);
\draw[->](-1.97,0.187)--(-1.85,0.3);
\draw(-2.43,-0.187)--(-3.1,-0.7);
\draw[-<](-2.43,-0.187)--(-2.65,-0.35);
\draw(1.97,0.187)--(1.5,0.6);
\draw[->](2.43,0.187)--(2.65,0.35);
\draw(1.97,-0.187)--(1.5,-0.6);
\draw[-<](1.97,-0.187)--(1.85,-0.3);
\draw(2.43,0.187)--(3.1,0.7);
\draw[-<](1.97,0.187)--(1.85,0.3);
\draw(2.43,-0.187)--(3.1,-0.7);
\draw[->](2.43,-0.187)--(2.65,-0.35);
\draw(-5.64,0.18)--(-6.5,0.9);
\draw[-<](-5.64,0.18)--(-5.96,0.446);
\draw(-5.16,0.18)--(-4.7,0.6);
\draw[->](-5.16,-0.18)--(-4.93,-0.39);
\draw(-5.64,-0.18)--(-6.5,-0.9);
\draw[->](-5.16,0.18)--(-4.93,0.39);
\draw(-5.16,-0.18)--(-4.7,-0.6);
\draw[-<](-5.64,-0.18)--(-5.96,-0.446);
\draw(-3.75,0.187)--(-3.1,0.7);
\draw[->](-3.75,0.187)--(-3.55,0.35);
\draw(-4.25,0.187)--(-4.7,0.6);
\draw[-<](-4.25,-0.187)--(-4.42,-0.35);
\draw(-3.75,-0.187)--(-3.1,-0.7);
\draw[-<](-4.25,0.187)--(-4.42,0.35);
\draw(-4.25,-0.187)--(-4.7,-0.6);
\draw[->](-3.75,-0.187)--(-3.55,-0.35);
\draw(3.75,0.187)--(3.1,0.7);
\draw[->](3.75,0.187)--(3.55,0.35);
\draw(4.25,0.187)--(4.7,0.6);
\draw[-<](4.25,-0.187)--(4.42,-0.35);
\draw(3.75,-0.187)--(3.1,-0.7);
\draw[-<](4.25,0.187)--(4.42,0.35);
\draw(4.25,-0.187)--(4.7,-0.6);
\draw[->](3.75,-0.187)--(3.55,-0.35);
\draw[->](-1.5,0)--(-1.5,0.6);
\draw[->](-1.5,0)--(-1.5,-0.6);
\draw[->](-4.7,0)--(-4.7,0.6);
\draw[->](-4.7,0)--(-4.7,-0.6);
\draw[->](-3.1,0)--(-3.1,0.7);
\draw[->](-3.1,0)--(-3.1,-0.7);
\draw(5.64,0.18)--(6.5,0.9);
\draw[-<](5.64,0.18)--(5.96,0.446);
\draw(5.16,0.18)--(4.7,0.6);
\draw[->](5.16,-0.18)--(4.93,-0.39);
\draw(5.64,-0.18)--(6.5,-0.9);
\draw[->](5.16,0.18)--(4.93,0.39);
\draw(5.16,-0.18)--(4.7,-0.6);
\draw[-<](5.64,-0.18)--(5.96,-0.446);
\draw[->](1.5,0)--(1.5,0.6);
\draw[->](1.5,0)--(1.5,-0.6);
\draw[->](4.7,0)--(4.7,0.6);
\draw[->](4.7,0)--(4.7,-0.6);
\draw[->](3.1,0)--(3.1,0.7);
\draw[->](3.1,0)--(3.1,-0.7);
\draw[->](0,0)--(0,0.7);
\draw[->](0,0)--(0,-0.7);
\coordinate (A) at (-5.4,0);
\fill (A) circle (1pt) node[below] {$\xi_8$};
\draw[thick,red](-5.4,0) circle (0.3);
\coordinate (b) at (-4,0);
\draw[thick,red](-4,0) circle (0.3);
\fill (b) circle (1pt) node[below] {$\xi_7$};
\coordinate (C) at (-0.8,0);
\draw[thick,red](-0.8,0) circle (0.3);
\fill (C) circle (1pt) node[below] {$\xi_5$};
\coordinate (d) at (-2.2,0);
\draw[thick,red](-2.2,0) circle (0.3);
\fill (d) circle (1pt) node[below] {$\xi_6$};
\coordinate (E) at (5.4,0);
\draw[thick,red](5.4,0) circle (0.3);
\fill (E) circle (1pt) node[below] {$\xi_1$};
\coordinate (R) at (4,0);
\draw[thick,red](4,0) circle (0.3);
\fill (R) circle (1pt) node[below] {$\xi_2$};
\coordinate (T) at (0.8,0);
\draw[thick,red](0.8,0) circle (0.3);
\fill (T) circle (1pt) node[below] {$\xi_4$};
\coordinate (Y) at (2.2,0);
\draw[thick,red](2.2,0) circle (0.3);
\fill (Y) circle (1pt) node[below] {$\xi_3$};
\end{tikzpicture}
}
\caption{  The jump contour $\Sigma^{(E)}$ for the $E(z;\xi)$. The red circles are $U(\xi)$. }
	\label{figE}
\end{figure}
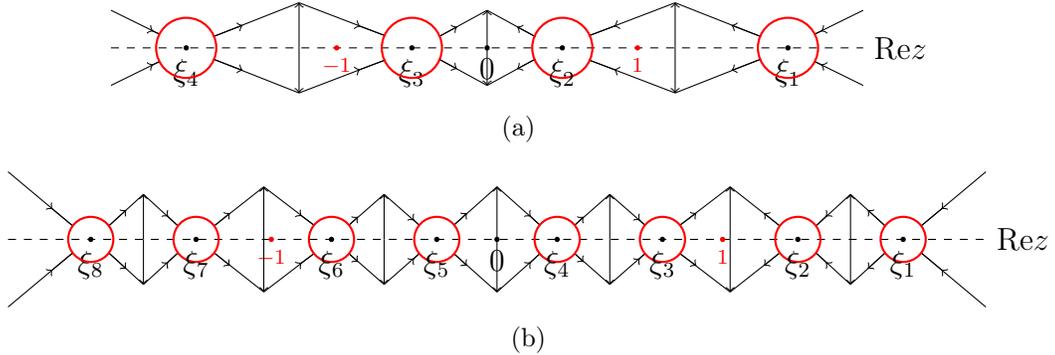

By using   {Proposition \ref{prov2}}, we have the following estimates of $V^{(E)}$:
\begin{equation}
\parallel V^{(E)}(z)-I\parallel_p\lesssim\left\{\begin{array}{llll}
\exp\left\{-tK_p\right\},  & z\in \Sigma_{kj}\setminus U(\xi),\\[6pt]
\exp\left\{-tK_p'\right\},   & z\in \Sigma'_{k\pm}.
\end{array}\right. \label{VE-I}
\end{equation}

For $z\in \partial U(\xi)$,  $M^{(r)}(z)$ is bounded,  so   by using  (\ref{asymlo}),  we find that
\begin{equation}
| V^{(E)}(z)-I|=   \big|M^{(r)}(z)^{-1}(M^{lo}(z)-I)M^{(r)}(z) \big| = \mathcal{O}(|t|^{-1/2}).\label{VE}
\end{equation}
Therefore,    the   existence and uniqueness  of  the RHP11 can  shown  by using  a  small-norm RH problem \cite{RN9,RN10}.  Moreover, according to Beal-Coifman theory,
the solution of  the RHP11  can be given by
\begin{equation}
E(z;\xi)=I+\frac{1}{2\pi i}\int_{\Sigma^{(E)}}\dfrac{\left( I+\varpi(s)\right) (V^{(E)}(s)-I)}{s-z}ds,\label{Ez}
\end{equation}
where the $\varpi\in L^\infty(\Sigma^{(E)})$ is the unique solution of following equation
\begin{equation}
(1-C_E)\varpi=C_E\left(I \right).
\end{equation}
And  $C_E$ is  a integral operator:$L^\infty(\Sigma^{(E)})\to L^2(\Sigma^{(E)})$ defined by
\begin{equation}
C_E(f)(z)=C_-\left( f(V^{(E)}(z) -I)\right) ,
\end{equation}
where the $C_-$ is the usual Cauchy projection operator on $\Sigma^{(E)}$
\begin{equation}
C_-(f)(s)=\lim_{z\to \Sigma^{(E)}_-}\frac{1}{2\pi i}\int_{\Sigma^{(E)}}\dfrac{f(s)}{s-z}ds.
\end{equation}
By  (\ref{VE}),   we have
\begin{equation}
\parallel C_E\parallel\leq\parallel C_-\parallel \parallel V^{(E)}(z)-I\parallel_2 \lesssim \mathcal{O}(t^{-1/2}),
\end{equation}
which implies that  $1-C_E$ is invertible  for   sufficiently large $t$.    So  $\varpi$  exists and is unique,
moreover
\begin{equation}
\parallel \varpi\parallel_{L^\infty(\Sigma^{(E)})}\lesssim\dfrac{\parallel C_E\parallel}{1-\parallel C_E\parallel}\lesssim|t|^{-1/2}.\label{normrho}
\end{equation}
In order to reconstruct the solution $u(y,t)$ of (\ref{mch}), we need the asymptotic behavior of $E(z;\xi)$ as $z\to i$ and the long time asymptotic behavior of $E(i)$. Note that when we estimate its  asymptotic behavior, from (\ref{Ez}) and (\ref{VE-I}) we only need to consider the calculation on $\partial U(\xi)$ because it  approach zero exponentially on other boundary.
\begin{Proposition}\label{asyE}
	As $z\to i$, we have
	\begin{align}
	E(z;\xi)=E(i)+E_1(z-i)+\mathcal{O}((z-i)^2),
	\end{align}
	where
	\begin{align}
	E(i)=I+\frac{1}{2\pi i}\int_{\Sigma^{(E)}}\dfrac{\left( I+\varpi(s)\right) (V^{(E)}-I)}{s-i}ds,
	\end{align}
	with long time asymptotic behavior
	\begin{equation}
	E(i)=I+t^{-1/2}H^{(0)}+\mathcal{O}(|t|^{-1}).\label{E0t}
	\end{equation}
	And
	\begin{align}
	H^{(0)}&=\sum_{ k=1 }^{n(\xi)}\frac{1}{2\pi i}\int_{\partial U_{\xi_k}}\dfrac{M^{(r)}(s)^{-1}A_k(\xi)M^{(r)}(s)}{(s-i)( s- \xi_k)}ds\nonumber\\
	&=\sum_{ k=1 }^{n(\xi)}\frac{1}{\xi_k-i} M^{(r)}(\xi_k)^{-1}A_k(\xi)M^{(r)}(\xi_k).
	\end{align}
	The last equality  follows from a residue calculation. Moreover,
	\begin{equation}
	E_1=-\frac{1}{2\pi i}\int_{\Sigma^{(E)}}\dfrac{\left( I+\varpi(s)\right) (V^{(E)}-I)}{(z-i)^2}ds,
	\end{equation}
	satisfying long time asymptotic behavior condition
	\begin{equation}
	E_1=|t|^{-1/2}H^{(1)}+\mathcal{O}(|t|^{-1}),\label{E1t}
	\end{equation}
	where
	\begin{align}
	H^{(1)}=-&\sum_{ k=1 }^{n(\xi)}\frac{1}{2\pi i}\int_{\partial U_{\xi_k}}\dfrac{M^{(r)}(s)^{-1}A_k(\xi)M^{(r)}(s)}{(s-i)^2( s- \xi_k)}ds\nonumber\\
	=&-\sum_{ k=1 }^{n(\xi)}\frac{1}{(\xi_k-i)^2} M^{(r)}(\xi_k)^{-1}A_k(\xi)M^{(r)}(\xi_k).
	\end{align}
\end{Proposition}
In order to facilitate calculation, denote
\begin{align}
f_{11}=[H^{(1)}]_{11}+[H^{(1)}]_{12}+[H^{(1)}]_{21}+[H^{(1)}]_{22},\label{f11}
\end{align}
and
\begin{align}
f_{12}=&\frac{[M^{(r)}_\Lambda]_{11}([H^{(0)}]_{11}+[H^{(1)}]_{21})+[M^{(r)}_\Lambda]_{21}([H^{(0)}]_{12}+[H^{(1)}]_{22})}{[M^{(r)}_\Lambda]_{11}+[M^{(r)}_\Lambda]_{21}}\nonumber\\
&-\frac{[M^{(r)}_\Lambda]_{12}([H^{(0)}]_{11}+[H^{(1)}]_{21})+[M^{(r)}_\Lambda]_{22}([H^{(0)}]_{12}+[H^{(1)}]_{22})}{[M^{(r)}_\Lambda]_{12}+[M^{(r)}_\Lambda]_{22}}.\label{f12}
\end{align}

\section{Analysis  on   the pure $\bar{\partial}$-Problem}\label{sec8}
\quad Now we consider the Proposition and the long time asymptotics behavior of $M^{(3)}$.
The $\bar{\partial}$-problem4  of $M^{(3)}$ is equivalent to the integral equation
\begin{equation}
M^{(3)}(z)=I+\frac{1}{\pi}\iint_\mathbb{C}\dfrac{M^{(3)}(s)W^{(3)} (s)}{s-z}dm(s),\label{m3}
\end{equation}
where $m(s)$ is the Lebesgue measure on the $\mathbb{C}$. Denote $C_z$ as the left Cauchy-Green integral  operator,
\begin{equation*}
fC_z(z)=\frac{1}{\pi}\iint_C\dfrac{f(s)W^{(3)} (s)}{s-z}dm(s).
\end{equation*}
Then above equation can be rewritten as
\begin{equation}
M^{(3)}(z)=I\cdot\left(I-C_z \right) ^{-1}.\label{deM3}
\end{equation}
As we discussing preceding, $W^{(3)}$ has different  properties and structures in the case $\xi\in(-\infty,-0.25)\cup(2,+\infty)$ and $\xi\in(-0.25,2)$. This means we need to research it respectively.

\subsection{$\xi\in(-\infty,-0.25)\cup(2,+\infty)$ case}
To prove the existence of operator $\left(I-C_z \right) ^{-1}$, we have following Lemma.
\begin{lemma}\label{Cz}
	The norm of the integral operator $C_z$ decay to zero as $t\to\infty$:
	\begin{equation}
	\parallel C_z\parallel_{L^\infty\to L^\infty}\lesssim |t|^{-1/2},
	\end{equation}
	which implies that  $\left(I-C_z \right) ^{-1}$ exists.
\end{lemma}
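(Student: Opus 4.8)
The plan is to read off the operator norm directly from the integral kernel. Since $fC_z(z)=\frac{1}{\pi}\iint_{\mathbb{C}}\frac{f(s)W^{(3)}(s)}{s-z}\,dm(s)$, for any $f\in L^\infty$ I would bound
$$\|fC_z\|_{L^\infty}\le \|f\|_{L^\infty}\,\sup_{z\in\mathbb{C}}\frac{1}{\pi}\iint_{\mathbb{C}}\frac{|W^{(3)}(s)|}{|s-z|}\,dm(s),$$
so everything reduces to bounding this scalar integral uniformly in $z$. On the support of $W^{(3)}$ the factors $M^{(r)}$ and $(M^{(r)})^{-1}=EM^{(r)}_\Lambda$ are uniformly bounded (Proposition \ref{unim} together with Proposition \ref{asyE}, noting that the sectors avoid the poles $\zeta_n,\bar\zeta_n$), whence $|W^{(3)}(s)|\lesssim|\bar\partial R^{(2)}(s)|$. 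By Proposition \ref{proR} this is supported only in the sectors $\Omega_j$, $j=1,\dots,4$, where $|\bar\partial R_j(s)|\lesssim\big(|p_j'(|s|)|+|s|^{-1/2}\big)$ times the oscillatory factor $e^{\pm 2it\theta}$.

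The second step is to convert the oscillation into genuine decay using the sign of $\mathrm{Im}\,\theta$ from Corollary \ref{Imtheta}: on each $\Omega_j$ the relevant exponential obeys $|e^{\pm 2it\theta(s)}|\le e^{-ct|v|}$, where $s=u+iv$ and $c>0$ equals $\delta_0/4$ in Case I or the constant of Case II. Writing $z$ for the external point, this leaves me to bound, sector by sector, integrals of the form
$$\iint_{\Omega_j}\frac{\big(|p_j'(|s|)|+|s|^{-1/2}\big)\,e^{-ct|v|}}{|s-z|}\,du\,dv,$$
which I would split into a square-integrable piece and an $|s|^{-1/2}$ piece treated separately.

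For the $p_j'$-piece I would integrate in $u$ at fixed $v$ by Cauchy--Schwarz, using $\int_{\mathbb{R}}|s-z|^{-2}\,du=\pi|v-\mathrm{Im}\,z|^{-1}$ and $\|p_j'(|\cdot|)\|_{L^2(du)}\lesssim\|r'\|_{L^2}$, reducing it to $\lesssim\int_0^\infty e^{-ctv}|v-\mathrm{Im}\,z|^{-1/2}\,dv$; the substitution $w=tv$ extracts exactly a factor $t^{-1/2}$ and leaves $\int_0^\infty e^{-cw}|w-t\,\mathrm{Im}\,z|^{-1/2}\,dw$, which is bounded uniformly in $t\,\mathrm{Im}\,z$ since the singularity is integrable and is suppressed by $e^{-cw}$. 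For the $|s|^{-1/2}$ piece, which is \emph{not} in $L^2$, I would instead apply H\"older in $u$ with a fixed exponent $p>2$: on $\Omega_j$ one has $|s|\asymp u$ and $u\gtrsim v\cot\varphi$, giving $\||s|^{-1/2}\|_{L^p(du)}\lesssim|v|^{1/p-1/2}$, while $\||s-z|^{-1}\|_{L^q(du)}\lesssim|v-\mathrm{Im}\,z|^{1/q-1}$ with $1/p+1/q=1$; after integrating against $e^{-ct|v|}$ and rescaling $w=t|v|$, the competing powers $t^{-(1/p-1/2)}\cdot t^{1/p}\cdot t^{-1}$ again combine to exactly $t^{-1/2}$ times a bounded integral.

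The main obstacle is precisely this $|s|^{-1/2}$ term: because it fails to be square-integrable, the naive Cauchy--Schwarz bound diverges, and one must balance the exponents $p>2$ and $q$ so that the power of $v$ near the origin and the power of $|v-\mathrm{Im}\,z|$ near the diagonal are both integrable while still yielding the full $t^{-1/2}$ gain uniformly in $z$. Once $\|C_z\|_{L^\infty\to L^\infty}\lesssim|t|^{-1/2}$ is established, it follows that $\|C_z\|<1$ for $|t|$ large, so $I-C_z$ is invertible via a Neumann series and the representation (\ref{deM3}) for $M^{(3)}$ is justified.
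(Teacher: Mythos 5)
Your proposal is correct and follows essentially the same route as the paper: reduce the operator norm to the scalar integral $\sup_z\frac{1}{\pi}\iint|W^{(3)}(s)||s-z|^{-1}dm(s)$, use boundedness of $M^{(r)}$ and its inverse on $\bar\Omega$ so that $|W^{(3)}|\lesssim|\bar\partial R_j|\,e^{-ct|v|}$ via Corollary \ref{Imtheta}, split according to $|\bar\partial R_j|\lesssim|p_j'|+|s|^{-1/2}$, and treat the two pieces by Cauchy--Schwarz in $u$ and by H\"older with $p>2$ (using $\||s|^{-1/2}\|_{L^p(du)}\lesssim v^{1/p-1/2}$, $\||s-z|^{-1}\|_{L^q(du)}\lesssim|v-\mathrm{Im}\,z|^{1/q-1}$), respectively. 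Your explicit rescaling $w=t|v|$ with the resulting uniformity in $t\,\mathrm{Im}\,z$ is in fact slightly more careful than the paper's final estimates, which assert the bound $\lesssim t^{-1/2}$ without addressing uniformity in $z$; the only blemish is the harmless misstatement $(M^{(r)})^{-1}=EM^{(r)}_\Lambda$ (it is $M^{(r)}=EM^{(r)}_\Lambda$, and $(M^{(r)})^{-1}$ is bounded since $\det M^{(r)}\equiv 1$).
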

\begin{proof}
	For any $f\in L^\infty$,
	\begin{align}
	\parallel fC_z \parallel_{L^\infty}&\leq \parallel f \parallel_{L^\infty}\frac{1}{\pi}\iint_C\dfrac{|W^{(3)} (s)|}{|z-s|}dm(s)\nonumber.
	\end{align}
	Consequently, we only need to  evaluate the integral
	$\frac{1}{\pi}\iint_C\dfrac{|W^{(3)} (s)|}{|z-s|}dm(s)$. As $W^{(3)} (s)$ is a sectorial function,  we only need to consider it on ever sector. Recall the definition of $W^{(3)} (s)=M^{(r)}(z)\bar{\partial}R^{(2)}(z)M^{(r)}(z)^{-1}$.  $W^{(3)} (s)\equiv0$ out of $\bar{\Omega}$. We only detail the case for matrix functions having support in the sector $\Omega_1$ as $\xi< -0.25$.
	Proposition \ref{asyE} and \ref{unim}  implies  the boundedness of $M^{(r)}(z)$ and $M^{(r)}(z)^{-1}$ for $z\in \bar{\Omega}$, so
	\begin{equation}
		\frac{1}{\pi}\iint_{\Omega_1}\dfrac{|W^{(3)} (s)|}{|z-s|}dm(s)\lesssim \frac{1}{\pi}\iint_{\Omega_1}\dfrac{|\bar{\partial}R_1 (s) e^{2it\theta}|}{|z-s|}dm(s).
	\end{equation}
Referring  to (\ref{dbarRj}) in proposition \ref{proR}, the integral $\iint_{\Omega_1}\dfrac{|\bar{\partial}R_1 (s)|}{|z-s|}dm(s)$	can be divided to two part:
\begin{align}
	\iint_{\Omega_1}\dfrac{|\bar{\partial}R_1 (s)|e^{-2t\text{Im}\theta}}{|z-s|}dm(s)\lesssim I_1+I_2,	
\end{align}
with
\begin{align}
	I_1=\iint_{\Omega_1}\dfrac{|p_1' (s)|e^{-2t\text{Im}\theta}}{|z-s|}dm(s),\hspace{0.5cm}
	I_2=\iint_{\Omega_1}\dfrac{|s|^{-1/2}e^{-2t\text{Im}\theta}}{|z-s|}dm(s).	
\end{align}
Let $s=u+vi=le^{i\vartheta}$, $z=x+yi$. In the following computation,  we will use the inequality
\begin{align}
	\parallel |s-z|^{-1}\parallel_{L^q(v,+\infty)}&=\left\lbrace \int_{z_0}^{+\infty}\left[  \left( \frac{u-x}{v-y}\right) ^2+1\right]^{-q/2}
d\left( \frac{u-x}{|v-y|}\right)\right\rbrace ^{1/q}|v-y|^{1/q-1}\nonumber\\
&\lesssim |v-y|^{1/q-1},
\end{align}
with $1\leq q<+\infty$ and $\frac{1}{p}+\frac{1}{q}=1$. Moreover, by corollary \ref{Imtheta}, for $z\in\Omega_1$,
\begin{equation}
e^{-2t\text{Im}\theta}\leq e^{-c(\xi)tv}.
\end{equation}
Therefore,
	\begin{align}
	I_1&\leq \int_{0}^{+\infty}\int_{v}^{\infty}\dfrac{|p_1' (s)|}{|z-s|}dudv\leq \parallel |s-z|^{-1}\parallel_{L^2(\mathbb{R}^+)} \parallel p_1'\parallel_{L^2(\mathbb{R}^+)} e^{-c(\xi)tv}dv\nonumber\\
	&\lesssim \int_{0}^{+\infty}|v-y|^{-1/2} e^{-c(\xi)tv}dv\lesssim t^{-1/2}.
	\end{align}
Before we estimating the second item, we  consider for $p>2$,
\begin{align}
	\left( \int_{v}^{+\infty}|\sqrt{u^2+v^2}|^{-\frac{p}{2}}du\right) ^{\frac{1}{p}}&=\left( \int_{v}^{+\infty}|l|^{-\frac{p}{2}+1}u^{-1}dl\right) ^{\frac{1}{p}}\lesssim v^{-\frac{1}{2}+\frac{1}{p}}.
\end{align}
By Cauchy-Schwarz inequality,
\begin{align}
	I_2&\leq \int_{0}^{+\infty}\parallel |s-z|^{-1}\parallel_{L^q(\mathbb{R}^+)} \parallel |z|^{-1/2}\parallel_{L^p(\mathbb{R}^+)}e^{-c(\xi)tv}dv\nonumber\\
	&\lesssim\int_{0}^{+\infty}|v-y|^{1/q-1}v^{-\frac{1}{2}+\frac{1}{p}}e^{-c(\xi)tv}dv\nonumber\\
	&\lesssim\int_{0}^{+\infty}v^{-\frac{1}{2}}e^{-c(\xi)tv}dv\lesssim t^{-1/2}.
\end{align}
  So the proof is completed.
\end{proof}
Take $z=i$ in (\ref{m3}), then
\begin{equation}
	M^{(3)}(i)=I+\frac{1}{\pi}\iint_\mathbb{C}\dfrac{M^{(3)}(s)W^{(3)} (s)}{s-i}dm(s).
\end{equation}
To reconstruct the solution of (\ref{mch}), we need following proposition.
\begin{Proposition}\label{asyM3i}
	There exist a small positive constant $\frac{1}{4}>\rho$ such that the solution $M^{(3)}(z)$  of  $\bar{\partial}$-problem  admits the following estimation:
	\begin{align}
		\parallel M^{(3)}(i)-I\parallel=\parallel\frac{1}{\pi}\iint_\mathbb{C}\dfrac{M^{(3)}(s)W^{(3)} (s)}{s-i}dm(s)\parallel\lesssim t^{-1+2\rho}.\label{m3i}
	\end{align}
	As $z\to i$, $M^{(3)}(z)$ has asymptotic expansion:
	\begin{equation}
		M^{(3)}(z)=M^{(3)}(i)+M^{(3)}_1(x,t)(z-i)+\mathcal{O}((z-i)^{2}),
	\end{equation}
	where $M^{(3)}_1(x,t) $ is a $z$-independent coefficient with
\begin{equation}
	M^{(3)}_1(x,t)=\frac{1}{\pi}\iint_C\dfrac{M^{(3)}(s)W^{(3)} (s)}{(s-i)^2}dm(s).
\end{equation}
	There exist constants $T_1$, such that for all $t>T_1$, $M^{(3)}_1(x,t)$  satisfies
	\begin{equation}
		|M^{(3)}_1(x,t)|\lesssim t^{-1+2\rho}.\label{M31}
	\end{equation}
\end{Proposition}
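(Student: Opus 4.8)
The plan is to treat \eqref{m3i} as a statement about the Cauchy--Green operator in \eqref{deM3} and to exploit the fact that the evaluation point $z=i$ lies off the support of $W^{(3)}$. First I would record that, by Lemma \ref{Cz}, $\|C_z\|_{L^\infty\to L^\infty}\lesssim|t|^{-1/2}<1$ for large $t$, so \eqref{deM3} is a convergent Neumann series and $\|M^{(3)}\|_{L^\infty}\lesssim1$. Using this together with the boundedness of $M^{(r)}(z)^{\pm1}$ on $\bar\Omega$ (Propositions \ref{unim} and \ref{asyE}) and the fact that $W^{(3)}=M^{(r)}\bar\partial R^{(2)}(M^{(r)})^{-1}$ is supported in $\bar\Omega=\cup_{k=1}^4\Omega_k$, the estimate \eqref{m3i} reduces to bounding
\begin{equation}
\iint_{\bar\Omega}\frac{|\bar\partial R^{(2)}(s)|\,e^{-2t\,\text{Im}\,\theta}}{|s-i|}\,dm(s).\nonumber
\end{equation}
The crucial geometric point is that $i$ sits at angle $\pi/2$ while every $\Omega_k$ is a sector of opening $\varphi<\pi/8$ about $\mathbb{R}$; hence $|s-i|$ is bounded below on $\bar\Omega$, and, writing $s=u+iv$, the Cauchy factor behaves like $|v-1|^{-1}$ rather than the $|v|^{-1}$ that limited Lemma \ref{Cz}. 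This is exactly what should upgrade the $|t|^{-1/2}$ rate to a near-$|t|^{-1}$ rate.

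The core estimate would follow the same bookkeeping as the proof of Lemma \ref{Cz}, but with $y=1$. I would apply Corollary \ref{Imtheta} to get $e^{-2t\,\text{Im}\,\theta}\lesssim e^{-ctv}$ on each $\Omega_k$, and split $\bar\partial R_j$ via Proposition \ref{proR}. For the $|p_j'(|s|)|$ contribution, Cauchy--Schwarz in $u$ for fixed $v$, with $\||s-i|^{-1}\|_{L^2_u}\lesssim|v-1|^{-1/2}$ and $\|p_j'(|s|)\|_{L^2_u}\lesssim\|p_j'\|_2$, leaves
\begin{equation}
\int_0^{+\infty}|v-1|^{-1/2}e^{-ctv}\,dv\lesssim t^{-1},\nonumber
\end{equation}
since the mass concentrates near $v=0$ where $|v-1|^{-1/2}\approx1$, and the singularity at $v=1$ is annihilated by the exponential. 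For the $|s|^{-1/2}$ contribution I would use H\"older with $\tfrac1p+\tfrac1q=1$, $p>2$, together with $\||s-i|^{-1}\|_{L^q_u}\lesssim|v-1|^{1/q-1}$ and $\||s|^{-1/2}\|_{L^p_u}\lesssim v^{-1/2+1/p}$, giving
\begin{equation}
\int_0^{+\infty}|v-1|^{1/q-1}v^{-1/2+1/p}e^{-ctv}\,dv\lesssim t^{-1/2-1/p}.\nonumber
\end{equation}
Choosing $p=2/(1-4\rho)$, i.e.\ $1/p=\tfrac12-2\rho$, turns this into $t^{-1+2\rho}$, and the constraint $p<\infty$ is precisely $\rho<\tfrac14$; since $2\rho>0$ the $|p_j'|$ term is subdominant, which establishes \eqref{m3i}.

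For the expansion near $z=i$ I would expand the Cauchy kernel geometrically: as $|s-i|$ is bounded below on $\bar\Omega$ while $|z-i|$ is small,
\begin{equation}
\frac{1}{s-z}=\frac{1}{s-i}+\frac{z-i}{(s-i)^2}+\mathcal{O}\!\left((z-i)^2\right),\nonumber
\end{equation}
uniformly for $s$ in the support of $W^{(3)}$, yielding the claimed expansion with $M^{(3)}_1(x,t)=\frac1\pi\iint_{\mathbb{C}}M^{(3)}(s)W^{(3)}(s)(s-i)^{-2}\,dm(s)$. The bound $|M^{(3)}_1|\lesssim t^{-1+2\rho}$ then follows by repeating the two estimates above with $|s-i|^{-1}$ replaced by $|s-i|^{-2}$, which is again bounded on $\bar\Omega$ and behaves like $|v-1|^{-2}$ away from the dominant region $v\approx0$; the $t$-scaling of the $v$-integrals is unchanged.

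The main obstacle is the second (non-$L^2$) contribution: one must balance the Cauchy-kernel singularity concentrated at $v=1$ against the $|s|^{-1/2}$ (equivalently $|s|^{-1}$) singularity of $\bar\partial R_j$ concentrated at the origin $v=0$, and track how the free H\"older exponent feeds into the final power of $t$. Obtaining the sharp $t^{-1+2\rho}$ rather than $t^{-1/2}$ hinges entirely on the off-contour location of $i$ and on choosing $p$ just above $2$. Care is also needed near $s=0$, where I would use the milder bound $|\bar\partial R_j|\lesssim|p_j'|+|s|^{-1/2}$ of \eqref{dbarRj} on $|s|\le1$ and the sharper $|s|^{-1}$ bound \eqref{dbarRj2} on $|s|\ge1$, and near $v=1$, where integrability is restored only by the exponential factor.
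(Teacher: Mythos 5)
Your proposal is correct and reaches the paper's rate $t^{-1+2\rho}$, and its skeleton coincides with the paper's proof: boundedness $\|M^{(3)}\|_{L^\infty}\lesssim 1$ via Lemma \ref{Cz}, reduction to $\iint_{\Omega_1}|\bar{\partial}R_1(s)|e^{-2t\,\mathrm{Im}\,\theta}|s-i|^{-1}dm(s)$ using the boundedness of $M^{(r)}(z)^{\pm1}$, the key geometric fact that $|s-i|$ is bounded below on $\bar{\Omega}$ (this is exactly what upgrades the $t^{-1/2}$ of Lemma \ref{Cz}), and the geometric expansion of the Cauchy kernel for the coefficient $M^{(3)}_1$. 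The differences are in execution. For the $|p_1'|$ term you use Cauchy--Schwarz with $\|p_1'\|_2$ and $\||s-i|^{-1}\|_{L^2_u}\lesssim|v-1|^{-1/2}$, whereas the paper uses $p_1'\in L^1(\mathbb{R})$ (a consequence of $r\in H^{1,1}$) together with the uniform bound on $|s-i|^{-1}$; both yield $t^{-1}$. For the singular term the paper deliberately switches to the sharper bound (\ref{dbarRj2}), $|\bar{\partial}R_1|\lesssim|p_1'|+|s|^{-1}$, splits the $v$-integration at $v=1/2$, and applies the pointwise interpolation $(u^2+v^2)^{-1/2}(u^2+(v-1)^2)^{-1/2}\leq(u^2+v^2)^{-1/2-\rho}(u^2+(v-1)^2)^{-1/2+\rho}$ on $\{v<1/2\}$, the piece $\{v>1/2\}$ being exponentially small; you instead keep the $|s|^{-1/2}$ bound (\ref{dbarRj}) and let the $\rho$-loss enter through the H\"older exponent $p=2/(1-4\rho)$, which also explains the constraint $\rho<\tfrac14$ in the statement. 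The two mechanisms are equivalent in effect, so either write-up is acceptable. One caution on your treatment of $M^{(3)}_1$: repeating the H\"older estimate literally with $|s-i|^{-2}$ gives $\||s-i|^{-2}\|_{L^q_u}\lesssim|v-1|^{1/q-2}$, which is \emph{not} integrable at $v=1$ and cannot be rescued by the bounded factor $e^{-ctv}$; the valid route is the one you state first, namely $|s-i|^{-2}\lesssim|s-i|^{-1}$ on the support of $W^{(3)}$ (since $|s-i|$ is bounded below there), after which the first-order estimates apply verbatim --- this is also how the paper argues for (\ref{M31}).
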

\begin{proof}
	First we estimate (\ref{m3i}). The proof proceeds along the same steps as the proof of above Proposition. 	{Lemma \ref{Cz}} and (\ref{deM3}) implies that for large $t$,   $\parallel M^{(3)}\parallel_\infty \lesssim1$. And for same reason, we only estimate the integral on sector $\Omega_1$ as $\xi<-0.25$. Let $s=u+vi=le^{i\vartheta}$.	We also divide $M^{(3)}(i)-I$ to two parts, but  this time we use another estimation (\ref{dbarRj2}):
	\begin{equation}
	\frac{1}{\pi}\iint_\mathbb{C}\dfrac{M^{(3)}(s)W^{(3)} (s)}{s-i}dm(s)\lesssim I_3+I_4,
	\end{equation}
	with
	\begin{align}
		I_3=\iint_{\Omega_1}\dfrac{|p_1' (s)|e^{-2t\text{Im}\theta}}{|i-s|}dm(s),\hspace{0.5cm}
		I_4=\iint_{\Omega_1}\dfrac{|s|^{-1}e^{-2t\text{Im}\theta}}{|i-s|}dm(s).	
	\end{align}	
	For $r\in H^{1,1}(\mathbb{R})$, $r'\in L^1(\mathbb{R})$, which together with $|p_1'|\lesssim |r'|$ implies $p_1'\in L^1(\mathbb{R})$. So
	\begin{align}
		I_3\leq&\int_{0}^{+\infty}\int_{v}^{+\infty}\dfrac{|p_1' (s)|e^{-c(\xi)tv}}{|i-s|}dudv\nonumber\\
		&\leq \int_{0}^{+\infty} \parallel p_1'\parallel_{L^1(\mathbb{R}^+)} \sqrt{2} e^{-c(\xi)tv} dv\lesssim \int_{0}^{+\infty}  e^{-c(\xi)tv} dv\nonumber\\
		&\leq t^{-1}.
	\end{align}
The second  inequality  from $|i-s|\leq \frac{1}{\sqrt{2}}$ for $s\in\Omega_1$. And we use the sufficiently  small  positive constant $\rho$ to bound $I_4$. And we partition it to two parts:
\begin{align}
I_4\leq\int_{0}^{\frac{1}{2}}\int_{v}^{+\infty}\dfrac{|s|^{-1}e^{-c(\xi)tv}}{|i-s|}dudv+\int_{\frac{1}{2}}^{+\infty}\int_{v}^{+\infty}\dfrac{|s|^{-1}e^{-c(\xi)tv}}{|i-s|}dudv.
\end{align}
For $0<v<\frac{1}{2}$, $|s-i|^2=u^2+(v-1)^2>u^2+v^2=|s|^2$, while as $v>\frac{1}{2}$, $|s-i|^2<|s|^2$. Then  the first integral has:
\begin{align}
	&\int_{0}^{\frac{1}{2}}\int_{v}^{+\infty}\dfrac{|s|^{-1}e^{-c(\xi)tv}}{|i-s|}dudv\nonumber\\
	&\leq\int_{0}^{\frac{1}{2}}\int_{v}^{+\infty} (u^2+v^2)^{-\frac{1}{2}-\rho}(u^2+(v-1)^2)^{-\frac{1}{2}+\rho}due^{-c(\xi)tv}dv\nonumber\\
	&\leq \int_{0}^{\frac{1}{2}}\left[ \int_{v}^{+\infty}\left(1+\left(\frac{u}{v} \right)^2  \right) ^{-\frac{1}{2}-\rho}v^{-2\rho} d\frac{u}{v}\right]  (v^2+(v-1)^2)^{-\frac{1}{2}+\rho}e^{-c(\xi)tv}dv\nonumber\\
	&\lesssim  \int_{0}^{\frac{1}{2}}v^{-2\rho}(\frac{1}{2})^{-\frac{1}{2}+\rho}e^{-c(\xi)tv}dv\lesssim t^{-1+2\rho}.
\end{align}
The second integral can be bounded in same way:
\begin{align}
	&\int_{\frac{1}{2}}^{+\infty}\int_{v}^{+\infty}\dfrac{|s|^{-1}e^{-c(\xi)tv}}{|i-s|}dudv\nonumber\\
	&\leq\int_{\frac{1}{2}}^{+\infty}(2v^2)^{-\frac{1}{2}+\rho}|v-1|^{-2\rho}e^{-c(\xi)tv}dv\nonumber\\
	&\lesssim \int_{\frac{1}{2}}^{1} (1-v)^{-2\rho}e^{-c(\xi)tv}dv+\int_{1}^{+\infty}(v-1)^{-2\rho}e^{-c(\xi)tv}dv\nonumber\\
	&\leq e^{-\frac{c(\xi)}{2}t}\int_{\frac{1}{2}}^{1} (1-v)^{-2\rho}dv+e^{-c(\xi)t}\int_{1}^{+\infty}(v-1)^{-2\rho}e^{-c(\xi)t(v-1)}d(v-1)\nonumber\\
	&\lesssim e^{-\frac{c(\xi)}{2}t}.
\end{align}
This estimation  is strong enough to obtain the result (\ref{m3i}). And (\ref{M31}) is obtained by  exploiting the fact  $|i-s|\leq \frac{1}{\sqrt{2}}$ for $s\in\Omega_1$.
\end{proof}

\subsection{$\xi\in(-0.25,2)$ case}
\quad The first step also is to prove the existence of operator $\left(I-C_z \right) ^{-1}$.
\begin{lemma}\label{Cz1}
	The norm of the integral operator $C_z$ decay to zero as $t\to\infty$:
	\begin{equation}
	\parallel C_z\parallel_{L^\infty\to L^\infty}\lesssim |t|^{-1/4},
	\end{equation}
	which implies that  $\left(I-C_z \right) ^{-1}$ exists.
\end{lemma}
\begin{proof}
		For any $f\in L^\infty$,
	\begin{align}
	\parallel fC_z \parallel_{L^\infty}&\leq \parallel f \parallel_{L^\infty}\frac{1}{\pi}\iint_C\dfrac{|W^{(3)} (s)|}{|z-s|}dm(s)\nonumber.
	\end{align}
	Consequently, we only need to  evaluate the integral
	$\frac{1}{\pi}\iint_C\dfrac{|W^{(3)} (s)|}{|z-s|}dm(s)$. As $W^{(3)} (s)$ is a sectorial function, it is just need to consider it on ever sector. Recall the definition of $W^{(3)} (s)=M^{R}(z)\bar{\partial}R^{(2)}(z)M^{R}(z)^{-1}$.  $W^{(3)} (s)\equiv0$ out of $\bar{\Omega}$. We only detail the case for matrix functions having support in the sector $\Omega_{11}$ as $\xi\in (-0.25,0)$.
	Proposition \ref{asyE}, \ref{unim} and \ref{asymlo}  implies  the boundedness of $M^{R}(z)$ and $M^{R}(z)^{-1}$ for $z\in \bar{\Omega}$, so
	\begin{equation}
	\frac{1}{\pi}\iint_{\Omega_{11}}\dfrac{|W^{(3)} (s)|}{|z-s|}dm(s)\lesssim \frac{1}{\pi}\iint_{\Omega_{11}}\dfrac{|\bar{\partial}R_{11} (s) e^{2it\theta}|}{|z-s|}dm(s).
	\end{equation}
	Referring  to (\ref{dbarRj3}) in proposition \ref{proR1}, the integral $\iint_{\Omega_{11}}\dfrac{|\bar{\partial}R_{11} (s)|}{|z-s|}dm(s)$	can be divided to two part:
	\begin{align}
	\iint_{\Omega_{11}}\dfrac{|\bar{\partial}R_{11} (s)|e^{-2t\text{Im}\theta}}{|z-s|}dm(s)\lesssim \hat{I}_1+\hat{I}_2,	
	\end{align}
	with
	\begin{align}
	&\hat{I}_1=\iint_{\Omega_{11}}\dfrac{|p_{11}'(s)|e^{-2t\text{Im}\theta}}{|z-s|}dm(s),\\
	&\hat{I}_2=\iint_{\Omega_{11}}\dfrac{|s-\xi_1|^{-1/2}e^{-2t\text{Im}\theta}}{|z-s|}dm(s).
	\end{align}
	Recall that $z=x+yi$, $s=\xi_1+u+vi$ with $x,y,u,v\in\mathbb{R}$, then lemma \ref{theta2} gives that
	\begin{align}
	\hat{I}_1&\leq\int_{0}^{+\infty}\int_{v}^{+\infty}\dfrac{|p_{11}'(s)|}{|z-s|}e^{-c(\xi)tv\frac{u^2+2u\xi_1+v^2}{4+|s|^2}}dudv\nonumber\\
	&\leq\int_{0}^{+\infty}\parallel p_{11}'\parallel_2\parallel |z-s|^{-1}\parallel_2e^{-2c(\xi)tv^2\frac{u+\xi_1}{4+\xi_1^2+v^2}}dv\nonumber\\
	&\lesssim\int_{0}^{+\infty}|v-y|^{-1/2}e^{-2c(\xi)tv^2\frac{u+\xi_1}{4+\xi_1^2+v^2}}dv\nonumber\\
	&=\left(\int_{0}^{y}+\int_{y}^{+\infty}\right)|v-y|^{-1/2}e^{-2c(\xi)tv^2\frac{v+\xi_1}{4+\xi_1^2+v^2}}dv  .
	\end{align}
	For the first integral, we use the  inequality $e^{-z}\lesssim z^{-1/4}$
	\begin{align}
	\int_{0}^{y}(y-v)^{-1/2}e^{-2c(\xi)tv^2\frac{u+\xi_1}{4+\xi_1^2+v^2}}dv\lesssim \int_{0}^{y} (y-v)^{-1/2}v^{-1/2}dvt^{-1/4}\lesssim t^{-1/4}.
	\end{align}
	And for the second integral, we make the substitution $w=v-y:0\to +\infty$
	\begin{align}
	\int_{y}^{+\infty}(v-y)^{-1/2}e^{-2c(\xi)tv^2\frac{u+\xi_1}{4+\xi_1^2+v^2}}dv&=\int_{0}^{+\infty}w^{-1/2}e^{-2c(\xi)ty\frac{y+\xi_1}{4+\xi_1^2+y^2}(w+y)}dw\nonumber\\
	&=\int_{0}^{+\infty}w^{-1/2}e^{-2c(\xi)ty\frac{y+\xi_1}{4+\xi_1^2+y^2}w}dwe^{-2c(\xi)ty\frac{y+\xi_1}{4+\xi_1^2+y^2}y}\nonumber\\
	&\lesssim e^{-t2c(\xi)y\frac{y+\xi_1}{4+\xi_1^2+y^2}y}.
	\end{align}
	And similar with lemma \ref{Cz}, we bound $\hat{I}_2$. For $p>2$, and $1/p+1/q=1$,
	\begin{align}
	\hat{I}_2&\leq \int_{0}^{+\infty} \parallel |s-\xi_1|^{-1/2}\parallel_p\parallel |z-s|^{-1}\parallel_q e^{-c(\xi)tv\frac{u^2+2u\xi_1+v^2}{4+|s|^2}}dv\nonumber\\
	&\lesssim \int_{0}^{+\infty}v^{1/p-1/2}|y-v|^{1/q-1}e^{-c(\xi)tv\frac{u^2+2u\xi_1+v^2}{4+|s|^2}}dv\nonumber\\
	&=\left(\int_{0}^{y}+\int_{y}^{+\infty}\right)v^{1/p-1/2}|y-v|^{1/q-1}e^{-c(\xi)tv\frac{u^2+2u\xi_1+v^2}{4+|s|^2}}dv.
	\end{align}
	Analogously,
	\begin{align}
	\int_{0}^{y}v^{1/p-1/2}|y-v|^{1/q-1}e^{-c(\xi)tv\frac{u^2+2u\xi_1+v^2}{4+|s|^2}}dv&\lesssim \int_{0}^{y}v^{1/p-1}(y-v)^{1/q-1} dv t^{-1/4}\nonumber\\
	&\lesssim t^{-1/4},
	\end{align}
	and
	\begin{align}
	&\int_{y}^{+\infty}v^{1/p-1/2}|y-v|^{1/q-1}e^{-c(\xi)tv\frac{u^2+2u\xi_1+v^2}{4+|s|^2}}dv\nonumber\\
	&\leq \int_{y}^{+\infty}(v-y)^{-1/2} e^{-2c(\xi)ty\frac{y+\xi_1}{4+\xi_1^2+y^2}(v-y)}dve^{-2c(\xi)ty\frac{y+\xi_1}{4+\xi_1^2+y^2}y}\nonumber\\
	&\lesssim e^{-t2c(\xi)y\frac{y+\xi_1}{4+\xi_1^2+y^2}y}.
	\end{align}
	Then the result is confirmed.
\end{proof}
Take $z=i$ in (\ref{m3}), then
\begin{equation}
M^{(3)}(i)=I+\frac{1}{\pi}\iint_\mathbb{C}\dfrac{M^{(3)}(s)W^{(3)} (s)}{s-i}dm(s).
\end{equation}
To reconstruct the solution of (\ref{mch}), we need following proposition.
\begin{Proposition}\label{asyM3i1}
	The solution $M^{(3)}(z)$  of  $\bar{\partial}$-problem  admits the following estimation:
	\begin{align}
	\parallel M^{(3)}(i)-I\parallel=\parallel\frac{1}{\pi}\iint_\mathbb{C}\dfrac{M^{(3)}(s)W^{(3)} (s)}{s-i}dm(s)\parallel\lesssim t^{-3/4}.\label{m3i1}
	\end{align}
	As $z\to i$, $M^{(3)}(z)$ has asymptotic expansion:
	\begin{equation}
	M^{(3)}(z)=M^{(3)}(i)+M^{(3)}_1(x,t)(z-i)+\mathcal{O}((z-i)^{2}),
	\end{equation}
	where $M^{(3)}_1(x,t) $ is a $z$-independent coefficient with
	\begin{equation}
	M^{(3)}_1(x,t)=\frac{1}{\pi}\iint_C\dfrac{M^{(3)}(s)W^{(3)} (s)}{(s-i)^2}dm(s).
	\end{equation}
	There exist constants $T_1$, such that for all $t>T_1$, $M^{(3)}_1(x,t)$  satisfies
	\begin{equation}
	|M^{(3)}_1(x,t)|\lesssim t^{-3/4}.\label{M311}
	\end{equation}
\end{Proposition}
\begin{proof}
	Analogously in proposition \ref{asyM3i}, we first estimate (\ref{m3i1}). The proof proceeds along the same steps as the proof of above Proposition. 	{Lemma \ref{Cz1}} and (\ref{deM3}) implies that for large $t$,   $\parallel M^{(3)}\parallel_\infty \lesssim1$. So it is just need to bound $\iint_\mathbb{C}\dfrac{|W^{(3)} (s)|}{s-i}dm(s)$. And we only give the details on $\Omega_{11}$, the integral on other region can be obtained in the same way. Referring  to (\ref{dbarRj3}) in proposition \ref{proR1}, this integral 	can be divided to two part:
	\begin{equation}
	\frac{1}{\pi}\iint_\mathbb{C}\dfrac{M^{(3)}(s)W^{(3)} (s)}{s-i}dm(s)\lesssim \hat{I}_3+\hat{I}_4,
	\end{equation}
	with
	\begin{align}
	&\hat{I}_3=\iint_{\Omega_{11}}\dfrac{|p_{11}'(s)|e^{-2t\text{Im}\theta}}{|i-s|}dm(s),\\
	&\hat{I}_4=\iint_{\Omega_{11}}\dfrac{|s-\xi_1|^{-1/2}e^{-2t\text{Im}\theta}}{|i-s|}dm(s).
	\end{align}
	For $\hat{I}_3$, note that $|i-s|^{-1}$ has nonzero  maximum
	\begin{align}
	\hat{I}_3&\leq \int_{0}^{+\infty}\int_{v}^{+\infty}|p_{11}'(s)|e^{-c(\xi)tv\frac{u^2+2u\xi_1+v^2}{4+|s|^2}}dudv\nonumber\\
	&\leq \int_{0}^{+\infty}e^{-c(\xi)t\frac{v^3}{4+v^2+(v+\xi_1)^2}} \parallel p_{11}'(s) \parallel_2\left( \int_v ^{+\infty}e^{-u2c(\xi)tv\frac{v+2\xi_1}{4+v^2+(v+\xi_1)^2}}du\right) ^{1/2}dv\nonumber\\
	&\lesssim t^{-1/2}\int_{0}^{+\infty}\left( v\frac{v+2\xi_1}{4+v^2+(v+\xi_1)^2}\right) ^{-1/2}e^{-2c(\xi)tv^2\frac{v+2\xi_1}{4+v^2+(v+\xi_1)^2}}dv\nonumber\\
	&= t^{-1/2}\left( \int_{0}^{1}+\int_{1}^{+\infty}\right) \left( v\frac{v+2\xi_1}{4+v^2+(v+\xi_1)^2}\right) ^{-1/2}e^{-2c(\xi)tv^2\frac{v+2\xi_1}{4+v^2+(v+\xi_1)^2}}dv.
	\end{align}
	For the first integral, use that $\frac{4+v^2+(v+\xi_1)^2}{v(v+2\xi_1)}\lesssim v^{-1}$, then
	\begin{align}
	&\int_{0}^{1}\left( v\frac{v+2\xi_1}{4+v^2+(v+\xi_1)^2}\right) ^{-1/2}e^{-2c(\xi)tv^2\frac{v+2\xi_1}{4+v^2+(v+\xi_1)^2}}dv\nonumber\\
	&\lesssim \int_{0}^{1} v^{-1/2}e^{-c'tv^2}dv\nonumber\\
	&\lesssim t^{-1/4}.
	\end{align}
	As for the second item,
	\begin{align}
	&\int_{1}^{+\infty}\left( v\frac{v+2\xi_1}{4+v^2+(v+\xi_1)^2}\right) ^{-1/2}e^{-2c(\xi)tv^2\frac{v+2\xi_1}{4+v^2+(v+\xi_1)^2}}dv\nonumber\\
	&\lesssim \int_{1}^{+\infty} e^{-c't(v+\xi_1)}dv\nonumber\\
	&\lesssim t^{-1}e^{-c't(1+\xi_1)}.
	\end{align}
	So $\hat{I}_3 \lesssim t^{-3/4}$. And for $\hat{I}_4$, similarly we take $2<p<4$, and $1/p+1/q=1$, then
	\begin{align}
	\hat{I}_4&\leq \int_{0}^{+\infty}\int_{v}^{+\infty}|s-\xi_1|^{-1/2}e^{-c(\xi)tv\frac{u^2+2u\xi_1+v^2}{4+|s|^2}}dudv\nonumber\\
	&\leq \int_{0}^{+\infty}e^{-c(\xi)t\frac{v^3}{4+v^2+(v+\xi_1)^2}} \parallel |s-\xi_1|^{-1/2} \parallel_p\left( \int_v ^{+\infty}e^{-uqc(\xi)tv\frac{v+2\xi_1}{4+v^2+(v+\xi_1)^2}}du\right) ^{1/q}dv\nonumber\\
	&\lesssim t^{-1/q}\int_{0}^{+\infty}v^{2/p-1/2}\left( v\frac{v+2\xi_1}{4+v^2+(v+\xi_1)^2}\right) ^{-1/q}e^{-qc(\xi)tv^2\frac{v+2\xi_1}{4+v^2+(v+\xi_1)^2}}dv\nonumber\\
	&= t^{-1/q}\left( \int_{0}^{1}+\int_{1}^{+\infty}\right) v^{1/p-1/2}\left( v\frac{v+2\xi_1}{4+v^2+(v+\xi_1)^2}\right) ^{-1/q}e^{-qc(\xi)tv^2\frac{v+2\xi_1}{4+v^2+(v+\xi_1)^2}}dv.
	\end{align}
	The first  item has evaluation as
	\begin{align}
	&\int_{0}^{1} v^{1/p-1/2}\left( v\frac{v+2\xi_1}{4+v^2+(v+\xi_1)^2}\right) ^{-1/q}e^{-qc(\xi)tv^2\frac{v+2\xi_1}{4+v^2+(v+\xi_1)^2}}dv\nonumber\\
	&\int_{0}^{1} v^{2/p-3/2}e^{-c_p tv}dv\nonumber\\
	&\lesssim t^{1/4-1/p}.
	\end{align}
	And
	\begin{align}
	&\int_{1}^{+\infty} v^{1/p-1/2}\left( v\frac{v+2\xi_1}{4+v^2+(v+\xi_1)^2}\right) ^{-1/q}e^{-qc(\xi)tv^2\frac{v+2\xi_1}{4+v^2+(v+\xi_1)^2}}dv\nonumber\\
	&\lesssim\int_{1}^{+\infty}e^{-c_p't(v+\xi_1)}dv\nonumber\\
	& \lesssim t^{-1}e^{-c_p't(1+\xi_1)}.
	\end{align}
	So $\hat{I}_4 \lesssim t^{1/4-1/p-1/q}=t^{-3/4} $. And (\ref{M311}) comes from $|M^{(3)}_1(x,t)|\lesssim \parallel M^{(3)}(i)-I\parallel$.
\end{proof}

\section{Asymptotic approximation   for the  mCH equation }\label{sec9}

\quad Now we begin to construct the long time asymptotics of the mch equation (\ref{mch}).
 Inverting the sequence of transformations (\ref{transm1}), (\ref{transm2}), (\ref{transm3}) and (\ref{transMr}), we have
\begin{align}
M(z)=&M^{(3)}(z)E(z;\xi)M^{(r)}(z)R^{(2)}(z)^{-1}T(z)^{-\sigma_3}.
\end{align}
To  reconstruct the solution $u(x,t)$ by using (\ref{recons u}),   we take $z\to i$ out of $\bar{\Omega}$. In this case,  $ R^{(2)}(z)=I$. Further using   Propositions \ref{proT}  and  \ref{asyM3i},  we can obtain that as $z\to i$  behavior
\begin{align}
	M(z)=&\left(M^{(3)}(i)+ M^{(3)}_1(z)(z-i)\right) E(z;\xi)M^{(r)}_\Lambda(z)\nonumber\\
	&T(i)^{-\sigma_3}\left( 1-\frac{1}{2\pi i}\int _{I(\xi)}\dfrac{ \log (1+|r(s)|^2)}{(s-i)^2}ds(z-i)\right)^{-\sigma_3} + \mathcal{O}((z-i)^{2}).
\end{align}
Its  long time asymptotics rely on different case of $\xi$. For $\xi\in(-\infty,-0.25)\cup(2,+\infty)$,
\begin{align}
	M(z)=& M^{(r)}_\Lambda(z)T(i)^{-\sigma_3}\left(I- \frac{1}{2\pi i}\int _{I(\xi)}\dfrac{ \log (1+|r(s)|^2)}{(s-i)^2}ds (z-i) \right)^{-\sigma_3}\nonumber\\
	& + \mathcal{O}((z-i)^{-2})+\mathcal{O}(t^{-1+2\rho}),
\end{align}
and
\begin{align}
	M(i)=& M^{(r)}_\Lambda(i)T(i)^{-\sigma_3}+\mathcal{O}(t^{-1+2\rho}).
\end{align}
Substitute above estimation into  (\ref{recons u}) and (\ref{recons x}) and obtain
\begin{align}
	u(x,t)=&u(y(x,t),t)=\lim_{z\to i}\frac{1}{z-i}\left(1- \dfrac{(M_{11}(z)+M_{21}(z))(M_{12}(z)+M_{22}(z)) }{(M_{11}(i)+M_{21}(i))(M_{12}(i)+M_{22}(i))}\right) \nonumber\\
	=&\lim_{z\to i}\frac{1}{z-i}\left(1- \dfrac{([M^{(r)}_\Lambda]_{11}(z)+[M^{(r)}_\Lambda]_{21}(z))([M^{(r)}_\Lambda]_{12}(z)+[M^{(r)}_\Lambda]_{22}(z)) }{([M^{(r)}_\Lambda]_{11}(i)+[M^{(r)}_\Lambda]_{21}(i))([M^{(r)}_\Lambda]_{12}(i)+[M^{(r)}_\Lambda]_{22}(i))}\right)\nonumber\\
	&+\mathcal{O}(t^{-1+2\rho})\nonumber\\
	=&u^r(x,t;\tilde{\mathcal{D}}) +\mathcal{O}(t^{-1+2\rho}),  \label{resultu}
\end{align}
and
\begin{align}
	x(y,t)&=y+c_+(x,t)=y-\ln\left( \frac{M_{12}(i)+M_{22}(i)}{M_{11}(i)+M_{21}(i)}\right)\nonumber\\
	&=y+2\ln\left(  T(i) \right)+c_+^r(x,t;\tilde{\mathcal{D}}) +\mathcal{O}(t^{-1+2\rho}),
\end{align}
where $u^r(x,t;\tilde{\mathcal{D}})$ and $c_+^r(x,t;\tilde{\mathcal{D}})$ shown in Corollary \ref{sol}. And when $\xi\in(-0.25,2)$, Propositions \ref{proT},  \ref{asyE},  \ref{asyM3i} and (\ref{asymr}) give that
\begin{align}
M(z)=&\left( M^{(3)}(i)+M^{(3)}_1(z-i)\right)\left(E(i)+E_1(z-i) \right) \nonumber\\
& \left( M^{(r)}_\Lambda(i)+M^{(r)}_{\Lambda,1}(z-i)\right) T(i)^{-\sigma_3}\left( I-\frac{1}{2\pi i}\int _{I(\xi)}\dfrac{ \log (1+|r(s)|^2)}{(s-i)^2}ds\sigma_3(z-i)\right)\nonumber\\
&+\mathcal{O}((z-i)^{-2})\nonumber\\
=&\left(I+H^{(0)}t^{-1/2}+E_1(z-i) \right) \left( M^{(r)}_\Lambda(i)+M^{(r)}_{\Lambda,1}(z-i)\right) \nonumber\\
&T(i)^{-\sigma_3}\left( I-\frac{1}{2\pi i}\int _{I(\xi)}\dfrac{ \log (1+|r(s)|^2)}{(s-i)^2}ds\sigma_3(z-i)\right)\nonumber\\
&+\mathcal{O}((z-i)^{-2})+\mathcal{O}(t^{-3/4}).
\end{align}

Therefore, we achieve main result of this paper.

\begin{theorem}\label{last}   Let $u(x,t)$ be the solution for  the initial-value problem (\ref{mch}) with generic data   $u_0(x)\in \mathcal{S}(\mathbb{R})$ and scatting data $\left\lbrace  r(z),\left\lbrace \zeta_n,C_n\right\rbrace^{4N_1+2N_2}_{n=1}\right\rbrace$. Let $\xi=\frac{y}{t}$  and
denote $q^r_\Lambda(x,t)$ be the $\mathcal{N}(\Lambda)$-soliton solution corresponding to   scattering data
$\tilde{\mathcal{D}}=\left\lbrace  0,\left\lbrace \zeta_n,C_nT^2(\zeta_n)\right\rbrace_{n\in\Lambda}\right\rbrace$ shown in Corollary \ref{sol}. And $\Lambda$ is  defined in (\ref{devide}). Then there exist a large constant $T_1=T_1(\xi)$, for all $T_1<t\to\infty$,

1. when $\xi\in(-\infty,-0.25)\cup(2,+\infty)$,
\begin{align}
	u(x,t)=&u(y(x,t),t)=\lim_{z\to i}\frac{1}{z-i}\left(1- \dfrac{(M_{11}(z)+M_{21}(z))(M_{12}(z)+M_{22}(z)) }{(M_{11}(i)+M_{21}(i))(M_{12}(i)+M_{22}(i))}\right) \nonumber\\
	=&\lim_{z\to i}\frac{1}{z-i}\left(1- \dfrac{([M^{(r)}_\Lambda]_{11}(z)+[M^{(r)}_\Lambda]_{21}(z))([M^{(r)}_\Lambda]_{12}(z)+[M^{(r)}_\Lambda]_{22}(z)) }{([M^{(r)}_\Lambda]_{11}(i)+[M^{(r)}_\Lambda]_{21}(i))([M^{(r)}_\Lambda]_{12}(i)+[M^{(r)}_\Lambda]_{22}(i))}\right)\nonumber\\
	&+\mathcal{O}(t^{-1+2\rho})\nonumber\\
	=&u^r(x,t;\tilde{\mathcal{D}}) +\mathcal{O}(t^{-1+2\rho}),
\end{align}
and
\begin{align}
	x(y,t)&=y+c_+(x,t)=y-\ln\left( \frac{M_{12}(i)+M_{22}(i)}{M_{11}(i)+M_{21}(i)}\right)\nonumber\\
	&=y+2\ln\left(  T(i) \right)+c_+^r(x,t;\tilde{\mathcal{D}}) +\mathcal{O}(t^{-1+2\rho}),
\end{align}
where  $T(z)$ and $u^r(x,t;\tilde{\mathcal{D}})$ and $c_+^r(x,t;\tilde{\mathcal{D}})$ are show in Propositions \ref{proT} and Corollary (\ref{sol}) respectively.

2. when $\xi\in(-0.25,2)$,
\begin{align}
u(x,t)=&u(y(x,t),t)=\lim_{z\to i}\frac{1}{z-i}\left(1- \dfrac{(M_{11}(z)+M_{21}(z))(M_{12}(z)+M_{22}(z)) }{(M_{11}(i)+M_{21}(i))(M_{12}(i)+M_{22}(i))}\right) \nonumber\\
=&u^r(x,t;\tilde{\mathcal{D}}) +f_{11}t^{-1/2}+\mathcal{O}(t^{-3/4}),
\end{align}
and
\begin{align}
x(y,t)&=y+c_+(x,t)=y-\ln\left( \frac{M_{12}(i)+M_{22}(i)}{M_{11}(i)+M_{21}(i)}\right)\nonumber\\
&=y-2\ln\left(  T(i) \right)+c_+^r(x,t;\tilde{\mathcal{D}}) +f_{12}t^{-1/2}+\mathcal{O}(t^{-3/4}),
\end{align}
where  $T(z)$ and $u^r(x,t;\tilde{\mathcal{D}})$, $c_+^r(x,t;\tilde{\mathcal{D}})$, $f_{11}$ and $f_{12}$ are show in Propositions \ref{proT}, Corollary (\ref{sol}), (\ref{f11}) and (\ref{f12}) respectively.
\end{theorem}
Our results also show that the poles on curve soliton solutions of mCH  equation has dominant contribution to the solution as $t\to\infty$.\vspace{6mm}

\noindent\textbf{Acknowledgements}

This work is supported by  the National Science
Foundation of China (Grant No. 11671095,  51879045).

\hspace*{\parindent}
\\

\end{document}